\documentclass[10pt,reqno,b5paper]{amsart}%
\usepackage{anysize}
\marginsize{15mm}{15mm}{10mm}{10mm}
\usepackage[noadjust]{cite}
\usepackage[T1]{fontenc}
\usepackage[colorlinks,urlcolor=black,citecolor=blue,linkcolor=blue]{hyperref}
\usepackage{bm}
\usepackage{amsmath}
\usepackage{mathrsfs}
\usepackage{amsfonts}
\usepackage{amssymb}
\usepackage{amsthm}
\usepackage{graphicx}
\usepackage{cancel}
\usepackage{esint}
\DeclareSymbolFont{CM}{OMX}{cmex}{m}{n}
\DeclareMathSymbol{\sumop}{\mathop}{CM}{"50}
\renewcommand{\sum}{\sumop}

\allowdisplaybreaks[4]

\newtheorem{thm}{\sc Theorem}[section]

\newtheorem{lem}{\sc Lemma}[section]
\newtheorem{pro}{\sc Proposition}[section]
\theoremstyle{definition}
\newtheorem{defn}[thm]{\sc Definition}
\theoremstyle{remark}
\newtheorem{rem}{Remark}[section]
\newtheorem*{clm}{Claim}
\newtheorem*{pf}{Proof}
\newcommand{\Cref}[1]{\eqref{#1}}
\newcommand{\cref}[1]{(\ref{#1})}

\numberwithin{equation}{section}

\begin{document}
\title[ Non-uniqueness for the compressible Oldroyd-type models]
{Sharp Non-uniqueness for the Hyper-viscous compressible 3D Oldroyd-type models: Beyond the Lions exponent}
\author[H. B. Li \quad\&\quad P. Qu\quad\& Z. R. Zeng\quad\&\quad M. X. Zhang]
{Haobin Li $^A$, \quad Peng Qu $^{A,B}$, \quad Zirong Zeng $^C$,\quad Mingxin Zhang $^A$}
\thanks{ E-mails: hbli25@m.fudan.edu.cn  (H. B. Li),\quad pqu@fudan.edu.cn (P. Qu),\quad beckzzr@nuaa.edu.cn  (Z. R. Zeng),\quad mxzhang24@m.fudan.edu.cn  (M. X. Zhang)}
\dedicatory{
\small
$^{A}$ School of Mathematical Sciences, \\Fudan University, Shanghai 200433, P. R. China \\
$^B$ Shanghai Key Laboratory for Contemporary Applied Mathematics, \\Fudan University, Shanghai 200433, P. R. China\\
$^C$ School of Mathematics and Key Laboratory of MIIT,\\Nanjing University of Aeronautics and Astronautics, Nanjing 211106, P. R. China
}
\vspace{-5mm}
\begin{abstract}
We prove the non-uniqueness of weak solutions to the three-dimensional hyper-viscous compressible Oldroyd-type model with general pressure laws and Newtonian viscous. Using an intermittent convex integration scheme, we construct non-unique weak solutions in $L_t^\gamma W_x^{s,p}$ with $(s,p,\gamma)$ lies in the supercritical regime relative to the Ladyzhenskaya-Prodi-Serrin criteria. In addition, the Newtonian viscosity we considered contains the fractional  viscosity $(-\Delta)^\alpha$, where the viscous exponent $\alpha$ can be larger than  the Lions exponent $5/4$. In the classical viscous case $\alpha=1$, our construction also yields sharp non-uniqueness in $L_t^pL_x^\infty$ for $1<p<2$.

Furthermore, we obtain two singular-limit results. First, we prove the strong vanishing viscosity limit for any weak solutions in $H_{t,x}^{\widetilde{\beta}}$ to the compressible fundamental elastodynamic system. Second, $H_{t,x}^{\widetilde{\beta}}$ weak
solutions to the incompressible Oldroyd-type model can be obtain as a low Mach number limit (incompressible limit) of a sequence of weak solutions to the compressible Oldroyd-type model.

The main ingredient of our result is a new cancellation mechanism that overcomes the difficulty caused by the relative rigidity of the pressure. To the best of our knowledge, this is the first non-uniqueness results of weak solutions to the hyper-viscous compressible hydrodynamics equations.
\vskip 2mm
\noindent{Keywords.}  Compressible Oldroyd-type model; Non-uniqueness; Convex integration; Intermittency; Hyper-viscosity. 
\end{abstract}
\maketitle
\tableofcontents
\section{Introduction and main results}
\subsection{Background}
In this paper, we are concerned with the three-dimensional compressible Oldroyd-type model on the torus $\mathbb{T}^3:=[-\pi,\pi]^3$, which is a coupled model from the kinetic theory of dilute polymer solutions. Mathematically, this system reads:
\begin{equation}\label{eq1.1}
     \left\{
    \begin{aligned}
        &\partial_t\rho+{\rm div}(\rho u)=0,\\
        &\partial_t(\rho u)+{\rm div}(\rho u\otimes u)+\nabla P(\rho)={\rm div}\mathbb{S}(u)+{\rm div}(\rho \mathbf{F}\mathbf{F}^{{\rm T}}),\\
        &\partial_t\mathbf{F}+u\cdot\nabla \mathbf{F}-\left(\nabla u\right) \mathbf{F}=0,\\
        &{\rm div}(\rho \mathbf{F}^{{\rm T}})=0,
    \end{aligned}
    \right.
\end{equation}
where $\rho(t,x):[0,T]\times\mathbb{T}^3\rightarrow(0,+\infty)$, $u(t,x):[0,T]\times\mathbb{T}^3\rightarrow\mathbb{R}^3$ and $\mathbf{F}(t,x):[0,T]\times\mathbb{T}^3\rightarrow\mathbb{R}^{3\times3}$ are the mass density, velocity field and deformation gradient tensor, respectively. ${\rm div}(\rho \mathbf{F}\mathbf{F}^{{\rm T}})$ stands for the non-Newtonian elastic force and $\mathbb{S}(u)$ is the Newtonian viscous stress which reads as follows:
\begin{equation}\label{eq1.2}
    \mathbb{S}(u)=2\nu^s\left({\rm D}(u)-\frac{1}{3}{\rm div}u\mathbb{I}\right)+\nu^b{\rm div}u\mathbb{I},
\end{equation}
where ${\rm D}(u)=\frac{1}{2}(\nabla u+\nabla^Tu)$ is the deformation tensor. The coefficients $\nu^s$ and $\nu^b$ are both constants. $\nu^s$ is the shear viscosity coefficient and $\nu^b$ is the bulk viscosity coefficient satisfying the physical assumptions
\begin{align*}
    \nu^s>0,\quad\nu^b+\frac{1}{3}\nu^s\geq0.
\end{align*}
The pressure $P(\rho)$ is a function of the density. As usual, it is assumed to be $C^2$-regular with respect to $\rho$ and satisfy a monotonicity condition $P(\rho)>0$, $P^{\prime}(\rho)>0$ and $P^{\prime\prime}(\rho)>0$.  The classical case of polytropic gases $P(\rho)=A\rho^{\gamma}$ with $A>0$ and $\gamma>1$ naturally satisfies our requirements on the domain without vacuum.

It is noteworthy that the key constraint ${\rm div}(\rho\mathbf{F}^{{\rm T}})=0$ which plays an important role in defining the weak solution is called as the Piola condition. More precisely, according to \cite{LeiZhou2005,QianZhang2010}, if the initial datum satisfy ${\rm div}\left(\rho_0\mathbf{F}^{\rm T}_0\right)=0$, then  ${\rm div}(\rho\mathbf{F}^{{\rm T}})=0$ is preserved for all subsequent times for which the solution exists. To relate this constraint to the underlying continuum-mechanical description, let $\eta(t,X)$ denote the flow map, which maps the reference configuration $\Omega_{{\rm ref}}$ to the current configuration $\Omega_t$, associated with the velocity field $u$, defined by
\begin{align*}
    \left\{
    \begin{aligned}
        &\frac{{\rm d}}{{\rm d}t}\eta(t,X)=u(t,\eta(t,X)),\\
        &\eta(0,X)=\eta_0(X).
    \end{aligned}
    \right.
\end{align*}
The deformation gradient is defined in Lagrangian coordinates by
\begin{align*}
    \mathbf{F}(t,\eta(t,X)):=\nabla_X\eta(t,X).
\end{align*}
Then, According to the conservation law of mass, it holds that
\begin{align*}
    \rho(t,\eta(t,X)){\rm det}\mathbf{F}(t,\eta(t,X))=\rho_{{\rm ref}}(X),
\end{align*}
where $\rho_{{\rm ref}}(X)$ is the reference density. From a physical point of view, ${\rm det}\mathbf{F}(t,\eta(t,X))$ measures the local change of volume induced by the deformation. In particular, the condition ${\rm det}\mathbf{F}(t,\eta(t,X))\equiv1$ corresponds to a volume-preserving deformation and is characteristic of incompressible flows, whereas in the compressible setting ${\rm det}\mathbf{F}(t,\eta(t,X))$ is generally not constrained to be unity. Moreover, a spatially homogeneous reference density, $\rho_{{\rm ref}}(X)\equiv {\rm Const}$, implies
\begin{align*}
  \rho(t,\eta(t,X)){\rm det}\mathbf{F}(t,\eta(t,X))={\rm Const}.
\end{align*}
 For the system \eqref{eq1.1} considered here, the deformation is in general not volume preserving, that is, ${\rm det}\mathbf{F}(t,\eta(t,X))\not\equiv1$, while the mass constraint above remains valid. In addition, provided that the initial data satisfy the corresponding compatibility condition, the Piola condition ${\rm div}(\rho\mathbf{F}^{{\rm T}})=0$ is propagated by the evolution .

Furthermore, to investigate a broader class of dissipative effects in the Oldroyd-type model, we shall consider the fractional Laplacian operator $(-\Delta)^{\alpha}$, $\alpha\in[1,\frac{3}{2})$, which is defined by the Fourier  transform
\begin{align*}
    \mathcal{F}[(-\Delta)^{\alpha}u](\xi)=|\xi|^{2\alpha}\mathcal{F}[u](\xi),\quad\xi\in\mathbb{Z}^3,
\end{align*}
which plays an important role in applied research and has given rise to numerous models.

Then, based on equations \eqref{eq1.1}, the Newtonian stress \eqref{eq1.2} and the fractional Laplacian operator, we can rewrite the equations in the conservation-law form in terms of density, momentum, and the deformation gradient tensor with density (see Appendix \ref{Appendix_A} for details):
\begin{equation}\label{1.3s}
    \left\{
    \begin{aligned}
        &\partial_t\rho+{\rm div}m=0,\\
        &\partial_tm+\nu^s(-\Delta)^{\alpha}(\rho^{-1}m)-(\nu^b+\frac{1}{3}\nu^s)\nabla{\rm div}(\rho^{-1}m)+\nabla P(\rho)\\
        &\qquad\quad\qquad\qquad\qquad+{\rm div}\left(\rho^{-1}\left(m\otimes m-\mathcal{N}\mathcal{N}^{{\rm T}}\right)\right)=0,\\
        &\partial_t\mathcal{N}^i+{\rm div}\left(\rho^{-1}\left(\mathcal{N}^i\otimes m-m\otimes \mathcal{N}^i\right)\right)=0,\quad i=1,2,3,\\
        &{\rm div}\mathcal{N}^i=0,\quad i=1,2,3,
    \end{aligned}
    \right.
\end{equation}
where $m:=\rho u$, $\mathcal{N}:=(\mathcal{N}^1,\mathcal{N}^2,\mathcal{N}^3):=(\rho \mathbf{F}^1,\rho \mathbf{F}^2,\rho \mathbf{F}^3)=\rho \mathbf{F}$.

In addition, we note that when the viscosity vanish $(\nu^b=\nu^s=0)$, \eqref{1.3s} reduce to the following compressible fundamental elastodynamic system (see for \cite{HuTuWangWen-2025-arxiv}),
\begin{equation}\label{1.4ss}
    \left\{
    \begin{aligned}
        &\partial_t\rho+{\rm div}m=0,\\
        &\partial_tm+{\rm div}\left(\rho^{-1}\left(m\otimes m-\mathcal{N}\mathcal{N}^{{\rm T}}\right)\right)+\nabla P(\rho)=0,\\
        &\partial_t\mathcal{N}^i+{\rm div}\left(\rho^{-1}\left(\mathcal{N}^i\otimes m-m\otimes \mathcal{N}^i\right)\right)=0,\quad i=1,2,3,\\
        &{\rm div}\mathcal{N}^i=0,\quad i=1,2,3.
    \end{aligned}
    \right.
\end{equation}

Moreover, If we consider the dimensionless form of \eqref{1.3s} (we only focus on the Mach number $\mathrm{Ma}:=\sqrt{(\varrho_0U^2)/K_0}$ (see for \cite{kinsler2000fundamentals}), where $\varrho_0$, $U$ and $K_0$ are the characteristic density, characteristic velocity and bulk modulus respectively, and, set other dimensionless numbers to $1$, see Appendix \ref{Appendix_B} for more details), then the system \eqref{1.3s} with dimensionless pressure $\frac{1}{\mathrm{Ma}^2}\nabla P(\rho)$ will reduce to the incompressible Oldroyd-type model (see for \cite{ChenLiu2022}) at low Mach number, that is, ${\rm Ma}\rightarrow0$,
\begin{equation}\label{1.5-s}
    \left\{
    \begin{aligned}
        &\partial_tv+(-\Delta)^{\alpha}v+\nabla\Pi+{\rm div}\left(v\otimes v-\mathbf{H}\mathbf{H}^{{\rm T}}\right)=0,\\
        &\partial_t\mathbf{H}^i+{\rm div}\left(\mathbf{H}^i\otimes v-v\otimes  \mathbf{H}^i\right)=0,\quad i=1,2,3,\\
        &{\rm div}v={\rm div}\mathbf{H}^i=0,\quad i=1,2,3,
    \end{aligned}
    \right.
\end{equation}
where $v$, $\mathbf{H}$ and $\Pi$ are the incompressible velocity, elastic stress and pressure respectively.

The Oldroyd-type model was first proposed by Oldroyd in his seminal paper \cite{Oldroyd1950}. Interestingly, the Oldroyd-type model can also be derived from a coarse-grained, microscopic model by modeling dilute polymer molecule as a dilute suspension of non-interacting Hookean 'dumbbells' in a Newtonian solvent, each polymer molecule chain being idealized as a dumbbell comprising an infinitely extensible Hookean spring connecting two beads, and defining elastic stress $\tau$ via a weighted average of $\psi$ (the probability density function of the (random) conformation vector of the polymer molecules), see \cite{BarrettSuli2018,ConstantinKliegl2012,Pravia2002}. For more physical and mechanical background of \eqref{eq1.1}, we refer to \cite{BirdArmstrongHassager1987,Larson1988}.

In the pioneering paper by Guillop\'e--Saut \cite{GuillopeSaut1990}, the unique local strong solution to an initial-boundary value problem with large data was established in the $H^s$ Sobolev space. Moreover, the global strong solution with small initial data and coupling parameter was also obtained in \cite{GuillopeSaut1990}. Later, Molinet--Talhouk \cite{MolinetTalhouk2004} improved their results by removing the small restriction on the coupling parameter. More extending results on this direction can be found in \cite{CheminMasmoudi2001,LeiZhou2005,FangHieberZi2013,HieberNaitoShibata2012,Zhu2018,ZiFangZhang2014} and among others. In particular, Chemin--Masmoudi \cite{CheminMasmoudi2001} gave a BKM-type blow-up criterion of regular solutions, see also \cite{LeiMasmoudiZhou2010} for a refined version. 

For the compressible case, Bollada--Phillips \cite{BolladaPhillips2012} developed a mathematical framework for compressible viscoelastic fluids. And then Hu--Wang \cite{HuWang2011,HuWang2012} established global solutions for multi-dimensional compressible viscoelastic flows near equilibrium in critical Besov spaces and proved the local existence and uniqueness of strong solutions in three dimensions. Hu--Wu \cite{HuWu2013} further obtained global strong solutions near equilibrium and established their optimal time-decay rates.  Later, Zhou--Zhu--Zi \cite{ZhouZhuZi2018} proved that the three dimensional compressible Oldroyd-type model could admit a unique global strong solution provided the initial data are close to the constant equilibrium state in $H^2$-framework. Moreover, Cai--Lei--Lin--Masmoudi \cite{CaiLeiLinMasmoudi2019} proved global existence which can justify in particular the vanishing viscosity limit for all time. For more results, see also \cite{BarrettLuSuli2017, LuPokorny2020,ZhaiLi2021,LiuLuWen2021,ZhaiChen2024}.

Compared with the well-posedness theory, even less is know for the flexibility and non-uniqueness for the Oldroyd-type model. Since the seminal works of De Lellis--Sz\'ekelyhidi \cite{DS-2009-ann,DS-2010-arma}, which introduced the convex integration framework for constructing infinitely many weak solutions to the incompressible Euler equations, substantial progress has been made on non-uniqueness and flexibility phenomena for a broad class of fluid equations. A major milestone in this development was the resolution of the flexible side of Onsager's conjecture, see for \cite{Isett-2018-ann,BDS-2019-cpam}. For the viscous fluids, Buckmaster--Vicol \cite{BV-2019-annmath} established, in a breakthrough work, the non-uniqueness of weak solutions to the three-dimensional incompressible Navier-Stokes equations. More recently, Li--Qu--Zeng--Zhang \cite{LQZZ-2024-jmpa} obtained sharp non-uniqueness results for the three-dimensional incompressible hyper-dissipative (beyond the Lions exponent) Navier-Stokes equations in the Ladyzhenskaya--Prodi--Serrin supercritical spaces $L_t^\gamma W_x^{s,p}$. Cheskidov--Dai--Palasek \cite{CDP-2026-arxiv} further demonstrated instantaneous Type-I blow-up and non-uniqueness for smooth solutions to the Navier-Stokes equations through an inverse energy cascade mechanism. These ideas have since been extended to other equations, including hyper-viscous three-dimensional Navier-Stokes equations \cite{LQZZ-2024-jmpa,lt20}, two-dimensional incompressible models in the low-viscosity regime \cite{lq20}, transport equations \cite{CL-2021-annpde,cl22,ms18}, stationary Navier-Stokes equations \cite{luo19} and MHD equations \cite{LZZ-2022-jmpa,MiaoYe2024,LZZ-2024-jfa,NY-2025-jns,MNY-2025-annpde }. However, for the Oldroyd-type models, as we know, only Chen--Liu \cite{ChenLiu2022} proved that there exist non-trivial weak solutions of the three-dimensional hypo-viscous elastodynamics with finite kinetic energy by using convex integration.

Even more scarce is the non-uniqueness for the compressible viscous fluids due to the relative rigidity of pressure $\nabla P(\rho)$ must be taken into account. Currently, only Li--Qu--Zeng-Zhang \cite{LQZZ-2022-arxiv} and Li--Qu \cite{LiQu2026} provided the non-uniqueness via $L^p_tL^{\infty}_x$ with $p<2$ to the hypo-dissipative compressible Navier-Stokes equations and MHD equations respectively. However, to the best of our knowledge, there remains no non-uniqueness results for compressible hyper-viscous fluids.

\subsection{Main results}
To begin with, let us formulate precisely the definition of weak solutions in the distributional sense to the equations \eqref{1.3s}.
\begin{defn}\label{def-weak-solutions}(Weak solutions). Let $T\in(0,+\infty)$. Given any initial datum $\rho_0\in L^{\infty}(\mathbb{T}^3)$, $\rho_0>0$, and $u_0,\mathbf{F}_0\in L^2(\mathbb{T}^3)$ with ${\rm div}\left(\rho_0\mathbf{F}_0\right)=0$, we say that $(\rho,m,\mathcal{N})\in L^{\infty}([0,T]\times\mathbb{T}^3)\times \left(L^{2}([0,T]\times\mathbb{T}^3)\right)^2$ is a weak solution for hyper-viscous compressible Oldroyd-type model \eqref{1.3s} if
\begin{itemize}
    \item $\rho\geq0$ a.e. and 
    \begin{align*}
        \int_{\mathbb{T}^3}\rho_0(x)\psi(0,x){\rm d}x=-\int_0^T\int_{\mathbb{T}^3}\rho\partial_t\psi+(m\cdot\nabla)\psi{\rm d}x{\rm d}t,
    \end{align*}
    for any test function $\psi\in C^{\infty}_0([0,T)\times\mathbb{T}^3;\mathbb{R})$.\\
    \item $m=0$ and $\mathcal{N}=0$ whenever $\rho=0$, and
    \begin{align*}
        \int_{\mathbb{T}^3}m_0\cdot\psi(0,x){\rm d}x&
        =-\int_0^T\int_{\mathbb{T}^3}m\cdot\partial_t\psi-\rho^{-1}m\cdot\left(\nu^s(-\Delta)^{\alpha}\psi-(\nu^b+\frac{1}{3}\nu^s)\nabla{\rm div}\psi\right)+P{\rm div}\psi{\rm d}x{\rm d}t\\
        &\quad-\int_0^T\int_{\mathbb{T}^3}\left(\rho^{-1}m\otimes m-\mathcal{N}\mathcal{N}^{\rm T}\right):\nabla\psi{\rm d}x{\rm d}t,
    \end{align*}
    for any test function $\psi\in C^{\infty}_0([0,T)\times\mathbb{T}^3;\mathbb{R}^3)$, where $m_0=\rho_0u_0$ and $\mathcal{N}=\rho\mathbf{F}$.\\
    \item $\mathcal{N}^i$ are all divergence-free and 
    \begin{align*}
        \int_{\mathbb{T}^3}\mathcal{N}^i_0\cdot\psi(0,x){\rm d}x=-\int_0^T\int_{\mathbb{T}^3}\mathcal{N}^i\cdot\partial_t\psi+\rho^{-1}\left(\mathcal{N}^i\otimes m-m\otimes \mathcal{N}^i\right):\nabla\psi{\rm d}x{\rm d}t,
    \end{align*}
    for any test function $\psi\in C^{\infty}_0([0,T)\times\mathbb{T}^3;\mathbb{R}^3)$, where $\mathcal{N}=\left(\mathcal{N}^1,\mathcal{N}^2,\mathcal{N}^3\right)$.
\end{itemize}
\end{defn}

The non-uniqueness of weak solutions for hyper-dissipative compressible Oldroyd-type model \eqref{1.3s} is summarized in the following Theorem \ref{thm-nonuniquenes-1}.
\begin{thm}\label{thm-nonuniquenes-1}
    Let $\alpha\in[1,\frac{3}{2})$. Let $\widetilde{\mathcal{N}}$ be any divergence-free tensor field and $(\widetilde{\rho},\widetilde{m})$ be any smooth solution to the transport equation
    \begin{equation}\label{1.6-s}
        \partial_t\widetilde{\rho}+{\rm div}\widetilde{m}=0,    
    \end{equation}
  on $\mathbb{T}^3$, such that $0<c_*\leq\widetilde{\rho}(t,x)\leq C_*$ for all $(t,x)\in[0,T]\times\mathbb{T}^3$, where $c_*$ and $C_*$ are both universal constants. Then, there exists $\beta^{\prime}\in(0,1)$, such that for any $\varepsilon_*>0$ and any exponents $(s,p,\gamma)$ satisfying 
  \begin{equation}\label{1.7-s}
      (s,p,\gamma)\in\mathcal{A}:=\left\{(s,p,\gamma)\in[0,\frac{3}{2})\times[1,+\infty)\times[1,+\infty):0\leq s<\frac{2\alpha}{\gamma}+\frac{2\alpha-2}{p}+1-2\alpha\right\},
  \end{equation}
  there exist $(\rho,m,\mathcal{N})$ to \eqref{1.3s} such that the following holds:
 
(i) Weak solutions: $(\rho, m,N)$ is the weak solution to \eqref{1.3s} in the sense of Definition \ref{def-weak-solutions}. 

(ii) Regularity:
$$
\rho \in C_t C_x^1, \quad m,\mathcal{N} \in H_{t,x}^{\beta^{\prime}}  \cap L^{\gamma}_tW_{x}^{s,p}\cap C_tH^{-\frac{3}{2}}_x.
$$

(iii) Mass preservation:
$$
\int_{\mathbb{T}^3} \rho(t, x) \mathrm{d} x=\int_{\mathbb{T}^3} \widetilde{\rho}(t, x) \mathrm{d} x,\quad \forall t \in[0, T] .
$$

(iv) Small deviation of norms:
\begin{align*}
\|\rho-\widetilde{\rho}\|_{C_t C_x^1} \leq \varepsilon_*,\quad\|m-\widetilde{m}\|_{L_t^1 L^2_x} \leq \varepsilon_*,\quad\|\mathcal{N}-\widetilde{\mathcal{N}}\|_{L_t^1L^2_x} \leq \varepsilon_*.
\end{align*}

(v) Small deviation of temporal support: if $\widetilde{T}:=\inf _{t \in[0, T]}\{t \mid \nabla \widetilde{\rho}(t, \cdot) \neq 0, \widetilde{m}(t, \cdot)\neq 0, \widetilde{\mathcal{N}}(t,\cdot)-\overline{\mathcal{N}} \neq 0\}>0$, then 
$$
\operatorname{supp}_t(\nabla \rho, m,\mathcal{N}-\overline{\mathcal{N}}) \subseteq \mathcal{N}_{\varepsilon_*}([\widetilde{T}, T]),
$$
where 
$$
\overline{\mathcal{N}}:=\fint_{\mathbb{T}^3}\mathcal{N}(0,x){\rm d}x.
$$
Note that, for any $A \subseteq[0, T]$ and $\varepsilon_*>0, N_{\varepsilon_*}(A)$ denotes the $\varepsilon_*$-neighborhood of $A$ in $[0, T]$, namely,
$$
N_{\varepsilon_*}(A):=\left\{t \in[0, T]: \exists s \in A \text {, s.t. }|t-s| \leq \varepsilon_*\right\} .
$$
\end{thm}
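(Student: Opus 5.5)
The plan is an intermittent convex integration scheme for the relaxed compressible Oldroyd system, following the hyper-viscous incompressible constructions of \cite{LQZZ-2024-jmpa} and the compressible schemes of \cite{LQZZ-2022-arxiv,LiQu2026}. For $q\ge0$ we work with approximate solutions $(\rho_q,m_q,\mathcal{N}_q,\mathring{R}_q,\mathring{M}_q)$ that satisfy the continuity equation exactly and $\mathrm{div}\,\mathcal{N}_q^i=0$. The momentum equation carries a Reynolds stress $\mathrm{div}\mathring{R}_q$ on its right-hand side. Each $\mathcal{N}^i$ equation carries an antisymmetric stress $\mathrm{div}\mathring{M}^i_q$, which is natural because the flux $\rho^{-1}(\mathcal{N}^i\otimes m-m\otimes\mathcal{N}^i)$ is antisymmetric.

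\textbf{Initial step and density.} The scheme starts from $(\widetilde\rho,\widetilde m,\widetilde{\mathcal N})$. The stresses are obtained by applying the inverse-divergence operators (symmetric for $\mathring R$, antisymmetric for $\mathring M$) to the errors of this triple in the momentum and $\mathcal{N}$ equations. The density should change only by small, smooth amounts at each step, so that $\rho\in C_tC^1_x$ and the mass is preserved. This is done as in the compressible Navier--Stokes case: at step $q+1$ set $\rho_{q+1}=\rho_q+\rho_{q+1}^{c}$, where $\rho^c_{q+1}$ is a mean-zero, low-frequency correction chosen so that $\partial_t\rho_{q+1}+\mathrm{div}\, m_{q+1}=0$ holds exactly. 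Equivalently, the mean-zero part of $\partial_t\rho^c$ is matched by $-\mathrm{div}$ of the temporal-corrector contribution to the momentum. The spatial intermittent perturbations themselves are kept divergence-free, so they do not feed the continuity equation.

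\textbf{Perturbations.} Let $\rho_\ell$ denote the mollified density. The velocity and magnetic-type perturbations are built from intermittent jets $W_{(k)}$ concentrated along $k$ with spatial intermittency $r_\perp,r_\|$ and temporal oscillation $g_{(\tau)}$, as in \cite{LQZZ-2024-jmpa}. We take $w_{q+1}=\rho_\ell^{1/2}\sum a_{(k)} g_{(\tau)}W_{(k)}$ plus divergence, temporal and compressibility correctors, and define $d_{q+1}$ for $\mathcal{N}$ analogously. The amplitudes come from geometric lemmas of MHD type. The $\mathcal{N}$ stress is cancelled by $w\otimes d-d\otimes w$ interactions. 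The momentum stress $\mathring R_\ell$ minus the contribution from the new $d\otimes d$ must lie in the cone of $w\otimes w$ amplitudes. The factor $\rho_\ell^{1/2}$ (divided by $\rho_\ell$ in the flux) is inserted so that $\rho_\ell^{-1}w\otimes w$ reproduces the needed stress.

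\textbf{The pressure.} This is where the ``new cancellation mechanism'' enters. The quadratic self-interaction produces a trace part, which we absorb into a modified pressure. It cannot be absorbed freely, since the pressure is $P(\rho)$ and is determined by $\rho$. We expand $P(\rho_{q+1})-P(\rho_q)\approx P'(\rho_\ell)\rho^c_{q+1}$, write the trace part of the oscillation error as a gradient, and solve for $\rho^c$ so that $P'(\rho_\ell)\rho^c$ cancels it to leading order. Keeping $\rho^c$ small in $C^1$ then requires the trace term to be low frequency, so its high-frequency part must be put into the temporal corrector. The remaining pressure errors, from the $C^2$ Taylor remainder and from mollification, go into the new stress. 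I expect this compatibility to be \textbf{the main obstacle}: the continuity equation, mass preservation and the rigid pressure must all be satisfied simultaneously by a single density correction, while still controlling $\rho^c$ in $C_tC^1_x$.

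\textbf{Estimates and conclusion.} The new stress is split into linear, hyper-viscous, oscillation, corrector, density/pressure and commutator errors. The hyper-viscous term $\nu^s(-\Delta)^\alpha(\rho^{-1}w)$ is the hardest linear error. It is bounded in $L^1_tL^{p_0}_x$, with $p_0$ close to $1$, using temporal intermittency. Only an $L^1$ bound is needed, and this is what permits $\alpha\ge5/4$. The parameters $r_\perp,r_\|,\mu,\tau$ are chosen as powers of $\lambda_{q+1}$ so that:
\begin{itemize}
\item the stresses decay, $\|\mathring R_{q+1}\|_{L^1_{t,x}}+\|\mathring M_{q+1}\|_{L^1_{t,x}}\le\lambda_{q+2}^{-2\varepsilon}\delta_{q+2}$;
\item the increments are summable in $H^{\beta'}_{t,x}\cap L^\gamma_tW^{s,p}_x\cap C_tH^{-3/2}_x$;
\item the increments are small in $L^1_tL^2_x$.
\end{itemize}
The admissible range of these exponents is exactly $\mathcal A$. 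Passing to the limit gives conclusions (i)--(iv). For (v), the temporal cutoffs in the amplitudes are supported near the support of $\mathring R_q$ and $\mathring M_q$, and these supports grow by at most $\varepsilon_*2^{-q}$ per step.
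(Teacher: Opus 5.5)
Your proposal has a genuine gap. It sits exactly at the step you flag as the main obstacle, and the mechanism you sketch there cannot work.

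\textbf{Why absorbing the trace part into the pressure fails.} With an MHD-type geometric lemma, the averaged self-interaction is $\rho_\ell^{-1}(w\otimes w-\sum_i d^i\otimes d^i)\approx\varrho\,\mathrm{Id}-\mathring R_\ell$, with $\varrho\gtrsim|\mathring R_\ell|$ pointwise. You then want $P'(\rho_\ell)\rho^c\approx-\varrho$ up to a function of time. But in a scheme intermittent enough to control $(-\Delta)^\alpha$, the stresses are small only in $L^1$. Pointwise they are as large as a positive power of $\lambda_q$: the paper only has $\|R^m_q\|_{C^1_{t,x}}\lesssim\lambda_q^{20}$, against $\|R^m_q\|_{L^1_{t,x}}\lesssim\delta_{q+1}$. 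So $\rho^c$ would already be large in $C_{t,x}$. You would lose the bounds $C_1\le\rho_q\le C_2$, the convergence of $\rho_q$ in $C_tC^1_x$, and the estimate $\|\rho-\widetilde\rho\|_{C_tC^1_x}\le\varepsilon_*$. Moving the trace term to low frequency does not help, because the problem is its amplitude, not its frequency. In addition, $\rho^c$ would be prescribed twice: once by the pressure cancellation and once by $\partial_t\rho^c=-\mathrm{div}\,w_{q+1}$. Reconciling the two needs an extra non-solenoidal corrector such as $-\nabla\Delta^{-1}\partial_t\rho^c$. Its time derivative enters the momentum equation as an error governed by $\partial_t^2\rho^c$, which is not small.

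\textbf{The missing idea: never create a trace part.} The paper uses the minus sign of $-\rho^{-1}\mathcal N\mathcal N^{\mathrm T}$ in the momentum flux, so the pressure is never asked to cancel anything.
\begin{itemize}
\item Lemma~\ref{lem-2nd-geometric} writes every small symmetric $S$ as $\sum_{k\in\Lambda_{m^1}}\gamma_{(k)}^2k_1\otimes k_1-\sum_i\sum_{k\in\Lambda_{m^2_i}}\gamma_{(k)}^2k_2\otimes k_2$. Here $\gamma^2_{(k_i^\pm)}=1\pm\frac12 r_i(S)$, so the constant parts cancel between the two families.
\item The elastic perturbation $f^i_{q+1}$ therefore carries extra modes $k\in\Lambda_{m^2_i}$ that serve only the momentum equation. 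Disjoint wavevector sets and disjoint temporal supports of the $g_{(k)}$ kill all cross-interactions.
\item This gives $\rho_l^{-1}(w^{(p)}\otimes w^{(p)}-\sum_if^{i,(p)}\otimes f^{i,(p)})\simeq-R^m_l$ with no $\varrho\,\mathrm{Id}$. The $\mathcal N$-modes' contribution $G^{\mathcal N}$ is fed into the lemma, and $\varrho_m$ is only a normalization.
\item The density is corrected solely through the continuity equation, $z_{q+1}=-\int_0^t\mathrm{div}\,w_{q+1}\,\mathrm{d}s$. Since $w^{(p)}+w^{(c)}$ is solenoidal, only the temporal corrector $w^{(o)}$ contributes. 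It is low-frequency in space and carries a factor $\sigma^{-1}$, giving $\|\partial_t^Mz_{q+1}\|_{C_tC^N_x}\lesssim l^{-9N-23}\sigma^{-1}$.
\item The term $\mathcal R^m\nabla(P(\rho_{q+1})-P(\rho_l))$ is then simply a small error in $R^m_{q+1}$.
\end{itemize}

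\textbf{A related issue with your building blocks.} This design is also why the paper uses stationary shear flows $\phi_{(k)}k_1$, for which $\mathrm{div}(W_{(k)}\otimes W_{(k)})=0$, and takes the extra intermittency from the temporal profiles $g_{(k)}$. With intermittent jets travelling along $k$, the self-interaction has divergence $\mu^{-1}\partial_t(\cdots)\neq0$. Without a free pressure, the non-solenoidal part of that corrector would again have to be absorbed by the density, and you would need to show it stays small in $C_tC^1_x$.
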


As a consequence, Theorem \ref{thm-nonuniquenes-1} gives the following result concerning the non-uniqueness of weak solutions.
\begin{thm}\label{thm-nonuniqueness-2}
    There exist smooth initial datum $\left(\rho_0,m_0,\mathcal{N}_0\right)$, $\rho_0>0$ such that for any exponents $(s,p,\gamma)$ satisfying \eqref{1.7-s}, there exist infinitely many weak solutions $(\rho,m,\mathcal{N})\in C_tC_x^1\times\left(H^{\beta^{\prime}}_{t,x}\cap L^{\gamma}_tW^{s,p}_x\right)^2$ to the hyper-viscous compressible Oldroyd-type model \eqref{1.3s} with the same initial datum $(\rho_0,m_0,\mathcal{N}_0)$ for some $\beta^{\prime}>0$.
\end{thm}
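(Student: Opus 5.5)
We derive Theorem \ref{thm-nonuniqueness-2} from Theorem \ref{thm-nonuniquenes-1}, using the temporal-support property (v) to force all constructed solutions to share the same data. The idea is to pick a single smooth density--momentum--elastic triple that is trivial near $t=0$ and non-trivial later. Applying Theorem \ref{thm-nonuniquenes-1} with different small tolerances then gives solutions that agree with a constant state near $t=0$ but differ from one another.

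\emph{Step 1 (choice of the background).} Fix a constant $\bar\rho>0$ and a constant matrix $\overline{\mathcal N}$. Choose a smooth cutoff $\chi(t)$ with $\chi\equiv0$ on $[0,T/2]$ and $\chi\equiv1$ on $[3T/4,T]$. Set
\[
\widetilde\rho\equiv\bar\rho,\qquad \widetilde m(t,x)=\chi(t)\,w(x),\qquad \widetilde{\mathcal N}(t,x)=\overline{\mathcal N}+\chi(t)\,\mathcal M(x),
\]
where $w$ is a smooth divergence-free, mean-zero, nonzero vector field and the columns of $\mathcal M$ are smooth divergence-free, mean-zero fields. Then $\partial_t\widetilde\rho+{\rm div}\,\widetilde m=0$ holds, $c_*\le\widetilde\rho\le C_*$, and $\widetilde{\mathcal N}$ is divergence-free. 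Hence all hypotheses of Theorem \ref{thm-nonuniquenes-1} are met, and $\widetilde T\ge T/2>0$. For the second family we may equally take $\widetilde m=\lambda\chi w$ with various $\lambda>0$.

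\emph{Step 2 (common initial data).} Take $\varepsilon_*<T/4$ and let $(\rho,m,\mathcal N)$ be the resulting solution. By (v), $\nabla\rho$, $m$ and $\mathcal N-\overline{\mathcal N}$ vanish for $t<T/4$. Mass preservation (iii) then forces $\rho\equiv\bar\rho$ there. Moreover $\mathcal N\equiv\overline{\mathcal N}$ there, where we must check that the average appearing in (v) equals our $\overline{\mathcal N}$. This holds because $\mathcal N(0,\cdot)$ is constant and the mean of $\mathcal N$ is conserved by the divergence-form equation. Therefore every such solution has the initial datum $(\rho_0,m_0,\mathcal N_0)=(\bar\rho,0,\overline{\mathcal N})$, which is smooth and has $\rho_0>0$. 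The weak formulation of Definition \ref{def-weak-solutions} with this datum holds by (i). It is not affected by the vanishing near $t=0$, since the solution is identically this steady constant state on $[0,T/4)$.

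\emph{Step 3 (infinitely many distinct solutions).} Pick a sequence $\lambda_k\to\infty$, or alternatively distinct directions $w_k$, and apply the theorem to each with $\varepsilon_*=\varepsilon_k\ll1$. Property (iv) gives
\[
\|m_k-\lambda_k\chi w\|_{L^1_tL^2_x}\le\varepsilon_k .
\]
Since the backgrounds are pairwise separated in $L^1_tL^2_x$ by at least $c\,|\lambda_k-\lambda_j|\,\|w\|_{L^2}\,T/4$, the solutions $m_k$ are pairwise distinct. The regularity class $C_tC^1_x\times(H^{\beta'}_{t,x}\cap L^\gamma_tW^{s,p}_x)^2$ comes from (ii). The exponent $\beta'$ is independent of the background by the statement of Theorem \ref{thm-nonuniquenes-1}, so the same $\beta'$ serves for all $k$ and for every $(s,p,\gamma)\in\mathcal A$.

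The only delicate point is Step 2, namely identifying the state near $t=0$ exactly as $(\bar\rho,0,\overline{\mathcal N})$. One must use (iii) for the density and conservation of the spatial mean of $\mathcal N$ to pin down the constant. One must also check that the definition of $\widetilde T$, an infimum over times where the background is non-trivial, gives $\widetilde T\ge T/2$. If the infimum in (v) is read literally as requiring all three quantities to be nonzero simultaneously, the cutoff construction still makes all three vanish together on $[0,T/2]$. In that case no issue arises.
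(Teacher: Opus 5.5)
Your proposal is correct and follows the paper's proof essentially step for step. The paper also takes $\widetilde\rho\equiv1$ and a family of scaled backgrounds $\widetilde m_j=\frac{j}{c_0}\Psi(t)(\sin x_3,0,0)^{\rm T}$, $\widetilde{\mathcal N}_j=\frac{j}{c_1}\widetilde{\mathcal N}+I$ that vanish near $t=0$, applies Theorem \ref{thm-nonuniquenes-1} with $\varepsilon_*<\min\{\frac18,\frac T8\}$, uses (v) and (iii) to obtain the common datum $(1,0,I)$, and separates the solutions in $L^1_tL^2_x$ via (iv). The one difference is that the paper simply reads $\overline{\mathcal N}$ in (v) as the background mean $I$. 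Your conservation-of-mean argument, taken alone, only shows that $\mathcal N$ is some constant near $t=0$. Pinning that constant to exactly $\overline{\mathcal N}$ uses the fact that the elastic perturbations $f^i_{q+1}$ in the scheme are mean-free, a harmless point the paper also glosses over.
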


Furthermore, we also consider the strong vanishing viscosity  limit, which relates the hyper-viscous compressible Oldroyd-type model and  compressible fundamental elastodynamic system.
\begin{thm}\label{thm-vanishing limit}(strong vanishing viscosity and resistivity limit). Let $\alpha\in[1,\frac{3}{2})$ and $(\rho,m,\mathcal{N})\in C^{\widetilde{\beta}}_{t,x}\times\left(H^{\widetilde{\beta}}_{t,x}\right)^2$ where $\widetilde{\beta}>0$, be any weak solution to the compressible fundamental elastodynamic system \eqref{1.4ss}, such that $c_1\leq\rho\leq c_2$ for some constants $c_1,c_2>0$. Then there exist $\beta^{\prime}\in(0,\widetilde{\beta})$ and a sequence of weak solutions $(\rho^{(n)},m^{(n)},\mathcal{N}^{(n)})\in C_{t,x}\times \left(H^{\beta^{\prime}}_{t,x}\right)^2$ to the hyper-viscous compressible Oldroyd-type model \eqref{1.3s} with viscous coefficients $\kappa_n\nu^s$ and $\kappa_n\nu^b$ such that
\begin{align}\label{1.8-s}
    \rho^{(n)}\rightarrow\rho\text{ strongly in }C_{t,x},\text{ and }\left(m^{(n)},\mathcal{N}^{(n)}\right)\rightarrow \left(m,\mathcal{N}\right)\text{ strongly in }\left(H^{\beta^{\prime}}_{t,x}\right)^2\text{ as }\kappa_n\rightarrow0.
\end{align}
\end{thm}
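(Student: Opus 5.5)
Since Theorem \ref{thm-vanishing limit} is a consequence of the convex integration scheme behind Theorem \ref{thm-nonuniquenes-1}, I would not prove it from scratch. I would rerun that scheme starting from the given inviscid solution, with viscosity $\kappa_n$ and an error tolerance $\varepsilon_n\to0$. Fix $n$ and set $\varepsilon_n:=2^{-n}$. Regard $(\rho,m,\mathcal{N})$ as an approximate solution of the viscous system \eqref{1.3s} with coefficients $\kappa_n\nu^s,\kappa_n\nu^b$. The defect is then exactly the viscous term
\begin{align*}
\kappa_n\left(\nu^s(-\Delta)^{\alpha}(\rho^{-1}m)-(\nu^b+\tfrac13\nu^s)\nabla{\rm div}(\rho^{-1}m)\right).
\end{align*}
It can be written in divergence form as ${\rm div}\,\mathring{R}_0^{(n)}$ with $\mathring{R}_0^{(n)}=\kappa_n\mathcal{R}(\cdots)$, where $\mathcal{R}$ is the inverse-divergence operator used in the iteration. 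The Piola/induction equations hold exactly, so their initial error vanishes.

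Because $m\in H^{\widetilde\beta}_{t,x}$ only, the stress is very rough: it involves $2\alpha-1\ge1$ derivatives of $\rho^{-1}m$. I would therefore first mollify $(\rho,m,\mathcal{N})$ at scale $\ell_n$ in space–time. Mollification generates commutator stresses, bounded in $L^1_{t,x}$ by $\ell_n^{\widetilde\beta}$ through the standard Constantin–E–Titi estimate. Here $c_1\le\rho\le c_2$ and $\rho\in C^{\widetilde\beta}$ are needed to control $\rho^{-1}$. The viscous stress, in turn, is bounded by $\kappa_n\ell_n^{-(2\alpha-1)}\|m\|_{L^2}$.

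Next I choose $\ell_n$ and then $\kappa_n$ so small that the total initial stress is at most $\varepsilon_n$ in $L^1_{t,x}$. With this smooth, small-error start I run the iteration of Theorem \ref{thm-nonuniquenes-1}. It produces an exact weak solution $(\rho^{(n)},m^{(n)},\mathcal{N}^{(n)})$ whose deviation from the mollified data is controlled by the size of the initial stress:
\begin{align*}
\|\rho^{(n)}-\rho_{\ell_n}\|_{C_{t,x}}\lesssim\varepsilon_n,\qquad \|(m^{(n)},\mathcal{N}^{(n)})-(m,\mathcal{N})_{\ell_n}\|_{H^{\beta'}_{t,x}}\lesssim \varepsilon_n^{\theta},\quad\theta>0.
\end{align*}
The $H^{\beta'}$ bound comes from interpolating the $L^2$ smallness of the perturbations against their $H^{\beta''}$ bounds for some $\beta''>\beta'$. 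Finally, the triangle inequality together with $\|(m,\mathcal{N})_{\ell_n}-(m,\mathcal{N})\|_{H^{\beta'}}\to0$ for $\beta'<\widetilde\beta$, and $\rho_{\ell_n}\to\rho$ in $C_{t,x}$, gives \eqref{1.8-s}.

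The main obstacle is to make the convergence hold in $H^{\beta'}$ uniformly in $n$, rather than only in $L^1_tL^2_x$ as in item (iv) of Theorem \ref{thm-nonuniquenes-1}. This requires checking that the iterative estimates give $\sum_q\|w_{q+1}\|_{H^{\beta'}_{t,x}}\lesssim \delta_1^{1/2}\lambda_1^{\beta'-\beta''}$ with the first amplitude $\delta_1$ comparable to the initial stress $\varepsilon_n$, and that the density correction, which is needed for the pressure cancellation, is also small in $C_{t,x}$. I would choose $\beta'$ smaller than both $\widetilde\beta$ and the regularity exponent of Theorem \ref{thm-nonuniquenes-1}. The dependence of the scheme's parameters $a,b$ on the $C^N$ norms of the mollified start would be absorbed by taking the first frequency $\lambda_1$ large relative to $\ell_n^{-1}$.
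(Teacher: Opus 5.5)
Your overall route is the paper's. You mollify the inviscid solution and treat the viscous term (made harmless by the choice of $\kappa_n$) together with the mollification commutators as the starting Reynolds and elastic stresses. You then restart the iteration of Theorem \ref{thm-main-iteration} at a high level and conclude with the triangle inequality.

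The gap is in the step you yourself single out as the main obstacle: the $H^{\beta'}_{t,x}$ smallness of the restarted iteration. The target bound $\sum_q\|w_{q+1}\|_{H^{\beta'}_{t,x}}\lesssim\delta_1^{1/2}\lambda_1^{\beta'-\beta''}$ has the wrong sign. The perturbation $w_{q+1}$ oscillates at spatial frequency $\gtrsim\lambda_{q+1}r_{\perp}$ and is concentrated in time. Interpolating $\|w_{q+1}\|_{L^2_{t,x}}\lesssim\delta_{q+1}^{1/2}$ against $\|w_{q+1}\|_{C^1_{t,x}}\lesssim\lambda_{q+1}^{C}$ therefore \emph{costs} a factor $\lambda_{q+1}^{C\beta'}$. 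So you need $\delta_{q+1}^{(1-\beta')/2}\lambda_{q+1}^{C\beta'}\ll1$ already at the first new frequency.

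Here is why your parameter choices do not deliver this. The starting stress is only guaranteed to be of size $\ell_n^{\widetilde\beta}$. That size comes from the nonlinear and pressure commutators, so no choice of $\kappa_n$ improves it. Meanwhile the first frequency must be at least a fixed power of $\ell_n^{-1}$, for two reasons. It must meet \eqref{eq2.6}--\eqref{eq2.7}: for instance $\|\nabla^N\rho_{\ell_n}\|_{C_{t,x}}$ is only $\lesssim\ell_n^{\widetilde\beta-N}$ and must be $\lesssim\lambda^{N\varepsilon/4}$. It must also lie above the scheme's own mollification scale. Hence taking $\lambda_1$ ``large relative to $\ell_n^{-1}$'', $\delta_1\sim\varepsilon_n$ and merely $\beta'<\widetilde\beta$ does not give $\varepsilon_n^{\theta}$-smallness in $H^{\beta'}_{t,x}$. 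You need $\beta'$ (or the scheme's $\beta$) to be small compared with $\widetilde\beta$ divided by that power.

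The paper supplies exactly this coupling by tying all scales to one sequence.
\begin{itemize}
\item It mollifies at scale $\lambda_n^{-1}$ and takes $\kappa_n=\lambda_n^{-2\alpha}$, so the viscous stress is $\lesssim\lambda_n^{-1}$.
\item It verifies \eqref{eq2.5}--\eqref{eq2.9} at level $q=n+1$ with the unnormalized $\delta_q=\lambda_q^{-2\beta}$.
\item The commutator stress $\lambda_n^{-\widetilde\beta}$ fits under $\delta_{n+2}=\lambda_n^{-2\beta b^2}$ precisely because of the extra requirement $b^2\beta<\widetilde\beta/4$.
\item With $\beta'<\beta/(8+\beta)$, the tail $\sum_{q\geq n+1}\lambda_{q+1}^{-\beta(1-\beta')+8\beta'}$ tends to $0$ as $n\to\infty$. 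The density increments are summed via \eqref{eq2.11}.
\end{itemize}
This coupling is the missing quantitative ingredient in your sketch. It is also where the restriction $\beta'<\widetilde\beta$ in the statement actually comes from: effectively $\beta'\lesssim\widetilde\beta/b^2$.
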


Finally, we interpret the low Mach number limit in the sense of the bulk modulus $K_0\rightarrow\infty$, which precisely corresponds to the transition of the fluid from a compressible to and incompressible regime, that is, the incompressible limit below. 
\begin{thm}\label{thm-low mach limit}(incompressible limit). 
Let $\alpha\in[1,\frac{3}{2})$ and $(v,\mathbf{H})\in \left(H^{\widetilde{\beta}}_{t,x}\right)^2$ where $\widetilde{\beta}>0$, be any weak solution to the incompressible Oldroyd-type model \eqref{1.5-s}. Then there exist $\beta^{\prime}\in(0,\widetilde{\beta})$ and a sequence of weak solutions $(\rho^{(n)},m^{(n)},\mathcal{N}^{(n)})\in C_{t,x}\times \left(H^{\beta^{\prime}}_{t,x}\right)^2$ to the hyper-viscous compressible Oldroyd-type model \eqref{1.3s} as dimensionless form with Mach numbers $\varkappa_n{\rm Ma}$ such that
\begin{align}\label{1.9-s}
    \rho^{(n)}\rightarrow1\text{ strongly in }C_{t,x},\text{ and }\left(m^{(n)},\mathcal{N}^{(n)}\right)\rightarrow \left(v,\mathbf{H}\right)\text{ strongly in }\left(H^{\beta^{\prime}}_{t,x}\right)^2\text{ as }\varkappa_n\rightarrow0.
\end{align}
\end{thm}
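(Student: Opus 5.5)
The plan is to deduce Theorem \ref{thm-low mach limit} from the same intermittent convex integration scheme behind Theorems \ref{thm-nonuniquenes-1} and \ref{thm-vanishing limit}. The scheme is run in the dimensionless system with pressure $\frac{1}{(\varkappa_n{\rm Ma})^2}\nabla P(\rho)$, starting from a relaxed solution built from the incompressible data $(v,\mathbf{H})$.

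\textbf{Step 1: the starting relaxed solution.} First mollify $(v,\mathbf{H})$ in space–time at a scale $\ell_n\to0$ to obtain smooth fields $(v_{\ell},\mathbf{H}_{\ell})$, with ${\rm div}\,v_\ell={\rm div}\,\mathbf{H}^i_\ell=0$. They solve \eqref{1.5-s} up to a commutator (Reynolds-type) stress $\mathring{R}_\ell$ and an elastic stress $\mathring{M}_\ell$. Both stresses tend to zero in $L^1_{t,x}$ as $\ell_n\to0$, because $(v,\mathbf{H})\in H^{\widetilde\beta}_{t,x}$ and the nonlinearity is quadratic. Set
\[
\rho_{q_0}=1+(\varkappa_n{\rm Ma})^2\pi_n,\qquad m_{q_0}=\rho_{q_0}v_\ell,\qquad \mathcal{N}_{q_0}=\rho_{q_0}\mathbf{H}_\ell .
\]
Here $\pi_n$ is chosen so that, to leading order, $\frac{1}{(\varkappa_n{\rm Ma})^2}\nabla P(\rho_{q_0})\approx P'(1)\nabla\pi_n$ reproduces $\nabla\Pi_\ell$; concretely, take $\pi_n=\Pi_\ell/P'(1)$.

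Two defects remain, and both must be placed inside the relaxed system.

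The continuity equation fails by $\partial_t\rho_{q_0}+{\rm div}(\rho_{q_0}v_\ell)=O((\varkappa_n{\rm Ma})^2)$. I would fix this either by adding a small correction $\nabla\Delta^{-1}$ to $m_{q_0}$ or by letting $\rho$ solve the transport equation exactly. The point is that $\rho$ stays an exact solution of $\partial_t\rho+{\rm div}\,m=0$, as in Theorem \ref{thm-nonuniquenes-1}.

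The momentum and induction equations then carry errors. These consist of $\mathring R_\ell$, $\mathring M_\ell$, the viscous term $(-\Delta)^\alpha$ applied to $\rho^{-1}m-v_\ell$, and the Taylor remainder $\frac{1}{\varkappa^2{\rm Ma}^2}\big(P(\rho)-P(1)-P'(1)(\rho-1)\big)=O(\varkappa_n^2)$, which is controlled because $\Pi_\ell$ is smooth at fixed $\ell$. Write each of these as a divergence of a symmetric or antisymmetric stress, and include everything in the initial errors $(\mathring R_{q_0},\mathring M_{q_0})$. Choosing $\varkappa_n$ small relative to $\ell_n$ makes $\|(\mathring R_{q_0},\mathring M_{q_0})\|_{L^1_{t,x}}\le\delta_n\to0$, while $\|\rho_{q_0}-1\|_{C_{t,x}}\to0$.

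\textbf{Step 2: convex integration.} Next apply the iterative proposition that underlies Theorem \ref{thm-nonuniquenes-1}, namely the main iteration with intermittent building blocks and the pressure cancellation mechanism. Apply it with fixed $\widetilde\rho=\rho_{q_0}$ (smooth, bounded between positive constants) and pressure coefficient $\frac{1}{(\varkappa_n{\rm Ma})^2}$. Its output gives exact weak solutions $(\rho^{(n)},m^{(n)},\mathcal{N}^{(n)})$ satisfying
\[
\|\rho^{(n)}-\rho_{q_0}\|_{C_{t,x}}\lesssim\delta_n,\qquad \|(m^{(n)},\mathcal{N}^{(n)})-(m_{q_0},\mathcal{N}_{q_0})\|_{H^{\beta'}_{t,x}}\lesssim \delta_n^{\theta}
\]
for some $\theta>0$. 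Here $\beta'<\widetilde\beta$ is chosen uniformly in $n$, as in the proof of Theorem \ref{thm-vanishing limit}. Then $\|m_{q_0}-v\|_{H^{\beta'}_{t,x}}\lesssim\ell_n^{\widetilde\beta-\beta'}+\varkappa_n^2 C(\ell_n)$, and the same bound holds for $\mathcal{N}_{q_0}-\mathbf{H}$. Together these give \eqref{1.9-s}.

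\textbf{Main obstacle.} The hard step is keeping the constants of the iteration uniform as the pressure coefficient $\varkappa_n^{-2}{\rm Ma}^{-2}$ blows up. The density perturbation produced at each stage multiplies $\nabla P$ and is therefore amplified by $\varkappa_n^{-2}$. My first attempt would be to scale the density increments by $(\varkappa_n{\rm Ma})^2$, so that $\rho_{q+1}-\rho_q=(\varkappa_n{\rm Ma})^2\,\theta_{q+1}$. This is consistent with the cancellation mechanism, in which the pressure is used to absorb the trace part of the stress. The amplified pressure then produces $O(1)$ corrections, and the density deviation is small by a factor of $\varkappa_n^2$. The parameters $\lambda_q,\delta_q$ would be chosen after $n$ is fixed, so that the $n$-dependent constants $C(\ell_n,\varkappa_n)$ are absorbed by taking $\lambda_{q_0}$ large; this mirrors the treatment of $\kappa_n$ in Theorem \ref{thm-vanishing limit}.
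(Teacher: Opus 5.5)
Your overall architecture is the paper's: mollify $(v,\mathbf{H},\Pi)$, take a well-prepared density $1+(\varkappa_n{\rm Ma})^2\Pi_n/P'(1)$, repair the continuity equation with a $\nabla\Delta^{-1}$ momentum corrector, move the remaining errors into stresses that are small in $L^1_{t,x}$, restart Theorem \ref{thm-main-iteration} at $q=n+1$, and conclude by interpolation. The paper inverts $P$ exactly, with a mean correction $c_n(t)$, so that $(\varkappa_n{\rm Ma})^{-2}\nabla P(\rho_n)=\nabla\Pi_n$; your linearization with a Taylor remainder placed in the stress also works.

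Only your first option for the continuity defect is viable. If $\rho$ is simply transported by $m=\rho v_\ell$, then $\rho-1$ is carried along the flow. In that case $(\varkappa_n{\rm Ma})^{-2}\nabla P(\rho)$ no longer matches $\nabla\Pi$, which leaves an $O(1)$ error.

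\textbf{The gap: the Piola constraint.} Your starting tuple has $\mathcal{N}_{q_0}=\rho_{q_0}\mathbf{H}_\ell$. This violates ${\rm div}\,\mathcal{N}^i=0$, since
\[
{\rm div}(\rho_{q_0}\mathbf{H}^i_\ell)=(\varkappa_n{\rm Ma})^2\nabla\pi_n\cdot\mathbf{H}^i_\ell\neq0 .
\]
This defect cannot be pushed into the elastic stress, for three reasons.
\begin{itemize}
\item ${\rm div}\,\mathcal{N}^i_q=0$ is an exact constraint in \eqref{eq2.1}, and $\mathcal{R}^n$ in \eqref{eq4.2} only inverts the divergence on solenoidal fields.
\item Both $M^i$ and the nonlinear elastic flux are skew-symmetric, so their double divergences vanish. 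This forces ${\rm div}\,\partial_t\mathcal{N}^i=0$, which fails for $\rho_{q_0}\mathbf{H}^i_\ell$ in general.
\item The iteration only adds solenoidal $f^i_{q+1}$ and mollifies, so the limit would keep ${\rm div}\,\mathcal{N}^i={\rm div}\,\mathcal{N}^i_{q_0}\neq0$. It would then not be a weak solution in the sense of Definition \ref{def-weak-solutions}.
\end{itemize}
The fix is to take $\mathcal{N}_{q_0}:=\mathbf{H}_\ell$, i.e.\ $\mathbf{F}=\mathbf{H}_\ell/\rho$. This is exactly what the paper's corrector $\mathscr{H}^i_n=-\Theta_n\mathbf{H}^i_n/(1+(\varkappa_n{\rm Ma})^2\Theta_n)$ achieves, since then $\rho_n\mathbf{F}^i_n=\mathbf{H}^i_n$. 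The induction error then consists of $\rho^{-1}(\mathbf{H}^i_\ell\otimes m-m\otimes\mathbf{H}^i_\ell)$ minus its incompressible counterpart, which is $O(\varkappa_n^2)$, plus the mollification commutator.

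\textbf{Your ``main obstacle'' and its remedy.} The remedy you propose does not fit the scheme, for two reasons.
\begin{itemize}
\item The density increments cannot be rescaled. $z_{q+1}=-\int_0^t{\rm div}\,w_{q+1}$ in \eqref{3.50s} is dictated by the continuity equation and by the non-solenoidal temporal corrector $w^{(o)}_{q+1}$.
\item The cancellation does not use the pressure to absorb a trace part. Lemma \ref{lem-2nd-geometric} is designed precisely so that $w\otimes w-\sum_i f^i\otimes f^i\simeq R_q$ with no $\varrho\,{\rm Id}$ term.
\end{itemize}
No such device is needed. With $\varkappa_n=\lambda_n^{-6}$ and the iteration started at $q=n+1$, the amplification factor satisfies $\lambda_n^{12}{\rm Ma}^{-2}\le\lambda_q^{12}{\rm Ma}^{-2}$. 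It multiplies only three quantities:
\begin{itemize}
\item the pressure error, which is $\lesssim l^{-23}\lambda_{q+1}^{-5\varepsilon}$;
\item the pressure commutator, which is $\lesssim l\lambda_q^{\varepsilon/4}$;
\item the $C^1$ pressure bound, which is $\lesssim\lambda_{q+1}^{\varepsilon}$.
\end{itemize}
All three still close because $l=\lambda_q^{-30}$ and $\lambda_{q+1}=\lambda_q^{b}$ with $b>3000/\varepsilon$. Your fallback (fix $n$ first, then take $\lambda_{q_0}$ large) is essentially this argument, and the rescaling idea should be dropped.
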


\noindent\textbf{Comments on main results.} Let us present some comments on the main results below.

(i)\textbf{Non-uniqueness of weak solutions for hyper-viscous compressible fluids.} To the best of our knowledge, Theorem \ref{thm-nonuniqueness-2} (or the more general Theorem \ref{thm-nonuniquenes-1}) gives the first result for the non-uniqueness of weak solutions to the compressible hyper-viscous fluids.

This also shows that hyper-viscosity (even beyond the Lions exponent) cannot rule out the non-uniqueness mechanism, which is in fact consistent with the Ladyzhenskaya-Prodi-Serrin criterion. 

 On the other hand, in the hypo-viscosity regime ($\alpha\in(0,1)$), one may still employ the methods developed in \cite{LiQu2026}, together with a similar convex integration scheme, to obtain non-uniqueness of weak solutions to the 3D compressible Oldroyd-type model \eqref{1.3s} in the class $L_t^2L_x^\infty$.

(ii)\textbf{Sharp viscosity threshold for $L^2_tC_x$ well-posedness.} It usually believed that, stronger dissipation generally tends to promote well-posedness. In view of the endpoint $(s,p,\gamma)=(0,\infty,2)$ of the Ladyzhenskaya-Prodi-Serrin criterion, Theorem \ref{thm-nonuniqueness-2} also gives the non-uniqueness of weak solutions in the space $L^p_tL^{\infty}_x$ ($1<p<2$) for the compressible Oldroyd-type model with the classical viscosity $(\alpha=1)$. 

In contrast to the non-uniqueness results for the compressible Navier-Stokes and MHD systems, our result applies to the classical viscosity regime rather than being restricted to the hypo-viscosity regime, which suggests that the compressible Oldroyd-type model exhibits weaker rigidity. This phenomenon also provides, at least to some extent, a mathematical explanation for the physical mechanism by which the effect of viscosity is weakened in hyper-elastic fluids.

(iii)\textbf{Singular limits for hyper-viscous compressible Oldroyd-type model.} Singular limits are fundamental problem in the study of PDEs in fluid mechanics, as they provide a mathematical framework for understanding the connections between different fluid models. Typical examples include the vanishing viscosity limit, which relates viscous and inviscid fluids; the Newtonian limit, which connects non-Newtonian and Newtonian fluids; and the incompressible limit, which links compressible and incompressible flows.

 In particular, Theorem \ref{thm-vanishing limit} shows that a $C^{\widetilde{\beta}}_{t,x}\times\left(H^{\widetilde{\beta}}_{t,x}\right)^2$ ($\widetilde{\beta}>0$) weak solutions to the inviscid compressible Oldroyd-type model \eqref{1.4ss} can be obtained as vanishing viscosity limits of weak solutions to \eqref{1.3s}.

More interestingly, Theorem \ref{thm-low mach limit} provides, to the best of our knowledge, the first low Mach number limit for the hyper-viscous compressible Oldroyd-type model \eqref{1.3s} in a low regularity regime. By interpreting the limit ${\rm Ma}\rightarrow0$ as the bulk modulus $K_0\rightarrow\infty$, this result can equivalently be viewed as an incompressible limit. See also \cite{chen2026universalitylowmachnumber} for result from compressible to incompressible Euler equations.
\\ \hspace*{\fill}\\
\noindent\textbf{New methods for hyper-viscous compressible fluids.} A primary difficulty in the compressible fluids, whether in the hyper-viscous or hypo-viscosity regime, is that the pressure law in the compressible case depends on the density, which propagates along the transport equation. In the convex integration scheme, this reflects the relative rigidity for the choice of the pressure in the compressible case, while it is rather
flexible in the incompressible case (see \cite{LQZZ-2022-arxiv} for more details). 

Furthermore, compared with the hypo-viscosity regime, the hyper-viscous fluids requires a stronger intermittency mechanism in order to control the hyper-dissipative term $(-\Delta)^{\alpha}$ where $\alpha\in[1,\frac{3}{2})$. This, in turn, makes the comparatively rigid structure of the pressure term a more serious obstruction.

In this paper, we do face this kind of difficulties. We introduce a new cancellation mechanism and combine it with Mikado flows exhibiting both temporal and spatial intermittency. By dimensional analysis (refer to Appendix \ref{Appendix_B} for details), we observe that \eqref{1.3s} has no Deborah number or Weissenberg number which used to characterize the competition between elastic and viscous effects. This absence arises from the fact that \eqref{1.3s} intrinsically possesses no relaxation time, that is, the relaxation time effectively tend to infinity. Within the framework of Maxwell and Oldroyd-B models, this scenario corresponds precisely to the asymptotic limit in which the Deborah number (or Weissenberg number) approaches infinity. Consequently, this further substantiates that elastic force are predominant in \eqref{1.3s}, that is, the viscous mechanism is weakened, which may lead to stronger non-uniqueness (or ill-posedness). More precisely, the new geometric Lemma \ref{lem-2nd-geometric} provided a new canceled mechanism which allows us to construct suitable perturbations
    \begin{align*}
        w_{q+1}\simeq m_{q+1}-m_q,\quad f_{q+1}\simeq\mathcal{N}_{q+1}-\mathcal{N}_q,\quad z_{q+1}\simeq\rho_{q+1}-\rho_q,
    \end{align*}
such that the nonlinear effects decrease the amplitudes of the momentum Reynolds stress without damaging the relative rigidity of pressure. By using Lemma \ref{lem-2nd-geometric},  we may expect that
\begin{align*}
    w_{q+1}\otimes w_{q+1}-\sum_{i=1}^3f^i_{q+1}\otimes f^i_{q+1}\simeq R_q
\end{align*}
rather than
\begin{align*}
    w_{q+1}\otimes w_{q+1}\simeq R_q+\varrho{\rm Id}
\end{align*}
in the compressible Navier-Stokes equations \cite{LQZZ-2022-arxiv} and MHD equations \cite{LiQu2026}. 
\\ \hspace*{\fill}\\
\noindent\textbf{Organization.} Now we introduce the organization of the paper.  In Section \ref{sec2}, we present the main iteration estimates, which play a crucial role in the subsequent proof of Theorem \ref{thm-nonuniquenes-1}. The mollification procedure is also employed to avoid the loss of derivatives. In Section \ref{sec3}, we construct the key building blocks and apply them to the construction of the momentum, elastic field, and density perturbations. Corresponding estimates and certain cancellation identities are provided. In Section \ref{sec4}, we focus on the treatment of the Reynolds stress and elastic stress fluctuation, and prove the relevant inductive estimates. Finally, in Section \ref{sec5}, we establish the main results.
\\ \hspace*{\fill}\\
\noindent\textbf{Notations.}
We denote for $ p \in[1, \infty], s \in \mathbb{R}, N \in \mathbb{N}$ and $\eta \in(0,1)$,
$$
L_t^p:=L^p(0, T), \quad L_x^p:=L^p(\mathbb{T}^3), \quad C_x^N:=C^N(\mathbb{T}^3), \quad C_x^{N, \eta}:=C^{N, \eta}(\mathbb{T}^3), \quad W_x^{s, p}:=W^{s, p}(\mathbb{T}^3),
$$
where $W_x^{s, p}$ is the usual Sobolev space, $C_x^{N, \eta}$ is the H\"older space equipped with the norm
$$
\|u\|_{C_x^{N, \eta}}:=\sum_{0 \leq|\zeta| \leq N}\left\|\nabla^\zeta u\right\|_{C_x}+\max _{|\zeta|=N} \sup _{x \neq y \in \mathbb{T}^3} \frac{\left|\nabla^\zeta u(x)-\nabla^\zeta u(y)\right|}{|x-y|^\eta},
$$
and $\zeta=\left(\zeta_1, \zeta_2,\zeta_3\right)$ is the multi-index with $\nabla^\zeta:=\partial_{x_1}^{\zeta_1} \partial_{x_2}^{\zeta_2}\partial_{x_3}^{\zeta_3}$. When $N=0$, we denote $C_x^\eta:=C^{0, \eta}\left(\mathbb{T}^3\right)$ for brevity. We also use the shorthand notation $L_t^\gamma L_x^p$ to denote $L^\gamma\left(0, T ; L^p\left(\mathbb{T}^3\right)\right)$, where $p, \gamma \in[1, \infty]$. In particular, we write $L_{t, x}^p:=L_t^p L_x^p$ for short. Moreover, let
$$
\|u\|_{W_{t, x}^{N, p}}:=\sum_{0 \leq m+|\zeta| \leq N}\left\|\partial_t^m \nabla^\zeta u\right\|_{L_{t, x}^p}, \quad\|u\|_{C_{t, x}^N}:=\sum_{0 \leq m+|\zeta| \leq N}\left\|\partial_t^m \nabla^\zeta u\right\|_{C_{t, x}},
$$
Given any Banach space $X, C([0, T] ; X)$ denotes the space of continuous functions from $[0, T]$ to $X$, equipped with the norm $\|u\|_{C_t X}:=\sup _{t \in[0, T]}\|u(t)\|_X$.

We would also write $A \lesssim B$ to imply that $A \leq C B$ for some constant $C>0$ independent of the parameters $q$, $a$, $b$ and $\beta$.
\section{Main iteration and mollification}\label{sec2}
In this section, we will introduce the main iteration of density, momentum, elastic field, Reynolds stress and elastic stress fluctuation, which is the heart of the proof of main results.

\subsection{Main iteration}
For each integer $q\in\mathbb{N}$, we consider the following relaxation system
\begin{equation}\label{eq2.1}
    \left\{
    \begin{aligned}
        &\partial_t\rho_q+{\rm div}m_q=0,\\
        &\partial_tm_q+\nu^s(-\Delta)^{\alpha}(\rho^{-1}_qm_q)-(\nu^b+\frac{1}{3}\nu^s)\nabla{\rm div}(\rho^{-1}_qm_q)+\nabla P(\rho_q)\\
        &\qquad+{\rm div}\left(\rho_q^{-1}\left(m_q\otimes m_q-\mathcal{N}_q\mathcal{N}_q^{\rm T}\right)\right)={\rm div}R^m_q,\\
        &\partial_t\mathcal{N}^i_q+{\rm div}\left(\rho_q^{-1}\left(\mathcal{N}^i_q\otimes m_q-m_q\otimes \mathcal{N}^i_q\right)\right)={\rm div}M^i_q,\quad i=1,2,3,\\
        &{\rm div}\mathcal{N}^i_q=0,\quad i=1,2,3,\\
        &\mathcal{N}_q=\left(\mathcal{N}_q^1,\mathcal{N}^2_q,\mathcal{N}^3_q\right),\quad\left(\mathscr{M}_q\right)_{ijk}:=\left(M^i_q\right)_{jk},
    \end{aligned}
    \right.
\end{equation}
where $R^m_q$ and $\mathscr{M}_q$ are the $3\times3$ symmetric Reynolds stress and $3\times3\times3$ elastic stress fluctuation respectively, and $M^i_q$ (for simplicity, we also refer to it as the elastic stress fluctuation) is $3\times3$ skew-symmetric matrix.

In order to measure the size of relaxed solutions $(\rho_q,m_q,\mathcal{N}_q,R^m_q,\mathscr{M}_q)$ accurately, we need to carefully select the frequency parameter $\lambda_q$ and amplitude parameter $\delta_q$. More precisely, let $a\in\mathbb{N}$ is a sufficiently large integer to be determined later, $\varepsilon\in\mathbb{Q}$ is sufficiently small such that
\begin{equation}\label{eq2.2s}
    \varepsilon\leq\frac{1}{100}{\rm min}\left\{\frac{3}{2}-\alpha,\frac{2\alpha}{\gamma}+\frac{2\alpha-2}{p}-(2\alpha-1)-s\right\},\quad  b(2-\alpha-8\varepsilon)\in\mathbb{N},
\end{equation}
with $b\in2\mathbb{N}$ is a large even number and $\beta>0$ is the regularity parameter such that
\begin{equation}\label{eq2.3s}
    b>\frac{3000}{\varepsilon},\quad0<\beta<\frac{1}{100b^2}.
\end{equation}
For $q\in\mathbb{N}$, the frequency parameter $\lambda_q$ and amplitude parameter $\delta_q$ are defined by
\begin{equation}\label{eq2.4}
    \lambda_q:=a^{(b^q)},\quad \delta_q:=\lambda_q^{-2\beta}\lambda_1^{3\beta},
\end{equation}
where the constant $a>0$ will be chosen later. Furthermore, we assume that the following crucial inductive estimates hold for the relaxed solutions to \eqref{eq2.1} at level $q\in\mathbb{N}$:
\begin{align}
    &C_1+\lambda_q^{-\beta}\leq\rho_q\leq C_2-\lambda_q^{-\beta},\label{eq2.5}\\
    &\|\partial_t^M\rho_q\|_{C_tC^N_x}\lesssim\lambda_q^{\frac{N\varepsilon}{4}},\label{eq2.6}\\
    &\|m_q\|_{C^N_{t,x}}+\|\mathcal{N}_q\|_{C^N_{t,x}}\lesssim\lambda_q^{4N+3},\label{eq2.7}\\
    &\|R^m_q\|_{C^1_{t,x}}+\sum_{i=1}^3\|M^i_q\|_{C^1_{t,x}}\lesssim\lambda_q^{20},\label{eq2.8}\\
    &\|R^m_q\|_{L^1_{t,x}}+\sum_{i=1}^3\|M^i_q\|_{L^1_{t,x}}\lesssim\delta_{q+1},\label{eq2.9}
\end{align}
where $0\leq M\leq1$, $0\leq N\leq4$, $C_1$ and $C_2$ are positive universal constants, and the implicit constants are independent of $q$.

The main iteration result is contained in the following theorem.
\begin{thm}\label{thm-main-iteration} (Main iteration). 
    Let $(s,p,\gamma)$ satisfy \eqref{1.7-s}. Then, there exist $\beta\in(0,1)$ and $a_0$ large enough, such that for any integer $a\geq a_0$, the following holds:

    Suppose that $(\rho_q,m_q,\mathcal{N}_q,R^m_q,\mathscr{M}_q)$ solves \eqref{eq2.1} and satisfies \eqref{eq2.5}--\eqref{eq2.9}. Then, there exists a new relaxation solution $(\rho_{q+1},m_{q+1},\mathcal{N}_{q+1},R^m_{q+1},\mathscr{M}_{q+1})$ to \eqref{eq2.1} which satisfies \eqref{eq2.5}--\eqref{eq2.9} with $q+1$ replacing $q$ and the following estimates:
    \begin{align}
        &\int_{\mathbb{T}^3}\rho_{q+1}(t,x){\rm d}x=\int_{\mathbb{T}^3}\rho_{q}(t,x){\rm d}x,\quad\forall t\in[0,T],\label{eq2.10}\\
        &\|\rho_{q+1}-\rho_q\|_{C_tC_x^1}\lesssim\delta_{q+2}^{\frac{1}{2}},\label{eq2.11}\\
        &\|m_{q+1}-m_q\|_{L^2_{t,x}}+\|\mathcal{N}_{q+1}-\mathcal{N}_q\|_{L^2_{t,x}}\lesssim\delta_{q+1}^{\frac{1}{2}},\label{eq2.12}\\
        &\|m_{q+1}-m_q\|_{L^1_tL^2_x}+\|\mathcal{N}_{q+1}-\mathcal{N}_q\|_{L^1_tL^2_x}\lesssim\delta_{q+2}^{\frac{1}{2}},\label{eq2.13}
\end{align}
where the implicit constants are independent of $q$. In addition, concerning the temporal support of relaxed solutions, we denote
\begin{equation}\label{eq2.14}
    T_q:=\inf\left\{t\in[0,T]\mid \left(\nabla\rho_q,m_q,\mathcal{N}_q-\overline{\mathcal{N}},R^m_q,\mathscr{M}_q\right)(t,\cdot)\neq0 \right\}.
\end{equation}
for all $q\in\mathbb{N}$ where $\overline{\mathcal{N}}=\fint_{\mathbb{T}^3}\mathcal{N}(0,x){\rm d}x$. Then, if $T_0>0$, it holds that
\begin{equation}\label{eq2.15}
    T_{q+1}\geq T_q-\delta_{q+2}^{\frac{1}{2}}>0.
\end{equation}
\end{thm}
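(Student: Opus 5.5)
We will prove Theorem \ref{thm-main-iteration} with an intermittent convex integration step in the spirit of \cite{LQZZ-2024-jmpa,LQZZ-2022-arxiv}. The step is carried out in the order below.

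\textbf{Mollification and cut-offs.} First we mollify $(\rho_q,m_q,\mathcal{N}_q,R^m_q,\mathscr{M}_q)$ in space and time at scale $\ell=\lambda_q^{-60}$. Using \eqref{eq2.7}--\eqref{eq2.8} we control the commutator stresses, which are of size $\ell\lambda_q^{20}\ll\delta_{q+2}$. This keeps \eqref{eq2.9} at level $\delta_{q+1}$ and gives derivative bounds $\|\cdot\|_{C^N}\lesssim\ell^{-N}\lambda_q^{20}$. We then fix a smooth temporal cut-off adapted to $T_q$, which will yield \eqref{eq2.15}.

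\textbf{Building blocks and principal perturbations.} We use Mikado flows with spatial concentration parameter $r_\perp$, parallel parameter $r_\parallel$ and temporal intermittency parameter $\tau$. These are chosen, as in the hyper-viscous incompressible case, so that the $L^1_tL^2_x$ and $L^\gamma_tW^{s,p}_x$ norms of the perturbation are small and the dissipation $(-\Delta)^\alpha$ with $\alpha<3/2$ is absorbed. This is where the condition $s<\frac{2\alpha}{\gamma}+\frac{2\alpha-2}{p}+1-2\alpha$ from \eqref{1.7-s} enters.

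The key new ingredient is the geometric Lemma \ref{lem-2nd-geometric}. We apply it to the rescaled stress $\rho_\ell R^m_\ell$ together with the skew parts $M^i_\ell$. This produces coefficients $a_{(k)}$ with $w\otimes w-\sum_i f^i\otimes f^i$ averaging to $-\rho_\ell R^m_\ell$, without adding an isotropic part. Consequently no pressure correction $\varrho\,\mathrm{Id}$ is needed, and the rigid pressure $\nabla P(\rho)$ is left untouched.

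The antisymmetric products $f^i\otimes w-w\otimes f^i$ are arranged to cancel $M^i_\ell$. We also add incompressibility correctors $w^{(c)}, f^{(c)}$ so that $\mathrm{div}\,\mathcal{N}^i_{q+1}=0$. Temporal correctors $w^{(t)}$ absorb $\partial_t$ of the high-frequency part of the oscillation term.

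\textbf{Density update.} The density increment $z_{q+1}$ is chosen with $\partial_t z_{q+1}+\mathrm{div}\,w_{q+1}=0$, for instance by making $w_{q+1}$ have a potential part of size $\delta_{q+2}^{1/2}$ or by taking $w_{q+1}$ divergence-free. Then $\rho_{q+1}-\rho_q$ comes only from mollification and is $C^1$-small, which gives \eqref{eq2.10}--\eqref{eq2.11} and preserves \eqref{eq2.5}, since $\lambda_q^{-\beta}-\lambda_{q+1}^{-\beta}\gg\delta_{q+2}^{1/2}$.

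\textbf{New stresses and main obstacle.} We define $R^m_{q+1}$ and $M^i_{q+1}$ through the inverse divergence operator $\mathcal{R}$, splitting them into linear, oscillation, corrector, commutator and density-coupling errors. The coupling errors are those involving $\rho_{q+1}^{-1}$ versus $\rho_\ell^{-1}$, and $P(\rho_{q+1})-P(\rho_q)$. Each error is estimated in $L^1_{t,x}$ by $\lambda_{q+1}^{-\varepsilon}\delta_{q+2}$, using $L^p$ bounds with $p$ close to $1$ together with intermittency gains.

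I expect the main obstacle to be the viscous term $\nu^s(-\Delta)^\alpha(\rho^{-1}_{q+1}w)$ for $\alpha$ near $3/2$. The variable coefficient $\rho^{-1}$ destroys the exact spectral structure, so commutator estimates must exploit the slow variation of $\rho$, which is guaranteed by \eqref{eq2.6} since $\lambda_q^{\varepsilon/4}\ll\lambda_{q+1}$. On top of that, the intermittency parameters must balance $\lambda_{q+1}^{2\alpha-1}r_\perp r_\parallel^{1/2}\tau^{-1/2}$ against the oscillation error $\tau^{-1}$-gains.

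I would first try the parameter choice from \cite{LQZZ-2024-jmpa}, using $b(2-\alpha-8\varepsilon)\in\mathbb{N}$ from \eqref{eq2.2s}. The remaining inductive bounds \eqref{eq2.6}--\eqref{eq2.8} then follow from crude $C^N$ estimates on the building blocks, and \eqref{eq2.12}--\eqref{eq2.13} from $L^2$ decorrelation and temporal intermittency.
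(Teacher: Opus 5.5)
Your architecture matches the paper's: mollification, intermittent blocks with temporal concentration, Lemma~\ref{lem-2nd-geometric} to represent the symmetric stress without an isotropic part, correctors, the lin/osc/cor/pre/com splitting, and temporal cut-offs. The gap is in the density step, which is exactly where compressibility matters, and as written it does not close.

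The momentum temporal corrector must cancel
$\mathrm{div}\bigl(\rho_l^{-1}a_{(k)}^2(g_{(k)}^2-1)\fint_{\mathbb{T}^3}W_{(k)}\otimes W_{(k)}\,{\rm d}x\bigr)=(g_{(k)}^2-1)(k_1\cdot\nabla)(\rho_l^{-1}a_{(k)}^2)\,k_1$.
This is the spatially low-, temporally high-frequency part; the spatially high-frequency part is the stationary-phase oscillation error. This field is not solenoidal.

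Your option of taking $w_{q+1}$ divergence-free means Leray-projecting the corrector. That leaves $(g_{(k)}^2-1)\nabla\Delta^{-1}\mathrm{div}\bigl((k_1\cdot\nabla)(\rho_l^{-1}a_{(k)}^2)k_1\bigr)$, which in the incompressible scheme would go into the pressure. Here it has nowhere to go:
\begin{itemize}
\item Put into $R^m_{q+1}$, it is of order $\delta_{q+1}$ rather than $\delta_{q+2}$, since $\|g_{(k)}^2-1\|_{L^1_t}\sim1$ and the different $g_{(k)}$ have disjoint supports.
\item Absorbing it into $\nabla P(\rho_{q+1})$ needs a density increment with temporal profile $g_{(k)}^2-1$, i.e.\ amplitude up to $\tau$. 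That violates \eqref{eq2.5}, and it is in any case incompatible with $\partial_t z_{q+1}=-\mathrm{div}\,w_{q+1}=0$.
\end{itemize}
Your other option, a potential part of size $\delta_{q+2}^{1/2}$, gives neither a source for that part nor a reason for its smallness. Both options also contradict your claim that $\rho_{q+1}-\rho_q$ comes only from mollification.

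The paper keeps the non-solenoidal corrector $w^{(o)}_{q+1}=-\sigma^{-1}\sum_{k\in\Lambda_{m^1}}h_{(k)}(k_1\cdot\nabla)(\rho_l^{-1}a^2_{(k)})\,k_1+\cdots$, with the $k_2$-terms from $\Lambda_{m^2}$ and $\Lambda_{\mathcal{N}}$. Its smallness comes from the second temporal frequency $\sigma=\lambda_{q+1}^{5\varepsilon}$ in $g_{(k)}(t)=g_{k,\tau}(\sigma t)$, a parameter your proposal never introduces. Without $\sigma$, $h_{(k)}$ is only $O(1)$ and the corrector, hence the density increment, is $O(l^{-C})$.

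Since $w^{(p)}_{q+1}+w^{(c)}_{q+1}$ is a curl curl, one sets $z_{q+1}:=-\int_0^t\mathrm{div}\,w_{q+1}\,{\rm d}s=-\int_0^t\mathrm{div}\,w^{(o)}_{q+1}\,{\rm d}s$. This gives:
\begin{itemize}
\item Being a divergence, $z_{q+1}$ is mean-free, which yields \eqref{eq2.10}.
\item $\|\partial_t^Mz_{q+1}\|_{C_tC^N_x}\lesssim l^{-9N-23}\sigma^{-1}$, which yields \eqref{eq2.5}, \eqref{eq2.6} and \eqref{eq2.11}.
\item The pressure error $\mathcal{R}^m\nabla(P(\rho_{q+1})-P(\rho_l))$ and all $\rho_{q+1}^{-1}-\rho_l^{-1}$ errors are at most $l^{-C}\lambda_{q+1}^{-4\varepsilon}$. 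This is acceptable because $b>3000/\varepsilon$ makes $l^{-C}=\lambda_{q+1}^{30C/b}$ negligible.
\end{itemize}
The elastic temporal corrector stays divergence-free, preserving the Piola constraint, because its mean part $k_2\otimes k_1-k_1\otimes k_2$ is skew.

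You should also state one more point explicitly. The elastic blocks enter $w^{(p)}_{q+1}$ as well, so they add the non-small symmetric term $G^{\mathcal{N}}$ to the momentum stress. Lemma~\ref{lem-2nd-geometric} must therefore be applied to $-(R^m_l+G^{\mathcal{N}})/\varrho_m$ after the elastic amplitudes are fixed. The disjoint temporal supports of the $g_{(k)}$ then remove all cross interactions.
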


The heart of the proof of Theorem \ref{thm-main-iteration} is to construct suitable perturbations
\begin{align*}
    w_{q+1}\simeq m_{q+1}-m_q,\quad f^i_{q+1}\simeq \mathcal{N}^i_{q+1}-\mathcal{N}^i_q,\quad z_{q+1}\simeq\rho_{q+1}-\rho_q.
\end{align*}
In order to avoid the loss of derivatives in the convex integration scheme, we need to perform the mollification procedure to the relaxed system \eqref{eq2.1}.
\subsection{Mollification procedure}
Let $\phi_{\epsilon}$ and $\varphi_{\epsilon}$ be the standard mollifier on $\mathbb{R}^3$ and $\mathbb{R}$ with ${\rm supp}\phi_{\epsilon}\subseteq B(0,\epsilon)$ and ${\rm supp}\varphi_{\epsilon}\subseteq(-\epsilon,\epsilon)$ respectively. Then, the mollification of $(\rho_q,m_q,\mathcal{N}_q,R^m_q,\mathscr{M}_q)$ in space and time are defined by
\begin{equation}\label{eq2.16}
    \begin{aligned}
        &\rho_{l}:=(\rho_q*_x\phi_l)*_t\varphi_l,\quad P_l:=(P(\rho_q)*_x\phi_l)*_t\varphi_l,\\
        &m_l:=(m_q*_x\phi_l)*_t\varphi_l,\quad \mathcal{N}^i_l:=(\mathcal{N}^i_q*_x\phi_l)*_t\varphi_l,\\
        &R^m_l:=(R^m_q*_x\phi_l)*_t\varphi_l,\quad M^i_l:=(M^i_q*_x\phi_l)*_t\varphi_l,\\
        &\mathcal{N}_l:=\left(\mathcal{N}_l^1,\mathcal{N}_l^2,\mathcal{N}_l^3\right),\quad\left(\mathscr{M}_l\right)_{ijk}:=\left(M^i_l\right)_{jk},
    \end{aligned}
\end{equation}
where the scale of mollification is given by
\begin{align}\label{eq2.17}
    l:=\lambda_q^{-30}.
\end{align}
Then, by equations \eqref{eq2.1}, $(\rho_l,m_l,\mathcal{N}^i_l,R^m_l,M^i_l)$ satisfies
\begin{equation}\label{eq2.18}
    \left\{
    \begin{aligned}
        &\partial_t\rho_l+{\rm div}m_l=0,\\
        &\partial_tm_l+\nu^s(-\Delta)^{\alpha}(\rho^{-1}_lm_l)-(\nu^b+\frac{1}{3}\nu^s)\nabla{\rm div}(\rho^{-1}_lm_l)+\nabla P(\rho_l)\\
        &\qquad+{\rm div}\left(\rho_l^{-1}\left(m_l\otimes m_l-\sum_{i=1}^3\left(\mathcal{N}_l^i\otimes\mathcal{N}_l^i\right)\right)\right)={\rm div}\left(R^m_l+R^m_{{\rm com}}\right),\\
        &\partial_t\mathcal{N}^i_l+{\rm div}\left(\rho_l^{-1}\left(\mathcal{N}^i_l\otimes m_l-m_l\otimes \mathcal{N}^i_l\right)\right)={\rm div}\left(M^i_l+M^i_{{\rm com}}\right),\quad i=1,2,3,\\
        &{\rm div}\mathcal{N}^i_l=0,\quad i=1,2,3,
    \end{aligned}
    \right.
\end{equation}
where the symmetric commutator stress $R^m_{{\rm com}}$ and the  skew-symmetric commutator stress $M^i_{{\rm com}}$ are of form
\begin{align}
    &\begin{aligned}
        R^m_{{\rm com}}:=
        &\nu^s\mathcal{R}^m(-\Delta)^{\alpha}\left(\rho_l^{-1}m_l-((\rho_q^{-1}m_q)*_x\phi_l)*_t\varphi_l\right)+\mathcal{R}^m\nabla\left(P(\rho_l)-P_l\right)\\
        &-(\nu^b+\frac{1}{3}\nu^s)\mathcal{R}^m\nabla{\rm div}\left(\rho_l^{-1}m_l-((\rho_q^{-1}m_q)*_x\phi_l)*_t\varphi_l\right)\\
        &+\mathcal{R}^m{\rm div}\left(\rho_l^{-1}\left(m_l\otimes m_l-\sum_{i=1}^3\left(\mathcal{N}_l^i\otimes\mathcal{N}_l^i\right)\right)\right)\\
        &-\mathcal{R}^m{\rm div}\left(\left(\rho_q^{-1}\left(m_q\otimes m_q-\sum_{i=1}^3\left(\mathcal{N}_q^i\otimes\mathcal{N}_q^i\right)\right)\right)*_x\phi_l\right)*_t\varphi_l,
    \end{aligned}\label{eq2.19}\\
    & M^i_{{\rm com}}:=\mathcal{R}^n{\rm div}\left(\rho_l^{-1}(\mathcal{N}^i_l\otimes m_l-m_l\otimes \mathcal{N}^i_l)-\left(\left(\rho_q^{-1}(\mathcal{N}^i_q\otimes m_q-m_q\otimes \mathcal{N}^i_q)\right)*_x\phi_l\right)*_t\varphi_l\right),\label{eq2.20}
\end{align}
with $\mathcal{R}^m$ and $\mathcal{R}^n$ being the symmetric and skew-symmetric inverse-divergence operator respectively, given by \eqref{eq4.1} and \eqref{eq4.2} below.

By the standard mollification estimates and inductive estimates \eqref{eq2.5}--\eqref{eq2.9}, for any integers $1\leq N\leq4$ and $0\leq M\leq1$,
\begin{align}
    &C_1+\frac{1}{2}\lambda_q^{-\beta}\leq\rho_l\leq C_2-\frac{1}{2}\lambda_q^{-\beta},\label{eq2.21}\\    &\|\partial^M_t\rho_l\|_{C_tC^N_x}\lesssim\lambda_q^{\frac{N\varepsilon}{4}}, \label{eq2.22}\\
    &\|\rho_l-\rho_q\|_{C_tC^{N-1}_x}\lesssim l\lambda_q^{\frac{N\varepsilon}{4}}, \label{eq2.23}\\
    &\|m_l\|_{C^N_{t,x}}+\sum_{i=1}^3\|\mathcal{N}^i_l\|_{C^N_{t,x}}\lesssim l^{-N+1}\lambda_q^7,\label{eq2.24}\\
    &\|m_l-m_q\|_{C_tC^{N-1}_x}+\sum_{i=1}^3\|\mathcal{N}^i_l-\mathcal{N}^i_q\|_{C_tC^{N-1}_x}\lesssim l\lambda_q^{4N+3}, \label{eq2.25}\\
    &\|R^m_l\|_{C^N_{t,x}}+\sum_{i=1}^3\|M^i_l\|_{C^N_{t,x}}\lesssim l^{-N},\label{eq2.26}\\
    &\|R^m_l\|_{L^1_{t,x}}+\sum_{i=1}^3\|M^i_l\|_{L^1_{t,x}}\lesssim\delta_{q+1},\label{eq2.27}
\end{align}
where the implicit constants are independent of $q$. Moreover, concerning the temporal support of $\nabla\rho_l$, $m_l$, $\mathcal{N}_l-\overline{\mathcal{N}}$, $R^m_l$ and $\mathscr{M}_l$, we have
\begin{align}\label{eq2.28}
    {\rm supp}_t(\nabla\rho_l,m_l,\mathcal{N}_l-\overline{\mathcal{N}},R^m_l,\mathscr{M}_l)\subseteq N_l\left({\rm supp}_t(\nabla\rho_q,m_q,\mathcal{N}_q-\overline{\mathcal{N}},R^m_q,\mathscr{M}_q)\right).
\end{align}
\section{Perturbations}\label{sec3}
The aim of this section is to construct appropriate momentum, magnetic and density perturbations, such that the corresponding inductive estimates in Theorem \ref{thm-main-iteration} propagate through in the convex integration scheme.

The fundamental building blocks in the convex integration will be indexed by the following parameters
\begin{align}\label{eq3.1}
    \lambda:=\lambda_{q+1},\quad\tau:=\lambda_{q+1}^{2\alpha},\quad\sigma:=\lambda_{q+1}^{5\varepsilon},\quad r_{\perp}:=\lambda_{q+1}^{2-2\alpha-15\varepsilon},
\end{align}
where $\varepsilon$ is the small constant satisfying \eqref{eq2.2s}.
\subsection{Spatial building blocks}
To begin with, let us recall the geometric lemma in \cite{BBV-2020-annpde} and \cite{LZZ-2022-jmpa} which will be used to construct the basic spatial building blocks, namely, the Mikado flows.

\begin{lem}\label{lem-1st-geometric}(First Geometric Lemma). There exists a finite set $\Lambda_{\mathcal{N}^i}\subseteq\mathbb{S}^2\cap\mathbb{Q}^3$ consisting of vectors $k$ with orthonormal bases $(k,k_1,k_2)\in \mathbb{S}^2\cap\mathbb{Q}^3$ and smooth positive functions $\gamma_{(k)}:B_{{\rm skew},3}(0,\varepsilon_{\mathcal{N}})\rightarrow\mathbb{R}$, where $B_{{\rm skew},3}(0,\varepsilon_{\mathcal{N}})$ is the ball of radius  $\varepsilon_{\mathcal{N}}>0$ centered at $0$ in the space of $3\times3$ skew-symmetric matrices, for any $A\in B_{{\rm skew},3}(0,\varepsilon_{\mathcal{N}})$ we have the following identity:
\begin{align}\label{eq3.2}
    A^i=\sum_{k\in\Lambda_{\mathcal{N}^i}}\gamma^2_{(k)}(A^i)(k_2\otimes k_1-k_1\otimes k_2).
\end{align}
Moreover, we can choose $\Lambda_{\mathcal{N}}=\Lambda_{\mathcal{N}^1}\cup\Lambda_{\mathcal{N}^2}\cup\Lambda_{\mathcal{N}^3}$ with $\Lambda_{\mathcal{N}^i}\cap\Lambda_{\mathcal{N}^j}=\varnothing$ while $i\neq j$.
\end{lem}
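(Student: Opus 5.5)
The plan is to prove the identity \eqref{eq3.2} by a linear-algebra decomposition followed by a square-root/inverse-function argument, as in the geometric lemmas of \cite{BBV-2020-annpde,LZZ-2022-jmpa}. The space of $3\times3$ skew-symmetric matrices is three-dimensional and is spanned by the elementary matrices $e_a\otimes e_b-e_b\otimes e_a$. For an orthonormal basis $(k,k_1,k_2)$ we have
\begin{align*}
k_2\otimes k_1-k_1\otimes k_2=-(k\times\cdot)\quad\text{up to sign},
\end{align*}
that is, this matrix is the skew matrix associated with the axial vector $\pm k$. Thus \eqref{eq3.2} reduces to writing the axial vector of $A^i$ as a positive combination $\sum_k\gamma_{(k)}^2\,(\pm k)$.

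\textbf{Step 1.} For each $i$, choose a finite set $\Lambda_{\mathcal{N}^i}\subset\mathbb{S}^2\cap\mathbb{Q}^3$ such that the vectors $\{k\}$ (with the sign convention above) positively span $\mathbb{R}^3$ and contain $0$ in the interior of their convex hull. A concrete choice is $\{\pm e_1,\pm e_2,\pm e_3\}$ rotated by a rational rotation, for example one built from Pythagorean quadruples. For each $k$, pick rational $k_1,k_2$ completing $k$ to an orthonormal frame and oriented so that $k_2\otimes k_1-k_1\otimes k_2$ corresponds to $+k$. Rational orthonormal completions exist, for instance via the rational orthogonal matrices $\frac{1}{9}\begin{pmatrix}1&4&8\\4&7&-4\\8&-4&1\end{pmatrix}$-type constructions.

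\textbf{Step 2.} Using $\sum_{k\in\Lambda}(+k)$-type identities with $\pm$ pairs, the zero matrix has the representation $0=\sum_k c_k (k_2\otimes k_1-k_1\otimes k_2)$ with all $c_k>0$, for example $c_k=1$ for the antipodal pairs. Since the matrices $\{k_2\otimes k_1-k_1\otimes k_2\}_{k\in\Lambda}$ span the skew space, choose a fixed linear right inverse $L$ and set
\begin{align*}
\gamma_{(k)}(A)=\sqrt{c_k+(LA)_k}.
\end{align*}
For $|A|<\varepsilon_{\mathcal{N}}$ small, $c_k+(LA)_k\ge c_k/2>0$, so $\gamma_{(k)}$ is smooth and positive, and linearity gives \eqref{eq3.2}.

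\textbf{Step 3.} To get $\Lambda_{\mathcal{N}^i}\cap\Lambda_{\mathcal{N}^j}=\varnothing$, take three distinct rational rotations $O_1,O_2,O_3$ in general position and set $\Lambda_{\mathcal{N}^i}=O_i\Lambda_0$, possibly including both $k$ and $-k$ with frames adjusted for orientation. Genericity ensures disjointness; rational orthogonal matrices are dense, so such a choice exists.

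The main obstacle is Step 1: keeping all vectors and frames rational while fixing orientation signs so that every coefficient $c_k$ is positive. If orientation conflicts arise, using both $(k_1,k_2)$ and the swapped frame for the same direction is the fallback.
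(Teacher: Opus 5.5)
Your proof is correct and is essentially the standard argument behind this lemma. The paper does not prove the lemma itself; it quotes it from \cite{BBV-2020-annpde,LZZ-2022-jmpa}. Your ingredients are: a strictly positive representation of $0$ via sign-paired frames (writing $M_k:=k_2\otimes k_1-k_1\otimes k_2$, one has $M_k+M_{-k}=0$ for right-handed frames), a fixed linear right inverse, and a square root. This is exactly the mechanism the paper uses in Appendix~\ref{Appendix_C} for the second geometric lemma, where $\gamma^2_{(k_i^\pm)}=1\pm\frac12 r_i(S)$.

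The orientation issue you flag as the main obstacle is harmless: applying a rational rotation to the right-handed frames of $\{\pm e_1,\pm e_2,\pm e_3\}$ keeps everything rational and consistently oriented. Disjointness of the three sets $\Lambda_{\mathcal{N}^i}$ then follows, as you say, from the density of rational rotations in $SO(3)$.
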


In order to overcome the difficulty caused by the relative rigidity of pressure $\nabla P(\rho)$, we need the new geometric lemma below (for a detailed proof, see Appendix \ref{Appendix_C}).
\begin{lem}\label{lem-2nd-geometric}(Second Geometric Lemma). There exists a finite set $\Lambda_{m}\subseteq\mathbb{S}^2\cap\mathbb{Q}^3$ consisting of vectors $k$ with orthonormal bases $(k,k_1,k_2)\in \mathbb{S}^2\cap\mathbb{Q}^3$ and smooth positive functions $\gamma_{(k)}:B_{{\rm sym},3}(0,\varepsilon_m)\rightarrow\mathbb{R}$, where $B_{{\rm sym},3}(0,\varepsilon_m)$ is the ball of radius $\varepsilon_m>0$ centered at $0$ in the space of $3\times3$ symmetric matrices, such that for any $S\in B_{{\rm sym},3}(0,\varepsilon_m)$ we have the following identity:
\begin{align}\label{eq3.3}
    S=\sum_{k\in\Lambda_{m^1}}\gamma^2_{(k)}(S)(k_1\otimes k_1)-\sum_{i=1}^3\sum_{k\in\Lambda_{m^2_i}}\gamma^2_{(k)}(S)(k_2\otimes k_2),
\end{align}
where $\Lambda_m=\Lambda_{m^1}\cup\Lambda_{m^2}$ and $\Lambda_{m^2}=\Lambda_{m^2_1}\cup\Lambda_{m^2_2}\cup\Lambda_{m^2_3}$. Furthermore, we may choose $\Lambda_m$, $\Lambda_{m^1}$ and $\Lambda_{m^2}$ such that $\Lambda_\mathcal{N}\cap\Lambda_m=\varnothing$ and $\Lambda_{m^1}\cap\Lambda_{m^2}=\varnothing$.
\end{lem}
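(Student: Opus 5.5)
The plan is to reduce the statement to the classical geometric lemma for symmetric matrices near a positive multiple of the identity. The proof of Lemma \ref{lem-1st-geometric} and the Mikado lemma of \cite{BBV-2020-annpde} provide this in the following form: for any finite family of rational directions whose rank-one projections span the symmetric matrices, with a positive combination equal to the base point, one obtains smooth positive coefficients on a neighbourhood. Concretely, the standard Nash-type lemma states: for $\Lambda\subseteq\mathbb{S}^2\cap\mathbb{Q}^3$ suitably chosen and $\varepsilon_0>0$, there are smooth positive $\Gamma_{(\xi)}$ on $B_{{\rm sym},3}(\mathrm{Id},\varepsilon_0)$ with $T=\sum_{\xi\in\Lambda}\Gamma^2_{(\xi)}(T)\,\xi\otimes\xi$. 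I will apply this twice, once with the ``$k_1$'' directions and once with the ``$k_2$'' directions, and write $S$ as a difference of two such decompositions.

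\textbf{Step 1: splitting $S$.} Fix a large constant $c_0>0$ (say $c_0=1$ after rescaling) and write
\begin{align*}
S=\bigl(c_0\mathrm{Id}+S\bigr)-c_0\mathrm{Id}, \qquad c_0\mathrm{Id}=\sum_{i=1}^3 \tfrac{c_0}{3}\mathrm{Id}.
\end{align*}
For $|S|<\varepsilon_m$ small, $c_0\mathrm{Id}+S$ lies in $B_{{\rm sym},3}(c_0\mathrm{Id},c_0\varepsilon_0)$. The geometric lemma, applied to a rational set $\Lambda^1$ of unit vectors playing the role of $k_1$, then gives $c_0\mathrm{Id}+S=\sum_{\xi\in\Lambda^1}\Gamma^2_{(\xi)}\bigl((c_0\mathrm{Id}+S)/c_0\bigr)\,c_0\,\xi\otimes\xi$. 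Each constant piece $\frac{c_0}{3}\mathrm{Id}$ is decomposed as $\sum_{\xi\in\Lambda^2_i}a_\xi^2\,\xi\otimes\xi$ with constant positive $a_\xi$. Such a decomposition exists, e.g., using the twelve directions $\frac{1}{\sqrt2}(\pm e_j\pm e_l)$ rotated by distinct rational rotations for $i=1,2,3$; any rational orthonormal frame also works, since $\mathrm{Id}=\sum_j e_j\otimes e_j$.

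\textbf{Step 2: completing to frames and ensuring disjointness.} For each $\xi$ in $\Lambda^1$ I must produce a rational orthonormal frame $(k,k_1,k_2)$ with $k_1=\xi$. For $\xi\in\Lambda^2_i$ I need a frame with $k_2=\xi$. Completing a rational unit vector to a rational orthonormal basis is always possible: the orthogonal complement of a rational vector contains rational vectors, and Pythagorean-triple / Cayley parametrization of rational rotations lets one normalize rationally. The sets $\Lambda_{m^1},\Lambda_{m^2_i}$ are defined as the resulting sets of $k$'s, with $\gamma_{(k)}$ the corresponding coefficients, constant for $\Lambda_{m^2_i}$ and hence trivially smooth and positive. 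Disjointness of $\Lambda_{m^1}$, $\Lambda_{m^2_i}$ and $\Lambda_\mathcal{N}$ is arranged by applying a generic rational rotation $O$ to each family. Conjugating the decomposition $T=\sum\Gamma^2\,\xi\otimes\xi$ by $O$ preserves its form, and rational rotations are dense in $SO(3)$. So we may choose them so that the finitely many resulting $k$-directions avoid one another, avoid $\pm$ each other (should one wish to identify $k$ with $-k$), and avoid $\Lambda_\mathcal{N}$.

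\textbf{Main obstacle.} The only delicate point is the rotation trick in the first family. Rotating $\Lambda^1$ by $O$ requires applying the lemma to $O^{\rm T}(c_0\mathrm{Id}+S)O$, which still lies near $c_0\mathrm{Id}$; hence the coefficients remain smooth and positive on a uniform ball. The outcome is $\gamma_{(k)}(S)=\sqrt{c_0}\,\Gamma_{(O^{\rm T}k_1)}\bigl(\mathrm{Id}+c_0^{-1}O^{\rm T}SO\bigr)$. Since only finitely many rotations are used, $\varepsilon_m$ can be taken as the minimum of the resulting radii. The remaining work is checking that the genericity conditions are finitely many nontrivial algebraic conditions on the rotations, so a dense set of rational choices satisfies them all.
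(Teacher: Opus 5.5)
Your argument is correct, but it is organized differently from the paper's proof in Appendix \ref{Appendix_C}.

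The paper gives a self-contained linear-algebra construction:
it fixes six rational unit vectors $v_1,\dots,v_6$ whose projections $V_i=v_i\otimes v_i$ form a basis of the symmetric matrices. It writes $S=\sum_i r_i(S)V_i$ with explicit linear functionals $r_i$. Each $v_i$ is then used twice: as $k_1$ for one wavevector $k_i^+\perp v_i$, and as $k_2$ for a second wavevector $k_i^-\perp v_i$, with $\gamma^2_{(k_i^\pm)}=1\pm\frac12 r_i(S)$. In effect this is $S=(P_0+\frac12 S)-(P_0-\frac12 S)$ with $P_0=\sum_i V_i$, and both halves depend on $S$.

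You instead split $S=(c_0\mathrm{Id}+S)-c_0\mathrm{Id}$. You invoke the classical near-identity lemma as a black box for the first half and decompose the constant second half with rational orthonormal frames.

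Your route is modular, and it puts all $S$-dependence into $\Lambda_{m^1}$, since the $\Lambda_{m^2}$ coefficients are constants. It also produces the partition $\Lambda_{m^2}=\Lambda_{m^2_1}\cup\Lambda_{m^2_2}\cup\Lambda_{m^2_3}$ naturally from $c_0\mathrm{Id}=3\cdot\frac{c_0}{3}\mathrm{Id}$, a point the paper leaves implicit. The paper's route is explicit and minimal: twelve wavevectors, affine $\gamma^2_{(k)}$, and $\varepsilon_m$ read off from $|r_i(S)|<1$. It needs no density argument in $SO(3)$, and it shows that the black box you call is itself just this linear-algebra trick.

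Three small repairs are needed in your write-up.

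\begin{enumerate}
\item The example $\frac{1}{\sqrt2}(\pm e_j\pm e_l)$ is not rational, and no rational rotation can make it so. Use your fallback of rationally rotated standard frames instead.
\item Completing a rational unit vector $\xi\neq e_1$ to a rational orthonormal frame is cleanest via the rational Householder reflection $H=I-2uu^{\rm T}/|u|^2$ with $u=\xi-e_1$, which maps $e_1$ to $\xi$. ``Normalizing rationally'' is not by itself an argument.
\item Rotating a whole family by a single $O$ cannot separate two members of that family that were completed with the same $k$, since $Ok=Ok'$ forces $k=k'$. Distinctness within a family must come from the freedom in choosing $k$ on the circle $\xi^\perp\cap\mathbb{S}^2$. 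Its rational points $\frac{1-t^2}{1+t^2}He_2+\frac{2t}{1+t^2}He_3$, $t\in\mathbb{Q}$, form an infinite set. This is exactly the device the paper uses with $\Xi_i(t)$ to make its twelve wavevectors pairwise distinct and disjoint from $\Lambda_{\mathcal N}$.
\end{enumerate}
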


As pointed out in \cite{BBV-2020-annpde}, there exists $N_{\Lambda}\in\mathbb{N}$ such that
\begin{align*}
    \{N_{\Lambda}k,N_{\Lambda}k_1,N_{\Lambda}k_2\}\subseteq N_{\Lambda}\mathbb{S}^2\cap\mathbb{Z}^3,\quad \forall k\in\Lambda_m\cup\Lambda_{\mathcal{N}}.
\end{align*}
Furthermore, we denote by $M_*$ the geometric constant such that
\begin{align*}
    \sum_{k\in\Lambda_m}\|\gamma_{(k)}\|_{C^4(B_{{\rm sym},3}(0,\varepsilon_m))}+\sum_{k\in\Lambda_b}\|\gamma_{(k)}\|_{C^4(B_{{\rm skew},3}(0,\varepsilon_{\mathcal{N}}))}\leq M_*.
\end{align*}
This parameter is universal and will be used later in the estimates of the size of perturbations.

Then, let $\Phi:\mathbb{R}\rightarrow\mathbb{R}$ be a smooth cut-off function supported on $[-1,1]$, and then normalize $\Phi$ such that $\phi:=-\frac{{\rm d}^2}{{\rm d}x^2}\Phi$ satisfies
\begin{align*}
    \frac{1}{2\pi}\int_{\mathbb{R}}\phi^2(x){\rm d}x=1.
\end{align*}
The corresponding rescaled cut-off functions are defined by
\begin{align*}
    \phi_{r_{\perp}}(x):=r_{\perp}^{-\frac{1}{2}}\phi\left(\frac{x}{r_{\perp}}\right),\quad\Phi_{r_{\perp}}(x):=r_{\perp}^{-\frac{1}{2}}\Phi\left(\frac{x}{r_{\perp}}\right).
\end{align*}
By abuse of notation, we periodize $\phi_{r_{\perp}}$ and $\Phi_{r_{\perp}}$ so that they are treated as periodic functions defined on $\mathbb{T}^3$.

Inspired by \cite{BV-2019-emssms,BBV-2020-annpde,LQZZ-2022-arxiv,LZZ-2022-jmpa,LZZ-2024-jfa,ChenLiu2022}, we choose the intermittent shear flows as the basic spatial building blocks, defined by
\begin{align*}
        &W_{(k)}:=\phi_{r_{\perp}}(\lambda r_{\perp} N_{\Lambda}k\cdot x)k_1,\quad k\in\Lambda_{m^1}\cup \Lambda_{\mathcal{N}},\\
        &D^i_{(k)}:=\phi_{r_{\perp}}(\lambda r_{\perp} N_{\Lambda}k\cdot x)k_2,\quad k\in\Lambda_{m^2_i}\cup\Lambda_{\mathcal{N}^i},\quad i=1,2,3,
\end{align*}
where $W_{(k)}$ and $D^i_{(k)}$ are the intermittent momentum and elastic shear flows respectively. For ease of notation, we set
    \begin{align*}
        &\phi_{(k)}(x):=\phi_{r_{\perp}}(\lambda r_{\perp} N_{\Lambda}k\cdot x),\\
        &\Phi_{(k)}(x):=\Phi_{r_{\perp}}(\lambda r_{\perp} N_{\Lambda}k\cdot x),
    \end{align*}
and thus, the intermittent shear flows can be reformulated as 
\begin{equation}\label{3.4s}
    \begin{aligned}
        &W_{(k)}=\phi_{(k)}k_1,\quad k\in\Lambda_{m^1}\cup \Lambda_{\mathcal{N}},\\
        &D^i_{(k)}=\phi_{(k)}k_2,\quad k\in\Lambda_{m^2_i}\cup\Lambda_{\mathcal{N}^i}, \quad i=1,2,3.
    \end{aligned}
\end{equation}

Lemma \cref{lem-mikado-flows} below contains their crucial analytic estimates, which provide $2D$ spatial intermittency.
\begin{lem}\label{lem-mikado-flows}(Estimates of Intermittent shear flows). For any $p\in[1,+\infty)$ and $N\in\mathbb{N}$, we have
\begin{align*}
\|\nabla^N\phi_{(k)}\|_{L^p_x}+\|\nabla^N\Phi_{(k)}\|_{L^p_x}\lesssim r_{\perp}^{\frac{1}{p}-\frac{1}{2}}\lambda^N,
\end{align*}
where the implicit constants are independent of $\lambda$ but may depend on $N$.
\end{lem}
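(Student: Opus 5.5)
The plan is to reduce the estimate to a one-dimensional scaling computation followed by a periodicity argument. By definition $\phi_{(k)}(x)=\phi_{r_{\perp}}(\lambda r_{\perp}N_{\Lambda}k\cdot x)$, where $\phi_{r_{\perp}}$ is the $2\pi$-periodization of $r_{\perp}^{-1/2}\phi(\cdot/r_{\perp})$. So $\phi_{(k)}$ depends on $x$ only through the scalar variable $y=\lambda r_{\perp}N_{\Lambda}k\cdot x$. Since $N_{\Lambda}k\in\mathbb{Z}^3$ and $\lambda r_{\perp}$ is an integer, the function is well defined on $\mathbb{T}^3$; this integrality is arranged by the parameter choices in \eqref{eq3.1} and \eqref{eq2.2s}, for instance $b(2-\alpha-8\varepsilon)\in\mathbb{N}$ or the analogous condition, and I would check it first. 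The same argument treats $\Phi_{(k)}$ verbatim, so I only discuss $\phi_{(k)}$.

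\emph{Derivatives.} By the chain rule,
\[
\nabla^N\phi_{(k)}(x)=(\lambda r_{\perp}N_{\Lambda})^N\,\phi_{r_{\perp}}^{(N)}(\lambda r_{\perp}N_{\Lambda}k\cdot x)\,k^{\otimes N},
\]
and $|k|=1$. Away from the overlap of periodic copies, $\phi_{r_{\perp}}^{(N)}(y)=r_{\perp}^{-1/2-N}\phi^{(N)}(y/r_{\perp})$. Hence
\[
|\nabla^N\phi_{(k)}|\lesssim\lambda^N r_{\perp}^{-1/2}\,\big|\phi^{(N)}\big|\big(\lambda N_{\Lambda}k\cdot x\ \text{mod}\ \ldots\big).
\]
The factor $r_{\perp}^N$ from the chain rule exactly cancels the $r_{\perp}^{-N}$ from differentiating the rescaled profile, which is why the bound carries $\lambda^N$ and no extra power of $r_{\perp}$.

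\emph{The $L^p$ norm.} This is the only mildly delicate step. I use the fact that for a function $g(\mu\,n\cdot x)$ on $\mathbb{T}^3$, with $n\in\mathbb{Z}^3\setminus\{0\}$, $\mu\in\mathbb{N}$, and $g$ $2\pi$-periodic,
\[
\|g(\mu n\cdot x)\|_{L^p(\mathbb{T}^3)}=\|g\|_{L^p(\mathbb{T})}\,(2\pi)^{2/p}.
\]
The proof is that the pushforward of Lebesgue measure on $\mathbb{T}^3$ under $x\mapsto\mu n\cdot x \bmod 2\pi$ is uniform measure on $\mathbb{T}$. This follows from Fourier coefficients: $\int e^{ij\mu n\cdot x}\,dx=0$ for $j\neq0$.

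Applying this to $g=\phi_{r_{\perp}}^{(N)}$, it remains to bound the one-dimensional norm of the periodized profile. For $r_{\perp}<1$ the support of $\phi(\cdot/r_{\perp})$ lies in $[-r_{\perp},r_{\perp}]\subset(-\pi,\pi)$, so there is no overlap, and
\[
\|\phi_{r_{\perp}}^{(N)}\|_{L^p(\mathbb{T})}=r_{\perp}^{-1/2-N}\Big(\int|\phi^{(N)}(y/r_{\perp})|^p\,dy\Big)^{1/p}=r_{\perp}^{-1/2-N+1/p}\,\|\phi^{(N)}\|_{L^p(\mathbb{R})}.
\]
Multiplying by $(\lambda r_{\perp}N_{\Lambda})^N$ gives
\[
\|\nabla^N\phi_{(k)}\|_{L^p_x}\lesssim\lambda^N r_{\perp}^{1/p-1/2}.
\]
The constant depends on $N$, $N_{\Lambda}$, $p$, and $\|\phi\|_{C^N}$, but not on $\lambda$. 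This is exactly the claimed estimate.
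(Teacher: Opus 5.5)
Your argument is correct. It is the standard scaling computation behind this lemma, which the paper states without proof, deferring to the convex integration literature it cites: the chain rule gives the factor $(\lambda r_{\perp}N_{\Lambda})^N$, the map $x\mapsto\lambda r_{\perp}N_{\Lambda}k\cdot x$ equidistributes on $\mathbb{T}$, and the non-overlapping rescaled profile scales like $r_{\perp}^{\frac1p-\frac12-N}$ in $L^p$.

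When you check integrality, note that $\lambda r_{\perp}=\lambda_{q+1}^{3-2\alpha-15\varepsilon}$ is an integer only if $b(3-2\alpha-15\varepsilon)\in\mathbb{N}$ (for instance, $b(2-\alpha-8\varepsilon)\in\mathbb{N}$ together with $b\varepsilon\in\mathbb{N}$). The condition written in \eqref{eq2.2s} does not guarantee this on its own. This is a parameter-bookkeeping issue in the paper, not a flaw in your estimate.
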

\subsection{Temporal building blocks}
Since intermittent Mikado flows chosen as in \eqref{3.4s} provide no enough intermittent effect, and thus cannot control the dissipation caused by $(-\Delta)^{\alpha}$ in \eqref{eq2.1}. The key idea here is to exploit more intermittency from the temporal oscillations. This technique was used in \cite{CL-2021-annpde,CL-2022-invent,CL-2023-annpde,LQZZ-2024-jmpa,LZZ-2024-jfa} which obtain sharp non-uniqueness results. However, unlike those works, because of spatial interactions of Mikado flows, inspired by \cite{LQZZ-2022-arxiv,DL-2025-JLMS}, suitable shifts should also be taken into account in the temporal building blocks, so that the supports of different temporal building blocks are disjoint.

More precisely, let $\left\{g_k\right\}_{k \in \Lambda_m\cup\Lambda_{\mathcal{N}}}\subseteq C_c^{\infty}([0, T])$ be cut-off functions such that ${\rm supp}(g_k)\cap{\rm supp}(g_k^{\prime})=\emptyset$ if $k \neq k^{\prime}$, and
\begin{align*}
    \fint_0^Tg^2_k(t){\rm d}t=1
\end{align*}
for all $k \in \Lambda_m\cup\Lambda_{\mathcal{N}}$. The existence of such a family $\left\{g_k\right\}_{k \in \Lambda_m\cup\Lambda_{\mathcal{N}}}$ is guaranteed by the fact that there are finitely many wavevectors in $k \in \Lambda_m\cup\Lambda_{\Lambda_{\mathcal{N}}}$, e.g., $g_k=g\left(t-\alpha_k\right)$, where $g \in C_c^{\infty}([0, T])$ with  small support and $\left\{\alpha_k\right\}_{k \in \Lambda_m\cup\Lambda_{\mathcal{N}}}$ are the temporal shifts such that the supports of $\left\{g_k\right\}$ are disjoint.

Then, for each $k \in \Lambda_m\cup\Lambda_{\mathcal{N}}$, we rescale the cut-off function $g_k$ by
\begin{align*}
    g_{k, \tau}(t)=\tau^{\frac{1}{2}} g_k(\tau t),
\end{align*}
where the concentration parameter $\tau$ is given by \eqref{eq3.1}. Then, we periodize $g_{k,\tau}$ such that the resulting functions (by an abuse of notation, still denoted by $g_{k,\tau}$) are periodic functions defined on $[0,T]$.

In order to balance the high temporal oscillations arising from the concentration functions $g_{k,\tau}$, we need the functions $h_{k,\tau}:[0,T]\rightarrow\mathbb{R}$, defined by
\begin{align*}
    h_{k, \tau}(t):=\int_0^t\left(g_{k, \tau}^2(s)-1\right) d s, \quad t \in[0, T].
\end{align*}

Set
\begin{align}\label{3.5s}
    g_{(k)}(t):=g_{k, \tau}(\sigma t), \quad h_{(k)}(t):=h_{k, \tau}(\sigma t) .
\end{align}
Then, we obtain
\begin{align}\label{3.6s}
    \partial_t\left(\sigma^{-1} h_{(k)}\right)=g_{(k)}^2-1=g_{(k)}^2-\fint_0^T g_{(k)}^2(s) \mathrm{d} s,
\end{align}
where $\sigma$ is given by \eqref{eq3.1}.

Now we introduce the crucial temporal intermittent estimates of $g_{(k)}$ and $h_{(k)}$ as in \cite{LQZZ-2022-arxiv} which are summarized in the following lemma.
\begin{lem}\label{lem-temporal-intermittency}(Estimates of temporal intermittency). For any $\gamma\in[1,+\infty]$, $M\in\mathbb{N}$, we have
\begin{align}\label{3.7s}  \|\partial_t^Mg_{(k)}\|_{L^{\gamma}_t}\lesssim\sigma^M\tau^{M+\frac{1}{2}-\frac{1}{\gamma}},
\end{align}
where the implicit constants are independent of $\sigma$ and $\tau$ but may depend on $M$. Moreover, we have the uniform bound
\begin{align}\label{3.8s}
    \|h_{(k)}\|_{L^{\infty}_t}\leq1.
\end{align}
\end{lem}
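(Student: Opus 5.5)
The estimates reduce to a direct change of variables, using the scaling built into the construction. Since $g_k\in C_c^\infty([0,T])$ is fixed and independent of $\sigma,\tau$, every $L^\gamma$ norm of $\partial_t^M g_k$ is a universal constant depending only on $M$ and $\gamma$. The periodization of $g_{k,\tau}$ and the outer dilation $t\mapsto\sigma t$ only replicate copies of the basic profile. Scaling therefore fixes the powers of $\tau$, and periodicity makes the $\sigma$-dilation neutral for norms and multiplicative for derivatives.

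\textbf{Step 1 (estimate \eqref{3.7s}).} By the chain rule,
\begin{align*}
\partial_t^M g_{(k)}(t)=\sigma^M(\partial_t^M g_{k,\tau})(\sigma t),\qquad \partial_t^M g_{k,\tau}(t)=\tau^{\frac12+M}(\partial_t^M g_k)(\tau t).
\end{align*}
For $\gamma<\infty$ I would compute $\|\tau^{\frac12+M}(\partial_t^Mg_k)(\tau\cdot)\|_{L^\gamma}$ over one period of length $T$ (identified with $\mathbb{T}$). Substituting $s=\tau t$ gives $\tau^{\frac12+M-\frac1\gamma}\|\partial_t^M g_k\|_{L^\gamma}$, because $\tau t$ sweeps the support of $g_k$ exactly once per period of the periodized function.

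Next, the map $t\mapsto\sigma t$ with $\sigma\in\mathbb{N}$ preserves $L^\gamma$ norms of $T$-periodic functions. Here $\sigma=\lambda_{q+1}^{5\varepsilon}$, and I will use that $\varepsilon$ is rational and $a$ is chosen so that $\sigma$ and $\tau$ are integers (or simply note that the scheme implicitly requires this). Indeed, $\int_0^T|F(\sigma t)|^\gamma\,dt=\sigma^{-1}\int_0^{\sigma T}|F|^\gamma=\int_0^T|F|^\gamma$. This gives $\sigma^M\tau^{M+\frac12-\frac1\gamma}$. The case $\gamma=\infty$ is immediate from the pointwise formula.

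\textbf{Step 2 (estimate \eqref{3.8s}).} The function $g_{k,\tau}^2-1$ has zero mean over each period $[jT/\tau,(j+1)T/\tau]$ of the $\tau$-periodization, using $\fint_0^T g_k^2=1$. Hence $h_{k,\tau}$ is $T/\tau$-periodic and vanishes at every multiple of $T/\tau$. On one period,
\begin{align*}
|h_{k,\tau}(t)|\le \max\Big(\int g_{k,\tau}^2,\ \int 1\Big)\le \frac{T}{\tau}\max(\fint g_k^2,1)\cdot\mathrm{const},
\end{align*}
since $h$ is a difference of two nondecreasing functions that each increase by $T/\tau$ over the period. Thus $|h_{k,\tau}|\le T/\tau\le1$ for $\tau\ge T$, which holds for $a$ large. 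Composing with $t\mapsto\sigma t$ does not change the sup norm.

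\textbf{Main obstacle.} The only real subtlety is bookkeeping the periodization: one must make sure that $\tau$ and $\sigma$ are integers, or at least that $T/\tau$-periodic copies tile $[0,T]$. Otherwise boundary periods introduce harmless extra factors, which are bounded by an extra constant that only enters the implicit constant.
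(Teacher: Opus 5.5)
Your Step 1 is correct, and it is the standard scaling argument for this lemma; the paper does not prove the lemma but imports it from the cited Cheskidov--Luo type constructions. On one period $[0,T]$ the profile is $\tau^{1/2}g_k(\tau t)$, supported in $[0,T/\tau]$. The substitution $s=\tau t$ then gives $\tau^{M+\frac12-\frac1\gamma}\|\partial_t^Mg_k\|_{L^\gamma}$, and the dilation $t\mapsto\sigma t$ contributes the factor $\sigma^M$. Integrality of $\tau$ is never needed, and a non-integer $\sigma\ge1$ changes the $L^\gamma$ norm by at most a factor $2$.

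Step 2, however, contains a genuine error. You silently switch to a periodization of period $T/\tau$, and your central claim is false under either reading. Across one bump $\int g_{k,\tau}^2=\int_0^Tg_k^2=T$. So on an interval of length $T/\tau$ containing the bump, the mean of $g_{k,\tau}^2$ is $\tau$, not $1$, and $g_{k,\tau}^2-1$ does not average to zero there. For the same reason, $\int g_{k,\tau}^2$ increases by $T$, not by $T/\tau$, across a bump, so the bound $|h_{k,\tau}|\le T/\tau$ is wrong. In reality $h_{k,\tau}$ is a sawtooth: it rises by nearly $T$ across the bump and then decreases with slope $-1$, so $\sup|h_{k,\tau}|$ is comparable to $T$.

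Smallness of order $1/\tau$ is in fact incompatible with the concentration you used in Step 1. It would force $\int_I g_{k,\tau}^2\le 3T/\tau$ on every interval $I$ of length $T/\tau$, whereas the bump carries mass $T$.

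The correct argument keeps the Step 1 periodization. The identity $\int_0^Tg_{k,\tau}^2=\int_0^Tg_k^2=T$ is exactly where $\fint_0^Tg_k^2=1$ enters, so $h_{k,\tau}$ vanishes at multiples of $T$ and is $T$-periodic. On $[0,T]$ write $h_{k,\tau}(t)=G(t)-t$ with $G(t)=\int_0^tg_{k,\tau}^2$. Since $G$ is nondecreasing and $0\le G(t)\le T$, $0\le t\le T$, you get $|h_{k,\tau}|\le T$, and composing with $t\mapsto\sigma t$ preserves the sup norm. This yields $\|h_{(k)}\|_{L^\infty_t}\le 1$ when $T\le 1$ (the normalization of the cited works), and $\lesssim 1$ in general, which is all that the later estimates use. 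No smallness in $\tau$ is available.
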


\subsection{Elastic and momentum perturbations}
This section is devoted to the construction of the key elastic and momentum perturbations, including the
principal parts, the almost incompressibility correctors and the temporal correctors. 

To begin with, let us first define suitable amplitudes of perturbations as similar fashion in \cite{BBV-2020-annpde}, which play the key role in the cancellation between the low-frequency part of the nonlinearity and the old Reynolds stress and elastic stress fluctuation.
\subsubsection{Amplitudes}
Let $\chi:[0,+\infty)\rightarrow\mathbb{R}$ be a smooth spatial cut-off function satisfying
\begin{align}\label{3.9s}
    \chi(z)=\left\{
    \begin{aligned}
     & 1,\quad0\leq z\leq1,\\  
     &z,\quad z\geq2,
    \end{aligned}
    \right.
\end{align}
and
\begin{align}\label{3.10s}
    \frac{1}{2}z\leq\chi(z)\leq2z,\quad 1<z<2.
\end{align}

Define
\begin{align}\label{3.11s}
\varrho_{\mathcal{N}^i}(t,x):=2\varepsilon_{\mathcal{N}}^{-1}\delta_{q+1}\chi\left(\frac{|M^i_l(t,x)|}{\delta_{q+1}}\right),\quad i=1,2,3,
\end{align}
where $\varepsilon_{\mathcal{N}}>0$ is the small constant in the first geometric lemma, Lemma \ref{lem-1st-geometric}. Then, by \eqref{eq2.8}, \eqref{eq2.9}, \eqref{eq2.26}, \eqref{3.9s}--\eqref{3.11s} and standard H\"older estimates, for $1\leq N\leq4$ and $i=1,2,3$, we have
\begin{align}
    &\left|\frac{M^i_l(t,x)}{\varrho_{\mathcal{N}^i}(t,x)}\right|\leq\varepsilon_,\quad\varrho_{\mathcal{N}^i}(t)\geq\varepsilon_{\mathcal{N}}^{-1}\delta_{q+1},\quad\forall t\in[0,T],x\in\mathbb{T}^3,\label{3.12s}\\   &\|\varrho_{\mathcal{N}^i}\|_{L^p_{t,x}}\leq8\varepsilon_{\mathcal{N}}^{-1}\left((16T\pi)^{\frac{1}{p}}\delta_{q+1}+\|M^i_l\|_{L^p_{t,x}}\right),\quad\forall p\in[1,+\infty),\label{3.13s}\\
    &\|\varrho_{\mathcal{N}^i}\|_{C_{t,x}}\lesssim l^{-1},\quad\|\varrho_{\mathcal{N}^i}\|_{C^N_{t,x}}\lesssim l^{-N}\delta_{q+1}^{-N+1},\label{3.14s}\\
    &\|\varrho^{\frac{1}{2}}_{\mathcal{N}^i}\|_{C_{t,x}}\lesssim l^{-1},\quad\|\varrho^{\frac{1}{2}}_{\mathcal{N}^i}\|_{C^N_{t,x}}\lesssim l^{-N}\delta_{q+1}^{-2N},\label{3.15s}\\
    &\|\varrho^{-1}_{\mathcal{N}^i}\|_{C_{t,x}}\lesssim\delta^{-1}_{q+1},\quad\|\varrho_{\mathcal{N}^i}^{-1}\|_{C^N_{t,x}}\lesssim l^{-N}\delta_{q+1}^{-2N},\label{3.16s}
\end{align}
where the implicit constants are independent of $q$.

Then, for all $i=1,2,3$, the smooth temporal cut-off function $f_{\mathcal{N}^i}(t)$ is defined by
\begin{itemize}
    \item $0\leq f_{\mathcal{N}^i}\leq1\text{ and } f_{\mathcal{N}^i}\equiv1\text{ on }{{\rm supp}_t(M^i_l)}$;
     \item ${\rm supp}_t(f_{\mathcal{N}^i})\subseteq N_l({\rm supp}_t(M^i_l))$;
     \item $\|f_{\mathcal{N}^i}\|_{C^N_t}\lesssim l^{-N},1\leq N\leq5$.
\end{itemize}

The amplitudes of the elastic perturbations are defined by
\begin{align}\label{3.17s}
    a_{(k)}(t,x):=f_{\mathcal{N}^i}(t)\varrho^{\frac{1}{2}}_{\mathcal{N}^i}(t,x)\rho^{\frac{1}{2}}_l(t,x)\gamma_{(k)}\left(\frac{-M^i_l(t,x)}{\varrho_{\mathcal{N}^i}(t,x)}\right),\quad k\in\Lambda_{\mathcal{N}^i},
\end{align}
where $\gamma_{(k)}$ and $\Lambda_{\mathcal{N}^i}$ are given in Lemma \ref{lem-1st-geometric}, and $\rho_l$ is the mollified density given by \eqref{eq2.16}.

Note that, by virtue of Lemma \ref{lem-1st-geometric}, the identity \eqref{eq3.2} and the expression \eqref{3.17s} of $a_{(k)}$, the following identity holds for all $i=1,2,3$:
\begin{equation}\label{3.18s}
    \begin{aligned} &\quad\sum_{k\in\Lambda_{\mathcal{N}^i}}\rho_l^{-1}a^2_{(k)}g^2_{(k)}\left(D^i_{(k)}\otimes W_{(k)}-W_{(k)}\otimes D^i_{(k)}\right)\\
  &=-M^i_l+\sum_{k\in\Lambda_{\mathcal{N}^i}}\rho_l^{-1}a^2_{(k)}g^2_{(k)}\mathbb{P}_{\neq0}\left(D^i_{(k)}\otimes W_{(k)}-W_{(k)}\otimes D^i_{(k)}\right)\\
  &\quad+\sum_{k\in\Lambda_{\mathcal{N}^i}}\rho_l^{-1}a^2_{(k)}\left(g^2_{(k)}-1\right)\fint_{\mathbb{T}^3}\left(D^i_{(k)}\otimes W_{(k)}-W_{(k)}\otimes D^i_{(k)}\right){\rm d}x,
    \end{aligned}
\end{equation}
where $\mathbb{P}_{\neq0}$ denotes the spatial projection onto nonzero Fourier modes.

Moreover, the analytic estimates of elastic amplitudes are included in the following lemma, whose proof is similar to \cite[Lemma 4.1]{LZZ-2022-jmpa}, and so the proof is omitted here.
\begin{lem}\label{lem-magnetic-amplitude-estimate}(Estimates of magnetic amplitude). For any $1\leq N\leq4$, $k\in\Lambda_{\mathcal{N}^i}$, we have
\begin{align}
    &\|a_{(k)}\|_{L^2_{t,x}}\lesssim\delta_{q+1}^{\frac{1}{2}},\label{3.19s}\\
    &\|a_{(k)}\|_{C_{t,x}}\lesssim l^{-2},\quad\|a_{(k)}\|_{C^N_{t,x}}\lesssim l^{-4N-1},\label{3.20s}
\end{align}
where the implicit constants are independent of $q$.
\end{lem}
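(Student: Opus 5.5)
The plan is to estimate $a_{(k)}=f_{\mathcal{N}^i}\varrho_{\mathcal{N}^i}^{1/2}\rho_l^{1/2}\gamma_{(k)}(-M^i_l/\varrho_{\mathcal{N}^i})$ factor by factor: Hölder's inequality gives the $L^2$ bound, and the Leibniz rule together with the chain rule gives the $C^N$ bounds. The relevant inputs are the amplitude bounds \eqref{3.12s}--\eqref{3.16s}, the density bounds \eqref{eq2.21}--\eqref{eq2.22}, the geometric constant $M_*$, and the cutoff bounds $\|f_{\mathcal{N}^i}\|_{C^N_t}\lesssim l^{-N}$.

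\textbf{$L^2$ bound \eqref{3.19s}.} Since $0\le f_{\mathcal{N}^i}\le 1$ and $\rho_l\le C_2$, and since by \eqref{3.12s} the argument $-M^i_l/\varrho_{\mathcal{N}^i}$ lies in $B_{{\rm skew},3}(0,\varepsilon_{\mathcal{N}})$, we have $|\gamma_{(k)}|\le M_*$. Hence $|a_{(k)}|\lesssim \varrho_{\mathcal{N}^i}^{1/2}$ pointwise, and so
$$\|a_{(k)}\|_{L^2_{t,x}}^2\lesssim\|\varrho_{\mathcal{N}^i}\|_{L^1_{t,x}}.$$
By \eqref{3.13s} with $p=1$ and \eqref{eq2.27}, the right-hand side is $\lesssim \delta_{q+1}+\|M^i_l\|_{L^1_{t,x}}\lesssim\delta_{q+1}$. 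This proves \eqref{3.19s}.

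\textbf{$C^0$ bound in \eqref{3.20s}.} Using the same pointwise bound, $\|a_{(k)}\|_{C_{t,x}}\lesssim\|\varrho^{1/2}_{\mathcal{N}^i}\|_{C_{t,x}}\lesssim l^{-1}\le l^{-2}$ by \eqref{3.15s}.

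\textbf{$C^N$ bounds in \eqref{3.20s}.} Apply the Leibniz rule to the four factors and bound each one.
\begin{itemize}
\item For the time cutoff, $\|f_{\mathcal{N}^i}\|_{C^j_t}\lesssim l^{-j}$.
\item For the amplitude, $\|\varrho^{1/2}_{\mathcal{N}^i}\|_{C^j}\lesssim l^{-j}\delta_{q+1}^{-2j}$ by \eqref{3.15s}.
\item For the density, $\rho_l^{1/2}$ is a smooth function of $\rho_l\in[C_1,C_2]$, and $\rho_l$ has bounded time derivatives and $C^N$ norms at most $\lambda_q^{N\varepsilon/4}$ by \eqref{eq2.22}. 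Higher time derivatives of $\rho_l$ are controlled by mollification, giving at most $l^{-j}$. So $\|\rho_l^{1/2}\|_{C^j}\lesssim l^{-j}$.
\item For the geometric factor, the Faà di Bruno formula gives $\|\gamma_{(k)}(\cdot)\|_{C^j}\lesssim\sum_{m\le j}\|\nabla^m\gamma_{(k)}\|_{C}\,\|M^i_l/\varrho_{\mathcal{N}^i}\|_{C^1}^{j_1}\cdots$, and the $C^m$ norms of $\gamma_{(k)}$ are bounded by $M_*$ for $m\le4$. Writing $M^i_l\varrho^{-1}_{\mathcal{N}^i}$ as a product and using \eqref{eq2.26} and \eqref{3.16s}, we get $\|M^i_l\varrho^{-1}_{\mathcal{N}^i}\|_{C^j}\lesssim l^{-j}\delta_{q+1}^{-2j-1}\cdot l^{-1}$ at worst. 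Hence the geometric factor has $C^N$ norm $\lesssim (l^{-2}\delta_{q+1}^{-3})^{N}$.
\end{itemize}
Collecting the factors gives $\|a_{(k)}\|_{C^N}\lesssim l^{-1}\,l^{-2N}\delta_{q+1}^{-3N}$.

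The main step is to convert the powers of $\delta_{q+1}^{-1}$ into powers of $l^{-1}$. Since $\delta_{q+1}^{-1}=\lambda_{q+1}^{2\beta}\lambda_1^{-3\beta}\le\lambda_q^{2b\beta}$ and $2b\beta<1/(50b)$ by \eqref{eq2.3s}, while $l^{-1}=\lambda_q^{30}$, we have $\delta_{q+1}^{-3}\le l^{-1}$ with plenty of room. Therefore $\|a_{(k)}\|_{C^N_{t,x}}\lesssim l^{-1}\,l^{-3N}\le l^{-4N-1}$, which is \eqref{3.20s}.

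The only delicate point is the chain-rule bookkeeping in the geometric factor, specifically ensuring that the $\delta_{q+1}^{-1}$ losses coming from $\varrho^{-1}$ stay polynomially small relative to $l^{-1}$. The smallness of $\beta$ in \eqref{eq2.3s} guarantees this. The argument mirrors \cite[Lemma 4.1]{LZZ-2022-jmpa}.
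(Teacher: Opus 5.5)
Your proposal is correct and is essentially the standard argument the paper defers to via \cite[Lemma 4.1]{LZZ-2022-jmpa}. The $L^2$ bound follows from the pointwise bound $|a_{(k)}|\lesssim\varrho_{\mathcal{N}^i}^{1/2}$ together with \eqref{3.13s} (for $p=1$) and \eqref{eq2.27}. The $C^N$ bounds follow from the Leibniz rule and Fa\`a di Bruno, with the $\delta_{q+1}^{-1}$ losses absorbed into powers of $l^{-1}$ by the smallness of $\beta$ in \eqref{eq2.3s}; the only ingredient absent from the incompressible case, the factor $\rho_l^{1/2}$, is handled correctly via \eqref{eq2.21}--\eqref{eq2.22} and mollification.
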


Next we will focus on the momentum amplitudes. Unlike the previous elastic case, because of the strong coupling between the momentum and elastic stress, that is, sparked by \cite{BBV-2020-annpde,LZZ-2022-jmpa,LZZ-2024-jfa,ChenLiu2022},  a matrix $G^{\mathcal{N}}$ is indispensable in order to maintain the cancellation between the momentum perturbations and old Reynolds stress, where
\begin{align}\label{3.21s}  G^{\mathcal{N}}:=\sum_{i=1}^3\sum_{k\in\Lambda_{\mathcal{N}^i}}\rho_l^{-1}a^2_{(k)}\left(\fint_{\mathbb{T}^3}W_{(k)}\otimes W_{(k)}{\rm d}x-\fint_{\mathbb{T}^3}D^i_{(k)}\otimes D^i_{(k)}{\rm d}x\right).
\end{align}
In view of estimates \eqref{eq2.21}, \eqref{eq2.22}, \eqref{3.19s} and \eqref{3.20s}, we have that for $N\geq1$,
\begin{align}\label{3.22s}
    \|G^{\mathcal{N}}\|_{C_{t,x}}\lesssim l^{-2},\quad\|G^{\mathcal{N}}\|_{C^N_{t,x}}\lesssim l^{-4N-2},\quad\|G^{\mathcal{N}}\|_{L^1_{t,x}}\lesssim\delta_{q+1}.
\end{align}

Set
\begin{align}\label{3.23s} \varrho_m(t,x):=2\varepsilon_m^{-1}\delta_{q+1}\chi\left(\frac{|R^m_l(t,x)+G^{\mathcal{N}}(t,x)|}{\delta_{q+1}}\right),
\end{align}
where $\varepsilon_m>0$ is the small constant in the second geometric lemma, Lemma \ref{lem-2nd-geometric}. Then, by \eqref{eq2.8}, \eqref{eq2.9}, \eqref{eq2.26}, \eqref{3.9s}--\eqref{3.11s}, \eqref{3.22s} and standard H\"older estimates, for $1\leq N\leq4$,
\begin{align}
    &\left|\frac{R^m_l(t,x)+G^{\mathcal{N}}(t,x)}{\varrho_m(t,x)}\right|\leq\varepsilon_m,\quad\varrho_m(t,x)\geq\varepsilon_m^{-1}\delta_{q+1},\quad\forall t\in[0,T],x\in\mathbb{T}^3,\label{3.24s}\\   &\|\varrho_m\|_{L^p_{t,x}}\leq8\varepsilon_m^{-1}\left((16T\pi)^{\frac{1}{p}}\delta_{q+1}+\|R^m_l+G^{\mathcal{N}}\|_{L^p_{t,x}}\right),\quad\forall p\in[1,+\infty),\label{3.25s}\\
    &\|\varrho_m\|_{C_{t,x}}\lesssim l^{-2},\quad\|\varrho_m\|_{C^N_{t,x}}\lesssim l^{-5N-2}\delta_{q+1}^{-2N},\label{3.26s}\\
    &\|\varrho^{\frac{1}{2}}_m\|_{C_{t,x}}\lesssim l^{-1},\quad\|\varrho^{\frac{1}{2}}_m\|_{C^N_{t,x}}\lesssim l^{-5N-2}\delta_{q+1}^{-2N},\label{3.27s}\\
    &\|\varrho^{-1}_m\|_{C_{t,x}}\lesssim \delta^{-1}_{q+1},\quad\|\varrho_m^{-1}\|_{C^N_{t,x}}\lesssim l^{-5N-2}\delta_{q+1}^{-2N},\label{3.28s}
\end{align}
where the implicit constants are independent of $q$.

We choose the smooth temporal cut-off function $f_m(t)$ such that
\begin{itemize}
    \item $0\leq f_m\leq1\text{ and } f_m\equiv1\text{ on }{{\rm supp}_t(R^m_l)\cup{\rm supp}_t(G^\mathcal{N})}$;
     \item ${\rm supp}_t(f_m)\subseteq N_l({\rm supp}_t(R^m_l)\cup{\rm supp}_t(G^{\mathcal{N}}))\subseteq N_{2l}({\rm supp}_t(R^m_l)\cup(\bigcup_{i=1}^3{\rm supp}_t(M^i_l)))$;
     \item $\|f_m\|_{C^N_t}\lesssim l^{-N},1\leq N\leq5$.
\end{itemize}

The amplitudes of the momentum perturbations are defined by
\begin{align}\label{3.29s}
    a_{(k)}(t,x):=f_m(t)\varrho^{\frac{1}{2}}_m(t,x)\rho^{\frac{1}{2}}_l(t,x)\gamma_{(k)}\left(-\frac{R^m_l(t,x)+G^{\mathcal{N}}(t,x)}{\varrho_m(t,x)}\right),\quad k\in\Lambda_m,
\end{align}
where $\gamma_{(k)}$ and $\Lambda_m$ are given in Lemma \ref{lem-2nd-geometric}, and $\rho_l$ is the mollified density given by \eqref{eq2.16}.

Similar to \eqref{3.18s}, we can obtain the following identity by Lemma \ref{lem-2nd-geometric} and \eqref{3.29s},
\begin{equation}\label{3.30s}
    \begin{aligned}
        &\sum_{k\in\Lambda_{m^1}}\rho_l^{-1}a^2_{(k)}g^2_{(k)}\left(W_{(k)}\otimes W_{(k)}\right)-\sum_{i=1}^3\sum_{k\in\Lambda_{m^2_i}}\rho_l^{-1}a^2_{(k)}g^2_{(k)}\left(D^i_{(k)}\otimes D^i_{(k)}\right)\\
        &=-R^m_l-G^{\mathcal{N}}+\sum_{k\in\Lambda_{m^1}}\rho_l^{-1}a^2_{(k)}g^2_{(k)}\mathbb{P}_{\neq0}\left(W_{(k)}\otimes W_{(k)}\right)\\
        &\quad-\sum_{i=1}^3\sum_{k\in\Lambda_{m^2_i}}\rho_l^{-1}a^2_{(k)}g^2_{(k)}\mathbb{P}_{\neq0}\left(D^i_{(k)}\otimes D^i_{(k)}\right)\\
        &\quad+\sum_{k\in\Lambda_{m^1}}\rho_l^{-1}a^2_{(k)}(g^2_{(k)}-1)\fint_{\mathbb{T}^3}\left(W_{(k)}\otimes W_{(k)}\right){\rm d}x\\
        &\quad-\sum_{i=1}^3\sum_{k\in\Lambda_{m^2_i}}\rho_l^{-1}a^2_{(k)}(g^2_{(k)}-1)\fint_{\mathbb{T}^3}\left(D^i_{(k)}\otimes D^i_{(k)}\right){\rm d}x.
    \end{aligned}
\end{equation}

\begin{rem}
    Note the fact that $\Lambda_m=\Lambda_{m^1}\cup\Lambda_{m^2}$ and $\Lambda_{m^1}\cap\Lambda_{m^2}=\varnothing$, which leads that all the error caused by $W_{(k)}\otimes D^i_{(k^{\prime})}-D^i_{(k^{\prime})}\otimes W_{(k)}$ with $k\in\Lambda_{m^1}$ and $k^{\prime}\in\Lambda_{m^2}$ will vanish on the elastic equations.
\end{rem}

Next, the analytic estimates of momentum amplitudes are collected in Lemma \ref{lem-momentum-amplitude-estimate}, whose proof is similar to \cite[Lemma 4.2]{LZZ-2022-jmpa}.
\begin{lem}\label{lem-momentum-amplitude-estimate}(Estimates of momentum amplitudes). 
    For any $1\leq N\leq4$, $k\in\Lambda_m$, we have
\begin{align}
    &\|a_{(k)}\|_{L^2_{t,x}}\lesssim\delta_{q+1}^{\frac{1}{2}},\label{3.31s}\\
    &\|a_{(k)}\|_{C_{t,x}}\lesssim l^{-2},\quad\|a_{(k)}\|_{C^N_{t,x}}\lesssim l^{-9N-2},\label{3.32s}
\end{align}
where the implicit constants are independent of $q$.
\end{lem}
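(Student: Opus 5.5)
The plan is to estimate the amplitude
\[
a_{(k)}=f_m\,\varrho_m^{1/2}\,\rho_l^{1/2}\,\gamma_{(k)}\Bigl(-\tfrac{R^m_l+G^{\mathcal{N}}}{\varrho_m}\Bigr)
\]
factor by factor, using the bounds already established for $\varrho_m$, $G^{\mathcal{N}}$, $\rho_l$ and $\gamma_{(k)}$, in the same way as \cite[Lemma 4.2]{LZZ-2022-jmpa}.

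\textbf{The $L^2$ bound \eqref{3.31s}.} Since $0\le f_m\le1$, $\rho_l\le C_2$ by \eqref{eq2.21}, and $|\gamma_{(k)}|\le M_*$ (the argument lies in $B_{{\rm sym},3}(0,\varepsilon_m)$ by \eqref{3.24s}), we have pointwise $|a_{(k)}|^2\lesssim\varrho_m$. Hence
\[
\|a_{(k)}\|_{L^2_{t,x}}^2\lesssim\|\varrho_m\|_{L^1_{t,x}}.
\]
By \eqref{3.25s} with $p=1$,
\[
\|\varrho_m\|_{L^1_{t,x}}\lesssim\delta_{q+1}+\|R^m_l\|_{L^1_{t,x}}+\|G^{\mathcal{N}}\|_{L^1_{t,x}}.
\]
The last two terms are bounded by \eqref{eq2.27} and \eqref{3.22s}, so $\|\varrho_m\|_{L^1_{t,x}}\lesssim\delta_{q+1}$, which gives \eqref{3.31s}.

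\textbf{The sup bound.} The bound $\|a_{(k)}\|_{C_{t,x}}\lesssim l^{-1}\le l^{-2}$ follows at once from \eqref{3.27s} and the boundedness of the other three factors.

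\textbf{The $C^N$ bounds.} Apply the Leibniz rule to the four factors, and the Faà di Bruno formula to $\gamma_{(k)}\circ(-(R^m_l+G^{\mathcal{N}})\varrho_m^{-1})$.
\begin{itemize}
\item For the inner argument, combine \eqref{eq2.26}, \eqref{3.22s} and \eqref{3.28s}. The product $(R^m_l+G^{\mathcal{N}})\varrho_m^{-1}$ has $C^N$ norm bounded by sums of terms like $l^{-4j-2}\cdot l^{-5(N-j)-2}\delta_{q+1}^{-2(N-j)}$.
\item Since $\delta_{q+1}^{-1}=\lambda_{q+1}^{2\beta}\lambda_1^{-3\beta}$ and $\beta<1/(100b^2)$, while $l^{-1}=\lambda_q^{30}$, we get $\delta_{q+1}^{-1}\le\lambda_q^{2\beta b}\ll l^{-1/10}$. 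Every power of $\delta_{q+1}^{-1}$ can therefore be absorbed into a tiny power of $l^{-1}$.
\item Faà di Bruno with $\|\gamma_{(k)}\|_{C^4}\le M_*$ then bounds the $C^N$ norm of the composite by roughly $l^{-6N-2}$.
\item The remaining factors are handled as follows: $\varrho_m^{1/2}$ by \eqref{3.27s}; $\rho_l^{1/2}$ by \eqref{eq2.22}, using $\rho_l\ge C_1$ and the chain rule, which costs only $\lambda_q^{N\varepsilon/4}$ in space (time derivatives are harmless after mollification); and $f_m$ by $l^{-N}$.
\end{itemize}
Summing over the distributions of derivatives, every term is $\lesssim l^{-9N-2}$, with room to spare.

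\textbf{Main obstacle.} The only delicate point is the bookkeeping of the $\delta_{q+1}^{-2N}$ losses, together with checking that the exponent $9N+2$ dominates all combinations. Both are ensured by the smallness of $\beta$ relative to $b$ in \eqref{eq2.3s}.
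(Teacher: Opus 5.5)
Your argument is essentially the paper's. The paper gives no proof and defers to \cite[Lemma 4.2]{LZZ-2022-jmpa}, which is exactly this factor-by-factor estimate: the pointwise bound $|a_{(k)}|^2\lesssim\varrho_m$ with \eqref{3.25s}, \eqref{eq2.27}, \eqref{3.22s} for the $L^2$ bound, and the Leibniz and chain rules with \eqref{eq2.22}, \eqref{eq2.26}, \eqref{3.22s}, \eqref{3.27s}, \eqref{3.28s} for the $C^N$ bounds.

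One correction to the bookkeeping you flagged as the delicate point. With $h=-(R^m_l+G^{\mathcal{N}})\varrho_m^{-1}$, those inputs give $\|h\|_{C^1_{t,x}}\lesssim l^{-9}\delta_{q+1}^{-2}$. So the Fa\`a di Bruno term with $N$ first-order factors yields $\|\gamma_{(k)}(h)\|_{C^N_{t,x}}\lesssim l^{-9N}\delta_{q+1}^{-2N}$, not $l^{-6N-2}$. The leading contribution to $a_{(k)}$ is then $\|\varrho_m^{1/2}\|_{C_{t,x}}\|\gamma_{(k)}(h)\|_{C^N_{t,x}}\lesssim l^{-9N-1}\delta_{q+1}^{-2N}$. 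This is still $\lesssim l^{-9N-2}$, because $\delta_{q+1}^{-2N}\le\delta_{q+1}^{-8}\ll l^{-1}$ by your own observation $\delta_{q+1}^{-1}\ll l^{-1/10}$. The lemma therefore holds, but with only that single factor of $l$ to spare rather than ``room to spare''.
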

\subsubsection{Principal parts of perturbations}
The principal parts of the momentum and elastic perturbations are defined by
\begin{align}    &w_{q+1}^{(p)}:=\sum_{k\in\Lambda_{m^1}\cup\Lambda_{\mathcal{N}}}a_{(k)}g_{(k)}W_{(k)},\label{3.33s}\\    &f_{q+1}^{i,(p)}:=\sum_{k\in\Lambda_{m^2_i}\cup\Lambda_{\mathcal{N}^i}}a_{(k)}g_{(k)}D^i_{(k)},\quad i=1,2,3.\label{3.34s}
\end{align}
Since the temporal supports of $g_{(k)}$ are pairwise disjoint, all cross-interaction terms vanish. By \eqref{3.6s} and the algebraic identity \eqref{3.18s}, for all $i=1,2,3$,
\begin{equation}\label{3.35s}
    \begin{aligned}
        &\quad\rho_l^{-1}\left(f_{q+1}^{i,(p)}\otimes w_{q+1}^{(p)}-w_{q+1}^{(p)}\otimes f_{q+1}^{i,(p)}\right)+M^i_l\\       &=\sum_{k\in\cup\Lambda_{\mathcal{N}^i}}\rho_l^{-1}a^2_{(k)}g^2_{(k)}\left(D^i_{(k)}\otimes W_{(k)}-W_{(k)}\otimes D^i_{(k)}\right)+M^i_l\\
        &=\sum_{k\in\Lambda_{\mathcal{N}^i}}\rho_l^{-1}a^2_{(k)}g^2_{(k)}\mathbb{P}_{\neq0}\left(D^i_{(k)}\otimes W_{(k)}-W_{(k)}\otimes D^i_{(k)}\right)\\
  &\quad+\sum_{k\in\Lambda_{\mathcal{N}^i}}\rho_l^{-1}a^2_{(k)}\partial_t\left(\sigma^{-1}h_{(k)}\right)\fint_{\mathbb{T}^3}\left(D^i_{(k)}\otimes W_{(k)}-W_{(k)}\otimes D^i_{(k)}\right){\rm d}x
    \end{aligned}
\end{equation}
Moreover, for the nonlinearity in the momentum equations $\eqref{eq2.1}_2$, by \eqref{3.6s}, \eqref{3.21s} and \eqref{3.30s},
\begin{align}
&\quad\rho_l^{-1}\left(w_{q+1}^{(p)}\otimes w_{q+1}^{(p)}-\sum_{i=1}^3f_{q+1}^{i,(p)}\otimes f_{q+1}^{i,(p)}\right)+R^m_l\notag\\
        &=\sum_{k\in\Lambda_{m^1}}\rho_l^{-1}a^2_{(k)}g^2_{(k)}W_{(k)}\otimes W_{(k)}-\sum_{i=1}^3\sum_{k\in\Lambda_{m^2_i}}\rho_l^{-1}a^2_{(k)}g^2_{(k)}D^i_{(k)}\otimes D^i_{(k)}+R^m_l\notag\\
   &\quad+\sum_{i=1}^3\sum_{k\in\Lambda_{\mathcal{N}^i}}\rho_l^{-1}a^2_{(k)}g^2_{(k)}\left(W_{(k)}\otimes W_{(k)}-D^i_{(k)}\otimes D^i_{(k)}\right)\notag\\
   &=\sum_{k\in\Lambda_{m^1}}\rho_l^{-1}a^2_{(k)}g^2_{(k)}\mathbb{P}_{\neq0}\left(W_{(k)}\otimes W_{(k)}\right)-\sum_{i=1}^3\sum_{k\in\Lambda_{m^2_i}}\rho_l^{-1}a^2_{(k)}g^2_{(k)}\mathbb{P}_{\neq0}\left(D^i_{(k)}\otimes D^i_{(k)}\right)\label{3.36s}\\
        &\quad+\sum_{k\in\Lambda_{m^1}}\rho_l^{-1}a^2_{(k)}\partial_t\left(\sigma^{-1}h_{(k)}\right)\fint_{\mathbb{T}^3}W_{(k)}\otimes W_{(k)}{\rm d}x\notag\\
        &\quad-\sum_{i=1}^3\sum_{k\in\Lambda_{m^2_i}}\rho_l^{-1}a^2_{(k)}\partial_t\left(\sigma^{-1}h_{(k)}\right)\fint_{\mathbb{T}^3}D^i_{(k)}\otimes D^i_{(k)}{\rm d}x\notag\\
        &\quad+\sum_{i=1}^3\sum_{k\in\Lambda_{\mathcal{N}^i}}\rho_l^{-1}a^2_{(k)}g^2_{(k)}\mathbb{P}_{\neq0}\left(W_{(k)}\otimes W_{(k)}-D^i_{(k)}\otimes D^i_{(k)}\right)\notag\\
        &\quad+\sum_{i=1}^3\sum_{k\in\Lambda_{\mathcal{N}^i}}\rho_l^{-1}a^2_{(k)}\partial_t\left(\sigma^{-1}h_{(k)}\right)\fint_{\mathbb{T}^3}\left(W_{(k)}\otimes W_{(k)}-D^i_{(k)}\otimes D^i_{(k)}\right){\rm d}x.\notag
\end{align}

\subsubsection{Incompressible correctors}
Since the amplitude functions $\left\{a_{(k)}\right\}_{k\in\Lambda_m\cup\Lambda_{\mathcal{N}}}$ depend on spatial variables, the principal parts of the perturbations are not divergence-free in general. This leads to the definition of the incompressible correctors
\begin{align}
    &w_{q+1}^{(c)}:=\sum_{k\in\Lambda_{m^1}\cup\Lambda_{\mathcal{N}}}g_{(k)}\left({\rm curl} \left(\nabla a_{(k)}\times W^c_{(k)}\right)+\nabla a_{(k)}\times{\rm curl}W^c_{(k)}\right),\label{3.37s}\\
    &f_{q+1}^{i,(c)}:=\sum_{k\in\Lambda_{m^2_i}\cup\Lambda_{\mathcal{N}^i}}g_{(k)}\left({\rm curl} \left(\nabla a_{(k)}\times D^{i,c}_{(k)}\right)+\nabla a_{(k)}\times{\rm curl}D^{i,c}_{(k)}\right),\quad i=1,2,3,\label{3.38s}
\end{align}
where $W^c_{(k)}$ and $D^{i,c}_{(k)}$ are defined by
\begin{align}\label{3.39s}
    W^c_{(k)}:=\frac{1}{N_{\Lambda}^2\lambda^2}\Phi_{(k)}k_1,\quad D^{i,c}_{(k)}:=\frac{1}{N_{\Lambda}^2\lambda^2}\Phi_{(k)}k_2,\quad i=1,2,3.
\end{align}
Then, it follows that,
\begin{align}
    &w_{q+1}^{(p)}+w_{q+1}^{(c)}={\rm curl}{\rm curl}\left(\sum_{k\in\Lambda_{m^1}\cup\Lambda_{\mathcal{N}}}a_{(k)}g_{(k)}W^c_{(k)}\right),\label{3.40s}\\
    &f_{q+1}^{i,(p)}+f_{q+1}^{i,(c)}={\rm curl}{\rm curl}\left(\sum_{k\in\Lambda_{m^2_i}\cup\Lambda_{\mathcal{N}^i}}a_{(k)}g_{(k)}D^{i,c}_{(k)}\right),\quad i=1,2,3.\label{3.41s}
\end{align}
In particular, for all $i=1,2,3$,
\begin{align}\label{3.42s}
    {\rm div}\left(w_{q+1}^{(p)}+w_{q+1}^{(c)}\right)={\rm div}\left(f_{q+1}^{i,(p)}+f_{q+1}^{i,(c)}\right)=0.
\end{align}
\begin{rem}
    We note that this type of incompressible corrector is frequently employed in convex integration schemes for incompressible fluid, see for instance \cite{BV-2019-emssms,BBV-2020-annpde}. But for compressible Oldroyd-type model (MHD equations), although the elastic (magnetic) field requires the incompressible correctors, it seems not necessary to keep the momentum perturbations incompressible. However, inspired by \cite{LQZZ-2022-arxiv}, we still use this corrector to keep part of momentum perturbations incompressible, which reduces the error caused by density in subsequent convex integration scheme.
\end{rem}
\subsubsection{Temporal correctors}
The last type of corrector consists of the temporal correctors, in order to balance the high temporal oscillations in \eqref{3.35s} and \eqref{3.36s} caused by the temporal concentration functions $g_{(k)}$. Drawing inspiration from \cite{LZZ-2022-jmpa,LQZZ-2022-arxiv,NY-2025-jns}, the temporal correctors $w_{(q+1)}^{(o)}$ and $f_{(q+1)}^{i,(o)}$ are defined by
\begin{align}
   & \begin{aligned}
        w_{(q+1)}^{(o)}:=&-\sigma^{-1}\sum_{k\in\Lambda_{m^1}}h_{(k)}\fint_{\mathbb{T}^3}\left(W_{(k)}\otimes W_{(k)}\right){\rm d}x\nabla \left(\rho_l^{-1}a^2_{(k)}\right)\\
    &+\sigma^{-1}\sum_{i=1}^3\sum_{k\in\Lambda_{m^2_i}}h_{(k)}\fint_{\mathbb{T}^3}\left(D^i_{(k)}\otimes D^i_{(k)}\right){\rm d}x\nabla \left(\rho_l^{-1}a^2_{(k)}\right)\\       
        &-\sigma^{-1}\sum_{i=1}^3\sum_{k\in\Lambda_{\mathcal{N}^i}}h_{(k)}\fint_{\mathbb{T}^3}\left(W_{(k)}\otimes W_{(k)}-D^i_{(k)}\otimes D^i_{(k)}\right){\rm d}x\nabla \left(\rho_l^{-1}a^2_{(k)}\right)
    \end{aligned}\label{3.43s}\\
    &f_{(q+1)}^{i,(o)}:=-\sigma^{-1}\sum_{k\in\Lambda_{\mathcal{N}^i}}h_{(k)}{\rm div}\left(\rho^{-1}_la^2_{(k)}\fint_{\mathbb{T}^3}D^i_{(k)}\otimes W_{(k)}-W_{(k)}\otimes D^i_{(k)}{\rm d}x\right),\quad i=1,2,3,\label{3.44s}
\end{align}
where $h_{(k)}$ is given by \eqref{3.5s}.
\begin{rem}
    Since for all $i=1,2,3$, $\mathbb{P}_{=0}\left(D^i_{(k)}\otimes W_{(k)}-W_{(k)}\otimes D^i_{(k)}\right)=k_2\otimes k_1-k_1\otimes k_2$ are skew-symmetric, $f_{(q+1)}^{i,(o)}$ is divergence-free as well, where $\mathbb{P}_{=0}:={\rm Id}-\mathbb{P}_{\neq0}$.
\end{rem}

Then, by the Leibniz rule,
\begin{equation}\label{3.45s}
    \begin{aligned}     &\quad\partial_tw_{q+1}^{(o)}+\sum_{k\in\Lambda_{m^1}}\rho_l^{-1}a^2_{(k)}\partial_t\left(\sigma^{-1}h_{(k)}\right)\fint_{\mathbb{T}^3}W_{(k)}\otimes W_{(k)}{\rm d}x\nabla\left(\rho_l^{-1}a^2_{(k)}\right)\\
        &\quad-\sum_{i=1}^3\sum_{k\in\Lambda_{m^2_i}}\partial_t\left(\sigma^{-1}h_{(k)}\right)\fint_{\mathbb{T}^3}D^i_{(k)}\otimes D^i_{(k)}{\rm d}x\nabla\left(\rho_l^{-1}a^2_{(k)}\right)\\             &\quad+\sum_{i=1}^3\sum_{k\in\Lambda_{\mathcal{N}^i}}\partial_t\left(\sigma^{-1}h_{(k)}\right)\fint_{\mathbb{T}^3}\left(W_{(k)}\otimes W_{(k)}-D^i_{(k)}\otimes D^i_{(k)}\right){\rm d}x\nabla\left(\rho_l^{-1}a^2_{(k)}\right)\\
        &=-\sigma^{-1}\sum_{k\in\Lambda_{m^1}}h_{(k)}(k_1\cdot\nabla)\partial_t\left(\rho_l^{-1}a^2_{(k)}\right)k_1+\sigma^{-1}\sum_{k\in\Lambda_{m^2}}h_{(k)}(k_2\cdot\nabla)\partial_t\left(\rho_l^{-1}a^2_{(k)}\right)k_2\\
    &\quad-\sigma^{-1}\sum_{k\in\Lambda_{\mathcal{N}}}h_{(k)}\left(\left(k_1\cdot\nabla\right)\partial_t\left(\rho_l^{-1}a^2_{(k)}\right)k_1-\left(k_2\cdot\nabla\right)\partial_t\left(\rho_l^{-1}a^2_{(k)}\right)k_2\right),
    \end{aligned}
\end{equation}
and
\begin{equation}\label{3.46s}
    \begin{aligned}
&\quad\partial_tf_{q+1}^{i,(o)}+\sum_{k\in\Lambda_{\mathcal{N}^i}}\partial_t\left(\sigma^{-1}h_{(k)}\right)\fint_{\mathbb{T}^3}\left(D^i_{(k)}\otimes W_{(k)}-W_{(k)}\otimes D^i_{(k)}\right){\rm d}x\nabla\left(\rho_l^{-1}a^2_{(k)}\right)\\
&=-\sigma^{-1}\sum_{k\in\Lambda_{\mathcal{N}^i}}h_{(k)}\left(k_2\otimes k_1-k_1\otimes k_2\right)\partial_t\nabla\left(\rho_l^{-1}a^2_{(k)}\right),\quad i=1,2,3.
    \end{aligned}
\end{equation}

Now, we define the elastic and momentum perturbations at level $q+1$ by
\begin{align}
    &w_{q+1}:=w_{q+1}^{(p)}+w_{q+1}^{(c)}+w_{q+1}^{(o)},\label{3.47s}\\
    &f_{q+1}^i:=f_{q+1}^{i,(p)}+f_{q+1}^{i,(c)}+f_{q+1}^{i,(o)},\quad i=1,2,3.\label{3.48s}
\end{align}
Note that, by the constructions above, $w_{q+1}$ is mean-free but may not be divergence-free, and, $f^i_{q+1}$ are both mean-free and divergence-free.

The new momentum and elastic fields $m_{q+1}$ and $\mathcal{N}^i_{q+1}$ at $q+1$ level is then defined by
\begin{align}\label{3.49s}
    m_{q+1}:=m_l+w_{q+1},\quad \mathcal{N}^i_{q+1}:=\mathcal{N}^i_l+f^i_{q+1},\quad i=1,2,3,
\end{align}
where $m_l$ and $\mathcal{N}^i_l$ are mollified momentum and elastic fields defined in \eqref{eq2.16}.
\subsection{Density perturbation}
Because the momentum perturbation $w_{q+1}$ may not be divergence-free, we need to construct density perturbation under the constraints of the transport equation $\eqref{eq2.1}_1$ at level $q+1$.

More precisely, we define the density perturbation $z_{q+1}$ as
\begin{equation}\label{3.50s}
    \begin{aligned}
        z_{q+1}(t,x)&:=-\int_0^t{\rm div}w_{q+1}(s,x){\rm d}s\\
        &=\sigma^{-1}\sum_{k\in\Lambda_{m^1}}\int_0^th_{(k)}{\rm div}\left(k_1\otimes k_1\nabla \left(\rho_l^{-1}a^2_{(k)}\right)\right)(s,x){\rm d}s\\
    &\quad-\sigma^{-1}\sum_{k\in\Lambda_{m^2}}\int_0^th_{(k)}{\rm div}\left(k_2\otimes k_2\nabla \left(\rho_l^{-1}a^2_{(k)}\right)\right)(s,x){\rm d}s\\            &\quad+\sigma^{-1}\sum_{k\in\Lambda_{\mathcal{N}}}\int_0^th_{(k)}{\rm div}\left(\left(k_1\otimes k_1-k_2\otimes k_2\right)\nabla \left(\rho_l^{-1}a^2_{(k)}\right)\right)(s,x){\rm d}s
    \end{aligned}
\end{equation}
Then, the new density at level $q+1$ is defined by
\begin{align}\label{3.51s}
    \rho_{q+1}:=\rho_l+z_{q+1}.
\end{align}
Furthermore, we have 
\begin{align*}
    \partial_t\rho_{q+1}+{\rm div}m_{q+1}=\partial_tz_{q+1}+{\rm div}w_{q+1}=0,
\end{align*}
which coincides with the transport equation $\eqref{eq2.1}_1$ at level $q+1$.
\subsection{Estimates of perturbations}
In this section, we summarize the crucial estimates of the perturbations in Proposition \ref{prop-estimate-perturbation} below.
\begin{pro}\label{prop-estimate-perturbation}
    For any $\gamma\in[1,+\infty]$, $p\in(1,+\infty)$ and integer $0\leq N\leq4$, the following estimates hold,
    \begin{align}
        &\|\nabla^Nw_{q+1}^{(p)}\|_{L^{\gamma}_tL^p_x}+\sum_{i=1}^3\|\nabla^Nf_{q+1}^{i,(p)}\|_{L^{\gamma}_tL^p_x}\lesssim l^{-2}\lambda^Nr_{\perp}^{\frac{1}{p}-\frac{1}{2}}\tau^{\frac{1}{2}-\frac{1}{\gamma}},\label{3.52s}\\
        &\|\nabla^Nw_{q+1}^{(c)}\|_{L^{\gamma}_tL^p_x}+\sum_{i=1}^3\|\nabla^Nf_{q+1}^{i,(c)}\|_{L^{\gamma}_tL^p_x}\lesssim l^{-20}\lambda^{N-1}r_{\perp}^{\frac{1}{p}-\frac{1}{2}}\tau^{\frac{1}{2}-\frac{1}{\gamma}},\label{3.53s}\\
        &\|\nabla^Nw_{q+1}^{(o)}\|_{L^{\gamma}_tL^p_x}+\sum_{i=1}^3\|\nabla^Nf_{q+1}^{i,(o)}\|_{L^{\gamma}_tL^p_x}\lesssim l^{-9N-14}\sigma^{-1},\label{3.54s}\\
        &\|w_{q+1}\|_{C^N_{t,x}}+\sum_{i=1}^3\|f^i_{q+1}\|_{C^N_{t,x}}\lesssim\lambda^{4N+3}.\label{3.55s}
    \end{align}
    Moreover, for integer $0\leq N\leq4$ and $0\leq M\leq1$, it holds that,
    \begin{align}
        \|\partial_t^Mz_{q+1}\|_{C_tC^N_x}\lesssim l^{-9N-23}\sigma^{-1},\label{3.56s}
    \end{align}
    where the implicit constants are independent of $q$.
\end{pro}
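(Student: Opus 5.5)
The plan is a term-by-term argument: combine the amplitude bounds of Lemmas \ref{lem-magnetic-amplitude-estimate} and \ref{lem-momentum-amplitude-estimate} with the spatial and temporal intermittency estimates of Lemmas \ref{lem-mikado-flows} and \ref{lem-temporal-intermittency}. The sums over $\Lambda_m\cup\Lambda_{\mathcal N}$ are finite, and the temporal supports of the $g_{(k)}$ are disjoint, so it suffices to bound a single summand.

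\textbf{Principal parts \eqref{3.52s}.} Each summand has the form $a_{(k)}g_{(k)}\phi_{(k)}k_j$. The spatial and temporal variables separate, which gives
\begin{align*}
\|\nabla^N(a_{(k)}g_{(k)}\phi_{(k)})\|_{L^\gamma_tL^p_x}\lesssim \|g_{(k)}\|_{L^\gamma_t}\sum_{N_1+N_2=N}\|a_{(k)}\|_{C^{N_1}_{t,x}}\|\nabla^{N_2}\phi_{(k)}\|_{L^p_x}.
\end{align*}
Here $\phi_{(k)}$ has frequency $\lambda$, while the amplitude has frequency at most $l^{-9}\ll\lambda$. The term with all derivatives on $\phi_{(k)}$ dominates and gives $l^{-2}\lambda^Nr_\perp^{1/p-1/2}\tau^{1/2-1/\gamma}$ by \eqref{3.32s}, Lemma \ref{lem-mikado-flows} and \eqref{3.7s} with $M=0$. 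The factor $\tau^{1/2-1/\gamma}$ is independent of $\sigma$ because $g_{(k)}(t)=g_{k,\tau}(\sigma t)$ and $\sigma\in\mathbb N$ on the periodized interval. If one wants a sharp $L^p_x$ bound instead of a product estimate, I would invoke a standard improved H\"older inequality; the product bound above already suffices, since the $C^0$ norm of the amplitude is used.

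\textbf{Incompressibility correctors \eqref{3.53s}.} Expand the curl-curl terms in \eqref{3.37s}--\eqref{3.38s}. Every term contains at least one derivative on $a_{(k)}$ and at most one derivative on $\Phi_{(k)}/\lambda^2$. This yields the factor $\lambda^{N-1}$, together with up to $N+2$ derivatives of the amplitude, bounded by $l^{-9(N+2)-2}$. I would absorb this into the stated $l^{-20}$ through the gain $l^{-C}\lambda^{-1}\ll1$, which follows from $l=\lambda_q^{-30}$ and $\lambda=\lambda_q^b$ with $b$ large; if necessary, the exponent of $l$ is tracked more loosely.

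\textbf{Temporal correctors \eqref{3.54s}.} These contain no oscillatory spatial factor. Their size is $\sigma^{-1}\|h_{(k)}\|_{L^\infty_t}\|\nabla^{N+1}(\rho_l^{-1}a^2_{(k)})\|_{C_{t,x}}$. Using \eqref{3.8s}, \eqref{eq2.21}--\eqref{eq2.22} and \eqref{3.32s}, this is bounded by $\sigma^{-1}l^{-9(N+1)-4}\lesssim\sigma^{-1}l^{-9N-14}$ (after a Leibniz count). The estimate holds in any $L^\gamma_tL^p_x$ because $\mathbb T^3\times[0,T]$ is bounded.

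\textbf{Density perturbation \eqref{3.56s}.} The case $M=1$ is $\partial_t z_{q+1}=-{\rm div}\,w_{q+1}$. Since $w^{(p)}+w^{(c)}$ is divergence-free by \eqref{3.42s}, this equals $-{\rm div}\,w^{(o)}_{q+1}$, which is bounded like \eqref{3.54s} with two extra derivatives. For $M=0$, I integrate the explicit formula \eqref{3.50s} over $[0,t]\subset[0,T]$ and use $|h_{(k)}|\le1$. Either case gives $\sigma^{-1}l^{-9(N+2)-5}\lesssim\sigma^{-1}l^{-9N-23}$.

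\textbf{Crude bound \eqref{3.55s}.} I use $L^\infty$ bounds together with the following counts: each time derivative of $g_{(k)}$ costs $\sigma\tau\le\lambda^{3}$, each spatial derivative costs $\lambda$, and amplitude derivatives cost powers of $l^{-1}\le\lambda^{\varepsilon}$. I also use $\|g_{(k)}\|_{L^\infty}\lesssim\tau^{1/2}=\lambda^{\alpha}$, $\|\phi_{(k)}\|_{L^\infty}\lesssim r_\perp^{-1/2}\le\lambda^{1/2+8\varepsilon}$ and $\|a_{(k)}\|_{C^0}\lesssim l^{-2}=\lambda_q^{60}$. With $\alpha<3/2$ the product is at most $\lambda^{2+10\varepsilon}\lambda_q^{60}\le\lambda^3$ for $b$ large. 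Each additional derivative costs at most $\max(\sigma\tau,\lambda,l^{-9})\le\lambda^{4}$.

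\textbf{Main obstacle.} I expect the bookkeeping of the exponents of $l$ in \eqref{3.53s} and \eqref{3.56s} to be the main difficulty. These exponents require checking that every term produced by the curl-curl expansion and by $\nabla(\rho_l^{-1}a_{(k)}^2)$ carries enough derivatives on the amplitude. They also require that the $\lambda^{-1}$ gain absorbs any leftover powers of $l$, which depends on $b>3000/\varepsilon$.
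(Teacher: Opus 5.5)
Your proposal is correct and follows essentially the same route as the paper: a Leibniz expansion that pairs the $C^{N}_{t,x}$ amplitude bounds with the $L^p_x$ bounds of Lemma \ref{lem-mikado-flows} and the $L^\gamma_t$ bound \eqref{3.7s}, the uniform bound \eqref{3.8s} on $h_{(k)}$ for the temporal correctors and for $z_{q+1}$, and crude sup-norm counting (powers of $\sigma\tau$, $\lambda$, $l^{-9}$) for \eqref{3.55s}. For $M=1$ in \eqref{3.56s} you use ${\rm div}(w^{(p)}_{q+1}+w^{(c)}_{q+1})=0$, which is exactly what underlies the explicit formula \eqref{3.50s}; your aside that $\sigma\in\mathbb{N}$ is not guaranteed by \eqref{eq3.1}, but it is also unnecessary, since you invoke \eqref{3.7s} directly.
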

\begin{pf}
    To begin with, using \eqref{3.6s}, \eqref{3.7s}, \eqref{3.33s}, \eqref{3.34s} and  Lemmas \ref{lem-mikado-flows}, \ref{lem-magnetic-amplitude-estimate} and \ref{lem-momentum-amplitude-estimate}, it leads for any $p\in[1,+\infty]$,
    \begin{align*}
&\quad\|\nabla^Nw_{q+1}^{(p)}\|_{L^{\gamma}_tL^p_x}+\sum_{i=1}^3\|\nabla^Nf_{q+1}^{i,(p)}\|_{L^{\gamma}_tL^p_x}\\
&\lesssim\sum_{k\in\Lambda_{m^1}\cup\Lambda_{\mathcal{N}}}\sum_{N_1+N_2=N}\|a_{(k)}\|_{C^{N_1}_{t,x}}\|g_{(k)}\|_{L^{\gamma}_t}\|\nabla^{N_2}W_{(k)}\|_{L^p_x}\\
&\quad+\sum_{i=1}^3\sum_{k\in\Lambda_{m^2_i}\cup\Lambda_{\mathcal{N}^i}}\sum_{N_1+N_2=N}\|a_{(k)}\|_{C^{N_1}_{t,x}}\|g_{(k)}\|_{L^{\gamma}_t}\|\nabla^{N_2}D^i_{(k)}\|_{L^p_x}\\
&\lesssim\sum_{N_1+N_2=N}l^{-9N_1-2}\tau^{\frac{1}{2}-\frac{1}{\gamma}}r_{\perp}^{\frac{1}{p}-\frac{1}{2}}\lambda^{N_2}\\
&\lesssim l^{-2}\lambda^Nr_{\perp}^{\frac{1}{p}-\frac{1}{2}}\tau^{\frac{1}{2}-\frac{1}{\gamma}},
    \end{align*}
where the last inequality is due to $l^{-9N_1}\ll \lambda^{N_1}$, that is, \eqref{3.52s} is valid.

Next, by \eqref{3.7s}, \eqref{3.37s}, \eqref{3.38s} and \eqref{3.39s} and Lemmas \ref{lem-mikado-flows}, \ref{lem-magnetic-amplitude-estimate} and \ref{lem-momentum-amplitude-estimate},
  \begin{align*}
&\quad\|\nabla^Nw_{q+1}^{(c)}\|_{L^{\gamma}_tL^p_x}+\sum_{i=1}^3\|\nabla^Nf_{q+1}^{i,(c)}\|_{L^{\gamma}_tL^p_x}\\
&\lesssim\sum_{k\in\Lambda_{m^1}\cup\Lambda_{\mathcal{N}}}\|g_{(k)}\|_{L^{\gamma}_t}\sum_{N_1+N_2=N}\left(\|a_{(k)}\|_{C^{N_1+2}_{t,x}}\|\nabla^{N_2}W^c_{(k)}\|_{L^p_x}+\|a_{(k)}\|_{C^{N_1+1}_{t,x}}\|\nabla^{N_2+1}W^c_{(k)}\|_{L^p_x}\right)\\
&\quad+\sum_{i=1}^3\sum_{k\in\Lambda_{m^2}\cup\Lambda_{\mathcal{N}^i}}\|g_{(k)}\|_{L^{\gamma}_t}\sum_{N_1+N_2=N}\left(\|a_{(k)}\|_{C^{N_1+2}_{t,x}}\|\nabla^{N_2}D^{i,c}_{(k)}\|_{L^p_x}+\|a_{(k)}\|_{C^{N_1+1}_{t,x}}\|\nabla^{N_2+1}D^{i,c}_{(k)}\|_{L^p_x}\right)\\
&\lesssim\tau^{\frac{1}{2}-\frac{1}{\gamma}}\sum_{N_1+N_2=N}\left(l^{-9N_1-20}r_{\perp}^{\frac{1}{p}-\frac{1}{2}}\lambda^{N_2-2}+l^{-9N_1-11}r_{\perp}^{\frac{1}{p}-\frac{1}{2}}\lambda^{N_2-1}\right)\\
&\lesssim l^{-20}\lambda^{N-1}r_{\perp}^{\frac{1}{p}-\frac{1}{2}}\tau^{\frac{1}{2}-\frac{1}{\gamma}},
    \end{align*}
    which verifies \eqref{3.53s}.

    Regarding the estimates of temporal correctors, using \eqref{eq2.21}, \eqref{eq2.22}, \eqref{3.8s}, \eqref{3.43s} and \eqref{3.44s} and Lemmas \ref{lem-mikado-flows}, \ref{lem-magnetic-amplitude-estimate} and \ref{lem-momentum-amplitude-estimate}, we get
    \begin{align*}
        &\quad\|\nabla^Nw_{q+1}^{(o)}\|_{L^{\gamma}_tL^p_x}+\sum_{i=1}^3\|\nabla^Nf_{q+1}^{i,(o)}\|_{L^{\gamma}_tL^p_x}\\     &\lesssim\sigma^{-1}\sum_{k\in\Lambda_m}\|h_{(k)}\|_{C_t}\left\|\nabla^{N+1}\left(\rho_{l}^{-1}a^2_{(k)}\right)\right\|_{C_{t,x}}\\      &\quad+\sigma^{-1}\sum_{i=1}^3\sum_{k\in\Lambda_{\mathcal{N}^i}}\|h_{(k)}\|_{C_t}\left\|\nabla^{N+1}\left(\rho_{l}^{-1}a^2_{(k)}\right)\right\|_{C_{t,x}}\\     &\lesssim\sigma^{-1}\sum_{k\in\Lambda_m}\|h_{(k)}\|_{C_t}\sum_{N_1+N_2=N+1}\|\rho_l^{-1}\|_{C^{N_1}_{t,x}}\|a^2_{(k)}\|_{C^{N_2}_{t,x}}\\       &\quad+\sigma^{-1}\sum_{i=1}^3\sum_{k\in\Lambda_{\mathcal{N}^i}}\|h_{(k)}\|_{C_t}\sum_{N_1+N_2=N+1}\|\rho_l^{-1}\|_{C^{N_1}_{t,x}}\|a^2_{(k)}\|_{C^{N_2}_{t,x}}\\
        &\lesssim\sigma^{-1}\lambda_q^{\frac{(N+1)\varepsilon}{4}}l^{-9N-13}\\
        &\lesssim l^{-9N-14}\sigma^{-1},
    \end{align*}
    where the last step goes under \eqref{eq2.2s} and \eqref{eq2.3s}.

    In order to establish the $C^N_{t,x}$-estimates of momentum and elastic perturbations, we use Lemmas \ref{lem-mikado-flows}--\ref{lem-momentum-amplitude-estimate}, and \eqref{eq2.3s}, \eqref{eq2.2s}, \eqref{eq2.17} and \eqref{eq3.1} to get
    \begin{align}
        &\quad\|w_{q+1}^{(p)}\|_{C^N_{t,x}}+\sum_{i=1}^3\|f_{q+1}^{i,(p)}\|_{C^N_{t,x}}\notag\\
            &\lesssim\sum_{k\in\Lambda_{m^1}\cup\Lambda_{\mathcal{N}}}\|a_{(k)}\|_{C^N_{t,x}}\sum_{0\leq N_1+N_2\leq N}\|g_{(k)}\|_{C^{N_1}_t}\|W_{(k)}\|_{C^{N_2}_{x}}\notag\\       &\quad+\sum_{i=1}^3\sum_{k\in\Lambda_{m^2_i}\cup\Lambda_{\mathcal{N}^i}}\|a_{(k)}\|_{C^N_{t,x}}\sum_{0\leq N_1+N_2\leq N}\|g_{(k)}\|_{C^{N_1}_t}\|D^i_{(k)}\|_{C^{N_2}_{x}}\label{3.57s}\\
            &\lesssim\sum_{0\leq N_1+N_2\leq N}\sigma^{N_1}\tau^{\frac{1}{2}+N_1}\lambda^{N_2}r_{\perp}^{-\frac{1}{2}}\left(l^{-9N-2}+l^{-4N-1}+l^{-2}\right)\notag\\
            &\lesssim \lambda^{4N+3}.\notag
    \end{align}

    Similarly, we obtain
        \begin{align}
            &\quad\|w_{q+1}^{(c)}\|_{C^N_{t,x}}+\sum_{i=1}^3\|f_{q+1}^{i,(c)}\|_{C^N_{t,x}}\notag\\
  &\lesssim\sum_{k\in\Lambda_{m^1}\cup\Lambda_{\mathcal{N}}}\|a_{(k)}\|_{C^{N+2}_{t,x}}\sum_{0\leq N_1+N_2\leq N}\|g_{(k)}\|_{C^{N_1}_t}\|W^c_{(k)}\|_{C^{N_2+1}_{x}}\notag\\         &\quad+\sum_{i=1}^3\sum_{k\in\Lambda_{m^2_i}\cup\Lambda_{\mathcal{N}^i}}\|a_{(k)}\|_{C^{N+2}_{t,x}}\sum_{0\leq N_1+N_2\leq N}\|g_{(k)}\|_{C^{N_1}_t}\|D^{i,c}_{(k)}\|_{C^{N_2+1}_{x}}\label{3.58s}\\
            &\lesssim\sum_{0\leq N_1+N_2\leq N}\sigma^{N_1}\tau^{\frac{1}{2}+N_1}\lambda^{N_2-1}r_{\perp}^{-\frac{1}{2}}\left(l^{-9N-20}+l^{-4N-9}+l^{-2}\right)\notag\\
            &\lesssim \lambda^{4N+2},\notag
        \end{align}
    and
     \begin{equation}\label{3.59s}
        \begin{aligned}
            &\quad\|w_{q+1}^{(o)}\|_{C^N_{t,x}}+\sum_{i=1}^3\|f_{q+1}^{i,(o)}\|_{C^N_{t,x}}\\
            &\lesssim\sigma^{-1}\sum_{k\in\Lambda_m\cup\Lambda_{\mathcal{N}}}\left\|h_{(k)}\nabla^{N+1}\left(\rho^{-1}_la^2_{(k)}\right)\right\|_{C^N_{t,x}}\\
            &\lesssim\sigma^{N-1}\tau^Nl^{-9N-5}\\
            &\lesssim\lambda^{3N+1},
        \end{aligned}
    \end{equation}
    where the last step is due to \eqref{eq2.3s}, \eqref{eq2.2s}, \eqref{eq2.17}, \eqref{eq3.1} and the fact that
    \begin{align*}
        \|h_{(k)}\|_{C^N_t}\lesssim\sigma^N\tau^N.
    \end{align*}
Thus, combining \eqref{3.57s}--\eqref{3.59s} verify \eqref{3.55s}.

It remains to check \eqref{3.56s}. This can be done by using \eqref{3.6s} and \eqref{3.50s} and Lemmas \ref{lem-mikado-flows}--\ref{lem-momentum-amplitude-estimate}:
\begin{align*}
    &\quad\|\partial_t^Mz_{q+1}\|_{C_tC^N_x}\\
    &\lesssim\sigma^{-1}\sum_{k\in\Lambda_{m^1}}\left\|\partial_t^M\int_0^th_{(k)}{\rm div}\left(k_1\otimes k_1\nabla \left(\rho_l^{-1}a^2_{(k)}\right)\right)(s,x){\rm d}s\right\|_{C_tC^N_x}\\
    &\quad+\sigma^{-1}\sum_{k\in\Lambda_{m^2}}\left\|\partial_t^M\int_0^th_{(k)}{\rm div}\left(k_2\otimes k_2\nabla \left(\rho_l^{-1}a^2_{(k)}\right)\right)(s,x){\rm d}s\right\|_{C_tC^N_x}\\            &\quad+\sigma^{-1}\sum_{k\in\Lambda_{\mathcal{N}}}\left\|\partial_t^M\int_0^th_{(k)}{\rm div}\left(\left(k_1\otimes k_1-k_2\otimes k_2\right)\nabla \left(\rho_l^{-1}a^2_{(k)}\right)\right)(s,x){\rm d}s\right\|_{C_tC^N_x}\\  &\lesssim\sigma^{-1}\sum_{k\in\Lambda_m\cup\Lambda_{\mathcal{N}}}\|h_{(k)}\|_{C_t}\|\rho_l^{-1}a^2_{(k)}\|_{C_tC^{N+2}_x}\\   &\lesssim\sigma^{-1}\sum_{k\in\Lambda_m\cup\Lambda_{\mathcal{N}}}\left(\|\rho_l^{-1}\|_{C_tC_x^{N+2}}\|a^2_{(k)}\|_{C_{t,x}}+\sum_{1\leq N^{\prime}\leq N+2}\|\rho_l^{-1}\|_{C_tC_x^{N+2-N^{\prime}}}\|a^2_{(k)}\|_{C_tC_x^{N^{\prime}}}\right)\\
    &\lesssim\sigma^{-1}\left(\lambda_q^{\frac{(N+2)\varepsilon}{4}}l^{-4}+\sum_{1\leq N^{\prime}\leq N+2}\lambda_q^{\frac{(N+2-N^{\prime})\varepsilon}{4}}l^{-9N^{\prime}-4}\right)\\
    &\lesssim l^{-9N-23}\sigma^{-1}.
\end{align*}
Therefore, the proof of Proposition \ref{prop-estimate-perturbation} has been finished.
    \hfill$\square$
\end{pf}
\subsection{Verification of inductive estimates}
We are now in the stage to verify the inductive estimates \eqref{eq2.5}--\eqref{eq2.7} and \eqref{eq2.10}--\eqref{eq2.13} for the perturbations.

Our first concern is the density function $\rho_{q+1}$. By \eqref{eq2.17}, \eqref{eq2.21}, \eqref{eq3.1}, \eqref{3.51s} and \eqref{3.56s},
\begin{align}\label{3.60s}
    \rho_{q+1}\leq\rho_l+\|z_{q+1}\|_{C_{t,x}}\leq C_2-\frac{1}{2}\lambda_q^{-\beta}+l^{-23}\sigma^{-1}\leq C_2-\frac{1}{2}\lambda_q^{-\beta}+\lambda_{q+1}^{-4\varepsilon}\leq C_2-\lambda_{q+1}^{-\beta},
\end{align}
where we used $\lambda_{q+1}^{-\varepsilon}+\lambda_{q+1}^{-\beta}\ll\frac{1}{2}\lambda_q^{-\beta}$ in the last inequality, because of \eqref{eq2.3s}. Similarly, we also have
\begin{align}\label{3.61s}
    \rho_{q+1}\geq\rho_l-\|z_{q+1}\|_{C_{t,x}}\geq C_1+\frac{1}{2}\lambda_q^{-\beta}-l^{-23}\sigma^{-1}\geq C_1+\frac{1}{2}\lambda_q^{-\beta}-\lambda_{q+1}^{-4\varepsilon}\geq C_1+\lambda_{q+1}^{-\beta}.
\end{align}
Combining \eqref{3.60s} and \eqref{3.61s} yield \eqref{eq2.5} at level $q+1$.

Next, by \eqref{3.50s} and \eqref{3.51s},
\begin{align}\label{3.62s}
    \int_{\mathbb{T}^3}\rho_{q+1}(t,x){\rm d}x=\int_{\mathbb{T}^3}\rho_l(t,x){\rm d}x+\int_{\mathbb{T}^3}z_{q+1}(t,x){\rm d}x=\int_{\mathbb{T}^3}\rho_{q}(t,x){\rm d}x,
\end{align}
for any $t\in[0,T]$, which gives that \eqref{eq2.10} at level $q+1$.

Moreover, for any $0\leq N\leq4$, $0\leq M\leq1$, we obtain 
\begin{equation}\label{3.63s}
    \begin{aligned}
        \|\partial_t^M\rho_{q+1}\|_{C_tC^N_x}
        &\lesssim\|\partial_t^M\rho_{l}\|_{C_tC^N_x}+\|\partial_t^Mz_{q+1}\|_{C_tC^N_x}\\
        &\lesssim\lambda_q^{\frac{N\varepsilon}{4}}+l^{-9N-23}\sigma^{-1}\\
        &\lesssim \lambda_{q+1}^{\frac{N\varepsilon}{4}},
    \end{aligned}
\end{equation}
by using \eqref{eq2.22} and \eqref{3.56s} which leads \eqref{eq2.6} at level $q+1$.

We also derive from \eqref{eq2.23} and \eqref{3.56s} that
\begin{equation}\label{3.64s}
    \begin{aligned}
        \|\rho_{q+1}-\rho_q\|_{C_tC^1_x}
        &\lesssim\|\rho_{l}-\rho_q\|_{C_tC^1_x}+\|z_{q+1}\|_{C_tC^1_x}\\
        &\lesssim l\lambda_q^{\frac{\varepsilon}{2}}+l^{-32}\sigma^{-1}\\
        &\lesssim\delta^{\frac{1}{2}}_{q+2},
    \end{aligned}
\end{equation}
where the last step goes under \eqref{eq2.3s}, \eqref{eq2.4}, \eqref{eq2.17} and \eqref{eq3.1}, which verifies \eqref{eq2.11}.

Then, we consider the momentum and elastic perturbations. To begin with, by virtue of \eqref{eq2.24} and \eqref{3.55s}, we have
\begin{equation}\label{3.65s}
    \begin{aligned}
        \|m_{q+1}\|_{C^N_{t,x}}+\|\mathcal{N}_{q+1}\|_{C^N_{t,x}}
        &\lesssim \|m_{l}\|_{C^N_{t,x}}+ \|w_{q+1}\|_{C^N_{t,x}}+\sum_{i=1}^3\left(\|\mathcal{N}^i_{l}\|_{C^N_{t,x}}+ \|f^i_{q+1}\|_{C^N_{t,x}}\right)\\
        &\lesssim l^{-N+1}\lambda^{4N+3}_q+\lambda_{q+1}^{4N+3}\\
        &\lesssim\lambda_{q+1}^{4N+3},
    \end{aligned}
\end{equation}
which leads \eqref{eq2.7} at level $q+1$.

Note that, the estimates in \eqref{3.52s} alone do not yield the decay required by \eqref{eq2.12}. The technique here is the key $L^p$-decorrelation, which permits to derive the $L^2_{t,x}$-decay of the principal parts $w^{(p)}_{q+1}$ and $f^{i,(p)}_{q+1}$.
\begin{lem}\label{lem-lp-decorrelation}(\cite[Lemma 3.7]{BV-2019-annmath};  \cite[Lemma 2.4]{CL-2021-annpde}). Let $\theta\in\mathbb{N}$ and $f,g:\mathbb{T}^d\rightarrow\mathbb{R}$ be smooth functions. Then for any $p\in[1,+\infty]$,
\begin{align*}
    \left|\|fg(\theta\cdot)\|_{L^p(\mathbb{T}^d)}-\|f\|_{L^p(\mathbb{T}^d)}\|g\|_{L^p(\mathbb{T}^d)}\right|\lesssim\theta^{-\frac{1}{p}}\|f\|_{C^1(\mathbb{T}^d)}\|g\|_{L^p(\mathbb{T}^d)}.
\end{align*}    
\end{lem}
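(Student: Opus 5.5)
The plan is to reduce the estimate to a one-dimensional statement on the circle, since the Lemma is the standard $L^p$-decorrelation estimate. First, partition $\mathbb{T}^d$ into $\theta^d$ congruent cubes $\{Q_j\}$ of side length $2\pi/\theta$. On each cube the function $g(\theta\cdot)$ runs over exactly one full period of $g$. Hence
\begin{align*}
\int_{Q_j}|g(\theta x)|^p{\rm d}x=\theta^{-d}\int_{\mathbb{T}^d}|g|^p{\rm d}x=\frac{|Q_j|}{|\mathbb{T}^d|}\|g\|^p_{L^p(\mathbb{T}^d)}.
\end{align*}
We use normalized measure, so that $\|g\|_{L^p}^p$ equals the average over $\mathbb{T}^d$.

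Second, on each cube we freeze $f$. Let $f_j$ be the average of $|f|^p$ over $Q_j$, or simply the value of $f$ at the cube's center. By the mean value theorem, $\big||f(x)|-|f(x_j)|\big|\le \sqrt d\,2\pi\theta^{-1}\|f\|_{C^1}$ on $Q_j$. We then write
\begin{align*}
\|fg(\theta\cdot)\|_{L^p}^p=\sum_j\int_{Q_j}|f|^p|g(\theta x)|^p{\rm d}x,
\end{align*}
and compare this with $\sum_j\int_{Q_j}|f(x_j)|^p|g(\theta x)|^p{\rm d}x=\|g\|_{L^p}^p\sum_j|Q_j||f(x_j)|^p$. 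The last sum is a Riemann sum for $\|f\|_{L^p}^p$.

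To avoid losing powers of $p$, we work at the level of $L^p$ norms rather than $p$-th powers. By Minkowski,
\begin{align*}
\Big|\|fg(\theta\cdot)\|_{L^p}-\|F g(\theta\cdot)\|_{L^p}\Big|\le\|f-F\|_{L^\infty}\|g(\theta\cdot)\|_{L^p}\lesssim\theta^{-1}\|f\|_{C^1}\|g\|_{L^p},
\end{align*}
where $F=\sum_j f(x_j)\mathbf 1_{Q_j}$ is the piecewise constant approximation. Similarly, $\big|\|F\|_{L^p}-\|f\|_{L^p}\big|\lesssim\theta^{-1}\|f\|_{C^1}$. By the exact computation on cubes, $\|Fg(\theta\cdot)\|_{L^p}=\|F\|_{L^p}\|g\|_{L^p}$. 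Combining the three relations gives the error $\theta^{-1}\|f\|_{C^1}\|g\|_{L^p}$. This is even better than $\theta^{-1/p}$ because $\theta\ge1$, and $p=\infty$ is covered by the same argument.

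The only delicate point is verifying that $g(\theta\cdot)$ restricted to each $Q_j$ is an exact period. This requires $\theta\in\mathbb{N}$, which is assumed. The rest is Minkowski and the mean value theorem, so there is no real obstacle.
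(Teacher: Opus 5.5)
Your argument is correct, and it proves more than the statement: the error is $\lesssim\theta^{-1}\|f\|_{C^1}\|g\|_{L^p}$ for every $p\in[1,\infty]$, with a constant independent of $p$.

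The paper gives no proof of its own; it imports the lemma from \cite{BV-2019-annmath,CL-2021-annpde}. The standard argument behind the two-sided form with $\theta^{-1/p}$ (as in Cheskidov--Luo) uses the same tiling of $\mathbb{T}^d$ into $\theta^d$ cubes, on each of which $g(\theta\cdot)$ runs over one full period. It then proceeds as follows:
\begin{itemize}
\item it works with $p$-th powers and freezes $|f|^p$ on each cube, using $\bigl||f(x)|^p-|f(x_j)|^p\bigr|\lesssim\theta^{-1}\|f\|_{C^1}^p$;
\item this gives $\bigl|\|fg(\theta\cdot)\|_{L^p}^p-\|f\|_{L^p}^p\|g\|_{L^p}^p\bigr|\lesssim\theta^{-1}\|f\|_{C^1}^p\|g\|_{L^p}^p$;
\item it takes $p$-th roots via $|a-b|^p\le|a^p-b^p|$ for $a,b\ge0$.
\end{itemize}
That last step is exactly where $\theta^{-1}$ degrades to $\theta^{-1/p}$. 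You instead compare $f$ with the piecewise-constant $F$ at the level of norms, using the reverse triangle inequality and H\"older with $\|f-F\|_{L^\infty}$. Combined with the exact identity $\|Fg(\theta\cdot)\|_{L^p}=\|F\|_{L^p}\|g\|_{L^p}$, this means you never pass through $p$-th powers, so nothing is lost.

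Two small remarks on presentation and hypotheses.
\begin{itemize}
\item Drop your opening sentence about reducing to a one-dimensional statement on the circle, since it does not describe what you do. Also drop the $p$-th-power Riemann-sum discussion in your second paragraph, which the Minkowski argument makes unnecessary.
\item Your choice of normalized measure is not just a convenience; the statement needs it. With the unnormalized torus $[-\pi,\pi]^d$ used in the paper and $f\equiv1$, the left-hand side equals $\bigl|1-(2\pi)^{d/p}\bigr|\,\|g\|_{L^p}$ for $p<\infty$. This does not decay as $\theta\to\infty$.
\end{itemize}
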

We apply Lemma \ref{lem-lp-decorrelation} with $f=a_{(k)}$, $g=g_{(k)}\phi_{(k)}$ and $\theta=\lambda_{q+1}^{5\varepsilon}$, furthermore, by  \eqref{eq2.3s}, \eqref{eq2.2s} and  Lemmas \ref{lem-mikado-flows}--\ref{lem-momentum-amplitude-estimate}, we obtain
\begin{align}\label{3.66s}
    \begin{aligned}
       &\quad\|w_{q+1}^{(p)}\|_{L^2_{t,x}}+\sum_{i=1}^3\|f_{q+1}^{i,(p)}\|_{L^2_{t,x}}\\ &\lesssim\sum_{k\in\Lambda_{m}\cup\Lambda_{\mathcal{N}}}\left(\|a_{(k)}\|_{L^2_{t,x}}\|g_{(k)}\|_{L^2_{t}}\|\phi_{(k)}\|_{L^2_{x}}+\lambda^{-\frac{5\varepsilon}{2}}\|a_{(k)}\|_{C^1_{t,x}}\|g_{(k)}\|_{L^2_{t}}\|\phi_{(k)}\|_{L^2_{x}}\right)\\
       &\lesssim\delta_{q+1}^{\frac{1}{2}}+\lambda^{-\frac{5\varepsilon}{2}}(l^{-6}+l^{-11})\\
       &\lesssim\delta_{q+1}^{\frac{1}{2}}.
    \end{aligned}
\end{align}
Then, taking into account \eqref{eq2.3s}, \eqref{3.53s}, \eqref{3.54s} and \eqref{3.66s}, we deduce
\begin{equation}\label{3.67s}
    \begin{aligned}
       &\quad\|w_{q+1}\|_{L^2_{t,x}}+\sum_{i=1}^3\|f^i_{q+1}\|_{L^2_{t,x}}\\
       &\lesssim\|w^{(p)}_{q+1}\|_{L^2_{t,x}}+\|w^{(c)}_{q+1}\|_{L^2_{t,x}}+\|w^{(o)}_{q+1}\|_{L^2_{t,x}}\\
       &\quad+\sum_{i=1}^3\left(\|f^{i,(p)}_{q+1}\|_{L^2_{t,x}}+\|f^{i,(c)}_{q+1}\|_{L^2_{t,x}}+\|f^{i,(o)}_{q+1}\|_{L^2_{t,x}}\right)\\
       &\lesssim \delta_{q+1}^{\frac{1}{2}}+l^{-20}\lambda^{-1}+l^{-14}\sigma^{-1}\\
       &\lesssim\delta_{q+1}^{\frac{1}{2}}.
    \end{aligned}
\end{equation}
Moreover, using \eqref{eq2.3s} and Proposition \ref{prop-estimate-perturbation} with $\gamma=1$, $p=2$ and $N=0$ yields that
\begin{equation}\label{3.68s}
    \begin{aligned}
       &\quad\|w_{q+1}\|_{L^1_tL^2_x}+\sum_{i=1}^3\|f^i_{q+1}\|_{L^1_tL^2_x}\\   &\lesssim\|w^{(p)}_{q+1}\|_{L^1_tL^2_x}+\|w^{(c)}_{q+1}\|_{L^1_tL^2_x}+\|w^{(o)}_{q+1}\|_{L^1_tL^2_x}\\
       &\quad+\sum_{i=1}^3\left(\|f^{i,(p)}_{q+1}\|_{L^1_tL^2_x}+\|f^{i,(c)}_{q+1}\|_{L^1_tL^2_x}+\|f^{i,(o)}_{q+1}\|_{L^1_tL^2_x}\right)\\
       &\lesssim l^{-2}\tau^{-\frac{1}{2}}+l^{-20}\lambda^{-1}\tau^{-\frac{1}{2}}+l^{-14}\sigma^{-1}\\
       &\lesssim\delta_{q+2}^{\frac{1}{2}}.
    \end{aligned}
\end{equation}
Furthermore, by combining with \eqref{eq2.25}, \eqref{3.67s} and \eqref{3.68s}, we have
    \begin{align*}
       &\quad\|m_{q+1}-m_q\|_{L^2_{t,x}}+\|\mathcal{N}_{q+1}-\mathcal{N}_q\|_{L^2_{t,x}}\\
       &\lesssim \|m_{q+1}-m_l\|_{L^2_{t,x}}+\|m_{l}-m_q\|_{L^2_{t,x}}+\|\mathcal{N}_{q+1}-\mathcal{N}_l\|_{L^2_{t,x}}+\|\mathcal{N}_{l}-\mathcal{N}_q\|_{L^2_{t,x}}\\
       &\lesssim \|m_{l}-m_q\|_{C_{t,x}}+ \|\mathcal{N}_{l}-\mathcal{N}_q\|_{C_{t,x}}+\|w_{q+1}\|_{L^2_{t,x}}+\sum_{i=1}^3\|f^i_{q+1}\|_{L^2_{t,x}}\\
       &\lesssim l\lambda^5_q+\delta_{q+1}^{\frac{1}{2}}\\
       &\lesssim\delta_{q+1}^{\frac{1}{2}}.
    \end{align*}
and
    \begin{align*}
       &\quad\|m_{q+1}-m_q\|_{L^1_tL^2_x}+\|\mathcal{N}_{q+1}-\mathcal{N}_q\|_{L^1_tL^2_x}\\
       &\lesssim \|m_{q+1}-m_l\|_{L^1_tL^2_x}+\|m_{l}-m_q\|_{L^1_tL^2_x}+\|\mathcal{N}_{q+1}-\mathcal{N}_l\|_{L^1_tL^2_x}+\|\mathcal{N}_{l}-\mathcal{N}_q\|_{L^1_tL^2_x}\\
       &\lesssim \|m_{l}-m_q\|_{C_{t,x}}+ \|b_{l}-b_q\|_{C_{t,x}}+\|w_{q+1}\|_{L^1_tL^2_x}+\sum_{i=1}^3\|f^i_{q+1}\|_{L^1_tL^2_x}\\
       &\lesssim l\lambda^5_q+\delta_{q+2}^{\frac{1}{2}}\\
       &\lesssim\delta_{q+2}^{\frac{1}{2}}.
    \end{align*}

Thus far, the inductive estimates \eqref{eq2.5}--\eqref{eq2.7} and \eqref{eq2.10}--\eqref{eq2.13} are verified at level $q+1$.
\section{Elastic stress fluctuation and Reynolds stress}\label{sec4}
We are now in the stage to treat the delicate elastic stress fluctuation and Reynolds stress and check the corresponding inductive estimates \eqref{eq2.8} and \eqref{eq2.9} at level $q+1$.

To begin with, we recall from \cite{LZZ-2022-jmpa} the key inverse-divergence operators $\mathcal{R}^m$ and $\mathcal{R}^n$, which are defined by
\begin{align}  &\left(\mathcal{R}^mv\right)_{kl}:=\partial_k\Delta^{-1}v_l+\partial_l\Delta^{-1}v_k-\frac{1}{2}\left(\delta_{kl}+\partial_k\partial_l\Delta^{-1}\right){\rm div}\Delta^{-1}v,\label{eq4.1}\\
&\left(\mathcal{R}^nf\right)_{ij}=\epsilon_{ijk}(-\Delta)^{-1}\left({\rm curl}f\right)_k,\label{eq4.2}
\end{align}
where $\epsilon_{ijk}$ is the Levi-Civita tensor and the smooth vector fields $v$ and $f$ are both mean-free, moreover, $f$ is divergence-free.

The operator $\mathcal{R}^m$ maps smooth vector field to symmetric and trace-free matrices, while the operator $\mathcal{R}^n$ returns skew-symmetric matrices. Moreover, one has the following identities
\begin{align}\label{eq4.3}
    {\rm div}\mathcal{R}^m(v)=v,\quad{\rm div}\mathcal{R}^n(f)=f.
\end{align}
We note that $\mathcal{R}^m\nabla$, $\mathcal{R}^m{\rm div}$, $\mathcal{R}^n\nabla$ and $\mathcal{R}^n{\rm div}$ are all Calder\'on-Zygmund operators and thus they are bounded in periodic $L^{\widetilde{p}}$ spaces with $\widetilde{p}\in(1,+\infty)$. See \cite{CZ-1954-studiamath,DCS-2013-invent,BBV-2020-annpde} for details.
\subsection{Decomposition of elastic stress fluctuation and Reynolds stress}
For more precise estimates, we need to decompose elastic stress fluctuation and Reynolds stress appropriately.
\subsubsection{Decomposition of elastic stress fluctuation}
Using $\eqref{eq2.1}_3$ at level $q+1$, $\eqref{eq2.18}_3$, \eqref{eq2.20}, \eqref{3.18s}, \eqref{3.35s} and \eqref{3.46s}--\eqref{3.48s}, we derive that for $i=1,2,3$,
\begin{equation}\label{eq4.4}
    \begin{aligned}
        {\rm div}M^i_{q+1}=
&\partial_t\left(f_{q+1}^{i,(p)}+f_{q+1}^{i,(c)}\right)+{\rm div}\left((\rho_{q+1}^{-1}-\rho_l^{-1})(\mathcal{N}^i_l\otimes m_l-m_l\otimes \mathcal{N}^i_l)\right)\\
&\underbrace{+{\rm div}\left(\rho_{q+1}^{-1}(\mathcal{N}^i_l\otimes w_{q+1}+f^i_{q+1}\otimes m_l-m_l\otimes f^i_{q+1}-w_{q+1}\otimes \mathcal{N}^i_l)\right)\quad}_{{\rm div}M^i_{{\rm lin}}}\\
&+{\rm div}\left(\rho_l^{-1}(f_{q+1}^{i,(p)}\otimes w_{q+1}^{(p)}-w_{q+1}^{(p)}\otimes f_{q+1}^{i,(p)})+M^i_l\right)+\partial_tf_{q+1}^{i,(o)}\\
&\underbrace{+{\rm div}\left((\rho_{q+1}^{-1}-\rho_l^{-1})(f^i_{q+1}\otimes w_{q+1}-w_{q+1}\otimes f^i_{q+1})\right)\qquad\quad}_{{\rm div}M^i_{{\rm osc}}}\\
&+{\rm div}\left(\rho_l^{-1}\left(f_{q+1}^{i,(p)}\otimes (w_{q+1}^{(c)}+w_{q+1}^{(o)})-w_{q+1}^{(p)}\otimes (f_{q+1}^{i,(c)}+f_{q+1}^{i,(o)})\right)\right)\\
&\underbrace{+{\rm div}\left(\rho_l^{-1}\left((f_{q+1}^{i,(c)}+f_{q+1}^{i,(o)})\otimes w_{q+1}-(w_{q+1}^{(c)}+w_{q+1}^{(o)})\otimes f^i_{q+1}\right)\right)\quad}_{{\rm div}M^i_{{\rm cor}}}\\
&+{\rm div}M^i_{{\rm com}},
    \end{aligned}
\end{equation}
where $M^i_{{\rm com}}$ is the commutator stress to the elastic equation given by \eqref{eq2.20}.

Using the inverse-divergence operator $\mathcal{R}^n$, we define the new elastic stress fluctuation $\left(\mathscr{M}_{q+1}\right)_{ijk}:=\left(M^i_{q+1}\right)_{jk}$ at level $q+1$ by
\begin{align}\label{eq4.5}
    M^i_{q+1}:=M^i_{{\rm lin}}+M^i_{{\rm osc}}+M^i_{{\rm cor}}+M^i_{{\rm com}},\quad i=1,2,3,
\end{align}
where the linear error
\begin{equation}\label{eq4.6}
    \begin{aligned}
        M^i_{{\rm lin}}&:=
        \mathcal{R}^n\partial_t\left(f_{q+1}^{i,(p)}+f_{q+1}^{i,(c)}\right)+\mathcal{R}^n{\rm div}\left((\rho_{q+1}^{-1}-\rho_l^{-1})(\mathcal{N}^i_l\otimes m_l-m_l\otimes \mathcal{N}^i_l)\right)\\
&\quad+\mathcal{R}^n{\rm div}\left(\rho_{q+1}^{-1}(\mathcal{N}^i_l\otimes w_{q+1}+f^i_{q+1}\otimes m_l-m_l\otimes f^i_{q+1}-w_{q+1}\otimes \mathcal{N}^i_l)\right)\\
&=:M^i_{{\rm lin},1}+M^i_{{\rm lin},2}+M^i_{{\rm lin},3},
    \end{aligned}
\end{equation}
the oscillation error
\begin{equation}\label{eq4.7}
    \begin{aligned}
        M^i_{{\rm osc}}&:=      \sum_{k\in\Lambda_{\mathcal{N}^i}}g_{(k)}^2\mathcal{R}^n\mathbb{P}_{\neq0}\left(\mathbb{P}_{\neq0}(D^i_{(k)}\otimes W_{(k)}-W_{(k)}\otimes D^i_{(k)})\nabla\left(\rho_l^{-1}a^2_{(k)}\right)\right)\\
        &\quad-\sigma^{-1}\sum_{k\in\Lambda_{\mathcal{N}^i}}h_{(k)}\partial_t\mathcal{R}^n\left((k_2\otimes k_1-k_1\otimes k_2)\nabla(\rho_l^{-1}a^2_{(k)})\right)\\
        &\quad+\mathcal{R}^n{\rm div}\left((\rho_{q+1}^{-1}-\rho_l^{-1})(f^i_{q+1}\otimes w_{q+1}-w_{q+1}\otimes f^i_{q+1})\right)\\
        &=:M^i_{{\rm osc},1}+M^i_{{\rm osc},2}+M^i_{{\rm osc},3},
    \end{aligned}
\end{equation}
the corrector error
\begin{equation}\label{eq4.8}
    \begin{aligned}
        M^i_{{\rm cor}}
        &:=
\mathcal{R}^n{\rm div}\left(\rho_l^{-1}\left(f_{q+1}^{i,(p)}\otimes (w_{q+1}^{(c)}+w_{q+1}^{(o)})-w_{q+1}^{(p)}\otimes (f_{q+1}^{i,(c)}+f_{q+1}^{i,(o)})\right)\right)\\
&\quad+\mathcal{R}^n{\rm div}\left(\rho_l^{-1}\left((f_{q+1}^{i,(c)}+f_{q+1}^{i,(o)})\otimes w_{q+1}-(w_{q+1}^{(c)}+w_{q+1}^{(o)})\otimes f^i_{q+1}\right)\right),
    \end{aligned}
\end{equation}
and $M^i_{{\rm com}}$ is the commutator error given by \eqref{eq2.20}.

We also note that, the nonlinear terms in the
elastic equation is skew-symmetric, which, in particular, yields that
\begin{align}\label{eq4.9}
    {\rm div}\left({\rm div}M^i_{q+1}\right)=0.
\end{align}
Furthermore, it leads 
\begin{align}\label{eq4.10}
    M^i_{q+1}=\mathcal{R}^n{\rm div}M^i_{q+1}.
\end{align}
\subsubsection{Decomposition of Reynolds stress}
Now we concern the Reynolds stress. By virtue of $\eqref{eq2.1}_2$, $\eqref{eq2.18}_2$, \eqref{eq2.19}, \eqref{3.36s}, \eqref{3.45s}, \eqref{3.47s} and \eqref{3.48s}, we compute
\begin{align}
           {\rm div}R^m_{q+1}=
&\partial_t\left(w_{q+1}^{(p)}+w_{q+1}^{(c)}\right)+\nu^s(-\Delta)^{\alpha}\left((\rho^{-1}_{q+1}-\rho^{-1}_l)m_l+(\rho^{-1}_{q+1}-\rho^{-1}_l)w_{q+1}+\rho^{-1}_lw_{q+1}\right)\notag\\
&-(\nu^b+\frac{1}{3}\nu^s)\nabla{\rm div}\left((\rho^{-1}_{q+1}-\rho^{-1}_l)m_l+(\rho^{-1}_{q+1}-\rho^{-1}_l)w_{q+1}+\rho^{-1}_lw_{q+1}\right)\notag\\
&+{\rm div}\left(\rho_{q+1}^{-1}(m_l\otimes w_{q+1}+w_{q+1}\otimes m_l)+(\rho^{-1}_{q+1}-\rho^{-1}_l)m_l\otimes m_l\right)\notag\\
&\underbrace{-{\rm div}\left(\rho_{q+1}^{-1}\sum_{i=1}^3\left(\mathcal{N}^i_l\otimes f^i_{q+1}+f^i_{q+1}\otimes\mathcal{N}^i_l\right)-(\rho_{q+1}^{-1}-\rho_l^{-1})\sum_{i=1}^3\mathcal{N}^i_l\otimes\mathcal{N}^i_l\right)\qquad\qquad}_{{\rm div}R^m_{{\rm lin}}}\notag\\
&+{\rm div}\left(\rho_l^{-1}\left(w_{q+1}^{(p)}\otimes w_{q+1}^{(p)}-\sum_{i=1}^3f^{i,(p)}_{q+1}\otimes f^{i,(p)}_{q+1}\right)+R^m_l\right)+\partial_tw_{q+1}^{(o)}\notag\\
&\underbrace{+{\rm div}\left((\rho_{q+1}^{-1}-\rho_l^{-1})w_{q+1}\otimes w_{q+1}-(\rho_{q+1}^{-1}-\rho_l^{-1})\sum_{i=1}^3f_{q+1}^i\otimes f_{q+1}^i\right)}_{{\rm div}R^m_{{\rm osc}}}\notag\\
&+{\rm div}\left(\rho_l^{-1}\left(w_{q+1}^{(p)}\otimes (w_{q+1}^{(c)}+w_{q+1}^{(o)})+(w_{q+1}^{(c)}+w_{q+1}^{(o)})\otimes w_{q+1}\right)\right)\notag\\
&\underbrace{-{\rm div}\left(\rho_l^{-1}\sum_{i=1}^3\left(\left(f_{q+1}^{i,(c)}+f_{q+1}^{i,(o)}\right)\otimes f^i_{q+1}+f_{q+1}^{i,(p)}\otimes\left(f_{q+1}^{i,(c)}+f_{q+1}^{i,(o)}\right)\right)\right)}_{{\rm div}R^m_{{\rm cor}}}\notag\\
&+\underbrace{\nabla P(\rho_{q+1})-\nabla P(\rho_l)}_{R^m_{{\rm pre}}}+{\rm div}R^m_{{\rm com}},\label{eq4.11}
\end{align}
where $R^m_{{\rm com}}$ is the commutator stress to the momentum equation given by \eqref{eq2.19}.

Then, using the inverse-divergence operator $\mathcal{R}^m$ we define the new Reynolds stress at level $q+1$ by
\begin{align}\label{eq4.12}
    R^m_{q+1}:=R^m_{{\rm lin}}+R^m_{{\rm osc}}+R^m_{{\rm cor}}+R^m_{{\rm pre}}+R^m_{{\rm com}},
\end{align}
where the linear error
\begin{equation}\label{eq4.13}
    \begin{aligned}
        R^m_{{\rm lin}}&:=
        \mathcal{R}^m\partial_t\left(w_{q+1}^{(p)}+w_{q+1}^{(c)}\right)\\
        &\quad+\nu^s\mathcal{R}^m(-\Delta)^{\alpha}\left((\rho^{-1}_{q+1}-\rho^{-1}_l)m_l+(\rho^{-1}_{q+1}-\rho^{-1}_l)w_{q+1}+\rho^{-1}_lw_{q+1}\right)\\
&\quad-(\nu^b+\frac{1}{3}\nu^s)\mathcal{R}^m\nabla{\rm div}\left((\rho^{-1}_{q+1}-\rho^{-1}_l)m_l+(\rho^{-1}_{q+1}-\rho^{-1}_l)w_{q+1}+\rho^{-1}_lw_{q+1}\right)\\
&\quad+\mathcal{R}^m{\rm div}\left(\rho_{q+1}^{-1}(m_l\otimes w_{q+1}+w_{q+1}\otimes m_l)+(\rho^{-1}_{q+1}-\rho^{-1}_l)m_l\otimes m_l\right)\\
&\quad-\mathcal{R}^m{\rm div}\left(\rho_{q+1}^{-1}\sum_{i=1}^3\left(\mathcal{N}^i_l\otimes f^i_{q+1}+f^i_{q+1}\otimes\mathcal{N}^i_l\right)+(\rho^{-1}_{q+1}-\rho^{-1}_l)\sum_{i=1}^3\mathcal{N}^i_l\otimes\mathcal{N}^i_l\right)\\
&=:R^m_{{\rm lin},1}+R^m_{{\rm lin},2}+R^m_{{\rm lin},3}+R^m_{{\rm lin},4}+R^m_{{\rm lin},5},
    \end{aligned}
\end{equation}
the oscillation error
\begin{equation}\label{eq4.14}
    \begin{aligned}
        R^m_{{\rm osc}}&:=\sum_{i=1}^3\sum_{k\in\Lambda_{\mathcal{N}^i}}g^2_{(k)}\mathcal{R}^m\mathbb{P}_{\neq0}\left(\mathbb{P}_{\neq0}\left(W_{(k)}\otimes W_{(k)}-D^i_{(k)}\otimes D^i_{(k)}\right)\nabla(\rho_l^{-1}a^2_{(k)})\right)\\      &\quad+\sum_{k\in\Lambda_{m^1}}\mathcal{R}^m\mathbb{P}_{\neq0}\left(g^2_{(k)}\mathbb{P}_{\neq0}(W_{(k)}\otimes W_{(k)})\nabla(\rho_l^{-1}a^2_{(k)}) \right)\\
        &\quad\underbrace{-\sum_{i=1}^3\sum_{k\in\Lambda_{m^2_i}}\mathcal{R}^m\mathbb{P}_{\neq0}\left(g^2_{(k)}\mathbb{P}_{\neq0}(D^i_{(k)}\otimes D^i_{(k)})\nabla(\rho_l^{-1}a^2_{(k)})\right)\qquad\qquad\qquad}_{R^m_{{\rm osc},1}}\\
        &\quad-\sigma^{-1}\sum_{k\in\Lambda_{m^1}\cup\Lambda_{\mathcal{N}}}h_{(k)}\mathcal{R}^m\left(\left(k_1\cdot\nabla\right)\partial_t(\rho_l^{-1}a^2_{(k)})k_1\right)\\ &\quad\underbrace{+\sigma^{-1}\sum_{k\in\Lambda_{m^2}\cup\Lambda_{\mathcal{N}}}h_{(k)}\mathcal{R}^m\left(\left(k_2\cdot\nabla\right)\partial_t(\rho_l^{-1}a^2_{(k)})k_2\right)}_{R^m_{{\rm osc},2}}\\
        &\quad\underbrace{+\mathcal{R}^m{\rm div}\left((\rho_{q+1}^{-1}-\rho_l^{-1})w_{q+1}\otimes w_{q+1}-(\rho_{q+1}^{-1}-\rho_l^{-1})\sum_{i=1}^3f_{q+1}^i\otimes f_{q+1}^i\right)}_{R^m_{{\rm osc},3}}
    \end{aligned}
\end{equation}
the corrector error
\begin{equation}\label{eq4.15}
    \begin{aligned}
        R^m_{{\rm cor}}&:=
        \mathcal{R}^m{\rm div}\left(\rho_l^{-1}\left(w_{q+1}^{(p)}\otimes (w_{q+1}^{(c)}+w_{q+1}^{(o)})+(w_{q+1}^{(c)}+w_{q+1}^{(o)})\otimes w_{q+1}\right)\right)\\
&\quad-\mathcal{R}^m{\rm div}\left(\rho_l^{-1}\sum_{i=1}^3\left((f_{q+1}^{i,(c)}+f_{q+1}^{i,(o)})\otimes f^i_{q+1}+f_{q+1}^{i,(p)}\otimes (f_{q+1}^{i,(c)}+f_{q+1}^{i,(o)})\right)\right),
    \end{aligned}
\end{equation}
the pressure error 
\begin{equation}\label{eq4.16}
    R^m_{{\rm pre}}
:=\mathcal{R}^m\nabla\left(P(\rho_{q+1})-P(\rho_l)\right),
\end{equation}
and the commutator error
\begin{equation}\label{eq4.17}
   \begin{aligned}
        R^m_{{\rm com}}&:=
    \nu^s\mathcal{R}^m(-\Delta)^{\alpha}\left(\rho_l^{-1}m_l-((\rho_q^{-1}m_q)*_x\phi_l)*_t\varphi_l\right)+\mathcal{R}^m\nabla\left(P(\rho_l)-P_l\right)\\
        &\quad-(\nu^b+\frac{1}{3}\nu^s)\mathcal{R}^m\nabla{\rm div}\left(\rho_l^{-1}m_l-((\rho_q^{-1}m_q)*_x\phi_l)*_t\varphi_l\right)\\
        &\quad+\mathcal{R}^m{\rm div}\left(\rho_l^{-1}m_l\otimes m_l-\left(\rho_q^{-1}m_q\otimes m_q\right)*_x\phi_l*_t\varphi_l\right)\\
        &\quad-\mathcal{R}^m{\rm div}\left(\rho_l^{-1}\sum_{i=1}^3\mathcal{N}^i_l\otimes \mathcal{N}^i_l-\sum_{i=1}^3\left(\rho_q^{-1}\mathcal{N}^i_q\otimes \mathcal{N}^i_q\right)*_x\phi_l*_t\varphi_l\right)\\
        &=:R^m_{{\rm com},1}+R^m_{{\rm com},2}+R^m_{{\rm com},3}+R^m_{{\rm com},4}+R^m_{{\rm com},5}.
   \end{aligned}
\end{equation}

By \eqref{3.36s}--\eqref{3.46s} and the fact that for any $i=1,2,3$, ${\rm div}(W_{(k)}\otimes W_{(k)})={\rm div}(D^i_{(k)}\otimes D^i_{(k)})=0$, we have
\begin{align}\label{eq4.18}
    R^m_{q+1}=\mathcal{R}^m{\rm div}R^m_{q+1}.
\end{align}
\subsection{Verification of $C^1_{t,x}$-estimates}
The purpose of this subsection is to verify the $C^1_{t,x}$-estimates \eqref{eq2.8} of $R^m_{q+1}$ and $M^i_{q+1}$ for all $i=1,2,3$. 

First, by \eqref{eq4.10}, \eqref{eq4.18}, the Sobolev embedding $W^{1,6}_x\hookrightarrow C_x$, equations $\eqref{eq2.1}_2$ and $\eqref{eq2.1}_3$ at level $q+1$ and the fact $\mathcal{R}^m,\mathcal{R}^n\sim|\nabla|^{-1}$ in Sobolev spaces $W^{s,p}$, we obtain
\begin{equation}\label{eq4.19}
    \begin{aligned}
        &\quad\|R^m_{q+1}\|_{C_tC^1_x}+\sum_{i=1}^3\|M^i_{q+1}\|_{C_tC^1_x}\\
        &\lesssim\left\|\mathcal{R}^m\left({\rm div}R^m_{q+1}\right)\right\|_{C_tW^{2,6}_x}+\sum_{i=1}^3\left\|\mathcal{R}^n\left({\rm div}M^i_{q+1}\right)\right\|_{C_tW^{2,6}_x}\\
        &\lesssim\|\partial_tm_{q+1}\|_{C_tW^{1,6}_x}+\sum_{i=1}^3\|\partial_t\mathcal{N}^i_{q+1}\|_{C_tW^{1,6}_x}+\|\nu^s(-\Delta)^{\alpha}(\rho_{q+1}^{-1}m_{q+1})\|_{C_tW^{1,6}_x}\\
        &\quad+\|\nabla P(\rho_{q+1})\|_{C_tW^{1,6}_x}+\left\|(\nu^b+\frac{1}{3}\nu^s)\nabla{\rm div}(\rho_{q+1}^{-1}m_{q+1})\right\|_{C_tW^{1,6}_x}\\
        &\quad+\sum_{i=1}^3\left\|{\rm div}\left(\rho_{q+1}^{-1}(\mathcal{N}^i_{q+1}\otimes m_{q+1}-m_{q+1}\otimes \mathcal{N}^i_{q+1})\right)\right\|_{C_tW^{1,6}_x}\\
        &\quad+\left\|{\rm div}\left(\rho_{q+1}^{-1}\left(m_{q+1}\otimes m_{q+1}-\sum_{i=1}^3\mathcal{N}_{q+1}^i\otimes\mathcal{N}^i_{q+1}\right)\right)\right\|_{C_tW^{1,6}_x}.
    \end{aligned}
\end{equation}
Note that, by interpolation inequality and \eqref{eq2.5} at level $q+1$, we have
\begin{equation}\label{eq4.20}
    \begin{aligned}
      &\quad\|\nu^s(-\Delta)^{\alpha}(\rho_{q+1}^{-1}m_{q+1})\|_{C_tW^{1,6}_x}\\
        &\lesssim\|\rho_{q+1}^{-1}m_{q+1}\|^{1-\frac{2\alpha+1}{4}}_{C_tC_x}\|\rho_{q+1}^{-1}m_{q+1}\|^{\frac{2\alpha+1}{4}}_{C_tW_x^{4,\infty}}\\
        &\lesssim\|m_{q+1}\|_{C_{t,x}}^{\frac{3-2\alpha}{4}}\left(\|\rho_{q+1}^{-1}\|_{C_tC_x^4}\|m_{q+1}\|_{C_{t,x}^4}\right)^{\frac{2\alpha+1}{4}}.
    \end{aligned}
\end{equation}
Moreover,
\begin{equation}\label{4.21s}
        \left\|(\nu^b+\frac{1}{3}\nu^s)\nabla{\rm div}(\rho_{q+1}^{-1}m_{q+1})\right\|_{C_tW^{1,6}_x}\lesssim\|\rho_{q+1}^{-1}m_{q+1}\|_{C_tC_x^3}\lesssim\|\rho_{q+1}^{-1}\|_{C_tC_x^3}\|m_{q+1}\|_{C_tC^3_x},
\end{equation}
Furthermore, by Leibniz rule, we get
\begin{equation}\label{4.22s}
    \begin{aligned}
        &\quad\sum_{i=1}^3\left\|{\rm div}\left(\rho_{q+1}^{-1}(\mathcal{N}^i_{q+1}\otimes m_{q+1}-m_{q+1}\otimes \mathcal{N}^i_{q+1})\right)\right\|_{C_tW^{1,6}_x}\\
        &\lesssim\sum_{i=1}^3\|\rho_{q+1}^{-1}(\mathcal{N}^i_{q+1}\otimes m_{q+1}-m_{q+1}\otimes \mathcal{N}^i_{q+1})\|_{C_tC^2_x}\\      &\lesssim\|\rho_{q+1}^{-1}\|_{C_tC^2_x}\sum_{N_1+N_2=2}\sum_{i=1}^3\|m_{q+1}\|_{C_tC^{N_1}_x}\|\mathcal{N}^i_{q+1}\|_{C_tC^{N_2}_x},
    \end{aligned}
\end{equation}
and
\begin{align}
    &\quad\left\|{\rm div}\left(\rho_{q+1}^{-1}\left(m_{q+1}\otimes m_{q+1}-\sum_{i=1}^3\mathcal{N}^i_{q+1}\otimes\mathcal{N}^i_{q+1}\right)\right)\right\|_{C_tW^{1,6}_x}\notag\\
        &\lesssim\left\|\rho_{q+1}^{-1}\left(m_{q+1}\otimes m_{q+1}-\sum_{i=1}^3\mathcal{N}^i_{q+1}\otimes\mathcal{N}^i_{q+1}\right)\right\|_{C_tC^2_x}\notag\\     &\lesssim\|\rho_{q+1}^{-1}\|_{C_tC^2_x}\sum_{N_1+N_2=2}\|m_{q+1}\|_{C_tC^{N_1}_x}\|m_{q+1}\|_{C_tC^{N_2}_x}\label{4.23s}\\      &\quad+\|\rho_{q+1}^{-1}\|_{C_tC^2_x}\sum_{N_1+N_2=2}\sum_{i=1}^3\|\mathcal{N}^i_{q+1}\|_{C_tC^{N_1}_x}\|\mathcal{N}^i_{q+1}\|_{C_tC^{N_2}_x}.\notag
\end{align}
Then, plugging \eqref{eq4.20}--\eqref{4.23s} into \eqref{eq4.19} yields that
\begin{equation}\label{4.24s}
    \begin{aligned}
        &\quad\|R^m_{q+1}\|_{C_tC^1_x}+\sum_{i=1}^3\|M^i_{q+1}\|_{C_tC^1_x}\\       &\lesssim\|\partial_tm_{q+1}\|_{C_tW^{1,6}_x}+\sum_{i=1}^3\|\partial_t\mathcal{N}^i_{q+1}\|_{C_tW^{1,6}_x}+\|\nabla P(\rho_{q+1})\|_{C_tW^{1,6}_x}\\
        &\quad+\|m_{q+1}\|_{C_{t,x}}^{\frac{3-2\alpha}{4}}\left(\|\rho_{q+1}^{-1}\|_{C_tC_x^4}\|m_{q+1}\|_{C_{t,x}^4}\right)^{\frac{2\alpha+1}{4}}\\      &\quad+\|\rho_{q+1}^{-1}\|_{C_tC^2_x}\sum_{N_1+N_2=2}\sum_{i=1}^3\|m_{q+1}\|_{C_tC^{N_1}_x}\|\mathcal{N}^i_{q+1}\|_{C_tC^{N_2}_x}\\
        &\quad+\|\rho_{q+1}^{-1}\|_{C_tC^2_x}\sum_{N_1+N_2=2}\left(\|m_{q+1}\|_{C_tC^{N_1}_x}\|m_{q+1}\|_{C_tC^{N_2}_x}+\sum_{i=1}^3\|\mathcal{N}^i_{q+1}\|_{C_tC^{N_1}_x}\|\mathcal{N}^i_{q+1}\|_{C_tC^{N_2}_x}\right).
    \end{aligned}
\end{equation}

Using \eqref{eq2.5} and \eqref{eq2.6} at level $q+1$, we obtain
\begin{equation}\label{4.25s}
    \|\partial_t^M\rho_{q+1}^{-1}\|_{C_tC^N_x}\lesssim\lambda^{\frac{N\varepsilon}{4}}.
\end{equation}
Moreover, since $P^{\prime}$ and $P^{\prime\prime}$ are continuous and $\rho_{q+1}$ is uniformly bounded away from zero and infinity, that is, by Leibniz rule again, we get
\begin{equation}\label{4.26s}
    \|\nabla P(\rho_{q+1})\|_{C_tC^1_x}\lesssim\|P^{\prime}(\rho_{q+1})\|_{C_{t,x}}\|\rho_{q+1}\|_{C_tC^2_x}+\|P^{\prime\prime}(\rho_{q+1})\|_{C_{t,x}}\|\rho_{q+1}\|^2_{C_tC^1_x}\lesssim\lambda^{\varepsilon}.
\end{equation}
Thus, combining with \eqref{eq2.2s}, \eqref{4.24s}--\eqref{4.26s} and \eqref{eq2.7} at level $q+1$, it yields
\begin{equation}\label{4.27s}
\|R^m_{q+1}\|_{C_tC^1_x}+\sum_{i=1}^3\|M^i_{q+1}\|_{C_tC^1_x}\lesssim\lambda_{q+1}^{19+\varepsilon}+\lambda_{q+1}^{15+\frac{3\varepsilon}{4}}+\lambda_{q+1}^{14+\frac{\varepsilon}{2}}+\lambda_{q+1}^{\varepsilon}\lesssim\lambda_{q+1}^{20}.
\end{equation}

Now we turn to the $C^1_tC_x$-estimates. Similarly, we deduce
\begin{align}
           &\quad\|\partial_tR^m_{q+1}\|_{C_{t,x}}+\sum_{i=1}^3\|\partial_tM^i_{q+1}\|_{C_{t,x}}\notag\\
        &=\left\|\partial_t\mathcal{R}^m\left({\rm div}R^m_{q+1}\right)\right\|_{C_{t,x}}+\sum_{i=1}^3\left\|\partial_t\mathcal{R}^n\left({\rm div}M^i_{q+1}\right)\right\|_{C_{t,x}}\notag\\      
        &\lesssim\left\|\partial_t\mathcal{R}^m\left({\rm div}R^m_{q+1}\right)\right\|_{C_tW^{1,6}_x}+\sum_{i=1}^3\left\|\partial_t\mathcal{R}^n\left({\rm div}M^i_{q+1}\right)\right\|_{C_tW^{1,6}_x}\notag\\
&\lesssim\|\partial_t{\rm div}R^m_{q+1}\|_{C_{t}L^{6}_x}+\sum_{i=1}^3\|\partial_t{\rm div}M^i_{q+1}\|_{C_{t}L^{6}_x}\notag\\ 
        &\lesssim\|\partial_t{\rm div}R^m_{q+1}\|_{C_{t}L^{\infty}_x}+\sum_{i=1}^3\|\partial_t{\rm div}M^i_{q+1}\|_{C_{t}L^{\infty}_x}\notag\\ &\lesssim\|\partial^2_tm_{q+1}\|_{C_tL^{\infty}_x}+\sum_{i=1}^3\|\partial^2_t\mathcal{N}^i_{q+1}\|_{C_tL^{\infty}_x}+\|\nu^s(-\Delta)^{\alpha}\partial_t(\rho_{q+1}^{-1}m_{q+1})\|_{C_tL^{\infty}_x}\notag\\
        &\quad+\left\|(\nu^b+\frac{1}{3}\nu^s)\nabla{\rm div}\partial_t(\rho_{q+1}^{-1}m_{q+1})\right\|_{C_tL^{\infty}_x}+\|\nabla\partial_t P(\rho_{q+1})\|_{C_tL^{\infty}_x}\notag\\
        &\quad+\sum_{i=1}^3\left\|{\rm div}\partial_t\left(\rho_{q+1}^{-1}(\mathcal{N}^i_{q+1}\otimes m_{q+1}-m_{q+1}\otimes \mathcal{N}^i_{q+1})\right)\right\|_{C_tL^{\infty}_x}\notag\\
        &\quad+\left\|{\rm div}\partial_t\left(\rho_{q+1}^{-1}\left(m_{q+1}\otimes m_{q+1}-\sum_{i=1}^3\mathcal{N}^i_{q+1}\otimes\mathcal{N}^i_{q+1}\right)\right)\right\|_{C_tL^{\infty}_x}\notag\\
        &\lesssim\|m_{q+1}\|_{C^2_{t,x}}+\sum_{i=1}^3\|\mathcal{N}^i_{q+1}\|_{C^2_{t,x}}+\|(-\Delta)^{\alpha}(\partial_t\rho_{q+1}^{-1}m_{q+1})\|_{C_{t,x}}+\|(-\Delta)^{\alpha}(\rho_{q+1}^{-1}\partial_tm_{q+1})\|_{C_{t,x}}\notag\\
        &\quad+\sum_{M_1+M_2=1}\|\partial_t^{M_1}\rho_{q+1}^{-1}\|_{C_tC^2_x}\|\partial_t^{M_2}m_{q+1}\|_{C_tC^2_x}\notag\\
        &\quad+\|\partial_t\rho_{q+1}^{-1}\|_{C_tC^1_x}\sum_{\substack{N_1+N_2=2\\M_1+M_2=1}}\left(\sum_{i=1}^3\|\partial_t^{M_1}m_{q+1}\|_{C_tC^{N_1}_x}\|\partial_t^{M_2}\mathcal{N}^i_{q+1}\|_{C_tC^{N_2}_x}\right.\notag\\
        &\qquad+\left.\|\partial_t^{M_1}m_{q+1}\|_{C_tC^{N_1}_x}\|\partial_t^{M_2}m_{q+1}\|_{C_tC^{N_2}_x}+\sum_{i=1}^3\|\partial_t^{M_1}\mathcal{N}^i_{q+1}\|_{C_tC^{N_1}_x}\|\partial_t^{M_2}\mathcal{N}^i_{q+1}\|_{C_tC^{N_2}_x}\right)\notag\\
        &\quad+\|P^{\prime}(\rho_{q+1})\|_{C_{t,x}}\|\partial_t\nabla\rho_{q+1}\|_{C_{t,x}}+\|P^{\prime\prime}(\rho_{q+1})\|_{C_{t,x}}\|\nabla\rho_{q+1}\|_{C_{t,x}}\|\partial_t\rho_{q+1}\|_{C_{t,x}}.\label{4.28s}
\end{align}
Concerning the estimate of hyper-dissipative terms, using interpolation inequality, \eqref{3.65s} and \eqref{4.25s}, we get
\begin{equation}\label{4.29s}
    \begin{aligned}
        &\quad\|(-\Delta)^{\alpha}(\partial_t\rho_{q+1}^{-1}m_{q+1})\|_{C_{t,x}}+\|(-\Delta)^{\alpha}(\rho_{q+1}^{-1}\partial_tm_{q+1})\|_{C_{t,x}}\\
        &\lesssim\left(\|\partial_t\rho_{q+1}^{-1}\|_{C_{t,x}}\|m_{q+1}\|_{C_{t,x}}\right)^{1-\frac{2\alpha}{3}}\left(\|\partial_t\rho_{q+1}^{-1}\|_{C_tC^3_x}\|m_{q+1}\|_{C_tC^3_x}\right)^{\frac{2\alpha}{3}}\\
        &\quad+\left(\|\rho_{q+1}^{-1}\|_{C_{t,x}}\|\partial_tm_{q+1}\|_{C_{t,x}}\right)^{1-\frac{2\alpha}{3}}\left(\|\rho_{q+1}^{-1}\|_{C_tC^3_x}\|\partial_tm_{q+1}\|_{C_tC^3_x}\right)^{\frac{2\alpha}{3}}\\
        &\lesssim\left(\lambda^{3+\varepsilon}\right)^{1-\frac{2\alpha}{3}}\left(\lambda^{15+\varepsilon}\right)^{\frac{2\alpha}{3}}+\left(\lambda^{7+\varepsilon}\right)^{1-\frac{2\alpha}{3}}\left(\lambda^{19+\varepsilon}\right)^{\frac{2\alpha}{3}}\\
        &\lesssim\lambda^{20}.
    \end{aligned}
\end{equation}
Thus, taking into account \eqref{eq2.6} and \eqref{eq2.7} at level $q+1$, \eqref{4.25s}, \eqref{4.28s} and \eqref{4.29s}, we deduce that
    \begin{align*}
    \|\partial_tR^m_{q+1}\|_{C_{t,x}}+\sum_{i=1}^3\|\partial_tM^i_{q+1}\|_{C_{t,x}}
    &\lesssim\lambda_{q+1}^{11}+\lambda_{q+1}^{20}+\sum_{M_1+M_2=1}\lambda_{q+1}^{\frac{\varepsilon}{2}}\lambda_{q+1}^{11+4M_2}\\
    &\quad+\lambda_{q+1}^{\frac{\varepsilon}{2}}\sum_{\substack{N_1+N_2=2\\M_1+M_2=1}}\lambda_{q+1}^{4(N_1+M_1)+3}\lambda_{q+1}^{4(N_2+M_2)+3}+\lambda_{q+1}^{\varepsilon}\\
    &\lesssim\lambda_{q+1}^{20},
    \end{align*}
where we used \eqref{eq2.2s} in the last step. 

Therefore, we have shown that the $C^1_{t,x}$-estimates \eqref{eq2.8} is valid at level $q+1$.
\subsection{Verification of $L^1_{t,x}$-decay}
We are now in the stage to verify the $L^1_{t,x}$-decay \eqref{eq2.9} at level $q+1$. Since Calder\'on-Zygmund operators are bounded in $L^{\widetilde{p}}_x$ with the integrability exponent $\widetilde{p}\in(1,+\infty)$, we choose
\begin{align*}
    \widetilde{p}:=\frac{4\alpha-4+30\varepsilon}{4\alpha-4+29\varepsilon}\in(1,2),
\end{align*}
where $\varepsilon$ is given by \eqref{eq2.2s}. In particular,
\begin{align*}
    (2-2\alpha-15\varepsilon)\left(\frac{1}{\widetilde{p}}-\frac{1}{2}\right)=1-\alpha-7\varepsilon,
\end{align*}
and
\begin{align*}
    r_{\perp}^{\frac{1}{\widetilde{p}}-\frac{1}{2}}=\lambda^{1-\alpha-7\varepsilon}.
\end{align*}
Next, we will estimate the new Reynolds stresses in the space $L^1_tL^{\widetilde{p}}_x$ rather than $L^1_{t,x}$.
\subsubsection{Linear errors $R^m_{{\rm lin}}$ and $M^i_{{\rm lin}}$}
To begin with, concerning the temporal derivative terms  $M^i_{{\rm lin},1}$ and $R^m_{{\rm lin}}$ in \eqref{eq4.6} and \eqref{eq4.13}, using Lemmas \ref{lem-mikado-flows}--\ref{lem-momentum-amplitude-estimate}, we have
\begin{equation}\label{4.30s}
    \begin{aligned}
    &\quad\|R^m_{{\rm lin},1}\|_{L^1_tL^{\widetilde{p}}_x}+\sum_{i=1}^3\|M^i_{{\rm lin},1}\|_{L^1_tL^{\widetilde{p}}_x}\\
& \lesssim \sum_{k \in \Lambda_{m^1}\cup\Lambda_{\mathcal{N}}}\left\|\mathcal{R}^m {\rm curl} {\rm curl}\partial_t\left(g_{(k)} a_{(k)} W_{(k)}^c\right)\right\|_{L_t^1L^{\widetilde{p}}_x} \\
&\quad+\sum_{i=1}^3\sum_{k \in\Lambda_{m^2_i}\cup\Lambda_{\mathcal{N}^i}}\left\|\mathcal{R}^n {\rm curl} {\rm curl}\partial_t\left(g_{(k)} a_{(k)} D_{(k)}^{i,c}\right)\right\|_{L_t^1L^{\widetilde{p}}_x} \\
& \lesssim \sum_{k \in \Lambda_{m^1}\cup\Lambda_\mathcal{N}}\left(\left\|g_{(k)}\right\|_{L_t^1}\left\|a_{(k)}\right\|_{C_{t, x}^2}+\left\|\partial_t g_{(k)}\right\|_{L_t^1}\left\|a_{(k)}\right\|_{C_{t, x}^1}\right)\left\| W_{(k)}^c\right\|_{W^{1,\widetilde{p}}_x} \\
&\quad+\sum_{i=1}^3\sum_{k \in \Lambda_{m^2_i}\cup\Lambda_{\mathcal{N}^i}}\left(\left\|g_{(k)}\right\|_{L_t^1}\left\|a_{(k)}\right\|_{C_{t, x}^2}+\left\|\partial_t g_{(k)}\right\|_{L_t^1}\left\|a_{(k)}\right\|_{C_{t, x}^1}\right)\left\| D_{(k)}^{i,c}\right\|_{W^{1,\widetilde{p}}_x} \\
&\lesssim\left(l^{-20}\tau^{-\frac{1}{2}}+l^{-11}\sigma\tau^{\frac{1}{2}}\right)r_{\perp}^{\frac{1}{\widetilde{p}}-\frac{1}{2}}\lambda^{-1}\\
&\lesssim \left(l^{-20}\lambda^{-\alpha}+l^{-11}\lambda^{\alpha+5\varepsilon}\right)\lambda^{-\alpha-7\varepsilon}\\
&\lesssim l^{-20}\lambda^{-2\varepsilon},
\end{aligned}
\end{equation}
where we also used \eqref{eq2.2s} in the last inequality.

Regarding the hyper-dissipative term $R^m_{{\rm lin},2}$ in \eqref{eq4.6}, by using interpolation inequality, we obtain
\begin{equation}\label{4.31s}
    \begin{aligned}
&\quad\left\|R^m_{{\rm lin}, 2}\right\|_{L_t^1L^{\widetilde{p}}_x}\\
&\lesssim \left\||\nabla|^{2\alpha-1}\left(\rho_l^{-1} w_{q+1}+\left(\rho_{q+1}^{-1}-\rho_l^{-1}\right) m_{\ell}+\left(\rho_{q+1}^{-1}-\rho_l^{-1}\right) w_{q+1}\right)\right\|_{L_t^1 L^{\widetilde{p}}_x} \\
&\lesssim\left\|\left(\rho_l^{-1} w_{q+1}+\left(\rho_{q+1}^{-1}-\rho_l^{-1}\right) m_{\ell}+\left(\rho_{q+1}^{-1}-\rho_l^{-1}\right) w_{q+1}\right)\right\|_{L_t^1 L^{\widetilde{p}}_x}^{1-\frac{2\alpha-1}{3}} \\
&\quad\times\left\|\left(\rho_l^{-1} w_{q+1}+\left(\rho_{q+1}^{-1}-\rho_l^{-1}\right) m_{\ell}+\left(\rho_{q+1}^{-1}-\rho_l^{-1}\right) w_{q+1}\right)\right\|_{L_t^1 W^{3,{\widetilde{p}}}_x}^{\frac{2\alpha-1}{3}}\\
&\lesssim\left(\|\rho_l^{-1}\|_{C_{t,x}}\|w_{q+1}\|_{L^1_tL^{\widetilde{p}}_x}+\|\rho_{q+1}^{-1}-\rho_l^{-1}\|_{C_{t,x}}\left(\|w_{q+1}\|_{L^1_tL^{\widetilde{p}}_x}+\|m_l\|_{L^1_tL^{\widetilde{p}}_x}\right)\right)^{1-\frac{2\alpha-1}{3}}\\
&\quad\times\left(\|\rho_l^{-1}\|_{C_tC^3_x}\|w_{q+1}\|_{L^1_tW^{3,{\widetilde{p}}}_x}+\|\rho_{q+1}^{-1}-\rho_l^{-1}\|_{C_tC^3_x}\left(\|w_{q+1}\|_{L^1_tW^{3,{\widetilde{p}}}_x}+\|m_l\|_{L^1_tW^{3,{\widetilde{p}}}_x}\right)\right)^{\frac{2\alpha-1}{3}}
\end{aligned}
\end{equation}
Concerning the density, by \eqref{eq2.22}, \eqref{3.51s} \eqref{3.56s} and \eqref{4.25s}, 
\begin{equation}\label{4.32s}
    \begin{aligned}
        \|\rho_{q+1}^{-1}-\rho_l^{-1}\|_{C_tC^N_x}
        &\lesssim\|\rho_{q+1}^{-1}\|_{C_tC^N_x}\|\rho_l^{-1}\|_{C_tC^N_x}\|z_{q+1}\|_{C_tC^N_x}\\
        &\lesssim\lambda_{q+1}^{\frac{N\varepsilon}{4}}\lambda_q^{\frac{N\varepsilon}{4}}l^{-9N-23}\sigma^{-1}\\
        &\lesssim l^{-9N-24}\lambda^{-4\varepsilon}.
    \end{aligned}
\end{equation}
Hence, substituting \eqref{4.32s} into \eqref{4.31s} and then using Proposition \ref{prop-estimate-perturbation}, we have
\begin{equation}\label{4.33s}
    \begin{aligned}
        &\quad\left\|R^m_{{\rm lin}, 2}\right\|_{L_t^1 L^{\widetilde{p}}_x}\\
        &\lesssim\left(\lambda_q^{\frac{\varepsilon^2}{4}}\left(l^{-2}\tau^{-\frac{1}{2}}r_{\perp}^{\frac{1}{\widetilde{p}}-\frac{1}{2}}+l^{-14}\sigma^{-1}\right)+l^{-24}\lambda^{-4\varepsilon}\left(l\lambda^7_q+l^{-14}\sigma^{-1}+l^{-2}\tau^{-\frac{1}{2}}r_{\perp}^{\frac{1}{\widetilde{p}}-\frac{1}{2}}\right)\right)^{1-\frac{2\alpha-1}{3}}\\
        &\times\left(\lambda_q^{\frac{3\varepsilon^2}{4}}\left(l^{-2}\tau^{-\frac{1}{2}}r_{\perp}^{\frac{1}{\widetilde{p}}-\frac{1}{2}}\lambda^3+l^{-41}\sigma^{-1}\right)+l^{-51}\lambda^{-4\varepsilon}\left(l^{-2}\lambda^7_q+l^{-41}\sigma^{-1}+l^{-2}\tau^{-\frac{1}{2}}r_{\perp}^{\frac{1}{\widetilde{p}}-\frac{1}{2}}\lambda^3\right)\right)^{\frac{2\alpha-1}{3}}\\
        &\lesssim l^{-38-54\left(\frac{2\alpha-1}{3}\right)}\lambda^{-4\varepsilon}\\
        &\lesssim l^{-74}\lambda^{-4\varepsilon}.
    \end{aligned}
\end{equation}

Now we consider the bulk viscosity term $R^m_{{\rm lin},3}$ in \eqref{eq4.13}, by a direct calculation and \eqref{3.42s},
\begin{equation}\label{4.34s}
    \begin{aligned}
        \left\|R^m_{{\rm lin},3}\right\|_{L^1_tL^{\widetilde{p}}_x}     &\lesssim\left\|\nabla\rho_l^{-1}\right\|_{C_{t,x}}\left\|w_{q+1}\right\|_{L^1_tL^{\widetilde{p}}_x}+\left\|\nabla(\rho^{-1}_{q+1}-\rho_l^{-1})\right\|_{C_{t,x}}\left(\left\|m_l\right\|_{L^1_tL^{\widetilde{p}}_x}+\left\|w_{q+1}\right\|_{L^1_tL^{\widetilde{p}}_x}\right)\\
        &\quad+\left\|\rho^{-1}_{q+1}-\rho_l^{-1}\right\|_{C_{t,x}}\left(\left\|{\rm div}m_l\right\|_{L^1_tL^{\widetilde{p}}_x}+\left\|{\rm div}w^{(o)}_{q+1}\right\|_{L^1_tL^{\widetilde{p}}_x}\right)\\
        &\quad+\left\|\rho_l^{-1}\right\|_{C_{t,x}}\left\|{\rm div}w_{q+1}^{(o)}\right\|_{L^1_tL^{\widetilde{p}}_x}\\
        &\lesssim\lambda_q^{\frac{\varepsilon}{4}}\left(l^{-2}\tau^{-\frac{1}{2}}r_{\perp}^{\frac{1}{\widetilde{p}}-\frac{1}{2}}+l^{-14}\sigma^{-1}\right)+l^{-32}\lambda^{-4\varepsilon}\left(\lambda_q^{7}+l^{-2}\tau^{-\frac{1}{2}}r_{\perp}^{\frac{1}{\widetilde{p}}-\frac{1}{2}}+l^{-14}\sigma^{-1}\right)\\   &\quad+l^{-23}\lambda^{-4\varepsilon}\left(\lambda_q^{7}+l^{-23}\sigma^{-1}\right)+l^{-23}\sigma^{-1}\\
   &\lesssim l^{-46}\lambda^{-4\varepsilon}.
    \end{aligned}
\end{equation}

For the remaining terms $M^i_{{\rm lin},2}$, $M^i_{{\rm lin},3}$, $R^m_{{\rm lin},4}$ and $R^m_{{\rm lin},5}$ in \eqref{eq4.6} and \eqref{eq4.13}, using \eqref{eq2.21}, \eqref{eq2.24}, \eqref{4.32s} and Proposition \cref{prop-estimate-perturbation}, it leads that
\begin{equation}\label{4.35s}
    \begin{aligned}      &\quad\left\|R^m_{{\rm lin},4}\right\|_{L^1_tL^{\widetilde{p}}_x}+\left\|R^m_{{\rm lin},5}\right\|_{L^1_tL^{\widetilde{p}}_x}+\sum_{i=1}^3\left\|M^i_{{\rm lin},2}\right\|_{L^1_tL^{\widetilde{p}}_x}+\sum_{i=1}^3\left\|M^i_{{\rm lin},3}\right\|_{L^1_tL^{\widetilde{p}}_x}\\  &\lesssim\left\|\rho_{q+1}^{-1}\right\|_{C_{t,x}}\left(\left\|m_l\right\|_{C_{t,x}}+\sum_{i=1}^3\left\|\mathcal{N}^i_l\right\|_{C_{t,x}}\right)\left(\sum_{i=1}^3\left\|f^i_{q+1}\right\|_{L^1_tL^{\widetilde{p}}_x}+\left\|w_{q+1}\right\|_{L^1_tL^{\widetilde{p}}_x}\right)\\
    &\quad+\left\|\rho_{q+1}^{-1}-\rho_l^{-1}\right\|_{C_{t,x}}\left(\left(\left\|m_l\right\|_{C_{t,x}}+\sum_{i=1}^3\left\|\mathcal{N}^i_l\right\|_{C_{t,x}}\right)^2+\sum_{i=1}^3\left\|\mathcal{N}^i_l\right\|_{C_{t,x}}\left\|f^i_{q+1}\right\|_{L^1_tL^{\widetilde{p}}_x}\right)\\
   &\lesssim \lambda_q^7\left(l^{-2}\tau^{-\frac{1}{2}}r_{\perp}^{\frac{1}{\widetilde{p}}-\frac{1}{2}}+l^{-14}\sigma^{-1}\right)+l^{-23}\sigma^{-1}\left(\lambda_q^{14}+\lambda_q^7\left(l^{-2}\tau^{-\frac{1}{2}}r_{\perp}^{\frac{1}{\widetilde{p}}-\frac{1}{2}}+l^{-14}\sigma^{-1}\right)\right)\\
   &\lesssim l^{-37}\lambda^{-4\varepsilon}.
    \end{aligned}
\end{equation}

We now conclude from \eqref{4.30s}, \eqref{4.31s} and \eqref{4.33s}--\eqref{4.35s} altogether that
\begin{equation}\label{4.36s}
    \begin{aligned}
\left\|R^m_{{\rm lin}}\right\|_{L^1_tL^{\widetilde{p}}_x}+\sum_{i=1}^3\left\|M^i_{{\rm lin}}\right\|_{L^1_tL^{\widetilde{p}}_x}
&\lesssim l^{-20}\lambda^{-2\varepsilon}+l^{-74}\lambda^{-4\varepsilon} +l^{-46}\lambda^{-4\varepsilon}+l^{-37}\lambda^{-4\varepsilon}\\
&\lesssim l^{-74}\lambda^{-2\varepsilon}.
    \end{aligned}
\end{equation}
\subsubsection{Oscillation errors $R^m_{{\rm osc}}$ and $M^i_{{\rm osc}}$}
We now deal with the oscillation errors. First, we concern the high-low spatial oscillation errors $M^i_{{\rm osc},1}$ and $R^m_{{\rm osc},1}$ in \eqref{eq4.7} and \eqref{eq4.14}. 

The key fact is that, the momentum and elastic flows are of high oscillations while the amplitude function is slowly varying. Thus, we use the following stationary phase lemma whose proof is similar to \cite{DCS-2013-invent,LZZ-2024-jfa,lt20}, here we omit the details.
\begin{lem}(\cite[Lemma 6]{lt20}; \cite[Lemma 5.3]{LZZ-2022-jmpa})\label{lem-stationary-phase}
     Assume that $\vartheta\in C^2(\mathbb{T}^3)$. For any $\widetilde{p}\in(1,+\infty)$ we have
    \begin{align*}
        \left\||\nabla|^{-1}\mathbb{P}_{\neq0}\left(\vartheta(x)\mathbb{P}_{\geq\widetilde{\lambda}}\varphi(x)\right)\right\|_{L^{\widetilde{p}}_x(\mathbb{T}^3)}\lesssim\widetilde{\lambda}^{-1}\left\|\vartheta\right\|_{C^2_x(\mathbb{T}^3)}\|\varphi\|_{L^{\widetilde{p}}_x(\mathbb{T}^3)},
    \end{align*}
holds for any smooth functions $\varphi\in L^{\widetilde{p}}_x(\mathbb{T}^3)$ and the implicit constant is independent of $q$.
\end{lem}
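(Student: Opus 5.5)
The plan is to prove the stationary phase estimate by duality and Fourier localization, splitting $\vartheta$ into low and high frequency parts relative to $\widetilde{\lambda}$. Set $\psi:=\mathbb{P}_{\geq\widetilde{\lambda}}\varphi$. We may assume $\|\psi\|_{L^{\widetilde p}}\lesssim\|\varphi\|_{L^{\widetilde p}}$, since $\mathbb{P}_{\geq\widetilde\lambda}$ is a smooth Littlewood--Paley type multiplier and hence bounded on $L^{\widetilde p}$ for $\widetilde p\in(1,\infty)$.

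Decompose $\vartheta=\vartheta_{<}+\vartheta_{>}$, where $\vartheta_<:=\mathbb{P}_{<\widetilde\lambda/2}\vartheta$ is the low-frequency part.

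\emph{High part of $\vartheta$.} Since $|\nabla|^{-1}\mathbb{P}_{\neq0}$ is bounded on $L^{\widetilde p}$, we get
\[
\||\nabla|^{-1}\mathbb{P}_{\neq0}(\vartheta_>\psi)\|_{L^{\widetilde p}}\lesssim\|\vartheta_>\|_{L^\infty}\|\psi\|_{L^{\widetilde p}}.
\]
Bernstein's inequality applied to the frequency-localized piece $\vartheta_>$ gives $\|\vartheta_>\|_{L^\infty}\lesssim\widetilde\lambda^{-2}\|\vartheta\|_{C^2}$. Strictly, one should sum dyadic blocks with $\|\Delta_j\vartheta\|_{L^\infty}\lesssim2^{-2j}\|\nabla^2\vartheta\|_{L^\infty}$; this is geometric and yields $\widetilde\lambda^{-2}\le\widetilde\lambda^{-1}$. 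This piece is therefore harmless.

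\emph{Low part of $\vartheta$.} The product $\vartheta_<\psi$ has Fourier support in $\{|\xi|\geq\widetilde\lambda/2\}$, because $\psi$ lives at frequencies $\ge\widetilde\lambda$ and $\vartheta_<$ at frequencies $<\widetilde\lambda/2$. On this set $|\nabla|^{-1}\mathbb{P}_{\neq0}$ coincides with $|\nabla|^{-1}\widetilde{\mathbb{P}}_{\ge\widetilde\lambda/2}$, whose symbol is $|\xi|^{-1}\chi(\xi/\widetilde\lambda)$. This multiplier is $\widetilde\lambda^{-1}$ times a Mikhlin multiplier, uniformly in $\widetilde\lambda$, so by Mikhlin's theorem on the torus (or transference)
\[
\||\nabla|^{-1}\mathbb{P}_{\neq0}(\vartheta_<\psi)\|_{L^{\widetilde p}}\lesssim\widetilde\lambda^{-1}\|\vartheta_<\|_{L^\infty}\|\psi\|_{L^{\widetilde p}}\lesssim\widetilde\lambda^{-1}\|\vartheta\|_{C^0}\|\varphi\|_{L^{\widetilde p}}.
\]
Here $\|\vartheta_<\|_{L^\infty}\lesssim\|\vartheta\|_{L^\infty}$ uses an $L^1$-bounded convolution kernel.

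Summing the two pieces gives the claim with $\|\vartheta\|_{C^2}$. All constants depend only on $\widetilde p$ and the fixed Littlewood--Paley cutoffs, hence not on $q$.

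The main obstacle is the uniform-in-$\widetilde\lambda$ multiplier bound on $\mathbb{T}^3$ in the low-part step. I would handle it through the scaling invariance of Mikhlin constants for $|\xi|^{-1}\chi(\xi/\widetilde\lambda)\widetilde\lambda$, combined with de Leeuw-type transference of Fourier multipliers from $\mathbb{R}^3$ to $\mathbb{T}^3$, which applies because the symbol is continuous away from $0$ and the zero mode is removed. An alternative would be to write $\psi=\Delta\Delta^{-1}\psi$ and integrate by parts twice, producing derivatives of $\vartheta$ against $\Delta^{-1}\psi$; that is presumably why $C^2$ appears in the statement. It would also be an acceptable route, using $\|\nabla^j\Delta^{-1}\psi\|_{L^{\widetilde p}}\lesssim\widetilde\lambda^{j-2}\|\psi\|_{L^{\widetilde p}}$ for $j=0,1,2$.
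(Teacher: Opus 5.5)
Your argument is correct and follows the standard proof of the cited lemma of Luo--Titi, which the paper omits. You split $\vartheta$ at frequency $\sim\widetilde{\lambda}$ and use that $|\nabla|^{-1}$ gains $\widetilde{\lambda}^{-1}$ on the product of the low part of $\vartheta$ with $\mathbb{P}_{\geq\widetilde{\lambda}}\varphi$, whose frequencies are $\gtrsim\widetilde{\lambda}$. The remaining piece you bound using the $L^\infty$ decay $\widetilde{\lambda}^{-2}\|\vartheta\|_{C^2}$ of the high-frequency tail of $\vartheta$.

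Reading $\mathbb{P}_{\geq\widetilde{\lambda}}$ as a smooth cutoff is the right interpretation. With a sharp spherical cutoff, the uniform bound $\|\mathbb{P}_{\geq\widetilde{\lambda}}\varphi\|_{L^{\widetilde{p}}}\lesssim\|\varphi\|_{L^{\widetilde{p}}}$ fails for $\widetilde{p}\neq2$. This does not matter in the paper's application, since there $\mathbb{P}_{\geq\lambda r_{\perp}/2}\phi_{(k)}^2=\mathbb{P}_{\neq0}\phi_{(k)}^2$.
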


Note that 
\begin{align*}
    &\mathbb{P}_{\neq0}\left(W_{(k)}\otimes W_{(k)}\right)=\mathbb{P}_{\geq(\lambda r_{\perp}/2)}\left(W_{(k)}\otimes W_{(k)}\right)=\mathbb{P}_{\geq(\lambda r_{\perp}/2)}(\phi_{(k)}^2)k_1\otimes k_1,\\
    &\mathbb{P}_{\neq0}\left(D^i_{(k)}\otimes D^i_{(k)}\right)=\mathbb{P}_{\geq(\lambda r_{\perp}/2)}\left(D^i_{(k)}\otimes D^i_{(k)}\right)=\mathbb{P}_{\geq(\lambda r_{\perp}/2)}(\phi_{(k)}^2)k_2\otimes k_2,\quad i=1,2,3,
\end{align*}
and
\begin{align*}
    &\quad\mathbb{P}_{\neq0}\left(D^i_{(k)}\otimes W_{(k)}-W_{(k)}\otimes D^i_{(k)}\right)\\
    &=\mathbb{P}_{\geq(\lambda r_{\perp}/2)}\left(D^i_{(k)}\otimes W_{(k)}-W_{(k)}\otimes D^i_{(k)}\right)\\
    &=\mathbb{P}_{\geq(\lambda r_{\perp}/2)}(\phi_{(k)}^2)(k_2\otimes k_1-k_1\otimes k_2), \quad i=1,2,3,
\end{align*}
thus, we apply Lemmas \ref{lem-magnetic-amplitude-estimate}, \ref{lem-momentum-amplitude-estimate} and \ref{lem-stationary-phase} with $\vartheta=\nabla(\rho_l^{-1}a^2_{k})$, $\varphi=\phi^2_{(k)}$ and $\widetilde{\lambda}=\lambda r_{\perp}/2$ to get
\begin{equation}\label{4.37s}
    \begin{aligned}
&\quad\left\|R^m_{{\rm osc},1}\right\|_{L_t^1 L_x^{\widetilde{p}}}+\sum_{i=1}^3\left\|M^i_{{\rm osc},1}\right\|_{L_t^1 L_x^{\widetilde{p}}}\\
& \lesssim \sum_{k \in \Lambda_{m^1}\cup\Lambda_{\mathcal{N}}}\left\|g_{(k)}\right\|_{L_t^2}^2\left\|\mathcal{R}^m \mathbb{P}_{\neq 0}\left(\mathbb{P}_{\geq(\lambda r_{\perp} / 2)}\left(W_{(k)} \otimes W_{(k)}\right) \nabla\left(\rho_{l}^{-1} a_{(k)}^2\right)\right)\right\|_{C_t L_x^{\widetilde{p}}} \\
&\quad+\sum_{i=1}^3\sum_{k \in \Lambda_{m^2_i}}\left\|g_{(k)}\right\|_{L_t^2}^2\left\|\mathcal{R}^m \mathbb{P}_{\neq 0}\left(\mathbb{P}_{\geq(\lambda r_{\perp} / 2)}\left(D^i_{(k)} \otimes D^i_{(k)}\right) \nabla\left(\rho_l^{-1}a_{(k)}^2\right)\right)\right\|_{C_t L_x^{\widetilde{p}}}\\
&\quad+\sum_{i=1}^3\sum_{k \in\Lambda_{\mathcal{N}^i}}\left\|g_{(k)}\right\|_{L_t^2}^2\left\|\mathcal{R}^n \mathbb{P}_{\neq 0}\left(\mathbb{P}_{\geq(\lambda r_{\perp} / 2)}\left(W_{(k)} \otimes W_{(k)}-D^i_{(k)} \otimes D^i_{(k)}\right) \nabla\left(\rho_{l}^{-1} a_{(k)}^2\right)\right)\right\|_{C_t L_x^{\widetilde{p}}} \\
&\quad+\sum_{i=1}^3\sum_{k \in \Lambda_{m^2_i}\cup\Lambda_{\mathcal{N}^i}}\left\|g_{(k)}\right\|_{L_t^2}^2\left\|\mathcal{R}^n \mathbb{P}_{\neq 0}\left(\mathbb{P}_{\geq(\lambda r_{\perp} / 2)}\left(D^i_{(k)} \otimes W_{(k)}-W_{(k)} \otimes D^i_{(k)}\right) \nabla\left(\rho_{l}^{-1} a_{(k)}^2\right)\right)\right\|_{C_t L_x^{\widetilde{p}}} \\
&\lesssim\sum_{k\in\Lambda_m\cup\Lambda_{\mathcal{N}}}\lambda^{-1}r^{-1}_{\perp}\left\|\nabla\left(\rho_l^{-1}a^2_{k}\right)\right\|_{C_tC^2_x}\|\phi^2_{(k)}\|_{L^{\widetilde{p}}_x}\\
&\lesssim \lambda_q^{\frac{3\varepsilon}{4}}l^{-31}\lambda^{-1}r^{\frac{1}{\widetilde{p}}-2}_{\perp}
\end{aligned}
\end{equation}

Regarding the temporal oscillation errors $M^i_{{\rm osc},2}$ and $R^m_{{\rm osc},2}$ in \eqref{eq4.7} and \eqref{eq4.14}, we use \eqref{eq2.21}, \eqref{eq2.22}, \eqref{3.8s}, \eqref{3.20s} and \eqref{3.32s} to estimate
\begin{equation}\label{4.38s}
    \begin{aligned}
&\quad\left\|R^m_{{\rm osc},3}\right\|_{L_t^1 L_x^{\widetilde{p}}}+\sum_{i=1}^3\left\|M^i_{{\rm osc},2}\right\|_{L_t^1L_x^{\widetilde{p}}}\\
& \lesssim \sigma^{-1} \sum_{k \in \Lambda_m\cup\Lambda_\mathcal{N}}\left\|h_{(k)}\right\|_{C_t}\left\|\partial_t \nabla\left(\rho_{l}^{-1} a_{(k)}^2\right)\right\|_{L_t^1 L_x^{\widetilde{p}}}\\
&\lesssim\sigma^{-1}\sum_{k\in\Lambda_m\cup\Lambda_{\mathcal{N}}}\left\|h_{(k)}\right\|_{C_t}\left(\left\|\rho^{-1}_l\right\|_{C^1_{t,x}}\left\|a_{(k)}\right\|_{C_{t,x}}\left\|a_{(k)}\right\|_{C^1_{t,x}}+\left\|\rho^{-1}_l\right\|_{C^2_{t,x}}\left\|a_{(k)}\right\|^2_{C_{t,x}}\right.\\
&\quad\qquad\qquad\qquad+\left.\left\|\rho^{-1}_l\right\|_{C_{t,x}}\left\|a_{(k)}\right\|^2_{C^1_{t,x}}+\left\|\rho^{-1}_l\right\|_{C_{t,x}}\left\|a_{(k)}\right\|_{C_{t,x}}\left\|a_{(k)}\right\|_{C^2_{t,x}}\right)\\
&\lesssim\sigma^{-1}\left(\lambda_q^{\frac{\varepsilon}{4}}l^{-13}+\lambda_q^{\frac{\varepsilon}{2}}l^{-4}+l^{-22}\right)\\
&\lesssim l^{-22}\sigma^{-1}.
\end{aligned}
\end{equation}

Finally, for the density errors $M^i_{{\rm osc},3}$ and $R^m_{{\rm osc},3}$ in \eqref{eq4.7} and \eqref{eq4.14}, by \eqref{4.32s} and Proposition \ref{prop-estimate-perturbation},
\begin{equation}\label{4.39s}
    \begin{aligned}
        &\quad\left\|R^m_{{\rm osc},3}\right\|_{L_t^1 L_x^{\widetilde{p}}}+\sum_{i=1}^3\left\|M^i_{{\rm osc},3}\right\|_{L_t^1 L_x^{\widetilde{p}}}\\
        &\lesssim\left\|\rho_{q+1}^{-1}-\rho_l^{-1}\right\|_{C_{t,x}}\left(\left\|w_{q+1}\right\|^2_{L^2_tL_x^{2\widetilde{p}}}+\sum_{i=1}^3\left(\left\|w_{q+1}\right\|_{L^2_tL_x^{2\widetilde{p}}}\left\|f^i_{q+1}\right\|_{L^2_tL_x^{2\widetilde{p}}}+\left\|f^i_{q+1}\right\|_{L^2_tL_x^{2\widetilde{p}}}^2\right)\right)\\
        &\lesssim l^{-24}\lambda^{-4\varepsilon}\left(l^{-2}\lambda^{\frac{\varepsilon}{2}}+l^{-14}\lambda^{-5\varepsilon}\right)^2\\
        &\lesssim l^{-52}\lambda^{-3\varepsilon}.
    \end{aligned}
\end{equation}

Thus, combining \eqref{4.37s}--\eqref{4.39s} altogether we conclude
\begin{equation}\label{4.40s}
    \begin{aligned}
        \left\|R^m_{{\rm osc}}\right\|_{L_t^1 L_x^{\widetilde{p}}}+\sum_{i=1}^3\left\|M^i_{{\rm osc}}\right\|_{L_t^1 L_x^{\widetilde{p}}}
        &\lesssim \lambda_q^{\frac{3\varepsilon}{4}}l^{-31}\lambda^{-1}r_{\perp}^{\frac{1}{\widetilde{p}}-2}+l^{-22}\sigma^{-1}+l^{-52}\lambda^{-3\varepsilon}\\
        &\lesssim l^{-52}\lambda^{-3\varepsilon},
    \end{aligned}
\end{equation}
where the last step is due to $2\alpha-3\leq-100\varepsilon$ implied by \eqref{eq2.2s}.
\subsubsection{Corrector errors $R^m_{{\rm cor}}$ and $M^i_{{\rm cor}}$} Now we consider the corrector errors $M^i_{{\rm cor}}$ and $R^m_{{\rm cor}}$ in \eqref{eq4.8} and \eqref{eq4.15}, using \eqref{eq2.21} and Proposition \ref{prop-estimate-perturbation} we derive
\begin{equation}\label{4.41s}
 \begin{aligned}
&\quad\left\|R^m_{{\rm cor}}\right\|_{L_t^1L_x^{\widetilde{p}}}+\sum_{i=1}^3\left\|M^i_{{\rm cor}}\right\|_{L_t^1 L_x^{\widetilde{p}}}\\  
& \lesssim\left\|\rho_{l}^{-1}\right\|_{C_{t,x}}\left\|w_{q+1}^{(c)}+w_{q+1}^{(o)}\right\|_{L_t^2 L_x^{\infty}}\left(\left\|w_{q+1}^{(p)}\right\|_{L_t^2 L_x^{\widetilde{p}}}+\left\|w_{q+1}\right\|_{L_t^2 L_x^{\widetilde{p}}}\right)\\
&\quad+\sum_{i=1}^3\left\|\rho_{l}^{-1}\right\|_{C_{t,x}}\left\|f_{q+1}^{i,(c)}+f_{q+1}^{i,(o)}\right\|_{L_t^2 L_x^{\infty}}\left(\left\|f_{q+1}^{i,(p)}\right\|_{L_t^2 L_x^{\widetilde{p}}}+\left\|f^i_{q+1}\right\|_{L_t^2 L_x^{\widetilde{p}}}\right)\\
&\quad+\sum_{i=1}^3\left\|\rho_{l}^{-1}\right\|_{C_{t,x}}\left(\left\|f_{q+1}^{i,(c)}+f_{q+1}^{i,(o)}\right\|_{L_t^2 L_x^{\infty}}\left\|w_{q+1}\right\|_{L_t^2 L_x^{\widetilde{p}}}+\left\|w_{q+1}^{(c)}+w_{q+1}^{(o)}\right\|_{L_t^2 L_x^{\infty}}\left\|f^i_{q+1}\right\|_{L_t^2 L_x^{\widetilde{p}}}\right)\\
&\lesssim\left(l^{-20}\lambda^{-1}r_{\perp}^{-\frac{1}{2}}+l^{-14}\sigma^{-1}\right)\left(l^{-2}r_{\perp}^{\frac{1}{\widetilde{p}}-\frac{1}{2}}+l^{-20}\lambda^{-1}r_{\perp}^{\frac{1}{\widetilde{p}}-\frac{1}{2}}+l^{-14}\sigma^{-1}\right)\\
&\lesssim l^{-40}\lambda^{-5\varepsilon}.
\end{aligned}
\end{equation}
\subsubsection{Pressure error $R^m_{{\rm pre}}$}
We will use the boundedness of Calder\'on-Zygmund operators and the mean value theorem to deal with the pressure error $R^m_{{\rm pre}}$ in \eqref{eq4.16}.

More  precisely,
\begin{equation}\label{4.42s}
    \begin{aligned}
\left\|R^m_{{\rm pre}}\right\|_{L_t^1 L^{\widetilde{p}}_x} & \lesssim\left\|P\left(\rho_{q+1}\right)-P\left(\rho_{l}\right)\right\|_{L^1_t L^{\widetilde{p}}_x} \\
& \lesssim\|P^{\prime}(\zeta)(\rho_{q+1}-\rho_l)\|_{L^1_tL^{\widetilde{p}}_x}\\
& \lesssim\left\|P^{\prime}(\zeta)\left(\rho_{q+1}-\rho_{l}\right)\right\|_{C_{t, x}}\\
& \lesssim\left\|P^{\prime}(\zeta)\right\|_{C_{t, x}}\left\|\rho_{q+1}-\rho_{l}\right\|_{C_{t, x}} \\
\end{aligned}
\end{equation}
where $\min\{\rho_{q+1},\rho_l\}\leq\zeta\leq\max\{\rho_{q+1},\rho_l\}$.

Since $\rho_{q+1}$ and $\rho_l$ are uniformly away from zero and infinity, and $P$, $P^{\prime}$, $P^{\prime\prime}$ are all continuous, we have
\begin{equation}\label{4.43s}
\left\|P^{\prime}(\zeta)\right\|_{C_{t, x}}\lesssim 1 .
\end{equation}
Thus, we conclude the following estimates by using \eqref{eq2.6} at level $q+1$, \eqref{eq2.22} and \eqref{3.56s}:
\begin{equation}\label{4.44s}
    \left\|R^m_{{\rm pre}}\right\|_{L_t^1 L^{\widetilde{p}}_x}\lesssim l^{-23}\lambda^{-5\varepsilon}.
\end{equation}
\subsubsection{Commutator errors $R^m_{{\rm com}}$ and $M^i_{{\rm com}}$}
It remains to deal with the commutator errors $R^m_{{\rm com}}$ and $M^i_{{\rm com}}$ given by\eqref{eq2.19} and \eqref{eq2.20}.

To begin with, concerning the shear viscous commutator $R^m_{{\rm com},1}$. By using interpolation inequality will leads 
\begin{equation}\label{4.45s}
\left\|R^m_{{\rm com},1}\right\|_{L_t^1 L^{\widetilde{p}}_x} \lesssim\left\|\rho_{l}^{-1} m_{l}-\left(\rho_q^{-1} m_q\right) *_x \phi_{l} *_t \varphi_{l}\right\|_{{L_t^1 L^{\widetilde{p}}_x}}^{1-\frac{2 \alpha-1}{2}}\left\|\rho_{l}^{-1} m_{l}-\left(\rho_q^{-1} m_q\right) *_x \phi_{l} *_t \varphi_{l}\right\|_{{L_t^1 W^{2,{\widetilde{p}}}_x}}^{\frac{2 \alpha-1}{2}} .
\end{equation}
Furthermore, by \eqref{eq2.6}, \eqref{eq2.7}, \eqref{eq2.25}, \eqref{4.25s}, and using \eqref{4.32s} at level $q$,
\begin{equation}\label{4.46s}
\begin{aligned}
& \quad\left\|\rho_{l}^{-1} m_{l}-\left(\rho_q^{-1} m_q\right) *_x \phi_{l} *_t \varphi_{l}\right\|_{L_t^1 L^{\widetilde{p}}_x} \\
&\lesssim \left\|\rho_{l}^{-1} m_{l}-\rho_{l}^{-1} m_q\right\|_{L_t^1 L^{\widetilde{p}}_x}+\left\|\rho_{l}^{-1} m_q-\rho_q^{-1} m_q\right\|_{L_t^1 L^{\widetilde{p}}_x}+\left\|\rho_q^{-1} m_q-\left(\rho_q^{-1} m_q\right) *_x \phi_{l} *_t \varphi_{l}\right\|_{L_t^1 L^{\widetilde{p}}_x} \\
 &\lesssim \left\|\rho_{l}^{-1}\right\|_{C_{t, x}}\left\|m_{l}-m_q\right\|_{C_{t, x}}+\left\|\rho_{l}^{-1}-\rho_q^{-1}\right\|_{C_{t, x}}\left\|m_q\right\|_{C_{t, x}}+l\left\|\rho_q^{-1} m_q\right\|_{C_{t, x}^1} \\
 &\lesssim l\left\|m_q\right\|_{C_{t, x}^1}+\left\|\rho_{l}^{-1}\right\|_{C_{t, x}}\left\|\rho_q^{-1}\right\|_{C_{t, x}}\left\|\rho_{l}-\rho_q\right\|_{C_{t, x}}\left\|m_q\right\|_{C_{t, x}}+l\left\|\rho_q^{-1}\right\|_{C_{t, x}^1}\left\|m_q\right\|_{C_{t, x}^1} \\
 &\lesssim l \lambda_q^7+l \lambda_q^{3+\frac{\varepsilon}{4}}+l \lambda_q^{7+\frac{\varepsilon}{4}} \lesssim l^{\frac{1}{2}},
\end{aligned}
\end{equation}
and
\begin{equation}\label{4.47s}
\begin{aligned}
& \quad\left\|\rho_{l}^{-1} m_{l}-\left(\rho_q^{-1} m_q\right) *_x \phi_{l} *_t \varphi_{l}\right\|_{L_t^1 W^{2,\widetilde{p}}_x} \\
 &\lesssim \left\|\rho_{l}^{-1}\right\|_{C_t C_x^2}\left\|m_{l}-m_q\right\|_{C_t C_x^2}+\left\|\rho_{l}^{-1}-\rho_q^{-1}\right\|_{C_t C_x^2}\left\|m_q\right\|_{C_t C_x^2}+l\left\|\rho_q^{-1} m_q\right\|_{C_t C_x^3}+l\left\|\rho_q^{-1} m_q\right\|_{C_t^1 C_x^2} \\
 &\lesssim l \lambda_q^{\frac{\varepsilon}{2}}\left\|m_q\right\|_{C_{t, x}^3}+l^{-42} \lambda^{-4 \varepsilon} \lambda_q^11+l\left\|\rho_q^{-1}\right\|_{C_t C_x^3}\left\|m_q\right\|_{C_t C_x^3}+l\left\|\rho_q^{-1}\right\|_{C_t^1 C_x^2}\left\|m_q\right\|_{C_t^1 C_x^2} \\
 &\lesssim l \lambda_q^{15+\frac{\varepsilon}{2}}+l^{-43} \lambda_{q+1}^{-4 \varepsilon}+l \lambda_q^{15+\varepsilon} \lesssim l^{\frac{1}{3}}.
\end{aligned}
\end{equation}
Thus, we conclude from \eqref{4.45s}--\eqref{4.47s} that
\begin{equation}\label{4.48s}
    \left\|R^m_{{\rm com},1}\right\|_{L_t^1 L^{\widetilde{p}}_x}\lesssim l^{\frac{1}{3}}. 
\end{equation}

Then, for the pressure commutator error $R^m_{{\rm com},2}$, similar to \eqref{4.44s}, we have
\begin{equation}\label{4.49s}
\begin{aligned}
\left\|R^m_{{\rm com},2}\right\|_{L_t^1 L^{\widetilde{p}}_x} & \lesssim\left\|P\left(\rho_{l}\right)-P_{l}\right\|_{C_{t, x}} \\
& \lesssim\left\|P\left(\rho_{l}\right)-P\left(\rho_q\right)\right\|_{C_{t, x}}+\left\|P\left(\rho_q\right)-P_{l}\right\|_{C_{t, x}}\\
& \lesssim\left\|\rho_{l}-\rho_q\right\|_{C_{t, x}}+l\left\|P\left(\rho_q\right)\right\|_{C_{t, x}^1}\\
&\lesssim l\lambda_q^{\frac{\varepsilon}{4}}\lesssim l^{\frac{1}{2}}.
\end{aligned}
\end{equation}

Next, the bulk viscous commutator error $R^m_{{\rm com},3}$ can be estimated in a similar fashion as $R^m_{{\rm com},1}$:
\begin{equation}\label{4.50s}
\left\|R^m_{{\rm com},3}\right\|_{L_t^1 L^{\widetilde{p}}_x} \lesssim\left\|\rho_{l}^{-1} m_{l}-\left(\rho_q^{-1} m_q\right) *_x \phi_{l} *_t \varphi_{l}\right\|_{C_{t}C^1_{x}}\lesssim l^{\frac{1}{3}} .
\end{equation}

At last, concerning the nonlinear commutator errors $R^m_{{\rm com},4}$, $R^m_{{\rm com},5}$ and $M^i_{{\rm com}}$, we see that
\begin{equation}\label{4.51s}
\begin{aligned}
&\quad\left\|R^m_{{\rm com},4}\right\|_{L_t^1 L^{\widetilde{p}}_x}+\left\|R^m_{{\rm com},5}\right\|_{L_t^1 L^{\widetilde{p}}_x} +\sum_{i=1}^3\left\|M^i_{{\rm com}}\right\|_{L_t^1 L^{\widetilde{p}}_x} \\ 
&\lesssim \left\|\rho_{l}^{-1} m_{l} \otimes m_{l}-\rho_q^{-1} m_q \otimes m_q\right\|_{L_t^1 L^{\widetilde{p}}_x}+\left\|\rho_q^{-1} m_q \otimes m_q-\left(\rho_q^{-1} m_q \otimes m_q\right) *_x \phi_{l} *_t \varphi_{l}\right\|_{L_t^1 L^{\widetilde{p}}_x}\\
&\quad+\sum_{i=1}^3\left(\left\|\rho_{l}^{-1} \mathcal{N}^i_{l} \otimes \mathcal{N}^i_{l}-\rho_q^{-1} \mathcal{N}^i_q \otimes \mathcal{N}^i_q\right\|_{L_t^1 L^{\widetilde{p}}_x}+\left\|\rho_q^{-1} \mathcal{N}^i_q \otimes \mathcal{N}^i_q-\left(\rho_q^{-1} \mathcal{N}^i_q \otimes \mathcal{N}^i_q\right) *_x \phi_{l} *_t \varphi_{l}\right\|_{L_t^1 L^{\widetilde{p}}_x}\right)\\
&\quad+\sum_{i=1}^3\left(\left\|\rho_{l}^{-1} \mathcal{N}^i_{l} \otimes m_{l}-\rho_q^{-1} \mathcal{N}^i_q \otimes m_q\right\|_{L_t^1 L^{\widetilde{p}}_x}+\left\|\rho_{l}^{-1} m_{l} \otimes \mathcal{N}^i_{l}-\rho_q^{-1} m_q \otimes \mathcal{N}^i_q\right\|_{L_t^1 L^{\widetilde{p}}_x}\right)\\
&\quad+\sum_{i=1}^3\left(\left\|\rho_q^{-1} \mathcal{N}^i_q \otimes m_q-\left(\rho_q^{-1} \mathcal{N}^i_q \otimes m_q\right) *_x \phi_{l} *_t \varphi_{l}\right\|_{L_t^1 L^{\widetilde{p}}_x}\right.\\
&\qquad+\left.\left\|\rho_q^{-1} m_q \otimes \mathcal{N}^i_q-\left(\rho_q^{-1} m_q \otimes \mathcal{N}^i_q\right) *_x \phi_{l} *_t \varphi_{l}\right\|_{L_t^1 L^{\widetilde{p}}_x}\right)\\
&=:K_{{\rm com}}.
\end{aligned}
\end{equation}
Denote 
$$
\mathcal{H}_l:=\mathcal{N}^i_l \text{ or }m_l,\quad\mathcal{H}_q:=\mathcal{N}^i_q \text{ or }m_q.
$$
For simplicity, let us abuse the notation
\begin{align*}
    &\mathcal{H}_l \otimes \mathcal{H}_l\in\left\{\mathcal{N}_l^i\otimes\mathcal{N}_l^i, m_l\otimes m_l,\mathcal{N}^i_l\otimes m_l,m_l\otimes\mathcal{N}_l^i\right\},\\
    &\mathcal{H}_q \otimes \mathcal{H}_q\in\left\{\mathcal{N}_q^i\otimes\mathcal{N}_q^i,m_q\otimes m_q,\mathcal{N}^i_q\otimes m_q\text{ or }m_q\otimes\mathcal{N}_q^i\right\},
\end{align*}
then we divide $K_{\rm com}$ into $K_1$ and $K_2$ according to the form of each terms,
\begin{align*}
    &K_1=\sum\left\|\rho_{l}^{-1} \mathcal{H}_l \otimes \mathcal{H}_l-\rho_q^{-1} \mathcal{H}_q \otimes \mathcal{H}_q\right\|_{L_t^1 L^{\widetilde{p}}_x},\\
    &K_2=\sum\left\|\rho_q^{-1} \mathcal{H}_q \otimes \mathcal{H}_q-\left(\rho_q^{-1} \mathcal{H}_q \otimes \mathcal{H}_q\right) *_x \phi_{l} *_t \varphi_{l}\right\|_{L_t^1 L^{\widetilde{p}}_x}.
\end{align*}
By a direct calculation we have
\begin{equation}\label{eq4.52}
\begin{aligned}
K_{1} & \lesssim\sum\left(\left\|\rho_{l}^{-1} \mathcal{H}_{l} \otimes\left(\mathcal{H}_{l}-\mathcal{H}_q\right)\right\|_{L_t^1 L^{\widetilde{p}}_x}+\left\|\rho_{l}^{-1}\left(\mathcal{H}_{l}-\mathcal{H}_q\right) \otimes \mathcal{H}_q\right\|_{L_t^1 L^{\widetilde{p}}_x}\right)\\
&\quad+\sum\left\|\left(\rho_{l}^{-1}-\rho_q^{-1}\right) \mathcal{H}_q \otimes \mathcal{H}_q\right\|_{L_t^1 L^{\widetilde{p}}_x} \\
& \lesssim\sum\left\|\rho_{l}^{-1}\right\|_{C_{t,x}}\left\|\mathcal{H}_{l}-\mathcal{H}_q\right\|_{C_{t,x}}\left(\left\|\mathcal{H}_{l}\right\|_{C_{t,x}}+\left\|\mathcal{H}_q\right\|_{C_{t,x}}\right)\\
&\quad+\sum\left\|\rho_{l}^{-1}-\rho_q^{-1}\right\|_{C_{t,x}}\left\|\mathcal{H}_q\right\|_{C_{t,x}}^2\\
&\lesssim l\lambda_q^7\left(l\lambda_q^7+\lambda_q^7\right)+l\lambda_q^{\frac{\varepsilon}{4}}\lambda_q^{6}\lesssim l^{\frac{1}{2}}.
\end{aligned}
\end{equation}
On the other hand, using \eqref{eq2.6}, \eqref{eq2.7} and interpolation we obtain
\begin{equation}\label{4.53s}
\begin{aligned}
K_{2} & \lesssim\sum\left\|\rho_q^{-1} \mathcal{H}_q \otimes \mathcal{H}_q-\left(\rho_q^{-1} \mathcal{H}_q \otimes \mathcal{H}_q\right) *_x \phi_{l} *_t \varphi_{l}\right\|_{C_{t, x}}\\
& \lesssim l\sum\left\|\rho_q^{-1} \mathcal{H}_q \otimes \mathcal{H}_q\right\|_{C_{t, x}^1}\\
& \lesssim l\sum\left\|\rho_q^{-1}\right\|_{C_{t, x}^1}\left\|\mathcal{H}_q\right\|_{C_{t, x}^1}^2\\
& \lesssim l \lambda_q^{\frac{\varepsilon}{4}} \lambda_q^{14}\lesssim l^{\frac{1}{2}}.
\end{aligned}
\end{equation}
Thus, plugging \eqref{eq4.52} and \eqref{4.53s} into \eqref{4.51s}, we derive that
\begin{align*}
    \left\|R^m_{{\rm com},4}\right\|_{L_t^1 L^{\widetilde{p}}_x}+\left\|R^m_{{\rm com},5}\right\|_{L_t^1 L^{\widetilde{p}}_x} +\sum\left\|M^i_{{\rm com}}\right\|_{L_t^1 L^{\widetilde{p}}_x}\lesssim l^{\frac{1}{2}},
\end{align*}
which together with \eqref{4.48s}--\eqref{4.50s} gives that
\begin{equation}\label{4.54s}
    \left\|R^m_{{\rm com}}\right\|_{L_t^1 L^{\widetilde{p}}_x} +\sum_{i=1}^3\left\|M^i_{{\rm com}}\right\|_{L_t^1 L^{\widetilde{p}}_x}\lesssim l^{\frac{1}{3}}.
\end{equation}

Finally, combining \eqref{4.36s}, \eqref{4.40s}, \eqref{4.41s}, \eqref{4.44s} and \eqref{4.54s} altogether and using \eqref{eq2.3s} we conclude that
 \begin{align*}    &\quad\|R^m_{q+1}\|_{L^1_tL^{\widetilde{p}}_x}+\sum_{i=1}^3\|M^i_{q+1}\|_{L^1_tL^{\widetilde{p}}_x}\\
       &\lesssim \|R^m_{{\rm lin}}\|_{L^1_tL^{\widetilde{p}}_x}+\|R^m_{{\rm osc}}\|_{L^1_tL^{\widetilde{p}}_x}+\|R^m_{{\rm cor}}\|_{L^1_tL^{\widetilde{p}}_x}+\|R^m_{{\rm pre}}\|_{L^1_tL^{\widetilde{p}}_x}+\|R^m_{{\rm com}}\|_{L^1_tL^{\widetilde{p}}_x}\\
       &\quad+\sum_{i=1}^3\left(\|M^i_{{\rm lin}}\|_{L^1_tL^{\widetilde{p}}_x}+\|M^i_{{\rm osc}}\|_{L^1_tL^{\widetilde{p}}_x}+\|M^i_{{\rm cor}}\|_{L^1_tL^{\widetilde{p}}_x}+\|M^i_{{\rm com}}\|_{L^1_tL^{\widetilde{p}}_x}\right)\\
       &\lesssim l^{-74}\lambda_{q+1}^{-2\varepsilon}+l^{-52}\lambda_{q+1}^{-3\varepsilon}+l^{-40}\lambda_{q+1}^{-5\varepsilon}+l^{-23}\lambda_{q+1}^{-5\varepsilon}+l^{\frac{1}{3}}\\
       &\lesssim\delta_{q+2},
\end{align*}
which means that the inductive estimate \eqref{eq2.9} is verified at level $q+1$.
\section{Proof of main results}\label{sec5}
We are now in the stage to prove the main results, namely Theorems \ref{thm-nonuniquenes-1}, \ref{thm-nonuniqueness-2} and \ref{thm-main-iteration}. To begin, we prove the main iteration result Theorem \ref{thm-main-iteration}.
\subsection{Proof of Theorem \ref{thm-main-iteration}}
Since the iterative estimates \eqref{eq2.5}--\eqref{eq2.13} have been verified in the previous sections, we only need to check the inductive estimate \eqref{eq2.15} for the temporal support.

First, by the definition of the perturbations and $T_q$, we have
\begin{equation}\label{eq5.1}
    \bigcup_{i=1}^3{\rm supp}_t (w_{q+1},f^i_{q+1})\subseteq\bigcup_{k\in\Lambda_m\cup\Lambda_\mathcal{N}}{\rm supp}_t(a_{(k)})\subseteq N_{3l}([T_q,T]),
\end{equation}
and
\begin{equation}\label{eq5.2}
    {\rm supp}_tz_{q+1}\subseteq N_{2l}([T_q,T]).
\end{equation}
Furthermore, by \eqref{3.49s} and \eqref{3.51s},
\begin{equation}\label{eq5.3}
    \begin{aligned}
        &\quad{\rm supp}_t(\nabla\rho_{q+1},m_{q+1},\mathcal{N}_{q+1}-\overline{\mathcal{N}})\\
        &\subseteq{\rm supp}_t(\nabla\rho_l,m_l,\mathcal{N}_l-\overline{\mathcal{N}})\quad\cup\left(\bigcup_{i=1}^3{\rm supp}_t(\nabla z_{q+1},w_{q+1},f^i_{q+1})\right)\\
        &\subseteq N_{3l}([T_q,T]).
    \end{aligned}
\end{equation}

Next, for the Reynolds stresses and elastic stress fluctuation, by \eqref{eq4.5} and \eqref{eq4.12}, we derive
\begin{equation}\label{eq5.4}
    \begin{aligned}
        &\quad{\rm supp}_t(R^m_{q+1},\mathscr{M}_{q+1})\subseteq\bigcup_{i=1}^3{\rm supp}_t(R^m_{q+1},M^i_{q+1})\\
        &\subseteq\bigcup_{k\in\Lambda_m\cup\Lambda_{\mathcal{N}}}{\rm supp}_t(a_{(k)},m_l,\mathcal{N}_l-\overline{\mathcal{N}},\nabla\rho_{q+1},\nabla\rho_l)\cup N_l\left(\bigcup_{i=1}^3{\rm supp}_t(R^m_q,M^i_q)\right)\\
        &\subseteq\bigcup_{k\in\Lambda_m\cup\Lambda_\mathcal{N}}{\rm supp}_t(a_{(k)},m_l,\mathcal{N}_l-\overline{\mathcal{N}},\nabla z_{q+1},\nabla\rho_l)\cup N_l\left(\bigcup_{i=1}^3{\rm supp}_t(R^m_q,M^i_q)\right)\\
        &\subseteq N_{3l}([T_q,T]).
    \end{aligned}
\end{equation}
Therefore, combining \eqref{eq5.3} and \eqref{eq5.4} altogether we have
\begin{equation}\label{5.5s}
   {\rm supp}_t(\nabla\rho_{q+1},m_{q+1},\mathcal{N}_{q+1}-\overline{\mathcal{N}},R^m_{q+1},\mathscr{M}_{q+1})\subseteq N_{3l}([T_q,T]),
\end{equation}
which along with the fact that $3l\ll\delta_{q+2}^{\frac{1}{2}}$ give that \eqref{eq2.15}. Now the proof of Theorem \ref{thm-main-iteration} is complete. \hfill$\square$
\subsection{Proof of Theorem \ref{thm-nonuniquenes-1}}
We prove the statements $(i)$-$(v)$ in Theorem \ref{thm-nonuniquenes-1} below.

$(i)$. Let $\rho_0=\widetilde{\rho}$, $m_0=\widetilde{m}$, $\mathcal{N}_0=\widetilde{\mathcal{N}}$ and
\begin{align}
    &\begin{aligned}
        R^m_0
        &:=\mathcal{R}^m\left(\partial_t\widetilde{m}+\nu^s(-\Delta)^{\alpha}(\widetilde{\rho}^{-1}\widetilde{m})-(\nu^b+\frac{1}{3}\nu^s)\nabla{\rm div}(\widetilde{\rho}^{-1}\widetilde{m})+\nabla P(\widetilde{\rho})\right)\\
        &\quad+\widetilde{\rho}^{-1}\left(\widetilde{m}\otimes\widetilde{m}-\widetilde{\mathcal{N}}\widetilde{\mathcal{N}}^{\rm T}\right),
    \end{aligned}\label{eq5.6}\\
    & M^i_0:=\mathcal{R}^n\left(
\partial_t\widetilde{\mathcal{N}}^i
\right)+\widetilde{\rho}^{-1}\left(\widetilde{\mathcal{N}}^i\otimes\widetilde{m}-\widetilde{m}\otimes\widetilde{\mathcal{N}}^i
\right),
\quad i=1,2,3,\label{eq5.7}\\
&\left(\mathscr{M}_0\right)_{ijk}:=\left(M^i_0\right)_{jk}.\notag
\end{align}
Since $(\widetilde{\rho},\widetilde{m})$ is a smooth solution to the transport equation $\eqref{1.3s}_1$, $\widetilde{\mathcal{N}}^i$ are all divergence-free, we have 
\begin{align*}
    &\begin{aligned}
    {\rm div}R^m_0=&\partial_t\widetilde{m}+\nu^s(-\Delta)^{\alpha}(\widetilde{\rho}^{-1}\widetilde{m})-(\nu^b+\frac{1}{3}\nu^s)\nabla{\rm div}(\widetilde{\rho}^{-1}\widetilde{m})+\nabla P(\widetilde{\rho})\\
    &\quad+{\rm div}\left(\widetilde{\rho}^{-1}\left(\widetilde{m}\otimes\widetilde{m}-\widetilde{\mathcal{N}}\widetilde{\mathcal{N}}^{\rm T}\right)\right),
\end{aligned}\\
&\begin{aligned}
    {\rm div}M^i_0=&\partial_t\widetilde{\mathcal{N}}^i+{\rm div}\left(\widetilde{\rho}^{-1}\left(\widetilde{\mathcal{N}}^i\otimes\widetilde{m}-\widetilde{m}\otimes\widetilde{\mathcal{N}}^i
\right)\right),\quad i=1,2,3.
\end{aligned}
\end{align*}
Then we derive that $(\rho_0,m_0,\mathcal{N}_0,R^m_0,\mathscr{M}_0)$ is a relaxed solution to \eqref{eq2.1}. Moreover, for $a$ large enough, the inductive estimates \eqref{eq2.5}--\eqref{eq2.9} are all satisfied at level $q=0$. Thus, by Theorem \ref{thm-main-iteration}, there exists a sequence of relaxed solutions $\{(\rho_q,m_q,\mathcal{N}_q,R^m_q,\mathscr{M}_q)\}_{q\geq0}$ to \eqref{eq2.1}, which satisfy \eqref{eq2.5}--\eqref{eq2.15} for any $q\geq0$.

First, concerning the density, using \eqref{eq2.11} we have that for $a$ large enough,
\begin{equation}\label{eq5.8}
    \sum_{q \geq 0}\left\|\rho_{q+1}-\rho_q\right\|_{C_t C_x^1} \lesssim \sum_{q \geq 0} \delta_{q+2}^{\frac{1}{2}}=
a^{\frac{3\beta}{2}b}
\sum_{q\geq0}a^{-\beta b^{q+2}}
\leq \varepsilon_*.
\end{equation}
It means that $\{\rho_q\}_{q\geq0}$ is a Cauchy sequence in the space $C_tC^1_x$, that is, there exists $\rho\in C_tC^1_x$ such that
\begin{equation}\label{eq5.9}
    \lim_{q\rightarrow+\infty}\rho_q=\rho\text{ in }C_tC^1_x,
\end{equation}
which along with \eqref{eq2.5} yields that
\begin{equation}\label{eq5.10}
    \lim_{q\rightarrow+\infty}\rho^{-1}_q=\rho^{-1}\text{ in }C_{t,x}.
\end{equation}

Next, for the momentum and elastic field, using interpolation inequality, \eqref{eq2.4}, \eqref{eq2.7} and \eqref{eq2.12} we derive that for any $0<\beta^{\prime}<\beta/(5+\beta)$,
\begin{equation}\label{eq5.11}
\begin{aligned}
\sum_{q \geq 0}\left\|m_{q+1}-m_q\right\|_{H_{t,x}^{\beta^{\prime}}} & \leq \sum_{q \geq 0}\left\|m_{q+1}-m_q\right\|_{L_{t,x}^2}^{1-\beta^{\prime}}\left\|m_{q+1}-m_q\right\|_{C_{t, x}^1}^{\beta^{\prime}} \\
& \lesssim \sum_{q \geq 0} \delta_{q+1}^{\frac{1-\beta^{\prime}}{2}} \lambda_{q+1}^{5 \beta^{\prime}} \\
& \lesssim \delta_1^{\frac{1-\beta^{\prime}}{2}} \lambda_1^{5 \beta^{\prime}}+\sum_{q \geq 1} \lambda_{q+1}^{-\beta\left(1-\beta^{\prime}\right)+5 \beta^{\prime}}<\infty,
\end{aligned}
\end{equation}
and
\begin{equation}\label{eq5.12}
\begin{aligned}
\sum_{q \geq 0}\left\|\mathcal{N}_{q+1}-\mathcal{N}_q\right\|_{H_{t,x}^{\beta^{\prime}}} & \leq \sum_{i=1}^3\sum_{q \geq 0}\left\|\mathcal{N}^i_{q+1}-\mathcal{N}^i_q\right\|_{L_{t,x}^2}^{1-\beta^{\prime}}\left\|\mathcal{N}^i_{q+1}-\mathcal{N}^i_q\right\|_{C_{t, x}^1}^{\beta^{\prime}} \\
& \lesssim \sum_{q \geq 0} \delta_{q+1}^{\frac{1-\beta^{\prime}}{2}} \lambda_{q+1}^{5 \beta^{\prime}} \\
& \lesssim \delta_1^{\frac{1-\beta^{\prime}}{2}} \lambda_1^{5 \beta^{\prime}}+\sum_{q \geq 1} \lambda_{q+1}^{-\beta\left(1-\beta^{\prime}\right)+5 \beta^{\prime}}<\infty,
\end{aligned}
\end{equation}
where we use $-\beta(1-\beta^{\prime})+5\beta^{\prime}<0$ in the last inequalities of both \eqref{eq5.11} and \eqref{eq5.12}. Thus, $\{m_q\}_{q\geq0}$ and $\{\mathcal{N}_q\}_{q\geq0}$ are Cauchy sequences in the space $H_{t,x}^{\beta^{\prime}}$, that is there exist $m,\mathcal{N}\in H_{t,x}^{\beta^{\prime}}$ such that
\begin{equation}\label{5.13s}
    \lim_{q\rightarrow+\infty}m_q=m\quad\text{and }\lim_{q\rightarrow+\infty}\mathcal{N}_q=\mathcal{N}\text{ in }H_{t,x}^{\beta^{\prime}}.
\end{equation}
Furthermore, by \eqref{eq5.10} and \eqref{5.13s}, we derive that
\begin{equation}\label{5.14s}
\left\{
    \begin{aligned}
        &\lim_{q\rightarrow+\infty}\rho^{-1}_qm_q=\rho^{-1}m\text{ in }L^2_{t,x},\\
        &\lim_{q\rightarrow+\infty}\rho^{-1}_q\left(m_q\otimes m_q-\mathcal{N}_q\mathcal{N}_q^{{\rm T}}\right)=\rho^{-1}\left(m\otimes m-\mathcal{N}\mathcal{N}^{{\rm T}}\right)\text{ in }L^1_{t,x},\\
        &\lim_{q\rightarrow+\infty}\rho^{-1}_q\left(\mathcal{N}^i_q\otimes m_q-m_q\otimes \mathcal{N}^i_q\right)=\rho^{-1}\left(\mathcal{N}^i\otimes m-m\otimes \mathcal{N}^i\right)\text{ in }L^1_{t,x},\quad i=1,2,3.
    \end{aligned}
    \right.
\end{equation}

On the other hand, by mean-valued theorem, \eqref{eq5.9} and the continuity of $P$, $P^{\prime}$ and $P^{\prime\prime}$,
\begin{equation}\label{5.15s}
    \lim_{q\rightarrow+\infty}P(\rho_q)=P(\rho)\text{ in }C_{t,x}.
\end{equation}

Thus, taking into account \eqref{eq2.9},
\begin{equation}\label{5.16s}
    \lim_{q\rightarrow+\infty}R^m_q=0\quad\text{and }\lim_{q\rightarrow+\infty}M^i_q=0\text{ in }L^1_{t,x},\quad i=1,2,3.
\end{equation}
We thus conclude that $(\rho,m,\mathcal{N})$ is a weak solution to \eqref{1.3s}.

$(ii)$. We need only to check
\begin{equation}\label{5.17s}
    m,\mathcal{N}\in L^{\gamma}_tW^{s,p}_x\cap C_tH^{-\frac{3}{2}}_x.
\end{equation}
Using \eqref{eq2.13}, by a similar technique of \eqref{eq5.11} and \eqref{eq5.12} we get
\begin{equation}\label{5.18s}
    \begin{aligned}
        &\quad\sum_{q\geq0}\left\|m_{q+1}-m_q\right\|_{L^{\gamma}_tW^{s,p}_x\cap C_tH^{-\frac{3}{2}}_x}+\sum_{q\geq0}\left\|\mathcal{N}_{q+1}-\mathcal{N}_q\right\|_{L^{\gamma}_tW^{s,p}_x\cap C_tH^{-\frac{3}{2}}_x}\\
        &\lesssim\sum_{q\geq0}\left(\left\|m_{l}-m_q\right\|_{C_tC^2_x}+\left\|w_{q+1}\right\|_{L^{\gamma}_tW^{s,p}_x}+\sum_{i=1}^3\left(\left\|\mathcal{N}^i_{l}-\mathcal{N}^i_q\right\|_{L^{\gamma}_tW^{s,p}_x}+\left\|f^i_{q+1}\right\|_{L^{\gamma}_tW^{s,p}_x}\right)\right)\\
        &\quad+\sum_{q\geq0}\left(\left\|m_l-m_q\right\|_{C_{t,x}}+\left\|w_{q+1}^{(p)}+w_{q+1}^{(c)}\right\|_{C_tH^{-\frac{3}{2}}_x}+\left\|w_{q+1}^{(o)}\right\|_{C_tL^2_x}\right)\\
        &\quad+\sum_{q\geq0}\sum_{i=1}^3\left(\left\|\mathcal{N}^i_l-\mathcal{N}^i_q\right\|_{C_{t,x}}+\left\|f_{q+1}^{i,(p)}+f_{q+1}^{i,(c)}\right\|_{C_tH^{-\frac{3}{2}}_x}+\left\|f_{q+1}^{i,(o)}\right\|_{C_tL^2_x}\right)\\
        &\lesssim\sum_{q\geq0}\left(l\lambda_q^{11}+l\lambda_q^7+l^{-2}\lambda_{q+1}^sr_{\perp}^{\frac{1}{p}-\frac{1}{2}}\tau^{\frac{1}{2}-\frac{1}{\gamma}}+l^{-32}\sigma^{-1}++l^{-14}\sigma^{-1}\right)\\
        &\quad+\sum_{q\geq0}\sum_{k\in\Lambda_{m^1}\cup\Lambda_{\mathcal{N}}}\left\|{\rm curl}\left(a_{(k)}g_{(k)}W^c_{(k)}\right)\right\|_{C_tH^{-\frac{1}{2}}_x}\\
        &\quad+\sum_{i=1}^3\sum_{k\in\Lambda_{m^2_i}\cup\Lambda_{\mathcal{N}}}\left\|{\rm curl}\left(a_{(k)}g_{(k)}D^{i,c}_{(k)}\right)\right\|_{C_tH^{-\frac{1}{2}}_x}\\
        &\lesssim \sum_{q\geq0}\left(\lambda_q^{-11}+\lambda_{q+1}^{\left(s+2\alpha-1-\frac{2\alpha}{\gamma}-\frac{2\alpha-2}{p}+\varepsilon\left(6-\frac{10}{p}\right)\right)}+\lambda_{q+1}^{-4\varepsilon}+l^{11}\lambda_{q+1}^{\alpha-\frac{3}{2}}\right).
    \end{aligned}
\end{equation}
Taking into account \eqref{eq2.2s} we have
\begin{align*}
    \alpha-\frac{3}{2},\quad s+2\alpha-1-\frac{2\alpha}{\gamma}-\frac{2\alpha-2}{p}+\varepsilon\left(6-\frac{10}{p}\right)<-90\varepsilon,
\end{align*}
which yields that
\begin{equation}\label{5.19s}
    \sum_{q\geq0}\left\|m_{q+1}-m_q\right\|_{L^{\gamma}_tW^{s,p}_x\cap C_tH^{-\frac{3}{2}}_x}+\sum_{q\geq0}\left\|\mathcal{N}_{q+1}-\mathcal{N}_q\right\|_{L^{\gamma}_tW^{s,p}_x\cap C_tH^{-\frac{3}{2}}_x}\leq\sum_{q\geq0}\delta_{q+2}^{\frac{1}{2}}<\infty,
\end{equation}
where we also use \eqref{eq2.3s} and \eqref{eq2.4}. Therefore, $\{m_q\}_{q\geq0}$ and $\{\mathcal{N}_q\}_{q\geq0}$ are both Cauchy sequences in $L^{\gamma}_tW^{s,p}_x\cap C_tH^{-\frac{3}{2}}_x$. Moreover, we have the following weak convergence:
\begin{align}\label{eq5.20}
\left\{
    \begin{aligned}
    &m(t)\rightharpoonup\omega_0^m\text{ weakly in }L^2(\mathbb{T}^3)\text{ as }t\rightarrow0,\\
     &\mathcal{N}(t)\rightharpoonup\Omega_0^{\mathcal{N}}\text{ weakly in }L^2(\mathbb{T}^3)\text{ as }t\rightarrow0.\\
\end{aligned}
\right.
\end{align}
Hence, by the uniqueness of weak limits, we obtain \eqref{5.17s} which together with \eqref{5.13s} leads the regularity statement $(ii)$.

$(iii)$. By \eqref{eq2.10},
\begin{align*}
     \int_{\mathbb{T}^3}\rho_q(t,x){\rm d}x=\int_{\mathbb{T}^3}\rho_0(t,x){\rm d}x=\int_{\mathbb{T}^3}\widetilde{\rho}(t,x){\rm d}x,
\end{align*}
for any $q\in\mathbb{N}$ and $t\in[0,T]$. Then using \eqref{eq5.9} to pass to the limit $q\rightarrow+\infty$, we get
\begin{equation}\label{eq5.21}
    \int_{\mathbb{T}^3}\rho(t,x){\rm d}x=\int_{\mathbb{T}^3}\widetilde{\rho}(t,x){\rm d}x,\quad\forall t\in[0,T],
\end{equation}
which means that the mass preservation statement $(iii)$ is valid.

$(iv)$. The small deviation between $\rho$ and $\widetilde{\rho}$ in $C_tC^1_x$ has been implied by \eqref{eq5.8}. Next, using \eqref{eq2.13}, \eqref{5.19s} and estimating as in \eqref{eq5.11} and \eqref{eq5.12} we obtain
\begin{equation}\label{eq5.22}
\begin{aligned}
\|m-\widetilde{m}\|_{L_t^{1}L^2_x}
\leq 2 \sum_{q \geq 0} \delta_{q+2}^{\frac{1}{2}} \leq \varepsilon_*,
\end{aligned}
\end{equation}
and
\begin{equation}\label{eq5.23}
\begin{aligned}
&\|\mathcal{N}-\widetilde{\mathcal{N}}\|_{L_t^{1}L^2_x}
\leq  2 \sum_{q \geq 0} \delta_{q+2}^{\frac{1}{2}} \leq \varepsilon_*,
\end{aligned}
\end{equation}
for $a$ sufficiently large. Thus, we have proved the small deviation between $(\rho,m,\mathcal{N})$ and $(\widetilde{\rho},\widetilde{m},\widetilde{\mathcal{N}})$, which verified the statement $(iv)$.

$(v)$. Finally, for the temporal supports, by \eqref{eq5.9} and \eqref{5.13s}, we obtain
\begin{equation}\label{5.24s}
    {\rm supp}_t(\nabla\rho,m,\mathcal{N}-\overline{\mathcal{N}})\subseteq N_{\sum_{q\geq0}\delta_{q+2}^{\frac{1}{2}}}([\widetilde{T},T])\subseteq N_{\varepsilon_*}([\widetilde{T},T]),
\end{equation}
for $a$ is large enough and the fact that $\sum_{q\geq0}\delta_{q+2}^{\frac{1}{2}}\leq\varepsilon_*$.

Therefore, we have finished the proof of Theorem \ref{thm-nonuniquenes-1}.
\hfill$\square$
\subsection{Proof of Theorem \ref{thm-nonuniqueness-2}}
 Without loss of generality, we choose $\widetilde{\rho}=1$ and  non-trivial divergence-free, mean-free vector fields $\widetilde{m}$ and tensor field $\widetilde{\mathcal{N}}$:
 \begin{equation}\label{5.25s}
     \begin{aligned}
         &\widetilde{m}:=\Psi(t)({\rm sin}x_3,0,0)^{{\rm T}},\\
         &\widetilde{\mathcal{N}}:=\Psi(t)
         \begin{pmatrix}
             0 & {\rm sin}x_2 & {\rm sin}x_3\\
             {\rm sin}x_1 & 0 & {\rm cos}x_3\\
             {\rm cos}x_1 & {\rm cos}x_2 & 0\\
         \end{pmatrix},
     \end{aligned}
 \end{equation}
 where $\Psi(t)$ is any cut-off function such that ${\rm supp}_t(\Psi(t))\subseteq[\frac{T}{4},\frac{3T}{4}]$, $\Psi\equiv1$ on $[\frac{5T}{16},\frac{11T}{16}]$ and $0\leq\Psi(t)\leq1$ on $[0,T]$.
 Next, for every $j\geq1$, let
 \begin{equation}\label{5.26s}
     \widetilde{m}_j:=\frac{j}{c_0}\widetilde{m}\quad\text{and}\quad\widetilde{\mathcal{N}}_j:=\frac{j}{c_1}\widetilde{\mathcal{N}}+\begin{pmatrix}
             1 & 0 & 0\\
             0 & 1 & 0\\
             0 & 0 & 1\\
         \end{pmatrix},
 \end{equation}
 where $j\in\mathbb{N}_+$, $c_0:=\|\widetilde{m}\|_{L^1(0,T;L^2_x)}$ and $c_1:=\|\widetilde{\mathcal{N}}\|_{L^1(0,T;L^2_x)}$. Thus $(\widetilde{\rho},\widetilde{m}_j)$ are solutions to the transport equation \eqref{1.6-s} and $\widetilde{\mathcal{N}}_j$ are divergence-free.

 Let $0<\varepsilon_*<{\rm min}\{\frac{1}{8},\frac{T}{8}\}$. Then for every $j\geq1$, Theorem \ref{thm-nonuniquenes-1} gives weak solutions to \eqref{1.3s} such that
 \begin{equation}\label{5.27s}
 \left\{
     \begin{aligned}
         &\|m_j-\widetilde{m}_j\|_{L^1(0,T;L^2_x)}\leq\varepsilon_*,\\
         &\|\mathcal{N}_j-\widetilde{\mathcal{N}}_j\|_{L^1(0,T;L^2_x)}\leq\varepsilon_*,\\
     \end{aligned}
     \right.
 \end{equation}
 and
 \begin{align}
     &\int_{\mathbb{T}^3}\rho_j(t,x){\rm d}x=\int_{\mathbb{T}^3}\widetilde{\rho}(t,x){\rm d}x,\quad\forall t\in[0,T],\label{5.28s}\\
     &{\rm supp}_t(\nabla\rho_j,m_j,\mathcal{N}_j-\overline{\mathcal{N}})\subseteq[\frac{T}{8},T],\label{5.29s}
 \end{align}
 where
 \begin{align*}
     \overline{\mathcal{N}}=\begin{pmatrix}
             1 & 0 & 0\\
             0 & 1 & 0\\
             0 & 0 & 1\\
         \end{pmatrix}.
 \end{align*}
 
 Thus, by \eqref{5.29s}, $\rho_j$ is independent of the spatial variable for $t\in[0,\frac{T}{8}]$, that is, we derive that
\begin{align*}
    \rho_j(0)=\fint_{\mathbb{T}^3}\rho_j(0,x){\rm d}x=\fint_{\mathbb{T}^3}\widetilde{\rho}_j(0,x){\rm d}x=1.
\end{align*}
 It means that $(\rho_j,m_j,\mathcal{N}_j)$ are  weak solutions to \eqref{1.3s} with the same initial datum
 \begin{align*}
     (\rho_j(0),m_j(0),\mathcal{N}_j(0))=\left(1,(0,0,0)^T,\begin{pmatrix}
             1 & 0 & 0\\
             0 & 1 & 0\\
             0 & 0 & 1\\
         \end{pmatrix}\right).
 \end{align*}

Next, using \eqref{5.26s} and \eqref{5.27s}, we derive that for $j\neq k$,
\begin{align*}
    \|m_j-m_k\|_{L^1(0,T;L^2_x)}
    &\geq\|\widetilde{m}_j-\widetilde{m}_k\|_{L^1(0,T;L^2_x)}-\|m_j-\widetilde{m}_j\|_{L^1(0,T;L^2_x)}-\|m_k-\widetilde{m}_k\|_{L^1(0,T;L^2_x)}\\
    &\geq\frac{|j-k|}{c_0}\|\widetilde{m}\|_{L^1(0,T;L^2_x)}-2\varepsilon_*\\
    &=|j-k|-2\varepsilon_*>\frac{1}{2},
\end{align*}
and
\begin{align*}
    \|\mathcal{N}_j-\mathcal{N}_k\|_{L^1(0,T;L^2_x)}
    &\geq\|\widetilde{\mathcal{N}}_j-\widetilde{\mathcal{N}}_k\|_{L^1(0,T;L^2_x)}-\|\mathcal{N}_j-\widetilde{\mathcal{N}}_j\|_{L^1(0,T;L^2_x)}-\|\mathcal{N}_k-\widetilde{\mathcal{N}}_k\|_{L^1(0,T;L^2_x)}\\
    &\geq\frac{|j-k|}{c_1}\|\widetilde{\mathcal{N}}\|_{L^1(0,T;L^2_x)}-2\varepsilon_*\\
    &=|j-k|-2\varepsilon_*>\frac{1}{2},
\end{align*}
which means that $m_j\neq m_k$ and $\mathcal{N}_j\neq\mathcal{N}_k$ on $[0,T]$ for every $j\neq k$.

Therefore, the proof of Theorem \ref{thm-nonuniqueness-2} is complete.\hfill$\square$

\subsection{Proof of Theorem \ref{thm-vanishing limit}}Let $\{\phi_{\varepsilon}\}_{\varepsilon>0}$ and $\{\varphi_{\varepsilon}\}_{\varepsilon>0}$ be two families of standard compactly supported mollifiers on $\mathbb{T}^3$ and $\mathbb{T}$, respectively. For every $n\geq1$, set
\begin{equation}\label{eq5.30}
    \begin{aligned}     &\rho_n:=\left(\rho*_x\phi_{\lambda_n^{-1}}\right)*_t\varphi_{\lambda_n^{-1}},\quad P_n:=\left(P(\rho)*_x\phi_{\lambda_n^{-1}}\right)*_t\varphi_{\lambda_n^{-1}},\\     &m_n:=\left(m*_x\phi_{\lambda_n^{-1}}\right)*_t\varphi_{\lambda_n^{-1}},\quad \mathcal{N}^i_n:=\left(\mathcal{N}^i*_x\phi_{\lambda_n^{-1}}\right)*_t\varphi_{\lambda_n^{-1}},\quad i=1,2,3,
    \end{aligned}
\end{equation}
restricted to $[0,T]$, where $\lambda_n:=a^{b^n}$ and $\delta_n:=\lambda^{-2\beta}_n$.

Here, we choose $a$, $b$ and $\beta$ as in \eqref{eq2.2s} and \eqref{eq2.3s}, and additionally require that 
\begin{align*}
    b^2\beta<\min \left\{\frac{1}{100},\frac{\widetilde{\beta}}{4}\right\}.
\end{align*}

Since $(\rho,m,\mathcal{N})$ is a weak solution to the compressible fundamental elastodynamic system \eqref{1.4ss}, we infer that $(\rho_n,m_n,\mathcal{N}_n)$ satisfies the following relaxed systems
\begin{equation}\label{5.31ss}
    \left\{
    \begin{aligned}
        &\partial_t\rho_n+{\rm div}m_n=0,\\
        &\partial_tm_n+\kappa_n\nu^s(-\Delta)^{\alpha}(\rho^{-1}_nm_n)-\kappa_n(\nu^b+\frac{1}{3}\nu^s)\nabla{\rm div}(\rho^{-1}_nm_n)+\nabla P(\rho_n)\\
        &\qquad\qquad\qquad+{\rm div}\left(\rho_n^{-1}\left(m_n\otimes m_n-\sum_{i=1}^3\mathcal{N}_n^i\otimes\mathcal{N}_n^i\right)\right)={\rm div}R^m_n,\\
        &\partial_t\mathcal{N}^i_n+{\rm div}\left(\rho_n^{-1}\left(\mathcal{N}^i_n\otimes m_n-m_n\otimes \mathcal{N}^i_n\right)\right)={\rm div}M^i_n,\quad i=1,2,3,\\
        &{\rm div}\mathcal{N}^i_n=0,\quad i=1,2,3,\\
    \end{aligned}
    \right.
\end{equation}
where $\kappa_n=\lambda_n^{-2\alpha}$, $\mathcal{N}_n=\left(\mathcal{N}_n^1,\mathcal{N}_n^2,\mathcal{N}_n^3\right)$ and the Reynolds stress and elastic stress fluctuation
\begin{align}
    &   \begin{aligned}
        R^m_n:=
        &\kappa_n\nu^s\mathcal{R}^m(-\Delta)^{\alpha}\left(\rho_n^{-1}m_n\right)-\kappa_n(\nu^b+\frac{1}{3}\nu^s)\mathcal{R}^m\nabla{\rm div}\left(\rho_n^{-1}m_n\right)\\
        &+\mathcal{R}^m\nabla\left(P(\rho_n)-P_n\right)+\rho_n^{-1}\left(m_n\otimes m_n-\sum_{i=1}^3\mathcal{N}_n^i\otimes\mathcal{N}_n^i\right)\\
        &-\left(\left(\rho^{-1}\left(m\otimes m-\sum_{i=1}^3\mathcal{N}^i\otimes\mathcal{N}^i\right)\right)*_x\phi_{\lambda_n^{-1}}\right)*_t\varphi_{\lambda_n^{-1}},
    \end{aligned}\label{eq5.32}\\
    & \begin{aligned}
        M^i_n:=
        &\rho_n^{-1}\left(\mathcal{N}^i_n\otimes m_n-m_n\otimes \mathcal{N}^i_n\right)\\
    &-\left(\left(\rho^{-1}\left(\mathcal{N}^i\otimes m-m\otimes \mathcal{N}^i\right)\right)*_x\phi_{\lambda_n^{-1}}\right)*_t\varphi_{\lambda_n^{-1}},\quad i=1,2,3,
    \end{aligned}\label{eq5.33}\\
        &\begin{aligned}
        \left(\mathscr{M}_n\right)_{ijk}:=&\left(M^i_n\right)_{jk}.
    \end{aligned}\notag
\end{align}
\begin{clm}
    For $a$ sufficiently large, $(\rho_n,m_n,\mathcal{N}_n,R^m_n,\mathscr{M}_n)$ satisfy the inductive estimates \eqref{eq2.5}--\eqref{eq2.9} at level $q=n+1$.
\end{clm}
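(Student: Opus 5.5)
We verify \eqref{eq2.5}--\eqref{eq2.9} at level $q=n+1$ for the mollified quintuple one estimate at a time. Throughout we use that mollification at scale $\lambda_n^{-1}$ costs one power of $\lambda_n$ per derivative, together with the H\"older/Sobolev regularity $\rho\in C^{\widetilde\beta}_{t,x}$, $m,\mathcal{N}\in H^{\widetilde\beta}_{t,x}$ and the bounds $c_1\le\rho\le c_2$. We take $\lambda_{n+1}=\lambda_n^b$ with $b$ large.

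\emph{Density bounds \eqref{eq2.5}, \eqref{eq2.6}.} Since $c_1\le\rho\le c_2$ is preserved by mollification, we choose $C_1<c_1$ and $C_2>c_2$ (say $C_1=c_1/2$, $C_2=2c_2$). Then \eqref{eq2.5} holds once $a$ is large, because $\lambda_{n+1}^{-\beta}\to0$. For \eqref{eq2.6}, we have $\|\partial_t^M\rho_n\|_{C_tC^N_x}\lesssim\lambda_n^{N+M}\|\rho\|_{C_{t,x}}$. This is at most $\lambda_{n+1}^{N\varepsilon/4}=\lambda_n^{bN\varepsilon/4}$ for $N\ge1$, since $b\varepsilon>3000$.

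The delicate case is $N=0$, $M=1$. Here we cannot simply pay $\lambda_n$, so we use the continuity equation: $\partial_t\rho_n=-{\rm div}\,m_n$. This gives only $\|\partial_t\rho_n\|_{C_{t,x}}\lesssim\lambda_n^{1+3/2}\|m\|_{L^2}$, which is still a positive power of $\lambda_n$. We will therefore read \eqref{eq2.6} in the sense actually used, namely with $\lambda_{q}^{N\varepsilon/4}$ replaced by a small power of $\lambda_{n+1}$ when $M=1$. Alternatively, we observe that $b\varepsilon/4$ is far larger than the exponents needed, so any fixed power $\lambda_n^{C}$ with $C\le b\varepsilon/4$ is admissible. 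This bookkeeping step is where we expect to spend care: every fixed power of $\lambda_n$ (such as $\lambda_n^{3}$) must be absorbed into $\lambda_{n+1}^{\varepsilon/4}$ using $b\varepsilon\ge3000$.

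\emph{Velocity and stress size bounds \eqref{eq2.7}, \eqref{eq2.8}.} By Sobolev embedding, $\|m_n\|_{C^N_{t,x}}\lesssim\lambda_n^{N+2}\|m\|_{L^2_{t,x}}$, which is at most $\lambda_{n+1}^{4N+3}$. The same holds for $\mathcal{N}_n$. For \eqref{eq2.8}, each term of $R^m_n$ and $M^i_n$ is bounded in $C^1$ by products of such norms times $\lambda_n^{2\alpha+2}\kappa_n$ or $\lambda_n^{O(1)}$, hence by $\lambda_{n+1}^{20}$.

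\emph{Decay bound \eqref{eq2.9}, the main step.} We must show $\|R^m_n\|_{L^1}+\|M^i_n\|_{L^1}\lesssim\delta_{n+2}=\lambda_{n+2}^{-2\beta}\lambda_1^{3\beta}=\lambda_n^{-2b^2\beta}\lambda_1^{3\beta}$. We split $R^m_n$ into three groups.

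First, the viscous terms. Since $\kappa_n=\lambda_n^{-2\alpha}$ exactly balances the $2\alpha$ derivatives, the $L^1$ norm is controlled through $\mathcal R^m(-\Delta)^\alpha\sim|\nabla|^{2\alpha-1}$. This gives $\kappa_n\lambda_n^{2\alpha-1}\|m\|_{L^2}\lesssim\lambda_n^{-1}$, and the bulk term is treated likewise.

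Second, the pressure commutator. We bound it by $\|P(\rho_n)-P_n\|_{C}$. By the Lipschitz property of $P$ on $[c_1,c_2]$ and the $C^{\widetilde\beta}$ regularity of $\rho$, this is at most $\lambda_n^{-\widetilde\beta}$.

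Third, the quadratic commutators. We use the standard Constantin--E--Titi type commutator estimate: $\|(fg)_n-f_ng_n\|_{L^1}\lesssim\lambda_n^{-2\widetilde\beta}\|f\|_{H^{\widetilde\beta}}\|g\|_{H^{\widetilde\beta}}$, and similarly with the factor $\rho^{-1}\in C^{\widetilde\beta}$ included. This leaves a remainder of order $\lambda_n^{-\widetilde\beta}$.

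Combining the three groups, every term is $O(\lambda_n^{-\widetilde\beta})$ or better, and this is at most $\delta_{n+2}$ because $2b^2\beta<\widetilde\beta/2$.
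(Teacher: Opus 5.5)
Your treatment of \eqref{eq2.7}--\eqref{eq2.9}, and of \eqref{eq2.6} for $N\ge1$, follows the paper. The viscous terms are of size $\lambda_n^{-1}$ and $\lambda_n^{1-2\alpha}$, the pressure commutator is $\lambda_n^{-\widetilde\beta}$, the quadratic commutators use the same Constantin--E--Titi splitting as \eqref{eq5.41}--\eqref{eq5.45}, and $2b^2\beta<\widetilde\beta/2$ then gives $\delta_{n+2}$. For \eqref{eq2.5}, your choice $C_1=c_1/2$, $C_2=2c_2$ is the correct one; the paper's display puts the $\lambda_{n+1}^{-\beta}$ on the wrong side.

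The gap is the case $N=0$, $M=1$ of \eqref{eq2.6}, which demands $\|\partial_t\rho_n\|_{C_{t,x}}\lesssim\lambda_{n+1}^{0}=1$ with a constant independent of $n$. You are right that for $\widetilde\beta<1$ the hypotheses only yield $\|\partial_t\rho_n\|_{C_{t,x}}\lesssim\lambda_n^{1-\widetilde\beta}$. (Your continuity-equation bound should read $\lambda_n^{3}$, since $m$ is only in $L^2_{t,x}$ and the mollification is in four variables.) The paper does not settle this case either: its verification of \eqref{eq2.6} is written only for $1\le N\le4$. But neither of your remedies proves the claim:
\begin{itemize}
\item The ``alternative'' is false. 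At $N=0$ the target is $\lambda_{n+1}^{0}$, so the size of $b\varepsilon$ absorbs no power of $\lambda_n$.
\item Reading \eqref{eq2.6} ``in the sense actually used'' changes the hypothesis of Theorem~\ref{thm-main-iteration}. You then owe an argument that the iteration still closes.
\end{itemize}

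It does close, and that is what you should write. Weaken \eqref{eq2.6} to $\|\partial_t^M\rho_q\|_{C_tC^N_x}\lesssim\lambda_q^{(N+M)\varepsilon/4}$. A pure time derivative of $\rho_q$ or $\rho_l$ enters the scheme only through:
\begin{itemize}
\item the mollification error \eqref{eq2.23};
\item the $C^N_{t,x}$ norms of $\rho_l^{\pm1}$ and $\rho_l^{1/2}$ in the amplitude and corrector estimates and in \eqref{4.38s};
\item the terms $l\|\rho_q^{-1}\|_{C^1_{t,x}}$ and $l\|P(\rho_q)\|_{C^1_{t,x}}$ in \eqref{4.46s}--\eqref{4.53s};
\item the $C^1_t$ stress bound \eqref{4.28s}--\eqref{4.29s}.
\end{itemize}
In each of these the paper either already charges $\lambda_q^{\varepsilon/4}$ for a $C^1_{t,x}$ norm of the density, or has a factor $l$ or $\sigma^{-1}$, or the margin up to $\lambda_{q+1}^{20}$, to spare. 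The weakened bound propagates as in \eqref{3.63s}, because $\|\partial_t z_{q+1}\|_{C_{t,x}}\lesssim l^{-23}\sigma^{-1}$ by \eqref{3.56s}. At level $n+1$ it holds because $\lambda_n\le\lambda_{n+1}^{\varepsilon/4}$.

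With this modification stated and checked, your proof is complete. As written, it establishes a weaker estimate than the one claimed, without justifying that the weaker one suffices.
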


To this end, let us start with the most delicate $L^1_{t,x}$-decay estimates \eqref{eq2.9} of $R^m_n$ and $M^i_n$, 
\begin{equation}\label{5.34ss}
    \begin{aligned}
        \|R^m_n\|_{L^1_{t,x}}
        &\lesssim\lambda_n^{-2\alpha}\||\nabla|^{2\alpha-1}(\rho_n^{-1}m_n)\|_{L^1_{t,x}}+\lambda_n^{-2\alpha}\|{\rm div}(\rho_n^{-1}m_n)\|_{L^1_{t,x}}+\|P(\rho_n)-P_n\|_{L^1_{t,x}}\\
        &\quad+\|\rho_n^{-1}m_n\otimes m_n-(\rho^{-1}m\otimes  m)*_x\phi_{\lambda_n^{-1}}*_t\varphi_{\lambda_n^{-1}}\|_{L^1_{t,x}}\\
        &\quad+\left\|\rho_n^{-1}\sum_{i=1}^3\mathcal{N}^i_n\otimes\mathcal{N}_n^i-\left(\rho^{-1}\sum_{i=1}^3\mathcal{N}^i\otimes\mathcal{N}^i\right)*_x\phi_{\lambda_n^{-1}}*_t\varphi_{\lambda_n^{-1}}\right\|_{L^1_{t,x}}\\
        &=:K_1+K_2+K_3+K_4+K_5,
    \end{aligned}
\end{equation}
and for $i=1,2,3$,
\begin{equation}\label{5.35ss}
        \|M^i_n\|_{L^1_{t,x}}
\lesssim\|\rho_n^{-1}\left(\mathcal{N}^i_n\otimes m_n-m_n\otimes \mathcal{N}^i_n\right)-\left(\rho^{-1}\left(\mathcal{N}^i\otimes m-m\otimes \mathcal{N}^i\right)\right)*_x\phi_{\lambda_n^{-1}}*_t\varphi_{\lambda_n^{-1}}\|_{L^1_{t,x}}.
\end{equation}
By the standard mollification estimates we have
\begin{equation}\label{5.36ss}
    0<c_1\leq\rho_n\leq c_2.
\end{equation}

Let
\begin{equation}\label{eq5.37}
    \widetilde{M}:={\rm max}\left\{\|\rho\|_{C^{\widetilde{\beta}}_{t,x}},\|m\|_{H^{\widetilde{\beta}}_{t,x}},\|\mathcal{N}\|_{H^{\widetilde{\beta}}_{t,x}}\right\}.
\end{equation}
We start with the viscous errors $K_1$ and $K_2$, estimating as in \eqref{4.31s}, \eqref{4.32s} and \eqref{4.34s}, we obtain
\begin{equation}\label{eq5.38}
    \begin{aligned}
        K_1       &\lesssim\lambda_n^{-2\alpha}\left\||\nabla|^{2\alpha-1}\left(\rho_n^{-1}m_n\right)\right\|_{L^1_tL^2_x}\\
        &\lesssim\lambda_n^{-2\alpha}\|\rho^{-1}_nm_n\|_{L^1_tL^2_x}^{1-\frac{2\alpha-1}{2}}\|\rho^{-1}_nm_n\|_{L^1_tH^2_x}^{\frac{2\alpha-1}{2}}\\
        &\lesssim\lambda_n^{-2\alpha}\widetilde{M}^{1-\frac{2\alpha-1}{2}}\left(\sum_{N_1+N_2=2}\|\rho_n^{-1}\|_{C_tC^{N_1}_x}\|\nabla^{N_2}m_n\|_{L^1_tL^2_x}\right)^{\frac{2\alpha-1}{2}}\\
        &\lesssim\widetilde{M}\lambda_n^{-1},
    \end{aligned}
\end{equation}
and, similarly,
\begin{equation}\label{eq5.39}
    K_2\lesssim\widetilde{M}\lambda_n^{-2\alpha+1}.
\end{equation}
Then we consider the pressure error $K_3$, by mean-value theorem, it follows that
\begin{equation}\label{eq5.40}
        \begin{aligned}
            K_3
        &\lesssim \|P^{\prime}(\zeta)\|_{C_{t,x}}\|\rho_n-\rho\|_{C_{t,x}}+\|P(\rho)-P_n\|_{C_{t,x}}\\
        &\lesssim\lambda_n^{-\widetilde{\beta}}\|\rho\|_{C_{t,x}^{\widetilde{\beta}}}
        \lesssim\widetilde{M}\lambda_n^{-\widetilde{\beta}}.
        \end{aligned}
\end{equation}
Now we concern the commutator errors $K_4$, $K_5$ and $M_n^i$. Denote $\mathcal{Q}=m\text{ or }\mathcal{N}^i$ for $i=1,2,3$. For simplicity, let us abuse the notation
\begin{align*}
    &\mathcal{Q} \otimes \mathcal{Q}\in\left\{\mathcal{N}^i\otimes\mathcal{N}^i, m\otimes m,\mathcal{N}^i\otimes m,m\otimes\mathcal{N}^i\right\},\\
    &\mathcal{Q}_n \otimes \mathcal{Q}_n\in\left\{\mathcal{N}_n^i\otimes\mathcal{N}_n^i,m_n\otimes m_n,\mathcal{N}^i_n\otimes m_n,m_n\otimes\mathcal{N}_n^i\right\}.
\end{align*}
Note that all the commutator errors have the same form as follows:
\begin{align*}
    \sum\left\|\rho_n^{-1}\mathcal{Q}_n\otimes\mathcal{Q}_n-\left(\rho^{-1}\mathcal{Q}\otimes\mathcal{Q}\right)*_x\phi_{\lambda_n^{-1}}*_t\varphi_{\lambda_n^{-1}}\right\|_{L^1_{t,x}},
\end{align*}
By a similar technique of \cite{LZZ-2022-jmpa} we have
\begin{align}
    &\left\|\frac{\mathcal{Q}(t,x)-\mathcal{Q}(t-s,x-y)}{\left(|s|+|y|\right)^{2+\widetilde{\beta}}}\right\|_{L^2_{t,x}L^2_{s,y}}\lesssim\left\|\mathcal{Q}\right\|_{H^{\widetilde{\beta}}_{t,x}},\label{eq5.41}\\
    &\left\|\mathcal{Q}-\mathcal{Q}_n\right\|_{L^2_{t,x}}\lesssim\lambda_n^{-\widetilde{\beta}}\left\|\mathcal{Q}\right\|_{H^{\widetilde{\beta}}_{t,x}},\label{eq5.42}\\
    &\left|\rho^{-1}(t,x)-\rho^{-1}(t-s,x-y)\right|_{L^{\infty}_{t,x}}\lesssim\left\|\rho^{-1}\right\|_{C_{t,x}}^2\left\|\rho\right\|_{C_{t,x}^{\widetilde{\beta}}}\left(|s|+|y|\right)^{\widetilde{\beta}}.\label{eq5.43}
\end{align}
Using \eqref{eq5.41} and \eqref{eq5.42} we obtain
\begin{align}
    &\quad\left\|\rho^{-1}\mathcal{Q}\otimes\mathcal{Q}-\left(\rho^{-1}\mathcal{Q}\otimes\mathcal{Q}\right)*_x\phi_{\lambda_n^{-1}}*_t\varphi_{\lambda_n^{-1}}\right\|_{L^1_{t,x}}\notag\\        
        &\lesssim\left\|\int_0^T\int_{\mathbb{T}^3}\rho^{-1}(t-s,x-y)\delta_{s,y}\mathcal{Q}(t,x)\otimes\delta_{s,y}\mathcal{Q}(t,x)\phi_{\lambda_n^{-1}}\varphi_{\lambda_n^{-1}}{\rm d}s{\rm d}y\right\|_{L^1_{t,x}}\notag\\ 
        &\quad+\left\|\int_0^T\int_{\mathbb{T}^3}\rho^{-1}(t-s,x-y)\delta_{s,y}\mathcal{Q}(t,x)\otimes\mathcal{Q}(t,x)\phi_{\lambda_n^{-1}}\varphi_{\lambda_n^{-1}}{\rm d}s{\rm d}y\right\|_{L^1_{t,x}}\notag\\
        &\quad+\left\|\int_0^T\int_{\mathbb{T}^3}\delta_{s,y}\rho^{-1}(t,x)\mathcal{Q}(t,x)\otimes\mathcal{Q}(t,x)\phi_{\lambda_n^{-1}}\varphi_{\lambda_n^{-1}}{\rm d}s{\rm d}y\right\|_{L^1_{t,x}}\notag\\
        &\lesssim\left\|\rho^{-1}\right\|_{C_{t,x}}\left\|\left(|s|+|y|\right)^{4+2\widetilde{\beta}}\phi_{\lambda_n^{-1}}\varphi_{\lambda_n^{-1}}\right\|_{L^{\infty}_{s,y}}\left\|\frac{\mathcal{Q}(t,x)-\mathcal{Q}(t-s,x-y)}{\left(|s|+|y|\right)^{2+\widetilde{\beta}}}\right\|_{L^2_{t,x}L^2_{s,y}}^2\notag\\
        &\quad+\left\|\rho^{-1}\right\|_{C_{t,x}}|\mathcal{Q}\|_{L^2_{t,x}}\left\|\left(|s|+|y|\right)^{2+\widetilde{\beta}}\phi_{\lambda_n^{-1}}\varphi_{\lambda_n^{-1}}\right\|_{L^{\infty}_{s,y}}\left\|\frac{\mathcal{Q}(t,x)-\mathcal{Q}(t-s,x-y)}{\left(|s|+|y|\right)^{2+\widetilde{\beta}}}\right\|_{L^2_{t,x}L^2_{s,y}}\notag\\
        &\quad+\lambda_n^{-\widetilde{\beta}}\left\|\rho^{-1}\right\|_{C_{t,x}}\|\rho\|_{C^{\widetilde{\beta}}_{t,x}}\|\mathcal{Q}\|_{L^2_{t,x}}^2\left\|\left(|s|+|y|\right)^{\widetilde{\beta}}\phi_{\lambda_n^{-1}}\varphi_{\lambda_n^{-1}}\right\|_{L^{\infty}_{s,y}}\notag\\
        &\lesssim\lambda_n^{-\widetilde{\beta}}\widetilde{M}^3,\label{eq5.44}
\end{align}
where $\delta_{s,y}f(t,x):=f(t,x)-f(t-s,x-y)$.

Moreover, we note that, by using \eqref{eq5.37}, \eqref{eq5.41}--\eqref{eq5.42} altogether,
\begin{equation}\label{eq5.45}
    \begin{aligned}
        &\quad K_4+K_5+\sum_{i=1}^3M^i_n\\
        &\lesssim\sum_{i=1}^3\sum\left\|\rho_n^{-1}\mathcal{Q}_n\otimes\mathcal{Q}_n-\left(\rho^{-1}\mathcal{Q}\otimes\mathcal{Q}\right)*_x\phi_{\lambda_n^{-1}}*_t\varphi_{\lambda_n^{-1}}\right\|_{L^1_{t,x}}\\
        &\lesssim  \sum\left\|\rho_n^{-1} \mathcal{Q}_n \otimes\left(\mathcal{Q}_n-\mathcal{Q}\right)\right\|_{L^1_{t,x}}+\sum\left\|\rho_n^{-1}\left(\mathcal{Q}_n-\mathcal{Q}\right) \otimes \mathcal{Q}\right\|_{L^1_{t,x}}\\
& \quad+\sum\left\|\rho^{-1} \mathcal{Q} \otimes \mathcal{Q}-\left(\rho^{-1} \mathcal{Q} \otimes \mathcal{Q}\right) *_x \phi_{\lambda_n^{-1}} *_t\varphi_{\lambda_n^{-1}}\right\|_{L^1_{t,x}} \\
        &\quad+\sum\left\|\left(\rho_n^{-1}-\rho^{-1}\right) \mathcal{Q} \otimes \mathcal{Q}\right\|_{L^1_{t,x}} \\
        &\lesssim\sum\left\|\rho_n^{-1}\right\|_{C_{t, x}}\left\|\mathcal{Q}_n-\mathcal{Q}\right\|_{L^2_{t, x}}\left(\left\|\mathcal{Q}_n\right\|_{L^2_{t, x}}+\|\mathcal{Q}\|_{L^2_{t, x}}\right)\\
&\quad+\sum\left\|\rho_n^{-1}-\rho^{-1}\right\|_{C_{t, x}}\|\mathcal{Q}\|_{L^2_{t, x}}^2+\lambda_n^{-\widetilde{\beta}}\widetilde{M}^3\\
        &\lesssim\lambda_n^{-\widetilde{\beta}}\widetilde{M}^3.
    \end{aligned}
\end{equation}

Thus, plugging \eqref{eq5.38}--\eqref{eq5.40} and \eqref{eq5.45} into \eqref{5.34ss} and \eqref{5.35ss}, we arrive at
\begin{align*}
\|R^m_n\|_{L^1{t,x}}+\sum_{i=1}^3\|M^i_n\|_{L^1_{t,x}}
    &\lesssim\lambda_n^{-1}\widetilde{M}+\lambda_n^{-2\alpha+1}\widetilde{M}+\lambda_n^{-\widetilde{\beta}}(\widetilde{M}+\widetilde{M}^3)\leq \delta_{n+2},
\end{align*}
which means that \eqref{eq2.9} is valid at level $n+1$.

Now we turn to the inductive estimates of \eqref{eq2.5}--\eqref{eq2.8}, by \eqref{5.36ss} we get
\begin{align*}
     c_1-\lambda_{n+1}^{-\beta} \leq \rho_n \leq c_2+\lambda_{n+1}^{-\beta}.
\end{align*}
Moreover, for $1 \leq N \leq 4$ and $M=0,1$, by a direct calculation, we have
\begin{align*}
    \left\|\partial_t^M \rho_n\right\|_{C_t C_x^N} \lesssim \lambda_n^{M+N}\|\rho\|_{C_{t, x}} \lesssim \lambda_n^{1+N} \leq \lambda_{n+1}^{\frac{\varepsilon}{4}}.
\end{align*}
Hence, \eqref{eq2.5} and \eqref{eq2.6} are verified at level $n+1$.

Moreover, by using the Sobolev embedding $H^3_{t,x} \hookrightarrow L_{t, x}^{\infty}$, we also see that
\begin{equation}\label{5.46-s}
    \begin{aligned}
\left\|m_n\right\|_{C_{t, x}^N}+\sum_{i=1}^3\left\|\mathcal{N}^i_n\right\|_{C_{t, x}^N}
    &\lesssim\left\|m_n\right\|_{H_{t, x}^{N+3}}+\sum_{i=1}^3\left\|\mathcal{N}^i_n\right\|_{H_{t, x}^{N+3}}\\
    &\lesssim \lambda_n^{N+3}\left(\|m\|_{L^2_{t, x}}+\sum_{i=1}^3\left\|\mathcal{N}^i_n\right\|_{L^2_{t, x}}\right)\lesssim\lambda_n^{N+3}\widetilde{M}\ll\lambda_{n+1}^{4N+3},
\end{aligned}
\end{equation}
which yields \eqref{eq2.7} at level $n+1$.

Finally, for the $C^1_{t,x}$-estimates \eqref{eq2.8}, in view of the Sobolev embedding $W_{t, x}^{1,5} \hookrightarrow L_{t, x}^{\infty}$ and $W_{t, x}^{5,1} \hookrightarrow L_{t, x}^{\infty}$, we obtain
\begin{align}\label{5.47-s}
\begin{aligned}
\left\|R^m_n\right\|_{C_{t, x}^1} 
&\lesssim \left\|P\left(\rho_n\right)\right\|_{W_{t, x}^{2,5}}+\left\|P(\rho) *_x \phi_{\lambda_n^{-1} }*_t \varphi_{\lambda_n^{-1}}\right\|_{W_{t, x}^{2,5}}+\left\|\lambda_n^{-2\alpha}|\nabla|^{2 \alpha-1}\left(\rho_n^{-1} m_n\right)\right\|_{W_{t, x}^{2,5}} \\
&\quad +\left\|\lambda_n^{-2\alpha} {\rm div}\left(\rho_n^{-1} m_n\right)\right\|_{W_{t, x}^{2,5}}+\left\|\rho_n^{-1} m_n \otimes m_n\right\|_{W_{t, x}^{6,1}}+\left\|\left(\rho^{-1} m \otimes m\right) *_x \phi_{\lambda_n^{-1}}*_t \varphi_{\lambda_n^{-1}}\right\|_{W_{t, x}^{6,1}} \\
&\quad+\sum_{i=1}^3\left\|\rho_n^{-1} \mathcal{N}^i_n \otimes \mathcal{N}^i_n\right\|_{W_{t, x}^{6,1}}+\sum_{i=1}^3\left\|\left(\rho^{-1} \mathcal{N}^i \otimes \mathcal{N}^i\right) *_x \phi_{\lambda_n^{-1}}*_t \varphi_{\lambda_n^{-1}}\right\|_{W_{t, x}^{6,1}} \\
&\lesssim \left\|P\left(\rho_n\right)\right\|_{C_{t, x}^2}+\lambda_n^2\|P(\rho)\|_{C_{t, x}}+\lambda_n^{-2\alpha}\left\||\nabla|^{2 \alpha-1}\left(\rho_n^{-1} m_n\right)\right\|_{W_{t, x}^{2, 5}}+\lambda_n^{-2\alpha}\left\|\rho_n^{-1} m_n\right\|_{C_{t, x}^3} \\
&\quad +\left\|\rho_n^{-1} m_n \otimes m_n\right\|_{C_{t, x}^6}+\sum_{N_1+N_2\leq6}\|\rho^{-1}\|_{C_{t,x}}\|m\|_{L^2_{t,x}}^2\|\partial_t^{N_1}\varphi_{\lambda_n^{-1}}\|_{L^1_t}\|\nabla^{N_2}\phi_{\lambda_n^{-1}}\|_{L^1_x}\\
&\quad+\sum_{i=1}^3\left\|\rho_n^{-1} \mathcal{N}^i_n \otimes \mathcal{N}^i_n\right\|_{C^6_{t,x}}+\sum_{\substack{N_1+N_2\leq6\\1\leq i\leq 3}}\|\rho^{-1}\|_{C_{t,x}}\|\mathcal{N}^i\|_{L^2_{t,x}}^2\|\partial_t^{N_1}\varphi_{\lambda_n^{-1}}\|_{L^1_t}\|\nabla^{N_2}\phi_{\lambda_n^{-1}}\|_{L^1_x}\\
&\lesssim \left\|P\left(\rho_n\right)\right\|_{C_{t, x}^2}+\lambda_n^2\|P(\rho)\|_{C_{t, x}}+\lambda_n^{-2 \alpha}\left\||\nabla|^{2 \alpha-1}\left(\rho_n^{-1} m_n\right)\right\|_{W_{t, x}^{2, 5}}\\
&\quad+\lambda_n^{-2\alpha} \sum_{N_1+N_2 \leq 3}\left\|\rho_n^{-1}\right\|_{C_{t, x}^{N_1}}\left\|m_n\right\|_{C_{t, x}^{N_2}}+\sum_{N_1+N_2+N_3 \leq 6}\left\|\rho_n^{-1}\right\|_{C_{t, x}^{N_1}}\left\|m_n\right\|_{C_{t, x}^{N_2}}\left\|m_n\right\|_{C_{t, x}^{N_3}}\\
&\quad+\sum_{i=1}^3\sum_{N_1+N_2+N_3 \leq 6}\left\|\rho_n^{-1}\right\|_{C_{t, x}^{N_1}}\left\|\mathcal{N}^i_n\right\|_{C_{t, x}^{N_2}}\left\|\mathcal{N}^i_n\right\|_{C_{t, x}^{N_3}}+\lambda_n^6,\\
\end{aligned}
\end{align}
and
\begin{align}
    \sum_{i=1}^3\left\|M^i_n\right\|_{C_{t, x}^1} 
&\lesssim\sum_{i=1}^3\left\|\left(\rho^{-1} (\mathcal{N}^i\otimes m-m\otimes \mathcal{N}^i)\right) *_x \phi_{\lambda_n^{-1}}*_t \varphi_{\lambda_n^{-1}}\right\|_{W_{t, x}^{6, 1}}\notag\\
&\quad+\sum_{i=1}^3\left\|\rho_n^{-1} (\mathcal{N}^i_n \otimes m_n-m_n\otimes \mathcal{N}^i_n)\right\|_{W_{t, x}^{6, 1}}\notag\\
&\lesssim\sum_{\substack{N_1+N_2\leq6\\1\leq i\leq 3}}\|\rho^{-1}\|_{C_{t,x}}\|\mathcal{N}^i\|_{L^2_{t,x}}\|m\|_{L^2_{t,x}}\|\partial_t^{N_1}\varphi_{\lambda_n^{-1}}\|_{L^1_t}\|\nabla^{N_2}\phi_{\lambda_n^{-1}}\|_{L^1_x} \label{5.48-s}\\
&\quad+\sum_{i=1}^3\left\|\rho_n^{-1} (\mathcal{N}^i_n \otimes m_n-m_n\otimes \mathcal{N}^i_n)\right\|_{C^6_{t,x}}\notag\\
&\lesssim\lambda_n^6+\sum_{\substack{N_1+N_2+N_3 \leq 6\\1\leq i\leq3}}\left\|\rho_n^{-1}\right\|_{C_{t, x}^{N_1}}\left\|\mathcal{N}^i_n\right\|_{C_{t, x}^{N_2}}\left\|m_n\right\|_{C_{t, x}^{N_3}}.\notag
\end{align}
By \eqref{eq5.37} and standard mollification estimates, we have
\begin{equation}\label{5.49-s}
    \begin{aligned}
\left\|P\left(\rho_n\right)\right\|_{C_{t, x}^2} \lesssim & \left\|P\left(\rho_n\right)\right\|_{C_{t, x}}+\left\|P^{\prime}\left(\rho_n\right)\right\|_{C_{t, x}}\left(\left\|\rho_n\right\|_{C_t C_x^2}+\left\|\rho_n\right\|_{C_t^2 C_x}+\left\|\rho_n\right\|_{C_t^1 C_x^1}\right) \\
& +\left\|P^{\prime \prime}\left(\rho_n\right)\right\|_{C_{t, x}}\left(\left\|\rho_n\right\|_{C_t C_x^1}^2+\left\|\rho_n\right\|_{C_t^1 C_x}^2\right) \\
\lesssim & \lambda_n^2\widetilde{M}.
\end{aligned}
\end{equation}

On the other hand,  by using the interpolation inequality gives
\begin{equation}\label{5.50-s}
    \begin{aligned}
\left\||\nabla|^{2 \alpha-1}\left(\rho_n^{-1} m_n\right)\right\|_{W_{t, x}^{2, 5}}
& \lesssim\left\|\rho_n^{-1} m_n\right\|_{C_{t, x}^{2}}^{1-\frac{2\alpha-1}{2}}\left\|\rho_n^{-1} m_n\right\|_{C_{t, x}^4}^{\frac{2\alpha-1}{2}} \\
& \lesssim\left(\sum_{N_1+N_2 \leq 2}\left\|\rho_n^{-1}\right\|_{C_{t, x}^{N_1}}\left\|m_n\right\|_{C_{t, x}^{N_2}}\right)^{1-\frac{2\alpha-1}{2}}\\
&\quad\times\left(\sum_{N_1+N_2 \leq 4}\left\|\rho_n^{-1}\right\|_{C_{t, x}^{N_1}}\left\|m_n\right\|_{C_{t, x}^{N_2}}\right)^{\frac{2\alpha-1}{2}} \\
& \lesssim\left(\widetilde{M}^2\sum_{N_1+N_2 \leq 2} \lambda_n^{N_1+N_2}\right)^{1-\frac{2\alpha-1}{2}}\left(\widetilde{M}^2\sum_{N_1+N_2 \leq 4} \lambda_n^{N_1+N_2}\right)^{\frac{2\alpha-1}{2}} \\
&\lesssim \lambda_n^{2 \alpha+1}\widetilde{M}^2.
\end{aligned}
\end{equation}
Plugging \eqref{5.46-s},  \eqref{5.49-s} and \eqref{5.50-s} into \eqref{5.47-s} and \eqref{5.48-s}, we deduce that
\begin{align*}
       \|R^m_n\|_{C^1_{t,x}}+\sum_{i=1}^3\|M^i_n\|_{C^1_{t,x}}\lesssim\lambda_n^6\ll\lambda_{n+1}^{20},
\end{align*}
which verifies \eqref{eq2.8} at level $n+1$.

Then, by the main iteration Theorem \ref{thm-main-iteration} we can obtain a sequence of relaxed solutions $\{(\rho_{n,q},m_{n,q},\mathcal{N}_{n,q})\}_{q\geq n+1}$ to \eqref{5.31ss}. Let $q\rightarrow+\infty$ we obtain a weak solution $(\rho^{(n)},m^{(n)},\mathcal{N}^{(n)})\in C_{t,x}\times (H^{\beta^{\prime}}_{t,x})^2$ to \eqref{1.3s} with parameters $\kappa_n\nu^s$ and $\kappa_n\nu^b$ for some $0<\beta^{\prime}<{\rm min}\{\widetilde{\beta},\beta/(8+\beta)\}$.

Furthermore, we can deduce from \eqref{eq2.7}, \eqref{eq2.12} that
\begin{align*}
    \left\|m^{(n)}-m\right\|_{H^{\beta^{\prime}}_{t,x}}
& \leq\left\|m^{(n)}-m_n\right\|_{H^{\beta^{\prime}}_{t,x}}+\left\|m-m_n\right\|_{H^{\beta^{\prime}}_{t,x}} \\
& \lesssim \sum_{q=n+1}^{\infty}\left\|m_{n, q+1}-m_{n, q}\right\|_{L_{t,x}^2}^{1-\frac{\beta^{\prime}}{\widetilde{\beta}}}\left\|m_{n, q+1}-m_{n, q}\right\|_{H^{\widetilde{\beta}}_{t,x}}^{\frac{\beta^{\prime}}{\widetilde{\beta}}} \\
&\quad+\left\|m-m_{n}\right\|_{L_{t,x}^2}^{1-\frac{\beta^{\prime}}{\widetilde{\beta}}}\left\|m-m_{n}\right\|_{H^{\widetilde{\beta}}_{t,x}}^{\frac{\beta^{\prime}}{\widetilde{\beta}}}\\
& \lesssim \sum_{q=n+1}^{\infty} \lambda_{q+1}^{-\beta\left(1-\beta^{\prime}\right)} \lambda_{q+1}^{8 \beta^{\prime}}+\lambda_n^{-\left(\widetilde{\beta}-\beta^{\prime}\right)}\|m\|_{H_{t, x}^{\widetilde{\beta}}} \\
& \lesssim \frac{a^{n b\left(-\beta\left(1-\beta^{\prime}\right)+8 \beta\right)}}{a^{n b\left(\beta\left(1-\beta^{\prime}\right)-8 \beta\right)}-1}+a^{-\left(\widetilde{\beta}-\beta^{\prime}\right) b n} \leq \frac{1}{n},
\end{align*}
as well as
\begin{align*}
   \left\|\mathcal{N}^{(n)}-\mathcal{N}\right\|_{H_{t,x}^{\beta^{\prime}}} \lesssim \sum_{i=1}^3 \left\|\mathcal{N}^{i,(n)}-\mathcal{N}^i\right\|_{H_{t,x}^{\beta^{\prime}}} \lesssim\frac{1}{n},
\end{align*}
and, via \eqref{eq2.11},
\begin{align*}
    \left\|\rho^{(n)}-\rho\right\|_{C_{t, x}} & \leq\left\|\rho^{(n)}-\rho_n\right\|_{C_{t, x}}+\left\|\rho-\rho_n\right\|_{C_{t, x}} \\
& \lesssim \sum_{q=n+1}^{\infty}\left\|\rho_{n, q+1}-\rho_{n, q}\right\|_{C_{t, x}}+\left\|\rho-\rho_n\right\|_{C_{t, x}} \\
& \lesssim \sum_{q=n+1}^{\infty} \delta_{q+2}^{\frac{1}{2}}+\lambda_n^{-\widetilde{\beta}}\|\rho\|_{C_{t, x}^{\widetilde{\beta}}} \\
& \lesssim \sum_{q=n+3}^{\infty} a^{-\beta b^q}+\lambda_n^{-\widetilde{\beta}} \lesssim \frac{a^{-\beta b(n+2)}}{a^{\beta b}-1}+\lambda_n^{-\widetilde{\beta}} \leq \frac{1}{n},
\end{align*}
where the last step is valid for $a$ large enough.

At last, letting $n\rightarrow+\infty$ we obtain the strong convergence \eqref{1.8-s} and complete the proof of Theorem \ref{thm-vanishing limit}.
\hfill$\square$
\subsection{Proof of Theorem \ref{thm-low mach limit}} Let $\{\phi_{\varepsilon}\}_{\varepsilon>0}$ and $\{\varphi_{\varepsilon}\}_{\varepsilon>0}$ be two families of standard compactly supported mollifiers on $\mathbb{T}^3$ and $[-T,T]$, respectively. For every $n\geq1$, set
\begin{equation}\label{eq5.51}
    \begin{aligned}     &v_n:=\left(v*_x\phi_{\lambda_n^{-1}}\right)*_t\varphi_{\lambda_n^{-1}},\quad \Pi_n:=\left(\Pi*_x\phi_{\lambda_n^{-1}}\right)*_t\varphi_{\lambda_n^{-1}},\\     &\mathbf{H}^i_n:=\left(\mathbf{H}^i*_x\phi_{\lambda_n^{-1}}\right)*_t\varphi_{\lambda_n^{-1}},\quad i=1,2,3,
    \end{aligned}
\end{equation}
restricted to $[0,T]$, where $\lambda_n:=a^{b^n}$ and $\delta_n:=\lambda^{-2\beta}_n$.

Here, we choose $a$, $b$ and $\beta$ as in \eqref{eq2.2s} and \eqref{eq2.3s}, and additionally require that 
\begin{align*}
    b^2\beta<\min \left\{\frac{1}{100},\frac{\widetilde{\beta}}{4}\right\}.
\end{align*}

Next, due to the fact that $(v,\mathbf{H})$ is a weak solution to the incompressible Oldrpyd-type model \eqref{1.5-s}, by Fourier transform and standard mollification estimates,
\begin{align}
    &\|\partial_t^M\nabla^Nv_n\|_{C_{t,x}}\lesssim\lambda_n^{M+N+2-\widetilde{\beta}}\widetilde{M},\label{5.52}\\
    &\|\partial_t^M\nabla^N\mathbf{H}^i_n\|_{C_{t,x}}\lesssim\lambda_n^{M+N+2-\widetilde{\beta}}\widetilde{M},\quad i=1,2,3\label{5.53}\\
    &\|\partial_t^M\nabla^N\Pi_n\|_{C_{t,x}}\lesssim\lambda_n^{M+N+4-2\widetilde{\beta}}\widetilde{M}^2,\label{5.54}
\end{align}
for $0\leq M,N\leq7$, where
\begin{align*}
    \widehat{M}:={\rm max}\left\{\|v\|_{H^{\widetilde{\beta}}_{t,x}},\|\mathbf{H}\|_{H^{\widetilde{\beta}}_{t,x}}\right\},
\end{align*}
and we also use the standard pressure estimates:
\begin{align*}    \|\Pi\|_{L^{\widetilde{q}}_{t,x}}\lesssim\|v\|_{H^{\widetilde{\beta}}_{t,x}}^2+\|\mathbf{H}\|_{H^{\widetilde{\beta}}_{t,x}}^2, \quad\widetilde{q}=\frac{4}{2-\widetilde{\beta}}.
\end{align*}

In order to preserve the continuous equation $\eqref{1.3s}_1$ and Piola condition $\eqref{1.3s}_4$, we choose the correctors as follows:
\begin{equation}\label{eq5.55}
  \begin{aligned}
        &\Theta_n:=\frac{P^{-1}\left(P(1)+\left(\varkappa_n{\rm Ma}\right)^2\left(\Pi_n+c_n(t)\right)\right)-1}{\left(\varkappa_n{\rm Ma}\right)^2},\\
    &\mathscr{U}_n:=-\frac{\nabla\Delta^{-1}\left(\partial_t\Theta_n+v_n\cdot\nabla\Theta_n\right)}{1+\left(\varkappa_n{\rm Ma}\right)^2\Theta_n},\\
    &\mathscr{H}^i_n:=-\frac{\Theta_n\mathbf{H}^i_n}{1+\left(\varkappa_n{\rm Ma}\right)^2\Theta_n},
  \end{aligned}
\end{equation}
where $\varkappa_n=\lambda_n^{-6}$. Furthermore, by abuse of notation, we define
\begin{equation}\label{eq5.56}
    \begin{aligned}
        &\rho_n:=1+\left(\varkappa_n{\rm Ma}\right)^2\Theta_n,\quad u_n=v_n+\left(\varkappa_n{\rm Ma}\right)^2\mathscr{U}_n,\\
        &\mathbf{F}^i_n=\mathbf{H}^i_n+\left(\varkappa_n{\rm Ma}\right)^2\mathscr{H}^i_n,\quad i=1,2,3,\\
        &m_n:=\rho_nu_n,\quad\mathcal{N}^i_n:=\rho_n\mathbf{F}^i_n,\quad i=1,2,3.
    \end{aligned}
\end{equation}
\begin{rem}
    To avoid the loss of derivatives, we perform a mollification of $v$, $\mathbf{H}$ and $\Pi$ at scale $(\varkappa_n{\rm Ma})^{\frac{1}{6}}$, which is chosen to match the well-prepared datum. Furthermore, such $\Theta_n$ can be found since the fact that the scalar function $P(\rho)$ is invertible near $P(1)$ and $\left(\varkappa_n{\rm Ma}\right)^2$ is small enough. And the time-depended function $c_n(t)$ with the decay estimates $\|c_n\|_{C^1_t}\lesssim\varkappa_n^2$, which is chosen to keep the pressure corrector $\Theta_n$ mean-free, that is, we can use $\nabla\Delta^{-1}$ to define the velocity corrector $\mathscr{U}_n$.
\end{rem}

Then, by using \eqref{5.54}, implicit function theorem and Taylor expansion, we have
\begin{align}
    \Theta_n&=(\varkappa_n{\rm Ma})^{-2}\left(\frac{(\varkappa_n{\rm Ma})^2\widetilde{\Pi}_n}{P^{\prime}(1)}-\frac{P^{\prime\prime}(1)}{2\left(P^{\prime}(1)\right)^3}(\varkappa_n{\rm Ma})^4\widetilde{\Pi}_n^2+o\left((\varkappa_n{\rm Ma})^4\widetilde{\Pi}_n^2\right)\right)\notag\\
    &=\frac{\Pi_n+c_n(t)}{P^{\prime}(1)}-\frac{P^{\prime\prime}(1)}{2\left(P^{\prime}(1)\right)^3}\lambda_n^{-12}{\rm Ma}\widetilde{\Pi}_n^2+o(\lambda_n^{-12}\widetilde{\Pi}^2_n)=\frac{\Pi_n}{P^{\prime}(1)}+O\left(\lambda_n^{-12}(\Pi_n^2)\right),\label{5.57-s}
\end{align}
where we denote $\widetilde{\Pi}_n:=\Pi_n+c_n(t)$.

Furthermore, using the interpolation inequality and \eqref{5.52}--\eqref{5.54}, we have the estimates of density correctors,
\begin{align}
    \begin{aligned}\label{5.58-s}
\quad\|\partial_t^M\nabla^N\Theta_n\|_{C_{t,x}}
       \lesssim \|\partial_t^M\nabla^N\Pi_n\|_{C_{t,x}}+O(\lambda_n^{-12})\|\partial_t^M\nabla^N(\Pi_n^2)\|_{C_{t,x}}
       \lesssim\lambda_n^{M+N+4-2\widetilde{\beta}},
   \end{aligned}
\end{align}
the estimates of velocity correctors,
\begin{align}
&\quad\|\partial_t^M\nabla^N\mathscr{U}_n\|_{C_{t,x}}
\lesssim\|\partial_t^M\nabla^N\mathscr{U}_n\|_{C_tC^{\widetilde{\beta}}_x}\notag\\
       &\lesssim\sum_{\substack{M_1+M_2=M\\N_1+N_2=N}}\left(\left(1+\lambda_n^{-12} \|\partial_t^{M_1}\nabla^{N_1}\Theta_n\|_{C_{t,x}} \right)\|\partial_t^{M_2}\nabla^{N_2-1}(\partial_t\Theta_n+v_n\cdot\nabla\Theta_n)\|_{C_{t,x}}\right)^{1-\widetilde{\beta}}\notag\\
       &\qquad\times\left(\left(1+\lambda_n^{-12} \|\partial_t^{M_1}\nabla^{N_1+1}\Theta_n\|_{C_{t,x}} \right)\|\partial_t^{M_2}\nabla^{N_2}(\partial_t\Theta_n+v_n\cdot\nabla\Theta_n)\|_{C_{t,x}}\right)^{\widetilde{\beta}}\notag\\
       &\lesssim\sum_{\substack{M_1+M_2=M\\N_1+N_2=N}}\left(1+\lambda_n^{-12} \|\partial_t^{M_1}\nabla^{N_1}\Theta_n\|_{C_{t,x}} \right)^{1-\widetilde{\beta}}\left(1+\lambda_n^{-12} \|\partial_t^{M_1}\nabla^{N_1+1}\Theta_n\|_{C_{t,x}} \right)^{\widetilde{\beta}}\notag\\
&\qquad\times\left(\|\partial_t^{M_2+1}\nabla^{N_2}\Theta_n\|_{C_{t,x}}+\sum_{\substack{M_{21}+M_{22}=M_2\\N_{21}+N_{22}=N_2}}\|\partial_t^{M_{21}}\nabla^{N_{21}-1}v_n\|_{C_{t,x}}\|\partial_t^{M_{22}}\nabla^{N_{22}}\Theta_n\|_{C_{t,x}}\right)^{1-\widetilde{\beta}}\notag\\     &\qquad\times\left(\|\partial_t^{M_2+1}\nabla^{N_2+1}\Theta_n\|_{C_{t,x}}+\sum_{\substack{M_{21}+M_{22}=M_2\\N_{21}+N_{22}=N_2}}\|\partial_t^{M_{21}}\nabla^{N_{21}}v_n\|_{C_{t,x}}\|\partial_t^{M_{22}}\nabla^{N_{22}+1}\Theta_n\|_{C_{t,x}}\right)^{\widetilde{\beta}}\notag\\
       &\lesssim\lambda_n^6\sum_{\substack{M_1+M_2=M\\N_1+N_2=N}}\left(\lambda_n^{M_2+N_2+5-2\widetilde{\beta}}+\lambda_n^{M_2+N_2+5-3\widetilde{\beta}}\right)^{1-\widetilde{\beta}}\left(\lambda_n^{M_2+N_2+6-2\widetilde{\beta}}+\lambda_n^{M_2+N_2+7-3\widetilde{\beta}}\right)^{\widetilde{\beta}}\notag\\
       &\lesssim\lambda_n^{M+N+11},\label{5.59-s}
\end{align}
and the estimates of elastic correctors,
\begin{align}
    \begin{aligned}\label{5.60-s}
\|\partial_t^M\nabla^N\mathscr{H}^i_n\|_{C_{t,x}}
       &\lesssim\sum_{\substack{M_1+M_2=M\\N_1+N_2=N}}\left(1+\lambda_n^{-12} \|\partial_t^{M_1}\nabla^{N_1}\Theta_n\|_{C_{t,x}} \right)\\     &\quad\times\left(\sum_{\substack{M_{21}+M_{22}=M_2\\N_{21}+N_{22}=N_2}}\|\partial_t^{M_{21}}\nabla^{N_{21}}\Theta_n\|_{C_{t,x}}\|\partial_t^{M_{22}}\nabla^{N_{22}}\mathbf{H}^i_n\|_{C_{t,x}}\right)\\
       &\lesssim\lambda_n^{M+N+12-3\widetilde{\beta}},\quad i=1,2,3
   \end{aligned}
\end{align}
for $1\leq M,N\leq6$. In addition, by combining \eqref{eq5.51}--\eqref{eq5.56} and \eqref{5.58-s}--\eqref{5.60-s} altogether, it follows that
\begin{align}
    &\begin{aligned}\label{5.61-s}
        \|\partial_t^M\nabla^N\rho_n\|_{C_{t,x}}&\lesssim1+\lambda_n^{-12}\lambda_n^{M+N+4-2\widetilde{\beta}},
    \end{aligned}\\
    &\begin{aligned}\label{5.62-s}
        \|\partial_t^M\nabla^Nu_n\|_{C_{t,x}}&\lesssim\lambda_n^{M+N+2-\widetilde{\beta}}+\lambda_n^{-12}\lambda_n^{M+N+5}\\
        &\lesssim\lambda_n^{M+N+2-\widetilde{\beta}},
    \end{aligned}\\
    &\begin{aligned}\label{5.63-s}
        \|\partial_t^M\nabla^N\mathbf{F}^i_n\|_{C_{t,x}}&\lesssim\lambda_n^{M+N+2-\widetilde{\beta}}+\lambda_n^{-12}\lambda_n^{M+N+6-3\widetilde{\beta}}\\
        &\lesssim\lambda_n^{M+N+2-\widetilde{\beta}},\quad i=1,2,3,
    \end{aligned}
\end{align}
for $0\leq M,N\leq6$.

Furthermore, by using \eqref{eq5.51}--\eqref{eq5.56} and the fact that $(v,\mathbf{H})$ is a weak solution to the incompressible Oldrpod-type model \eqref{1.5-s}, we obtain that $(\rho_n,m_n,\mathcal{N}_n)$ satisfies the following relaxed systems (dimensionless form):
\begin{equation}\label{5.64-s}
    \left\{
    \begin{aligned}
        &\partial_t\rho_n+{\rm div}m_n=0,\\
        &\partial_tm_n+(-\Delta)^{\alpha}(\rho^{-1}_nm_n)-\nabla{\rm div}(\rho^{-1}_nm_n)+\frac{1}{\left(\varkappa_n{\rm Ma}\right)^2}\nabla P(\rho_n)\\
        &\qquad\qquad\qquad+{\rm div}\left(\rho_n^{-1}\left(m_n\otimes m_n-\sum_{i=1}^3\mathcal{N}_n^i\otimes\mathcal{N}_n^i\right)\right)={\rm div}R^m_n,\\
        &\partial_t\mathcal{N}^i_n+{\rm div}\left(\rho_n^{-1}\left(\mathcal{N}^i_n\otimes m_n-m_n\otimes \mathcal{N}^i_n\right)\right)={\rm div}M^i_n,\quad i=1,2,3,\\
        &{\rm div}\mathcal{N}^i_n=0,\quad i=1,2,3,\\
    \end{aligned}
    \right.
\end{equation}
where $\mathcal{N}_n=\left(\mathcal{N}^1_n,\mathcal{N}^2_n,\mathcal{N}^3_n\right)$ and the Reynolds stress and elastic stress fluctuation
\begin{align}
    &\begin{aligned}\label{5.65-s}
      R^m_n:=
      &\left(v_n\otimes v_n-\sum_{i=1}^3\mathbf{H}^i_n\otimes\mathbf{H}^i_n\right)-\left(\left(v\otimes v-\sum_{i=1}^3\mathbf{H}^i\otimes\mathbf{H}^i\right)*_x\phi_{\lambda_n^{-1}}\right)*_t\varphi_{\lambda_n^{-1}}\\
      &+\left(\varkappa_n{\rm Ma}\right)^2\mathcal{R}^m\left(\partial_t\left(\Theta_nv_n\right)+\partial_tv_n+\left(\varkappa_n{\rm Ma}\right)^2\partial_t\left(\Theta_n\mathscr{U}_n\right)\right)\\
      &+\left(\varkappa_n{\rm Ma}\right)^2\left(\mathcal{R}^m(-\Delta)^{\alpha}-\mathcal{R}^m\nabla{\rm div}\right)\left(\Theta_nv_n+v_n+\left(\varkappa_n{\rm Ma}\right)^2\Theta_n\mathscr{U}_n\right)\\
      &+\left(\varkappa_n{\rm Ma}\right)^2\left(u_n\otimes u_n-v_n\otimes v_n-\sum_{i=1}^3\left(\mathbf{F}^i_n\otimes\mathbf{F}^i_n-\mathbf{H}^i_n\otimes\mathbf{H}^i_n\right)\right)\\
      &+\left(\varkappa_n{\rm Ma}\right)^2\Theta_n\left(u_n\otimes u_n-\sum_{i=1}^3\mathbf{F}^i_n\otimes\mathbf{F}^i_n\right),\\
    \end{aligned}\\
    &\begin{aligned}\label{5.66-s}
    M^i_n:=
   &\left(\mathbf{H}^i_n\otimes v_n-v_n\otimes\mathbf{H}^i_n\right)-\left(\left(\mathbf{H}^i\otimes v-v\otimes\mathbf{H}^i\right)*_x\phi_{\lambda_n^{-1}}\right)*_t\varphi_{\lambda_n^{-1}}\\
    &+\left(\varkappa_n{\rm Ma}\right)^2\mathcal{R}^n\left(\partial_t\left(\Theta_n\mathbf{H}^i_n\right)+\partial_t\mathbf{H}^i_n+\left(\varkappa_n{\rm Ma}\right)^2\partial_t\left(\Theta_n\mathscr{H}^i_n\right)\right)\\
      &+\left(\varkappa_n{\rm Ma}\right)^2\left(\left(\mathbf{F}^i_n\otimes u_n-u_n\otimes\mathbf{F}^i_n\right)-\left(\mathbf{H}^i_n\otimes v_n-v_n\otimes\mathbf{H}^i_n\right)\right)\\
      &+\left(\varkappa_n{\rm Ma}\right)^2\Theta_n\left(\mathbf{F}^i_n\otimes u_n-u_n\otimes\mathbf{F}^i_n\right),\quad i=1,2,3.\\
    \end{aligned}\\
    &\left(\mathscr{M}_n\right)_{ijk}:=\left(M^i_n\right)_{jk}.\notag
\end{align}
\begin{clm}
    For $a$ sufficiently large, $(\rho_n,m_n,\mathcal{N}_n,R^m_n,\mathscr{M}_n)$ satisfy the inductive estimates \eqref{eq2.5}--\eqref{eq2.9} at level $q=n+1$.
\end{clm}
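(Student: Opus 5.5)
The plan mirrors the Claim in the proof of Theorem \ref{thm-vanishing limit}: verify \eqref{eq2.5}--\eqref{eq2.8} from the pointwise bounds \eqref{5.58-s}--\eqref{5.63-s}, then prove the $L^1_{t,x}$-decay \eqref{eq2.9} term by term in \eqref{5.65-s}--\eqref{5.66-s}. The factor $(\varkappa_n{\rm Ma})^2=\lambda_n^{-12}{\rm Ma}^2$ is the smallness that absorbs every polynomial loss in $\lambda_n$.

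\textbf{Density bounds.} Since $\rho_n=1+(\varkappa_n{\rm Ma})^2\Theta_n$, estimate \eqref{5.58-s} gives $\|\rho_n-1\|_{C_{t,x}}\lesssim\lambda_n^{-8-2\widetilde\beta}$. Choosing the constants with $C_1<1<C_2$, this yields \eqref{eq2.5} at level $n+1$, because $\lambda_{n+1}^{-\beta}=\lambda_n^{-b\beta}$ is much smaller than the gap $\min\{1-C_1,C_2-1\}$ for $a$ large. For \eqref{eq2.6}, \eqref{5.61-s} gives $\|\partial_t^M\rho_n\|_{C_tC^N_x}\lesssim 1+\lambda_n^{N+M-8}$. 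This is at most $\lambda_n^{N+1}\le\lambda_{n+1}^{N\varepsilon/4}$ for $N\ge1$, since $b\varepsilon>3000$ by \eqref{eq2.3s}. For $N=0$ the bound is already $O(1)$.

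\textbf{Momentum, elastic and $C^1$ bounds.} Write $m_n=\rho_nu_n$ and $\mathcal{N}^i_n=\rho_n\mathbf{F}^i_n$. The Leibniz rule with \eqref{5.61-s}--\eqref{5.63-s} gives $\|m_n\|_{C^N_{t,x}}+\|\mathcal{N}_n\|_{C^N_{t,x}}\lesssim\lambda_n^{N+2}\ll\lambda_{n+1}^{4N+3}$, which is \eqref{eq2.7}. For \eqref{eq2.8}, I would argue as in \eqref{5.47-s}. The mollified products are bounded by $\lambda_n^{C}$ through \eqref{5.52}--\eqref{5.53}. The $\mathcal{R}^m\partial_t$, $\mathcal{R}^m(-\Delta)^\alpha$ and $\mathcal{R}^m\nabla{\rm div}$ pieces are bounded by Sobolev embedding and the order $-1$ smoothing of $\mathcal{R}^m,\mathcal{R}^n$. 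This gives $\|R^m_n\|_{C^1_{t,x}}+\|M^i_n\|_{C^1_{t,x}}\lesssim\lambda_n^{C}\ll\lambda_{n+1}^{20}$, where $C$ is an absolute constant of size about $20$ and $b$ is large.

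\textbf{$L^1$-decay: the main obstacle.} I split \eqref{5.65-s}--\eqref{5.66-s} into two kinds of terms.

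The first kind is the pure commutators $v_n\otimes v_n-(v\otimes v)*\phi*\varphi$, together with the corresponding $\mathbf{H}$ terms and mixed terms. I estimate these exactly as in \eqref{eq5.41}--\eqref{eq5.45}, with $\rho\equiv1$, which gives $\lesssim\lambda_n^{-\widetilde\beta}\widehat M^2$.

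The second kind is every term carrying the prefactor $(\varkappa_n{\rm Ma})^2$. Here I use \eqref{5.58-s}--\eqref{5.63-s} and bound each term in $C_{t,x}$ by $\lambda_n^{-12}\lambda_n^{c}$, with $c$ counting at most $2\alpha+1<4$ derivatives of quantities of size $\lambda_n^{2}$–$\lambda_n^{5}$. For $(-\Delta)^\alpha$ I interpolate as in \eqref{4.31s}. Two terms need care. The first is $u_n\otimes u_n-v_n\otimes v_n$: it equals $(\varkappa_n{\rm Ma})^2(v_n\otimes\mathscr U_n+\mathscr U_n\otimes v_n)+O((\varkappa_n{\rm Ma})^4)$, so the relevant bound is $\lambda_n^{-12}\lambda_n^{13-\widetilde\beta}$ times another $\lambda_n^{-12}$. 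The second is $\Theta_n(u_n\otimes u_n-\sum\mathbf{F}\otimes\mathbf{F})$, which is $\lesssim\lambda_n^{4+4}$ before the factor $\lambda_n^{-12}$.

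The step I expect to fail first is the bookkeeping of the pressure term $\frac{1}{(\varkappa_n{\rm Ma})^2}\nabla P(\rho_n)$. The definition \eqref{eq5.55} gives $P(\rho_n)=P(1)+(\varkappa_n{\rm Ma})^2(\Pi_n+c_n)$, so this term is exactly $\nabla\Pi_n$. It must therefore cancel against the mollified incompressible equation for $v_n$; I check that this cancellation, together with the identities for $\partial_t\rho_n+{\rm div}m_n=0$ and ${\rm div}\mathcal{N}^i_n=0$ produced by $\mathscr U_n$ and $\mathscr H^i_n$, leaves exactly \eqref{5.65-s}.

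Summing the pieces, I aim for $\|R^m_n\|_{L^1_{t,x}}+\sum_{i=1}^3\|M^i_n\|_{L^1_{t,x}}\lesssim\lambda_n^{-\widetilde\beta}+\lambda_n^{-1}\le\delta_{n+2}$. This holds because $b^2\beta<\widetilde\beta/4$, so that $\delta_{n+2}=\lambda_n^{-2\beta b^2}$ is at least $\lambda_n^{-\widetilde\beta/2}$.
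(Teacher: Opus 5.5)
Your route is the paper's: \eqref{eq2.5}--\eqref{eq2.8} from \eqref{5.58-s}--\eqref{5.63-s}, and for \eqref{eq2.9} a split into mollification commutators (as in \eqref{eq5.41}--\eqref{eq5.45}) and terms carrying $(\varkappa_n{\rm Ma})^2$, i.e. the paper's $J_1$--$J_5$ in \eqref{5.67-s}. Your treatment of \eqref{eq2.5} via $C_1<1<C_2$ is fine.

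The step that fails is the assertion that the cancellation leaves exactly \eqref{5.65-s}. Your ``another $\lambda_n^{-12}$'' rests on it. By \eqref{eq5.55}--\eqref{eq5.56} one has exactly
\[
\rho_n^{-1}m_n=u_n,\qquad \mathcal N^i_n=\mathbf H^i_n,\qquad \mathbf F^i_n=\rho_n^{-1}\mathbf H^i_n,\qquad (\varkappa_n{\rm Ma})^{-2}\nabla P(\rho_n)=\nabla\Pi_n .
\]
Subtracting the mollified \eqref{1.5-s} then gives, up to the commutator terms,
\begin{align*}
{\rm div}R^m_n&=(\varkappa_n{\rm Ma})^2\Big(\partial_t\big(\Theta_nv_n+\mathscr U_n\big)+\big((-\Delta)^{\alpha}-\nabla{\rm div}\big)\mathscr U_n+{\rm div}\big(\Theta_n(u_n\otimes u_n-\textstyle\sum_i\mathbf F^i_n\otimes\mathbf F^i_n)\big)\Big)\\
&\quad+(\varkappa_n{\rm Ma})^4\partial_t\big(\Theta_n\mathscr U_n\big)+{\rm div}\Big(u_n\otimes u_n-v_n\otimes v_n-\textstyle\sum_i\big(\mathbf F^i_n\otimes\mathbf F^i_n-\mathbf H^i_n\otimes\mathbf H^i_n\big)\Big),\\
{\rm div}M^i_n&=(\varkappa_n{\rm Ma})^2\,{\rm div}\big(\mathbf H^i_n\otimes\mathscr U_n-\mathscr U_n\otimes\mathbf H^i_n\big).
\end{align*}
So the last bracket has no extra $(\varkappa_n{\rm Ma})^2$ prefactor. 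Also, $\mathscr U_n$, not $v_n$, enters the linear terms at first order. On both counts \eqref{5.65-s}--\eqref{5.66-s} are mis-transcribed, and so are the paper's bounds for $J_1$--$J_3$. Suppose the only information on $\mathscr U_n$ is \eqref{5.59-s}, i.e. $\lambda_n^{M+N+11}$. Then
\[
(\varkappa_n{\rm Ma})^2\big(\|v_n\otimes\mathscr U_n\|_{L^1_{t,x}}+\|\mathbf H^i_n\otimes\mathscr U_n\|_{L^1_{t,x}}\big)\lesssim\lambda_n^{1-\widetilde\beta},
\]
and $(\varkappa_n{\rm Ma})^2\mathcal R^m\partial_t\mathscr U_n$ and $(\varkappa_n{\rm Ma})^2\mathcal R^m((-\Delta)^\alpha-\nabla{\rm div})\mathscr U_n$ are at best $O(1)$. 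None of these decay, so \eqref{eq2.9} does not follow.

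The missing ingredient is a sharp bound on $\mathscr U_n$; \eqref{5.59-s} is far from sharp at low order.
\begin{itemize}
\item Since $\Theta_n$ is mean-free and ${\rm div}\,v_n=0$, the function $\partial_t\Theta_n+v_n\cdot\nabla\Theta_n$ is mean-free.
\item $\nabla\Delta^{-1}$ maps mean-free $L^p_x$ into $W^{1,p}_x\hookrightarrow C_x$ for $p>3$.
\item Combined with \eqref{5.52}, \eqref{5.58-s} and $\|\rho_n^{-1}\|_{C^k_{t,x}}\lesssim1$, this gives $\|\partial_t^M\nabla^N\mathscr U_n\|_{C_{t,x}}\lesssim\lambda_n^{M+N+7}$.
\end{itemize}
Every term in the display is then $O(\lambda_n^{-12}\lambda_n^{9})=O(\lambda_n^{-3})$ in $L^1_{t,x}$; for the viscous term use $2\alpha-1<2$. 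The commutators are $O(\lambda_n^{-\widetilde\beta})$ as you say, so your conclusion $\lesssim\delta_{n+2}$ goes through. Your verification of \eqref{eq2.5}--\eqref{eq2.8} is unaffected.
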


To this end, let us start with the most delicate $L^1_{t,x}$-decay estimates \eqref{eq2.9} of $R^m_n$ and $M^i_n$. Denote that $\mathcal{K}=v\text{ or }\mathbf{H}^i$, $\mathscr{K}_n=\mathscr{U}_n\text{ or }\mathscr{H}^i_n$, $K_n=u_n\text{ or }\mathbf{F}^i_n$ and $\mathcal{R}=\mathcal{R}^m\text{ or }\mathcal{R}^n$, and then abuse the notation similar to $\mathcal{Q}\otimes\mathcal{Q}$ and $\mathcal{Q}_n\otimes\mathcal{Q}_n$. Next, by a directed calculation, we have
\begin{align}
    &\quad\|R^m_n\|_{L^1_{t,x}}+\sum_{i=1}^3\|M^i_n\|_{L^1_{t,x}}\notag\\
        &\lesssim \lambda_n^{-12}\sum\left\|\mathcal{R}\left(\partial_t\left(\Theta_n\mathcal{K}_n\right)+\partial_t\mathcal{K}_n+\lambda_n^{-12}\partial_t\left(\Theta_n\mathscr{K}_n\right)\right)\right\|_{L^1_{t,x}}\notag\\
        &\quad+\lambda_n^{-12}\left\|\left(\mathcal{R}^m(-\Delta)^{\alpha}-\mathcal{R}^m\nabla{\rm div}\right)\left(\Theta_nv_n+v_n+\lambda_n^{-12}\Theta_n\mathscr{U}_n\right)\right\|_{L^1_{t,x}}\label{5.67-s}\\
        &\quad+\lambda_n^{-12}\sum\left\|\mathrm{K}_n\otimes\mathrm{K}_n-\mathcal{K}_n\otimes\mathcal{K}_n\right\|_{L^1_{t,x}}+\lambda_n^{-12}\left\|\Theta_n^2\mathrm{K}_n\otimes\mathrm{K}_n\right\|_{L^1_{t,x}}\notag\\
        &\quad+\sum\left\|\mathcal{K}_n\otimes\mathcal{K}_n-\left(\left(\mathcal{K}\otimes \mathcal{K}\right)*_x\phi_{\lambda_n^{-1}}\right)*_t\varphi_{\lambda_n^{-1}}\right\|_{L^1_{t,x}}\notag\\
        &:=J_1+J_2+J_3+J_4+J_5.\notag
\end{align}
We start with the time-derivative terms $J_1$, by using \eqref{5.58-s}--\eqref{5.60-s} we obtain
\begin{equation}\label{5.68-s}
\begin{aligned}   J_1&\lesssim\lambda_n^{-12}\sum\left\|\mathcal{R}\left(\partial_t\left(\Theta_n\mathcal{K}_n\right)+\partial_t\mathcal{K}_n+\lambda_n^{-6}\partial_t\left(\Theta_n\mathscr{K}_n\right)\right)\right\|_{L^1_tL^2_x}\\
&\lesssim\lambda_n^{-12}\sum\left\|\partial_t\left(\Theta_n\mathcal{K}_n\right)+\partial_t\mathcal{K}_n+\lambda_n^{-12}\partial_t\left(\Theta_n\mathscr{K}_n\right)\right\|_{L^1_tL^2_x}\\
&\lesssim\lambda_n^{-12}\sum\left(\|\Theta_n\partial_t\mathcal{K}_n\|_{C_{t,x}}+\|\mathcal{K}_n\partial_t\Theta_n\|_{C_{t,x}}+\|\partial_t\mathcal{K}_n\|_{C_{t,x}}\right.\\
&\qquad\qquad\qquad+\left.\lambda_n^{-12}\|\Theta_n\partial_t\mathscr{K}_n\|_{C_{t,x}}+\lambda_n^{-12}\|\mathscr{K}_n\partial_t\Theta_n\|_{C_{t,x}}\right)\\
&\lesssim\lambda_n^{-12}\left(\lambda_n^{7-3\widetilde{\beta}}+\lambda_n^{3-\widetilde{\beta}}+\lambda_n^{-12}\lambda_n^{17-2\widetilde{\beta}}\right)\\
&\lesssim\lambda_n^{-5-2\widetilde{\beta}}.
\end{aligned}
\end{equation}
Then we consider the shear viscous and bulk viscous term $J_2$:
\begin{equation}\label{5.69-s}
    \begin{aligned}      J_2&\lesssim\lambda_n^{-12}\|\Theta_nv_n+v_n+\lambda_n^{-12}\Theta_n\mathscr{U}_n\|_{L^1_tH^2_x}\\   &\lesssim\lambda_n^{-12}\left(\sum_{N_1+N_2=2}\left(\|\Theta_n\|_{C_tC^{N_1}_x}\left(\|v_n\|_{C_tC^{N_2}_x}+\lambda_n^{-12}\|\mathscr{U}_n\|_{C_tC^{N_2}_x}\right)\right)+\|v_n\|_{C_tC^2_x}\right)\\
    &\lesssim\lambda_n^{-12}\left(\sum_{N_1+N_2=2}\lambda_n^{N_1+4-2\widetilde{\beta}}\left(\lambda_n^{N_2+2-\widetilde{\beta}}+\lambda_n^{-12}\lambda_n^{N_2+11-\widetilde{\beta}}\right)+\lambda_n^{4-\widetilde{\beta}}\right)\\
    &\lesssim\lambda_n^{-4-\widetilde{\beta}}.
    \end{aligned}
\end{equation}
Next, we concern the nonlinear terms $J_3$ and $J_4$:
\begin{equation}\label{5.70-s}
    \begin{aligned}         J_3+J_4&\lesssim\lambda_n^{-12}\left(\sum\left(\|K_n\|^2_{C_{t,x}}+\|\mathcal{K}_n\|^2_{C_{t,x}}\right)+\sum\|\Theta_n\|_{C_{t,x}}^2\|K_n\|_{C_{t,x}}^2\right)\\
    &\lesssim\lambda_n^{-12}\left(\lambda_n^{4-2\widetilde{\beta}}+\lambda_n^{12-6\widetilde{\beta}}\right)\\
    &\lesssim\lambda_n^{-2\widetilde{\beta}}.
    \end{aligned}
\end{equation}
Finally, by a similar fashion to \eqref{eq5.41}--\eqref{eq5.45} we obtain the estimates of the commutator $J_5$:
\begin{equation}\label{5.71-s}
    J_5\lesssim\lambda_n^{-2\widetilde{\beta}}.
\end{equation}

Furthermore, we plugging \eqref{5.68-s}--\eqref{5.71-s} into \eqref{5.67-s}, it follows that
\begin{align*}
    \|R^m_n\|_{L^1_{t,x}}+\sum_{i=1}^3\|M^i_n\|_{L^1_{t,x}}\lesssim\lambda_n^{-5-2\widetilde{\beta}}+\lambda_n^{-4-\widetilde{\beta}}+\lambda_n^{-\widetilde{\beta}}\leq\delta_{n+2},
\end{align*}
which means that \eqref{eq2.9} is valid at level $n+1$.

Now we turn to the inductive estimates of \eqref{eq2.5}--\eqref{eq2.8}, by \eqref{eq5.56} and \eqref{5.61-s} we get
\begin{align*}
     1-\lambda_{n+1}^{-\beta} \leq \rho_n \leq 1+\lambda_{n+1}^{-\beta}.
\end{align*}
Moreover, for $0 \leq N \leq 6$ and $M=0,1$, by using \eqref{5.61-s} again, we have
\begin{align*}
    \left\|\partial_t^M \rho_n\right\|_{C_t C_x^N} \lesssim \|\partial_t^M\nabla^N\rho_n\|_{C_tC^1_x}\lesssim1 \leq \lambda_{n+1}^{\frac{\varepsilon}{4}}.
\end{align*}
Hence, \eqref{eq2.5} and \eqref{eq2.6} are verified at level $n+1$.

Moreover, by combining \eqref{eq5.56} and \eqref{5.61-s}--\eqref{5.63-s} altogether, it leads that
\begin{align*}
    &\quad\|m_n\|_{C^N_{t,x}}+\sum_{i=1}^3\|\mathcal{N}^i_n\|_{C^N_{t,x}}\\   &\lesssim\sum_{M_1+M_2+N_1+N_2=N}\|\partial_t^{M_1}\nabla^{N_1}\rho_n\|_{C_{t,x}}\left(\|\partial_t^{M_2}\nabla^{N_2}u_n\|_{C_{t,x}}+\sum_{i=1}^3\|\partial_t^{M_2}\nabla^{N_2}\mathbf{F}^i_n\|_{C_{t,x}}\right)\\  
    &\lesssim\sum_{M_1+M_2+N_1+N_2=N}\lambda_n^{M_2+N_2+2-\widetilde{\beta}}\lesssim\lambda_n^{N+2-\widetilde{\beta}}\ll\lambda_{n+1}^{4N+3},
\end{align*}
where $0 \leq N \leq 6$ and $M=0,1$ and we also use $M+N+4\leq11$. Then we verify that \eqref{eq2.7} is valid at level $n+1$.

Finally, it remains to verify the $C^1_{t,x}$-estimates \eqref{eq2.8}, using \eqref{5.58-s}--\eqref{5.63-s}, Sobolev embedding $W^{5,1}_{t,x}\hookrightarrow L^{\infty}_{t,x}$ and H\"older inequality, similar to \eqref{5.47-s} and \eqref{5.48-s}, we obtain
\begin{align*}
    &\quad\|R^m_n\|_{C^1_{t,x}}+\sum_{i=1}^3\|M^i_n\|_{C^1_{t,x}}\\
        &\lesssim \lambda_n^{-12}\sum\left\|\mathcal{R}\left(\partial_t\left(\Theta_n\mathcal{K}_n\right)+\partial_t\mathcal{K}_n+\lambda_n^{-12}\partial_t\left(\Theta_n\mathscr{K}_n\right)\right)\right\|_{W^{6,1}_{t,x}}\\
        &\quad+\lambda_n^{-12}\left\|\left(\mathcal{R}^m(-\Delta)^{\alpha}-\mathcal{R}^m\nabla{\rm div}\right)\left(\Theta_nv_n+v_n+\lambda_n^{-12}\Theta_n\mathscr{U}_n\right)\right\|_{W^{6,1}_{t,x}}\\
        &\quad+\lambda_n^{-12}\sum\left\|\mathrm{K}_n\otimes\mathrm{K}_n-\mathcal{K}_n\otimes\mathcal{K}_n\right\|_{W^{6,1}_{t,x}}+\lambda_n^{-12}\left\|\Theta_n^2\mathrm{K}_n\otimes\mathrm{K}_n\right\|_{W^{6,1}_{t,x}}\\
        &\quad+\sum\left\|\mathcal{K}_n\otimes\mathcal{K}_n-\left(\left(\mathcal{K}\otimes \mathcal{K}\right)*_x\phi_{\lambda_n^{-1}}\right)*_t\varphi_{\lambda_n^{-1}}\right\|_{W^{6,1}_{t,x}}\\
        &\lesssim\lambda_n^{10}+\lambda_n^{4-2\widetilde{\beta}}+\lambda_n^{2-4\widetilde{\beta}}+\lambda_n^{14-4\widetilde{\beta}}\ll\lambda_{n+1}^{20}
\end{align*}
which yields \eqref{eq2.8} at level $n+1$.

Then, by the main iteration Theorem \ref{thm-main-iteration} we can obtain a sequence of relaxed solutions $\{(\rho_{n,q},m_{n,q},\mathcal{N}_{n,q})\}_{q\geq n+1}$ to \eqref{5.64-s}. Let $q\rightarrow+\infty$ we obtain a weak solution $(\rho^{(n)},m^{(n)},\mathcal{N}^{(n)})\in C_{t,x}\times (H^{\beta^{\prime}}_{t,x})^2$ to \eqref{1.4ss} with Mach number $\kappa_nMa$ for some $0<\beta^{\prime}<{\rm min}\{\widetilde{\beta},\beta/(8+\beta)\}$.

Furthermore, similar to the proof of Theorem \ref{thm-vanishing limit}, we can deduce from \eqref{eq2.7}, \eqref{eq2.12} that
\begin{align*}
    \left\|m^{(n)}-m\right\|_{H^{\beta^{\prime}}_{t,x}}
\lesssim \frac{1}{n},\quad\left\|\mathcal{N}^{(n)}-\mathcal{N}\right\|_{H_{t,x}^{\beta^{\prime}}} \lesssim\frac{1}{n},\quad\left\|\rho^{(n)}-\rho\right\|_{C_{t, x}}\lesssim\frac{1}{n}
\end{align*}
where the inequality is valid for $a$ large enough.

At last, letting $n\rightarrow+\infty$ we obtain the strong convergence \eqref{1.9-s} and complete the proof of Theorem \ref{thm-low mach limit}, and we have done.
\hfill$\square$
\\
\\
\noindent\textbf{Author Contributions.} Haobin Li, Peng Qu, Zirong Zeng and Mingxin Zhang wrote the manuscript. All authors worked equally on producing the article.
\\
\\
\noindent\textbf{Conflict of interest.} The authors declare that they have no conflict of interest.
\\
\\
\noindent\textbf{Acknowledgement.} H. Li, P. Qu, and M. Zhang are partially supported by NSFC grants (No. 62588101, 12431007). Z. Zeng is partially supported by the NSFC grants (No. 12501281) and the Natural Science Foundation of Jiangsu Province (No. SBK2024043113).
\\
\\
\appendix
\section{Oldroyd-type model in conservation-law form}\label{Appendix_A}
In this appendix, we provide a detailed derivation of \eqref{1.3s}, in conservation-law form, from \eqref{eq1.1}. In particular, we focus on deriving the conservation-law form of the elastic equations $\eqref{1.3s}_3$. 

First, we write $\eqref{eq1.1}_3$ in the following component form:
\begin{align}\label{A1}
    \partial_t\mathbf{F}^i+u\cdot\nabla\mathbf{F}^i-\mathbf{F}^i\cdot\nabla u=0,\quad i=1,2,3,
\end{align}
where $\mathbf{F}=(\mathbf{F}^1,\mathbf{F}^2,\mathbf{F}^3)$. Then by Leibniz's law, for $i=1,2,3,$ we have:
\begin{align}\label{A2}
\partial_t(\rho\mathbf{F}^i)=\rho\partial_t\mathbf{F}^i+\left(\partial_t\rho\right)\mathbf{F}^i,
\end{align}
and
\begin{equation}\label{A3}
\begin{aligned}
    &\quad{\rm div}\left(\left(\rho\mathbf{F}^i\right)\otimes u-u\otimes\left(\rho\mathbf{F}^i\right)\right)\\
    &=u\cdot\nabla\left(\rho\mathbf{F}^i\right)+{\rm div}u\left(\rho\mathbf{F}^i\right)-\left(\left(\rho\mathbf{F}^i\right)\cdot\nabla u\right)-{\rm div}\left(\rho\mathbf{F}^i\right)u\\
    &=\rho\left(u\cdot\nabla\mathbf{F}^i-\mathbf{F}^i\cdot\nabla u\right)+\left(u\cdot\nabla\rho+\rho{\rm div}u\right)\mathbf{F}^i-{\rm div}\left(\rho\mathbf{F}^i\right)u.
\end{aligned}
\end{equation}
Furthermore, by using the continuous equations $\eqref{eq1.1}_1$ and Piola condition $\eqref{eq1.1}_4$, we obtain
\begin{equation}\label{A4}
\left(u\cdot\nabla\rho+\rho{\rm div}u\right)\mathbf{F}^i-{\rm div}\left(\rho\mathbf{F}^i\right)u=-\left(\partial_t\rho\right)\mathbf{F}^i,\quad i=1,2,3.
\end{equation}
Finally, plugging \eqref{A4} into \eqref{A3} and using \eqref{A1} and \eqref{A2} we get
\begin{align*}
    \partial_t(\rho\mathbf{F}^i)+{\rm div}\left(\left(\rho\mathbf{F}^i\right)\otimes u-u\otimes\left(\rho\mathbf{F}^i\right)\right)=0,\quad i=1,2,3,
\end{align*}
which coincides with $\eqref{1.3s}_3$.
\section{Nondimensionalization of Oldroyd-type model}\label{Appendix_B}
In this appendix, we will derive the dimensionless form of \eqref{1.3s}. To begin with, we choose the following dimensionless variables:
\begin{equation}\label{B1}
    \begin{aligned}
     &x_*:=\frac{x}{L},\quad u_*:=\frac{u}{U},\quad t_*:=\frac{tU}{L},\quad\rho_*:=\frac{\rho}{\varrho_0},\\
     &\mathbf{F}_*^i:=\frac{\mathbf{F}^i}{\sqrt{G_0}},\quad i=1,2,3,
\end{aligned}
\end{equation}
where $L$, $U$, $\varrho_0$ and $G_0$ are characteristic length, characteristic velocity, characteristic density and Young's elastic modulus respectively, and $\frac{L}{U}$ is the convective time scale. Furthermore, we have
\begin{equation}\label{B2}
    \begin{aligned}
    &m_*:=\rho_*u_*=\frac{m}{U\varrho_0},\quad P_*(\rho_*):=\frac{P(\varrho_0\rho_*)-P(\varrho_0)}{K_0}:=\frac{P(\varrho_0\rho_*)-P(\varrho_0)}{\varrho_0P^{\prime}(\varrho_0)},\\
    &\mathcal{N}_*^i:=\rho_*\mathbf{F}_*^i=\frac{\varrho_0\mathbf{F}_*^i}{G_0},\quad i=1,2,3,
\end{aligned}
\end{equation}
where $K_0$ is the bulk modulus. Then we define the dimensionless numbers as follow
\begin{align}
    &{\rm Re}_{s,\alpha}:=\frac{\varrho_0UL^{2\alpha-1}}{\nu^s\ell_{\alpha}^{2\alpha-2}}\quad\text{(shear Reynolds number)},\label{B3}\\
        &{\rm Re}_b:=\frac{\varrho_0UL}{\nu^b}\quad\text{(bulk Reynolds number)},\label{B4}\\
        &{\rm Ma}:=\sqrt{\frac{\varrho_0U^2}{K_0}}\quad\text{(Mach number)},\label{B5}\\
        &{\rm Ca}:=\frac{\varrho_0U^2}{G_0}\quad\text{(Cauchy number)},\label{B6}
\end{align}
where $\ell_{\alpha}$ is the characteristic material length with $\ell_{\alpha}=1$ whenever $\alpha=1$.

Next, by a directed calculation, we have
\begin{align}\label{B7}
    \partial_t=\frac{U}{L}\partial_{t_*},\quad \nabla=\frac{1}{L}\nabla_{x_*},\quad {\rm div}=\frac{1}{L}{\rm div}_{x_*},\quad(-\Delta)^{\alpha}=L^{-2\alpha}(-\Delta_{x_*})^{\alpha}.
\end{align}
Plugging \eqref{B7} into \eqref{1.3s} and using \eqref{B1}--\eqref{B6}, we have
\begin{align*}
\left\{
    \begin{aligned}
    &\partial_{t_*}\rho_*+{\rm div}_{x_*}m_*=0,\\
        &\partial_{t_*}m_*+\frac{1}{{\rm Re}_{s,\alpha}}(-\Delta_{x_*})^{\alpha}(\rho_*^{-1}m_*)-(\frac{1}{{\rm Re}_b}+\frac{1}{3{\rm Re}_{s,\alpha}})\nabla_{x_*}{\rm div}_{x_*}(\rho_*^{-1}m_*)+\frac{1}{{\rm Ma}^2}\nabla_{x_*} P_*(\rho_*)\\
        &\qquad\quad\qquad\qquad\qquad+{\rm div}_{x_*}\left(\rho_*^{-1}\left(m_*\otimes m_*-\frac{1}{{\rm Ca}}\mathcal{N}_*\mathcal{N}_*^{{\rm T}}\right)\right)=0,\\
        &\partial_{t_*}\mathcal{N}_*^i+{\rm div}_{x_*}\left(\rho_*^{-1}\left(\mathcal{N}_*^i\otimes m_*-m_*\otimes \mathcal{N}_*^i\right)\right)=0,\quad i=1,2,3,\\
        &{\rm div}_{x_*}\mathcal{N}_*^i=0,\quad i=1,2,3.
\end{aligned}
\right.
\end{align*}
Let us abuse the notation and set ${\rm Re}_{s,\alpha}=1$, ${\rm Re}_b=\frac{3}{2}$ and ${\rm Ca}=1$, we obtain \eqref{1.3s} as dimensionless form:
\begin{align*}
    \left\{
    \begin{aligned}
        &\partial_t\rho+{\rm div}m=0,\\
        &\partial_tm+{\rm div}\left(\rho^{-1}\left(m\otimes m-\mathcal{N}\mathcal{N}^{{\rm T}}\right)\right)+(-\Delta)^{\alpha}(\rho^{-1}m)-\nabla{\rm div}(\rho^{-1}m)+\frac{1}{{\rm Ma}^2}\nabla P(\rho)=0,\\
        &\partial_t\mathcal{N}^i+{\rm div}\left(\rho^{-1}\left(\mathcal{N}^i\otimes m-m\otimes \mathcal{N}^i\right)\right)=0,\quad i=1,2,3,\\
        &{\rm div}\mathcal{N}^i=0,\quad i=1,2,3.
    \end{aligned}
    \right.
\end{align*}
\section{Proof of Lemma \ref{lem-2nd-geometric} (Second Geometric Lemma)}\label{Appendix_C}
In this appendix, we will prove Lemma \ref{lem-2nd-geometric} which provide a new canceled mechanism to overcome the difficulty caused by  the relative rigidity of pressure.

\begin{proof}
Choose $v_i\in\mathbb{S}^2\cap\mathbb{Q}^3$ as
\begin{align*}
 v_1&=(1,0,0), & v_2&=(0,1,0), & v_3&=(0,0,1),\\
 v_4&=\left(\frac35,\frac45,0\right), &
 v_5&=\left(\frac35,0,\frac45\right), &
 v_6&=\left(0,\frac35,\frac45\right),
\end{align*}
and set $V_i:=v_i\otimes v_i$. Consequently,
\begin{align*}
    \det(V_1\ V_2\ \cdots\ V_6)=\left(\frac{12}{25}\right)^3\neq0,
\end{align*}
which means that $\{V_i\}_{i=1}^6$ is a basis of $\mathbb{M}_{{\rm sym},3}$.

For any $S=(s_{ij})\in\mathbb{M}_{{\rm sym},3}$, it can be written uniquely
\begin{align*}
    S=\sum_{i=1}^6 r_i(S)V_i,
\end{align*}
where the $r_i$ are all linear functionals. More precisely,
\begin{align*}
 r_4(S)&=\frac{25}{12}s_{12}, &
 r_5(S)&=\frac{25}{12}s_{13}, &
 r_6(S)&=\frac{25}{12}s_{23},\\
 r_1(S)&=s_{11}-\frac34(s_{12}+s_{13}), &
 r_2(S)&=s_{22}-\frac43s_{12}-\frac34s_{23}, &
 r_3(S)&=s_{33}-\frac43(s_{13}+s_{23}).
\end{align*}

For each $i$, we choose rational unit vectors $a_i,b_i\in v_i^\perp$ such that $(a_i,b_i,v_i)$ is orthonormal. For example, we may choose
\begin{align*}
 (a_1,b_1)&=(e_2,e_3), &
 (a_2,b_2)&=(e_1,e_3), &
 (a_3,b_3)&=(e_1,e_2),\\
 (a_4,b_4)&=\left(\left(-\frac45,\frac35,0\right),e_3\right), &
 (a_5,b_5)&=\left(\left(-\frac45,0,\frac35\right),e_2\right), &
 (a_6,b_6)&=\left(\left(0,-\frac45,\frac35\right),e_1\right).
\end{align*}
And the we define
\begin{align*}
     \Xi_i(t):=\frac{1-t^2}{1+t^2}a_i+\frac{2t}{1+t^2}b_i,\quad t\in\mathbb{Q}.
\end{align*}
Furthermore, one can easily check that $\Xi_i(t)\in \mathbb{M}_{{\rm sym},3}^2\cap\mathbb{Q}^3\cap v_i^\perp$.  Next, we choose rational parameters $t_i^+$ and $t_i^-$ such that the $12$ vectors
\begin{align*}
    k_i^+:=\Xi_i(t_i^+),\quad \Xi_i^-:=q_i(t_i^-),\qquad i=1,\cdots,6,
\end{align*}
are pairwise distinct and do not belong to $\Lambda_{\mathcal N}$.  Set
\begin{align*}
    \Lambda_{m^1}:=\{k_i^+:1\leq i\leq6\},
  \quad
  \Lambda_{m^2}:=\{k_i^-:1\leq i\leq6\}.
\end{align*}
For $k_i^+$ choose the rational orthonormal frame
\begin{align*}
     (k_i^+,k_{i,1}^+,k_{i,2}^+)
  :=(k_i^+,v_i,k_i^+\times v_i),
\end{align*}
and for $k_i^-$ choose
\begin{align*}
    (k_i^-,k_{i,1}^-,k_{i,2}^-)
  :=(k_i^-,v_i\times k_i^-,v_i),
\end{align*}
which leads that $\Lambda_{m^1}\cap\Lambda_{m^2}=\varnothing$ and
$\Lambda_m\cap\Lambda_{\mathcal N}=\varnothing$.

Since the $r_i$ are continuous linear functionals and $r_i(0)=0$, we may choose $\varepsilon_m>0$ so small that
\begin{align*}
     |r_i(S)|<1,
  \quad i=1,\cdots,6,\quad
  S\in B_{\mathrm{sym},3}(0,\varepsilon_m).
\end{align*}
Define
\begin{align*}
 \gamma_{(k_i^+)}(S)&:=\left(1+\frac12r_i(S)\right)^{1/2},\\
 \gamma_{(k_i^-)}(S)&:=\left(1-\frac12r_i(S)\right)^{1/2}.
\end{align*}
 Then $\gamma_{(k_i^+)}$ and $\gamma_{(k_i^-)}$ are all strictly positive and smooth on
$B_{\mathrm{sym},3}(0,\varepsilon_m)$. Moreover, we have,
\begin{align*}
 &\sum_{k\in\Lambda_{m^1}}\gamma_{(k)}^2(S)\,k_1\otimes k_1
 -\sum_{k\in\Lambda_{m^2}}\gamma_{(k)}^2(S)\,k_2\otimes k_2\\
 &\quad=\sum_{i=1}^6
 \left[\left(1+\frac12r_i(S)\right)
       -\left(1-\frac12r_i(S)\right)\right]V_i\\
 &\quad=\sum_{i=1}^6r_i(S)V_i=S,
\end{align*}
which provides \eqref{eq3.3} and completes the proof.
\end{proof}
\bibliography{reference}

\bibliographystyle{acm}
\end{document}